\newtheorem{theorem}{Theorem}[section]
\newtheorem{proposition}[theorem]{Proposition}
\newtheorem{lemma}[theorem]{Lemma}
\newtheorem{corollary}[theorem]{Corollary}
\newtheorem{definition}[theorem]{Definition}
\newtheorem{remark}[theorem]{Remark}
\author{Stanis{\l}aw Szawiel \\ Marek Zawadowski \\ \\ Institute of Mathematics, University of Warsaw \\
ul. Banacha 2, 00-913 Warsaw, Poland}
\title{The Web Monoid and Opetopic Sets}
\date{\today}
\begin{document}
\maketitle

\begin{abstract}
We develop a new definition of opetopic sets. There are two main technical ingredients. The first is the systematic use of fibrations, which are implicit in most of the approaches in the literature. Their explicit use leads to certain clarifications in the construction of opetopic sets and other constructions. The second is the ``web monoid'', which plays a role analogous to the ``operad for operads'' of Baez and Dolan, the ``multicategory of function replacement'' of Hermida, Makkai and Power. We demonstrate that the web monoid is closely related to the ``Baez-Dolan slice construction'' as defined by Kock, Joyal, Batanin and Mascari.
\end{abstract}

\tableofcontents

\section*{Introduction}
\addcontentsline{toc}{section}{Introduction}

Opetopic sets are notorious for being difficult to define and work with. In this work we will separate the difficult parts from the easy parts, and encapsulate each difficult part in some formal structure. The end result is a rather straightforward definition of the category of opetopic sets. The category of opetopes, unfortunately, remains beyond reasonable reach (generators and relations are no good).

To facilitate this separation and isolation we will use lax monoidal fibrations, introduced in \cite{Zawadowski} for this purpose. Typed operads originally used by Baez and Dolan in \cite{HDA3}, and most other structures used for the definition of opetopic sets (or just opetopes), for example the multicategories of \cite{HMP} and polynomial functors of \cite{KJBM}, naturally assemble into fibrations, which turn out to be fibrations of monoids of some monoidal fibration. Lax monoidal fibrations seem to provide a natural language for working with opetopic sets.

All three approaches cited above use the fact that a free monoid in a certain monoidal category can be made into a monoid in a different monoidal category. Thus Baez and Dolan consider the ``operad for operads'' as an operad whose universe is the universe of the free operad on the universe of the terminal symmetric operad. Hermida, Makkai and Power construct the ``multicategory of function replacement'' from a free multicategory. Perhaps the slickest construction is used in \cite{KJBM}. We review it in section \ref{comparisonsection}, where we compare our work to that of \cite{KJBM}.

Our main tool for constructing opetopic sets has the same flavor. It is called the ``web monoid''. It can be constructed in any fibration with appropriate extra structure. This structure is where all the difficult parts of opetopic sets are enclosed.

The first piece of structure are two strong monoidal structures, say $\otimes$ and $\odot$, subject to certain cocontinuity conditions. This reflects the fact the web monoid is a $\otimes$-monoid on the underlying object of a free $\odot$-monoid. Furthermore we require that ``$\otimes$ distributes over $\odot$'', a notion made precise by the concept of a distributivity structure. In a strict world it would be a collection of equalities

\begin{eqnarray*}
(A \otimes X) \odot (B \otimes X) & = & (A \odot B) \otimes X \\
I_{\odot} & = & I_{\odot} \otimes X,
\end{eqnarray*}
where $I_{\odot}$ is the unit of $\odot$. As usual, equalities must be replaced by natural isomorphisms to obtain a useful concept.

To obtain the web monoid we consider $\mathcal{F}_{\odot}(I_{\otimes})$, the free $\odot$-monoid on the $\otimes$-unit, decide that the unit of the new multiplication is $\eta : I_{\otimes} \rightarrow \mathcal{F}_{\odot}(I_{\otimes})$, and demand that ``the new multiplication commutes with the free multiplication''. This is made precise by using the distributivity structure. The resulting $\otimes$-monoid structure is then unique. This result is called the ``three tensors theorem'' (due to the role of coproducts -- a third monoidal structure), and is completely abstract. Its proof, apart from an educated guess for the new multiplication, is a quite messy inductive calculation. The web monoid encapsulates the difficulty of seeing a new structure on the free monoid.

To construct our opetopic sets we must make this abstract machinery work for the fibration of monoidal signatures, $\mathbf{Sig}_{ma}$. This is where the combinatorial problems of opetopic sets are identified and partitioned into small pieces. These pieces are the two monoidal operations $\otimes$ and $\odot$ and a distributivity structure between them. How small are we talking about? We will illustrate this with an example.

The amalgamation permutations for function replacement (the central operation in \cite{HMP}) are the same, in a precise sense, as the amalgamation permutations for the web monoid. Their definition in \cite{HMP} is rather abstract and indirect. We could give explicit recursive formulas for them. But we can say a lot more. We know their origin -- they arise because the free multiplication in $\mathcal{F}_{\odot}(I_{\otimes})$ has nonstandard amalgamation. This in turn arises from the associativity of $\odot$ -- it has nonstandard amalgamation, and it must -- to preserve the ``geometry'' of opetopic sets. This can proved using the separation principle \ref{separationprinciple}, a trick inspired by the constructions in \cite[part II]{HMP}. Finally, the construction of $\odot$ reflects our geometrical intuition. On a more mundane level $\odot$ is uniquely determined if we want corollary \ref{uisstrictmonoidal} to be true (it is needed to compare our work to \cite{KJBM}). Thus the complexity of function replacement is reduced to the associativity of a monoidal structure. That is two levels of recursion less. The construction of $\odot$ is still quite involved, but completely explicit and ultimately manageable.

The rest of the structure is comparatively simple, and merely allows $\odot$ to exist (and distribute over $\otimes$) -- we have not distributed the difficulties evenly among our structures. Once the basic combinatorial widget $\odot$ is in place, a completely formal theory takes over. We have seen that $\odot$ generates the nonstandard amalgamation in the web monoid, and this is the only nontrivial ingredient in our definition of opetopic sets. This can also be seen clearly in section \ref{comparisonsection}. In that section not once must we consider any combinatorial formulas -- and there are plenty such formulas to go around. We only need to know they are there, and are a consistent part of our formalism.

There is one more thing which can be seen in section \ref{comparisonsection}. It is the central role of monoidal signatures. All the fibrations used by us, and in the slice construction are closely related to them, but only $\mathbf{Sig}_{ma}$ ``sees everything'' properly. It sees the two kinds of inputs which give rise to the two structures $\otimes$ and $\odot$, it sees free monoids in the fibrations used in the slice construction, and even its action on these fibrations is very nearly the same as one of its monoidal structures $\otimes$. This probably sounds vague, and the reader will have to read section \ref{comparisonsection} to see what to make of these claims.

In appendix \ref{amalgamationisnecessaryexample} we answer a very natural question, which has somehow managed to escape consideration in the other approaches: In the end, can we get rid of the amalgamation permutations? This is a sensible question, since many monoids with nonstandard amalgamation are isomorphic to ones with standard amalgamation. The answer is no: the web monoid is not isomorphic to a monoid with standard amalgamation, even if we begin with standard amalgamation. Our example consists of pictures, and should easily adapt to all other approaches.

\section{Lax Monoidal Fibrations}
We will need the theory of lax monoidal fibrations from \cite{Zawadowski}, for which the reader is referred there. The theory of fibrations can be found in \cite{Streicher}. Morphisms which are called ``cartesian'' in \cite{Streicher} will be called ``prone'' in this paper.

A lax monoidal fibration can be briefly defined as a lax monoid in the $2$-category of fibrations over some category $\mathcal{B}$, \emph{fibered functors} (not morphisms of fibrations!), and fibered natural transformations. Monoids are to be understood as using the ordinary product in the underlying $1$-category. We will be concerning ourselves with strong monoidal fibrations, that is those in which $\alpha, \lambda$ and $\rho$ are isomorphisms. The conventions on the direction of these arrows are therefore a matter of convenience. We will specify them here, but unfortunately no choice is optimal for the entire paper. The conventions in \cite{Zawadowski}, however, are uniquely determined by the examples therein.

\paragraph{Conventions.} If $\mathcal{E} \rightarrow \mathcal{B}$ is a functor, then the preimage of $O \in \mathcal{B}$ will be denoted $\mathcal{E}/O$, to be read ``$\mathcal{E}$ over $O$''. If this functor is the codomain fibration $\mathcal{C}^{\cdot \rightarrow \cdot} \rightarrow \mathcal{C}$, then this notation agrees with taking slices of $\mathcal{C}$. Thus if $O \in \mathcal{C}$ then $\mathcal{C}/O$ means the ordinary slice, and we will never write $\mathcal{C}^{\cdot \rightarrow \cdot}/O$.

In a lax monoidal fibration the coherence isomorphisms will have the following directions:

\begin{displaymath}
\begin{array}{l}
\alpha_{A, B, C} : A \otimes (B \otimes C) \rightarrow (A \otimes B) \otimes C \\
\lambda_{A} : I \otimes A \rightarrow A \\
\rho_{A} : A \rightarrow A \otimes I
\end{array}
\end{displaymath}

A pullback (in $\mathbf{Cat}$) of a fibration is a fibration. The same is true for lax monoidal fibrations. The following lemma states that any algebraic structure is preserved by pullback. It is exceedingly useful.

\begin{lemma}\label{pullbackspreservealgebra}
Let $F: \mathcal{B}' \rightarrow \mathcal{B}$ be a functor. Then the operation $\mathcal{E} \mapsto F^{\ast} \mathcal{E}$, of pulling back along $F$ extends to a $2$-functor $F^{\ast}: \mathbf{Fib}(\mathcal{B}) \rightarrow \mathbf{Fib}(\mathcal{B}')$, which preserves finite products in the underlying $1$-category.
\end{lemma}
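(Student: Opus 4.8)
The plan is to unwind the standard construction of the pullback fibration and then verify each clause of the statement, almost all of which falls out formally. Fix, once and for all, the ``pairs'' model: writing $p : \mathcal{E} \rightarrow \mathcal{B}$ for the given fibration, the category $F^{\ast}\mathcal{E}$ has as objects the pairs $(b', e)$ with $F(b') = p(e)$ and as morphisms the pairs $(f, \phi)$ with $F(f) = p(\phi)$, together with the two projections $P : F^{\ast}\mathcal{E} \rightarrow \mathcal{B}'$ and $Q : F^{\ast}\mathcal{E} \rightarrow \mathcal{E}$. The one structural fact I want to isolate first is the characterization of prone arrows: $(f, \phi)$ is $P$-prone if and only if $\phi$ is $p$-prone. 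The nontrivial direction is a short chase — given a test arrow into the codomain and a factorization of its $\mathcal{B}'$-component through $f$, one lifts the $\mathcal{E}$-component through $\phi$ by pronenness of $\phi$, and the two halves assemble, uniquely, into an arrow of $F^{\ast}\mathcal{E}$ because the compatibility condition $F(-) = p(-)$ is then automatic; the converse uses existence of $p$-prone lifts plus uniqueness of prones over a fixed base arrow up to vertical iso. This simultaneously reproves that $P$ is a fibration (which we are anyway allowed to cite): a $P$-prone lift of $f : \tilde b' \rightarrow b'$ at $(b', e)$ is $(f, \phi)$ for $\phi$ any $p$-prone lift of $F(f)$ at $e$.

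With this in hand the extension to $1$- and $2$-cells is forced. A fibered functor $G : \mathcal{E} \rightarrow \mathcal{E}'$ over $\mathcal{B}$ induces, by the universal property of the pullback, a functor $F^{\ast}G : F^{\ast}\mathcal{E} \rightarrow F^{\ast}\mathcal{E}'$ over $\mathcal{B}'$, explicitly $(b', e) \mapsto (b', Ge)$; it is automatically a functor over $\mathcal{B}'$, and (if one's notion of fibered functor includes prone preservation) it inherits that property immediately from the prone characterization above. A fibered natural transformation $\tau : G \Rightarrow H$ is sent to $F^{\ast}\tau$ with component $(\mathrm{id}_{b'}, \tau_e)$ at $(b', e)$, which is $P'$-vertical since $\tau_e$ is $p'$-vertical and $P'$ forgets the second coordinate; naturality and the fibered condition are read off componentwise. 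Because we pinned down the pairs model, the formulas $(b', e) \mapsto (b', Ge)$ and $(b', e) \mapsto (\mathrm{id}, \tau_e)$ are literally functorial, so $F^{\ast}(\mathrm{id}) = \mathrm{id}$ and $F^{\ast}(G'G) = F^{\ast}G' \circ F^{\ast}G$ hold on the nose, and $F^{\ast}$ likewise respects identity $2$-cells and both compositions of fibered natural transformations; hence it is an honest $2$-functor with no pseudo-coherence to carry.

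For the last clause, the terminal object of $\mathbf{Fib}(\mathcal{B})$ is $\mathrm{id}_{\mathcal{B}}$, and $F^{\ast}(\mathrm{id}_{\mathcal{B}}) = \mathcal{B}' \times_{\mathcal{B}} \mathcal{B}$ is identified with $\mathrm{id}_{\mathcal{B}'}$ by the first projection; binary products in $\mathbf{Fib}(\mathcal{B})$ are fiber products over $\mathcal{B}$, and iterated-pullback pasting gives
\[
F^{\ast}(\mathcal{E} \times_{\mathcal{B}} \mathcal{E}') \;=\; \mathcal{B}' \times_{\mathcal{B}} (\mathcal{E} \times_{\mathcal{B}} \mathcal{E}') \;\cong\; (\mathcal{B}' \times_{\mathcal{B}} \mathcal{E}) \times_{\mathcal{B}'} (\mathcal{B}' \times_{\mathcal{B}} \mathcal{E}') \;=\; F^{\ast}\mathcal{E} \times_{\mathcal{B}'} F^{\ast}\mathcal{E}'
\]
over $\mathcal{B}'$, compatibly with all projections, so that the image of the product cone is the product cone of the images; with the pairs model this comparison is again an identity. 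I do not expect a genuine obstacle: the only step that is more than bookkeeping is the prone-arrow characterization of the first paragraph, and once that is recorded, $2$-functoriality, verticality, and the product comparison are all purely formal manipulations of pullbacks.
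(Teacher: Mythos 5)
Your proof is correct and follows essentially the same route as the paper: the explicit pairs model, the componentwise formulas $(b',e)\mapsto(b',Ge)$ and $F^{\ast}\tau_{(b',e)}=(1_{b'},\tau_e)$, strict $2$-functoriality, and the identification of products with iterated pullbacks over the base. The only divergence is your opening paragraph on prone arrows, which is harmless but not needed here, since this paper's $1$-cells are fibered functors that are not required to preserve prone morphisms.
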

\begin{proof}
$F^{\ast}$ extends to fibered functors by the universal property of pullbacks. Preservation of composition and identities also follows directly from this universal property.

We extend $F^{\ast}$ to natural transformations by hand. If $f, g: \mathcal{E} \rightarrow \mathcal{F}$ are two functors over $\mathcal{B}$, and $\tau: f \rightarrow g$ is a fibered natural transformation, then:
\begin{itemize}
\item[-] $F^{\ast} \mathcal{E}$ is given by pairs $(b', e)$, $b' \in \mathcal{B}'$, $e \in \mathcal{E}$, which project to the same object in $\mathcal{B}$. Likewise for morphisms.
\item[-] $F^{\ast} f$ is given by $(b', e) \mapsto (b', f(e))$. Likewise for morphisms.
\item[-] We define $F^{\ast} \tau_{(b', e)} = (1_{b'}, \tau_{e})$ (which is a morphism in $F^{\ast} \mathcal{F}$).
\end{itemize}

It is then obvious that $F^{\ast} \tau$ is fibered, natural, and that $F^{\ast}$ preserves composition of natural transformations.

Products in the $1$-category underlying $\mathbf{Fib}(\mathcal{B})$ are computed as pullbacks in $\mathbf{Cat}$ over $\mathcal{B}$, since any pullback of a fibration is a fibration, and we are using fibered functors as morphisms, so our hom-sets coincide with the ones in $\mathbf{Cat}/ \mathcal{B}$. Preservation of binary products follows since both $F^{\ast} \mathcal{E} \times_{\mathcal{B}} \mathcal{F}$ and $F^{\ast} \mathcal{E} \times_{\mathcal{B}'} F^{\ast} \mathcal{F}$ compute the limit (in $\mathbf{Cat}$) of the diagram

\begin{center}
\begin{tikzpicture}
\matrix (m) [matrix of math nodes, column sep = .8cm, row sep = .8cm, text height = 1.5ex, text depth = 0.25ex]{
& \mathcal{E} && \mathcal{F} \\
\mathcal{B}' && \mathcal{B} \\ 
};

\path[->] (m-1-2) edge (m-2-3)
			 (m-1-4) edge (m-2-3)
			 (m-2-1) edge node[auto] {$F$} (m-2-3);
\end{tikzpicture}
\end{center}
Preservation of the terminal object is obvious.
\end{proof}

\begin{corollary}\label{pullbackspreservemonoidalfibrations}
The pullback of a lax monoidal fibration is naturally a lax monoidal fibration.
\end{corollary}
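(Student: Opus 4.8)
The plan is to read a lax monoidal fibration over $\mathcal{B}$ as a \emph{lax monoid} in the $2$-category $\mathbf{Fib}(\mathcal{B})$ equipped with its cartesian product (the product of the underlying $1$-category, computed as pullback over $\mathcal{B}$), and then to simply transport this structure along $F^{\ast}$, using the fact from Lemma \ref{pullbackspreservealgebra} that $F^{\ast}: \mathbf{Fib}(\mathcal{B}) \rightarrow \mathbf{Fib}(\mathcal{B}')$ is a $2$-functor preserving finite products. The general principle at work is that a finite-product-preserving $2$-functor carries lax monoids to lax monoids; the corollary is just the instance of this principle for the $2$-functor $F^{\ast}$.

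First I would unwind the data. A lax monoidal fibration $\mathcal{E} \rightarrow \mathcal{B}$ provides a fibered functor $\otimes : \mathcal{E} \times_{\mathcal{B}} \mathcal{E} \rightarrow \mathcal{E}$, a fibered functor $I : \mathcal{B} \rightarrow \mathcal{E}$ (the unit, $\mathcal{B}$ being terminal in $\mathbf{Fib}(\mathcal{B})$), and fibered natural transformations $\alpha, \lambda, \rho$ between the appropriate composites of these, subject to the pentagon and triangle equations. Every ingredient here lives purely in the $1$- and $2$-cell structure of $\mathbf{Fib}(\mathcal{B})$, together with the chosen product cone on $\mathcal{E} \times_{\mathcal{B}} \mathcal{E}$ and the terminal object. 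Applying $F^{\ast}$, the Lemma gives canonical comparison isomorphisms $c : F^{\ast}\mathcal{E} \times_{\mathcal{B}'} F^{\ast}\mathcal{E} \xrightarrow{\ \sim\ } F^{\ast}(\mathcal{E} \times_{\mathcal{B}} \mathcal{E})$ and $\mathcal{B}' \xrightarrow{\ \sim\ } F^{\ast}\mathcal{B}$ (indeed, inspecting the explicit description of $F^{\ast}$ in the proof of the Lemma, these are nothing but a harmless re-bracketing of tuples, so $F^{\ast}$ may be taken to preserve products strictly). One then sets the tensor on $F^{\ast}\mathcal{E}$ to be $F^{\ast}(\otimes) \circ c$, the unit to be $F^{\ast}(I)$ precomposed with the comparison iso, and the associator and unitors to be $F^{\ast}(\alpha)$, $F^{\ast}(\lambda)$, $F^{\ast}(\rho)$ whiskered by $c$. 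Since $F^{\ast}$ preserves composition, identities and whiskering of $2$-cells, these are again fibered natural isomorphisms of the correct source and target, and applying $F^{\ast}$ to the pentagon and triangle equalities for $\mathcal{E}$ yields the corresponding equalities for $F^{\ast}\mathcal{E}$ (the comparison isomorphisms pasted in cancel coherently). This exhibits $F^{\ast}\mathcal{E} \rightarrow \mathcal{B}'$ as a lax monoidal fibration.

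The word ``naturally'' in the statement is covered by the same argument run on morphisms and $2$-cells: a morphism of lax monoidal fibrations over $\mathcal{B}$ is a fibered functor equipped with fibered natural transformations compatible with $\otimes$, $I$ and the coherence data, and $F^{\ast}$, being a product-preserving $2$-functor, carries such a morphism to a morphism of lax monoidal fibrations over $\mathcal{B}'$, and similarly on the level of $2$-cells; functoriality of this assignment follows from that of $F^{\ast}$. Thus $F^{\ast}$ lifts to a (2-)functor on the (2-)categories of lax monoidal fibrations.

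I do not expect a genuine obstacle here: the content is entirely bookkeeping. The one point needing a moment's care is keeping track of the product-comparison isomorphisms $c$ and checking that the re-association they encode does not introduce spurious constraints when transporting the pentagon and triangle — but this is immediate once one observes, from the explicit formulas in the proof of Lemma \ref{pullbackspreservealgebra}, that $c$ is essentially an identity and in particular coherent. I would therefore keep the proof short, essentially just citing the Lemma and the fact that finite-product $2$-functors preserve lax monoids.
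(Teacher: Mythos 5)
Your proposal is correct and follows exactly the paper's route: both read a lax monoidal fibration as a lax monoid with respect to the $1$-categorical product in $\mathbf{Fib}(\mathcal{B})$ and transport that structure along the finite-product-preserving $2$-functor $F^{\ast}$ of Lemma \ref{pullbackspreservealgebra}, with coherence following because $F^{\ast}$ preserves composites and equalities of $2$-cells. Your write-up is simply a more explicit version of the paper's two-line proof.
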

\begin{proof}
The $2$-functor from the above lemma provides the necessary structure. The coherence conditions hold, because composites of natural transformations are preserved (and equality also -- trivially so).
\end{proof}

\begin{remark}
According to the definitions in \cite{Zawadowski} the canonical projection $F^{\ast} \mathcal{E} \rightarrow \mathcal{E}$ is a strict monoidal functor over $F: \mathcal{B}' \rightarrow \mathcal{B}$.
\end{remark}

We can now see that the fibers of a lax monoidal fibration are lax monoidal categories, by considering the pullbacks along functors $1 \rightarrow B$, which give the fibers of a fibration. Similarly, morphisms over a given one can be multiplied, by considering pullbacks along functors from the category $\mathbf{2} = (\cdot \longrightarrow \cdot)$. It was also noted in \cite{Zawadowski} that the reindexing functors are lax monoidal. Their strongness is equivalent to $\otimes$ being a morphism of fibrations. This will never happen in our examples.

We will need a few facts about universal properties in fibrations.

\begin{lemma}[cf. lemma 5.5 in \cite{Zawadowski}]\label{freedominfibersisfreedom}
Consider $\mathcal{U}: \mathcal{M} \rightarrow \mathcal{E}$, a morphism of fibrations over $\mathcal{B}$. Let $X \in \mathcal{E}/O$. Then a vertical arrow $X \rightarrow \mathcal{U}(M)$ is universal from $X$ to $\mathcal{U}$ if and only if it is universal from $X$ to the restriction of $\mathcal{U}$ to $\mathcal{M}/O$.
\end{lemma}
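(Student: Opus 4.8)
The plan is to prove the biconditional in its two halves; the direction ``universal from $X$ to $\mathcal{U}$ $\Rightarrow$ universal from $X$ to $\mathcal{U}$ restricted to $\mathcal{M}/O$'' is essentially free, and all the content sits in the converse, where I would use prone lifting to reduce an arbitrary comparison arrow to a vertical one.

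For the easy direction, note first that because the given arrow $u : X \rightarrow \mathcal{U}(M)$ is vertical and $\mathcal{U}$ commutes with the projections to $\mathcal{B}$, the object $M$ necessarily lies in $\mathcal{M}/O$. Given a vertical $f : X \rightarrow \mathcal{U}(M')$ with $M' \in \mathcal{M}/O$, the unrestricted universal property produces a unique $g : M \rightarrow M'$ with $\mathcal{U}(g) \circ u = f$; since $u$ and $f$ are vertical, so is $\mathcal{U}(g)$, hence so is $g$. The factoring arrow is literally the same one, so the restricted universal property --- existence and uniqueness --- is inherited verbatim.

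For the converse, suppose $u : X \rightarrow \mathcal{U}(M)$ is universal from $X$ to the restriction of $\mathcal{U}$ to $\mathcal{M}/O$ (so again $M \in \mathcal{M}/O$). Let $M' \in \mathcal{M}$ be arbitrary, lying over $O' \in \mathcal{B}$, and let $f : X \rightarrow \mathcal{U}(M')$ be any arrow of $\mathcal{E}$, lying over some $\varphi : O \rightarrow O'$. I would pick a prone arrow $p : \varphi^{\ast}M' \rightarrow M'$ over $\varphi$ in $\mathcal{M}$; the key point --- and the only place the hypothesis ``morphism of fibrations'', not merely ``fibered functor'', is used --- is that $\mathcal{U}(p)$ is then prone over $\varphi$ in $\mathcal{E}$. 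Hence $f$ splits uniquely as $f = \mathcal{U}(p) \circ \bar{f}$ with $\bar{f} : X \rightarrow \mathcal{U}(\varphi^{\ast}M')$ vertical. Now $\varphi^{\ast}M' \in \mathcal{M}/O$, so the restricted universal property gives a unique vertical $\bar{g} : M \rightarrow \varphi^{\ast}M'$ with $\mathcal{U}(\bar{g}) \circ u = \bar{f}$, and $g := p \circ \bar{g}$ satisfies $\mathcal{U}(g) \circ u = \mathcal{U}(p) \circ \mathcal{U}(\bar{g}) \circ u = \mathcal{U}(p) \circ \bar{f} = f$. For uniqueness: any $g'$ with $\mathcal{U}(g') \circ u = f$ must lie over $\varphi$ (project to $\mathcal{B}$ and use that $u$ is vertical), hence factors through $p$ as $g' = p \circ \bar{g'}$ with $\bar{g'}$ vertical; comparing $\mathcal{U}(p) \circ (\mathcal{U}(\bar{g'}) \circ u)$ with $\mathcal{U}(p) \circ \bar{f}$ and invoking uniqueness in the prone property of $\mathcal{U}(p)$ forces $\mathcal{U}(\bar{g'}) \circ u = \bar{f}$, whence $\bar{g'} = \bar{g}$ and $g' = g$.

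There is no serious obstacle here; the only thing to be vigilant about is the bookkeeping of base morphisms at each step --- checking that the arrows produced really are vertical, or really do lie over $\varphi$, so that the prone factorizations and the restricted universal property are applicable. The conceptual linchpin, worth isolating, is that a morphism of fibrations sends prone arrows to prone arrows; everything else is a diagram chase.
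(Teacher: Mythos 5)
Your proof is correct and is exactly the paper's argument, just spelled out: the paper's one-line proof ("Using prone morphisms we can reduce morphisms between fibers to morphisms in the fiber over $O$") is precisely your prone-factorization $f = \mathcal{U}(p)\circ\bar{f}$, with the preservation of prone arrows by the morphism of fibrations $\mathcal{U}$ doing the work, and the other direction dismissed as trivial. Nothing to add.
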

\begin{proof}
Using prone morphisms we can reduce morphisms between fibers to morphisms in the fiber over $O$, where we assumed universality. The other implication is trivial.
\end{proof}

The fibration of diagrams of type $\mathcal{D} \in \mathbf{Cat}$ is the pullback of $\mathcal{E}^{\mathcal{D}} \rightarrow \mathcal{B}^{\mathcal{D}}$ along the constant diagram functor $\Delta : \mathcal{B} \rightarrow \mathcal{B}^{\mathcal{D}}$, see \cite{Streicher}. Then $\Delta_{\mathcal{E}}: \mathcal{E} \rightarrow \mathcal{E}^{\mathcal{D}}$ factors into a morphism over $\mathcal{B}$, which we will still call the constant diagram functor, followed by the canonical projection. A fibered colimit of an object $F$ of such a fibration is a vertical universal arrow from $F$ to the constant diagram functor $\Delta_{\mathcal{E}}$ (considered over $\mathcal{B}$), as usual.

\begin{corollary}\label{fiberedcolimits}
If the fibration $\mathcal{E} \rightarrow \mathcal{B}$ has a type of colimit (eg. coproducts, pushouts, filtered colimits) fiberwise, then it has the fibered version of this type of colimit.

If a fibration has a type of colimit fiberwise, then taking the colimit extends to a functor on the fibration of diagrams of the given type. 
\end{corollary}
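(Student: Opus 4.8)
The plan is to reduce both statements to Lemma \ref{freedominfibersisfreedom}, applied to the constant diagram functor. Fix a diagram shape $\mathcal{D}$ and write $\mathcal{E}_{\mathcal{D}} := \Delta^{\ast}(\mathcal{E}^{\mathcal{D}})$ for the fibration of diagrams of type $\mathcal{D}$, and $\Delta_{\mathcal{E}} : \mathcal{E} \to \mathcal{E}_{\mathcal{D}}$ for the constant diagram functor over $\mathcal{B}$. First I would unwind the definition of $\mathcal{E}_{\mathcal{D}}$ to see that, for each $O \in \mathcal{B}$, the fiber $\mathcal{E}_{\mathcal{D}}/O$ is canonically the ordinary functor category $(\mathcal{E}/O)^{\mathcal{D}}$: an object of $\mathcal{E}_{\mathcal{D}}$ over $O$ is a functor $\mathcal{D} \to \mathcal{E}$ lying over the constant functor $\Delta O$, which is precisely a functor $\mathcal{D} \to \mathcal{E}/O$, and the same for morphisms. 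Under this identification the restriction of $\Delta_{\mathcal{E}}$ to $\mathcal{E}/O$ is exactly the usual diagonal $\mathcal{E}/O \to (\mathcal{E}/O)^{\mathcal{D}}$, so that a vertical universal arrow from a diagram $F \in \mathcal{E}_{\mathcal{D}}/O$ to this restriction is exactly a colimiting cocone for $F$ computed in the fiber $\mathcal{E}/O$.

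To apply Lemma \ref{freedominfibersisfreedom} I must check that $\Delta_{\mathcal{E}}$ is a morphism of fibrations, i.e. that it preserves prone morphisms; this is the one step that is not pure bookkeeping, and I expect it to be the main (if modest) obstacle. For the functor fibration $\mathcal{E}^{\mathcal{D}} \to \mathcal{B}^{\mathcal{D}}$ the prone morphisms are exactly the pointwise prone natural transformations (see \cite{Streicher}), and for a pullback fibration such as $\mathcal{E}_{\mathcal{D}} = \Delta^{\ast}(\mathcal{E}^{\mathcal{D}})$ a morphism is prone if and only if its image in $\mathcal{E}^{\mathcal{D}}$ is. Now if $\varphi : x \to y$ is prone in $\mathcal{E}$ over $u : O' \to O$, then $\Delta_{\mathcal{E}}(\varphi)$ is the natural transformation constant at $\varphi$, which is pointwise prone, hence prone in $\mathcal{E}^{\mathcal{D}}$ over $\Delta u$, hence prone in $\mathcal{E}_{\mathcal{D}}$ over $u$. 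So $\Delta_{\mathcal{E}}$ is a morphism of fibrations.

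The first assertion is then immediate: given $F \in \mathcal{E}_{\mathcal{D}}/O$, the fiberwise hypothesis supplies a colimiting cocone $\eta_{F} : F \to \Delta_{\mathcal{E}}(\operatorname{colim} F)$ in $\mathcal{E}/O$; by Lemma \ref{freedominfibersisfreedom} this is a vertical universal arrow from $F$ to $\Delta_{\mathcal{E}}$, which is by definition a fibered colimit of $F$. Hence $\mathcal{E} \to \mathcal{B}$ has fibered colimits of shape $\mathcal{D}$ (and, letting $\mathcal{D}$ range over all filtered shapes, all filtered fibered colimits, etc.).

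For the second assertion I would choose, for every object $F$ of $\mathcal{E}_{\mathcal{D}}$, a colimit $\operatorname{colim} F$ and cocone $\eta_{F}$ as above. Having a universal arrow from each $F$ to $\Delta_{\mathcal{E}}$, the standard parameter bookkeeping assembles $\operatorname{colim}$ into a functor $\mathcal{E}_{\mathcal{D}} \to \mathcal{E}$ left adjoint to $\Delta_{\mathcal{E}}$ with unit $\eta$; this is the claimed functor. Since every $\eta_{F}$ is vertical, $\operatorname{colim}$ carries the fiber over $O$ into the fiber over $O$, so it is a functor over $\mathcal{B}$; and because $\Delta_{\mathcal{E}}$ is a morphism of fibrations with vertical unit, the adjunction is in fact a fibered adjunction, so $\operatorname{colim}$ is even a morphism of fibrations. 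The leftover checks — functoriality of $\operatorname{colim}$, naturality of $\eta$, and the triangle identities — follow formally from universality, and I would not write them out.
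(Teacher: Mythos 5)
Your argument is correct and is essentially the paper's own proof: both reduce the statement to Lemma \ref{freedominfibersisfreedom} applied to the constant diagram functor $\mathcal{E} \rightarrow \Delta^{\ast}\mathcal{E}^{\mathcal{D}}$, with the key check being that this functor preserves prone morphisms, and both obtain the second assertion as a formal consequence. You merely spell out the details the paper leaves implicit (the identification of the fibers of the diagram fibration with $(\mathcal{E}/O)^{\mathcal{D}}$ and the pointwise characterization of prone morphisms), which is fine.
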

\begin{proof}
The needed universal property follows immediately from lemma \ref{freedominfibersisfreedom}, since the constant diagram functor $\mathcal{E} \rightarrow \Delta^{\ast} \mathcal{E}^{\mathcal{D}}$ (again, considered over $\mathcal{B}$) preserves prone morphisms. The second statement is a formal consequence of the first.
\end{proof}

Note that the condition in this corollary refers only to fibers. It is therefore stable under pullback. Thus existence of fibered colimits is stable under pullback.

\paragraph{The fibered slice.} We will need one more general construction for lax monoidal fibrations. It is needed exclusively for section \ref{comparisonsection}.

Recall that if $\mathcal{C}$ is a monoidal category and $M \in Mon(\mathcal{C})$ is a monoid, then the slice category $\mathcal{C}/M$ is also naturally a monoidal category, with monoidal product of $A \rightarrow M$ and $B \rightarrow M$ defined by the composite $A \otimes B \rightarrow M \otimes M \xrightarrow{\mu} M$, where $\otimes$ is the product in $\mathcal{C}$ and $\mu$ is multiplication in $M$. The unit is the unit of the monoid $e : I \rightarrow M$, and there are obvious correct choices for the coherence isomorphisms. If $\mathcal{C}$ has pullbacks, these categories are fibers of the monoidal fibration $\mathcal{C} \downarrow \mathcal{U}$ over $Mon(\mathcal{C})$, where $\mathcal{U}: Mon(\mathcal{C}) \rightarrow \mathcal{C}$ is the forgetful functor.

This construction has a fibered analogue. Let $\mathcal{E}$ be a monoidal fibration over $\mathcal{B}$, and let $\mathcal{E}^{(\cdot \rightarrow \cdot)}$ be the fibration of diagrams of type $\cdot \rightarrow \cdot$ in $\mathcal{E}$. There is an obvious functor $(cod): \mathcal{E}^{(\cdot \rightarrow \cdot)} \rightarrow \mathcal{E}$ sending each arrow to its codomain. If $\mathcal{E}$ has pullbacks then this functor is a fibration -- the fibered analogue of a fundamental fibration. As before let $\mathcal{U}: Mon(\mathcal{E}) \rightarrow \mathcal{E}$ be the forgetful functor. The structure we are looking for is the pullback of $(cod)$ along $\mathcal{U}$,

\begin{center}
\begin{tikzpicture}
\matrix (m) [matrix of math nodes, row sep = 1cm, column sep = 1cm, text height = 1.5ex, text depth = .25ex] {
\mathcal{E} \downdownarrows \mathcal{U} & \mathcal{E}^{(\cdot \rightarrow \cdot)} \\
Mon(\mathcal{E}) & \mathcal{E} \\
};

\path[->] (m-1-1) edge node[auto] {} (m-1-2)
			 (m-1-1) edge node[auto, swap] {$\mathcal{U}^{\ast}(cod)$} (m-2-1)
			 (m-2-1) edge node[auto] {$\mathcal{U}$} (m-2-2)
			 (m-1-2) edge node[auto] {$(cod)$} (m-2-2);

\end{tikzpicture}
\end{center}
\noindent
which is a fibration over $Mon(\mathcal{E})$. Its fibers are precisely all the categories of the form $\mathcal{E}_{O}/M$, where $O \in \mathcal{B}$, $\mathcal{E}_{O}$ is the fiber of $\mathcal{E}$ over $O$, and $M$ is a monoid in $\mathcal{E}_{O}$. The above discussion gives us a monoidal structure on $\mathcal{E} \downdownarrows \mathcal{U}$. Concretely the  monoidal product of $A \rightarrow M$ and $B \rightarrow M$ is given by $A \otimes B \rightarrow M \otimes M \xrightarrow{\mu} M$ (as before) and the unit functor is $I(M) = I(O) \xrightarrow{e} M$, the unit of multiplication in $M$.

\section{The Three Tensors Theorem}

In this section we will construct the monoid which will do the heavy lifting in our definition of opetopic sets. It was first discovered by the authors in the context of monoidal signatures, but the abstract construction given here has several advantages. The most obvious one is generality and conceptual clarity. But the most important one is simplicity -- the original construction consisted almost entirely of checking whether one page-long term is equal to another. It was quite unreadable.

\subsection{Free Monoids in Monoidal Fibrations}
The main theorem asserts the existence of a certain extra structure on a free monoid. Its construction will use an explicit construction of this free monoid, which will be given here. The basic ideas behind this construction seem have been first stated explicitly in \cite{Adamek}. A very general account of such constructions has been given in \cite{Kelly}. We will follow the very brief and readable \cite[Appendix B]{Baues-Jibladze-Tonk}, and refer the reader there for all the calculations omitted here. The context there is a single monoidal category, but the calculations adapt to monoidal fibrations verbatim.

Let $\mathcal{E}$ be a strong monoidal fibration over $\mathcal{B}$, that is we assume $\alpha, \lambda$ and $\rho$ to be isomorphisms. We wish to construct a fibered left adjoint to the forgetful functor $\mathcal{U} : Mon(\mathcal{E}) \rightarrow \mathcal{E}$. We assume the following:
\begin{itemize}
\item [a)] $\mathcal{E}$ has fiberwise finite coproducts\footnote{Binary coproducts would suffice, but this would ruin the name of our main theorem.} and filtered colimits.
\item [b)] The monoidal product $\otimes$ preserves fibered filtered colimits in both variables, and fibered binary coproducts in the left variable.
\end{itemize}

The condition $a)$ is stable under pullback, and gives us fibered filtered colimits, by corollary \ref{fiberedcolimits}. 

Let $X \in \mathcal{E}/O$. We define 

\begin{displaymath}
\begin{array}{l}
X_{0} = I_{O} \\
X_{n+1} = I_{O} \sqcup (X \otimes X_{n}),
\end{array}
\end{displaymath}
where $I_{O}$ is the unit of $\otimes$ in the fiber over $O$ and $\sqcup$ is the coproduct. We have arrows 

\begin{displaymath}
\begin{array}{l}
i_{n}: X_{n} \rightarrow X_{n+1} \\
i_{0}: I_{O} \rightarrow I_{O} \sqcup X \textnormal{ is the coprojection} \\
i_{n+1} = 1 \sqcup (1 \otimes i_{n}).
\end{array}
\end{displaymath}
We define $X_{\infty}$, the universe of the free monoid on $X$, as the colimit of the $X_{i}$:

\begin{displaymath}
X_{\infty} = \varinjlim(X_{0} \rightarrow X_{1} \rightarrow X_{2} \rightarrow X_{3} \rightarrow \cdots)
\end{displaymath}
To define multiplication we define the morphisms $\mu_{n,m}: X_{n} \otimes X_{m} \rightarrow X_{n+m}$:

\begin{displaymath}
\mu_{0,m} = \lambda_{X_{m}} : I_{O} \otimes X_{m} \rightarrow X_{m}
\end{displaymath}
and for $n \geq 1$ we have 

\begin{displaymath}
X_{n} \otimes X_{m} \simeq (I_{O} \sqcup (X \otimes X_{n-1})) \otimes X_{m} \simeq X_{m} \sqcup (X \otimes X_{n-1}) \otimes X_{m},
\end{displaymath}
and define

\begin{displaymath}
\mu_{n, m} = (i_{m, n+m}, j_{n+m}(1 \otimes \mu_{n-1, m})) : X_{m} \sqcup X \otimes X_{n-1} \otimes X_{m} \rightarrow X_{n+m},
\end{displaymath}
where $i_{m, n+m}: X_{m} \rightarrow X_{n+m}$ is the inclusion (the composite of the appropriate $i_{k}$), and $j_{k}: X \otimes X_{k-1} \rightarrow X_{k} \simeq I \sqcup X \otimes X_{k-1}$ is the coprojection.

By the fact that $\otimes$ preserves filtered colimits, and the (easily checked) compatibility of the $\mu_{n, m}$ we may pass to the colimit $\mu: X_{\infty} \otimes X_{\infty} \rightarrow X_{\infty}$ of the maps $i_{n+m, \infty} \circ \mu_{n, m}: X_{n} \otimes X_{m} \rightarrow X_{\infty}$, where $i_{n+m, \infty}: X_{n+m} \rightarrow X_{\infty}$ is the canonical map to the colimit. We also have the unit of our monoid $\eta: I = X_{0} \rightarrow X_{\infty}$, given again by the canonical map to the colimit.

This construction is functorial in $X$. Consider a morphism $f: X \rightarrow Y$ over $u: O \rightarrow Q$ in $\mathcal{B}$. We set
\begin{displaymath}
\begin{array}{l}
f_{0} = I_{u}: X_{0} = I_{O} \rightarrow I_{Q} = Y_{0} \\
f_{n+1} = I_{u} \sqcup f \otimes_{u} f_{n-1}.
\end{array}
\end{displaymath}
Again, the (obvious) compatibility implies the existence of a morphism $f_{\infty} : X_{\infty} \rightarrow Y_{\infty}$ (we define $\sqcup$ and $f_{\infty}$ using remark \ref{fiberedcolimits}), and it can be checked that it is a monoid homomorphism over $u$, with respect to $\mu$ and $\eta$.

\begin{theorem}\label{freemonoidbjt}
If $\mathcal{E}$ has fiberwise finite coproducts and $\otimes$ preserves fibered filtered colimits in both variables and binary coproducts in the left variable, then the free monoid functor is $X \mapsto \mathcal{F}(X) = (X_{\infty}, \mu, \eta)$ on objects, and $f \mapsto f_{\infty}$ on morphisms.
\end{theorem}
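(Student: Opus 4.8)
The plan is to reduce everything to a single fibre and then invoke the classical construction of \cite[Appendix B]{Baues-Jibladze-Tonk}. First I would observe that, by hypothesis a) together with corollary \ref{fiberedcolimits}, each fibre $\mathcal{E}/O$ carries finite coproducts and filtered colimits, computed fibrewise, and that by hypothesis b) the restriction of $\otimes$ to $\mathcal{E}/O$ preserves filtered colimits in both variables and binary coproducts in the left variable. Hence each fibre satisfies exactly the hypotheses under which \cite{Baues-Jibladze-Tonk} construct the free monoid, and their construction produces precisely the object $(X_{\infty}, \mu, \eta)$ described above, together with a universal vertical arrow $\eta_{X} : X \rightarrow \mathcal{U}(X_{\infty})$ in $\mathcal{E}/O$. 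Everything that must be verified there — compatibility of the $\mu_{n,m}$ with the $i_{n}$, associativity of the colimit map $\mu$, the unit laws for $\eta$, and unique extension of a vertical map $X \rightarrow \mathcal{U}(M)$ to a monoid map $X_{\infty} \rightarrow M$ — involves only finitely many objects and arrows of a single monoidal category, so it transcribes verbatim to the fibre.

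Next I would upgrade ``free in the fibre'' to ``free in the fibration''. By lemma \ref{freedominfibersisfreedom}, a vertical arrow $X \rightarrow \mathcal{U}(M)$ with $X \in \mathcal{E}/O$ is universal from $X$ to $\mathcal{U} : Mon(\mathcal{E}) \rightarrow \mathcal{E}$ if and only if it is universal from $X$ to the restriction of $\mathcal{U}$ to $Mon(\mathcal{E})/O$, which is $Mon(\mathcal{E}/O)$ — exactly what the previous paragraph supplies. To apply the lemma one must note that $\mathcal{U}$ is a morphism of fibrations, i.e. preserves prone arrows: a prone arrow of $Mon(\mathcal{E})$ over $u$ is carried by a prone arrow of $\mathcal{E}$. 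Thus every object of $\mathcal{E}$ admits a vertical universal arrow to $\mathcal{U}$, and by the standard pointwise-adjoint argument for fibrations (vertical universal arrows automatically assemble into a prone-preserving, hence fibered, functor) these glue into a fibered left adjoint $\mathcal{F} : \mathcal{E} \rightarrow Mon(\mathcal{E})$, unique up to unique isomorphism, with $\mathcal{F}(X) = (X_{\infty}, \mu, \eta)$ on objects.

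It remains to identify $\mathcal{F}$ on morphisms with $f \mapsto f_{\infty}$. Given $f : X \rightarrow Y$ over $u : O \rightarrow Q$, the maps $f_{n} : X_{n} \rightarrow Y_{n}$ are compatible with the coprojections $i_{n}$ by an immediate induction using that $I_{(-)}$, $\sqcup$ and $\otimes$ act on arrows over $u$ compatibly; via corollary \ref{fiberedcolimits} they induce $f_{\infty} : X_{\infty} \rightarrow Y_{\infty}$ over $u$, and a further routine colimit computation shows $f_{\infty}$ commutes with the $\mu$'s and the $\eta$'s, so it is a monoid homomorphism over $u$ satisfying $\mathcal{U}(f_{\infty}) \circ \eta_{X} = \eta_{Y} \circ f$. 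By the universal property of $\eta_{X}$ this characterises $\mathcal{F}(f)$, whence $\mathcal{F}(f) = f_{\infty}$; functoriality of $f \mapsto f_{\infty}$ then follows either from that of $\mathcal{F}$ or by the same colimit bookkeeping. The only genuinely laborious point is the compatibility and associativity check for the $\mu_{n,m}$, which we are delegating to \cite{Baues-Jibladze-Tonk}; the fibrational part of the argument is entirely formal once lemma \ref{freedominfibersisfreedom} and corollary \ref{fiberedcolimits} are in hand.
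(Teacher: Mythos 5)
Your proposal is correct and follows essentially the same route as the paper: delegate the fiberwise construction and verifications to \cite[Appendix B]{Baues-Jibladze-Tonk}, then promote universality from the fibers to the whole fibration via lemma \ref{freedominfibersisfreedom}, using that the forgetful functor from monoids is a morphism of fibrations. The extra detail you supply on identifying $\mathcal{F}(f)$ with $f_{\infty}$ matches the discussion the paper gives just before the theorem statement.
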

\begin{proof}
All the calculations in \cite[Appendix B]{Baues-Jibladze-Tonk} clearly apply in each fiber, and $\eta$ is natural in the entire fibration. The universality of $\eta$ in the entire fibration follows from lemma \ref{freedominfibersisfreedom}, since the forgetful functor from monoids is always a morphism of fibrations.
\end{proof}

We will require some additional facts about the above construction. They were discovered in the course of the proof of the main theorem, but a very similar phenomenon was used in \cite[part 2]{HMP} under the name ``unique readability''. If the structures under consideration are multicategories or operads, then the free monoids consist of trees\footnote{With additional structure of course. Note also, that vertices of these trees represent operations and leaves represent inputs, and these are different parts of the structure -- we are \emph{not} dealing with ordinary graphs!} or terms. We will now see that we can ``identify'' the first vertex in these trees or function symbol in these terms.

\begin{proposition}\label{canonicalsection}
Under the assumptions of theorem \ref{freemonoidbjt} the multiplication in the free monoid $\mu : X_{\infty} \otimes X_{\infty} \rightarrow X_{\infty}$ has a vertical section $\hat{s} : X_{\infty} \rightarrow X_{\infty} \otimes X_{\infty}$, which factors as $X_{\infty} \xrightarrow{s} X_{1} \otimes X_{\infty} \xrightarrow{i \otimes 1} X_{\infty} \otimes X_{\infty}$, where $i: X_{1} \rightarrow X_{\infty}$ is the canonical map. 
\end{proposition}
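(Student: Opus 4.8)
The plan is to build $\hat{s}$ by hand as a filtered colimit of maps defined stage by stage on the explicit presentation $X_{\infty} = \varinjlim_{n} X_{n}$ of section 2.1, and then to verify the section identity $\mu \circ \hat{s} = 1_{X_{\infty}}$ by testing it against the colimit cocone. Since the whole free monoid construction takes place inside the fiber $\mathcal{E}/O$ --- an honest strong monoidal category with finite coproducts in which $\otimes$ preserves filtered colimits and left coproducts --- every map produced this way is automatically vertical, so the word ``vertical'' in the statement will need no separate argument.

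Concretely, for each $n \geq 0$ I would define a map $s_{n} \colon X_{n+1} \to X_{1} \otimes X_{n}$ out of the coproduct $X_{n+1} = I_{O} \sqcup (X \otimes X_{n})$ by giving it on the two summands. On the $I_{O}$ summand put $(i_{0} \otimes \iota_{n}) \circ \rho_{I_{O}} \colon I_{O} \to I_{O} \otimes I_{O} \to X_{1} \otimes X_{n}$, where $i_{0} \colon I_{O} \to X_{1}$ is the coprojection and $\iota_{n} \colon X_{0} = I_{O} \to X_{n}$ is the composite of the $i_{k}$'s (the ``$I_{O}$-summand'' inclusion, which also equals $1_{X_{n}}$ restricted to that summand). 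On the $X \otimes X_{n}$ summand put $\kappa \otimes 1_{X_{n}}$, where $\kappa \colon X \xrightarrow{\rho_{X}} X \otimes I_{O} = X \otimes X_{0} \xrightarrow{j_{1}} X_{1}$ picks out the second summand of $X_{1}$. The first thing to check is that the $s_{n}$ are compatible with the connecting maps, i.e. $s_{n+1} \circ i_{n+1} = (1_{X_{1}} \otimes i_{n}) \circ s_{n}$; this is immediate on each summand from $i_{n+1} = 1 \sqcup (1 \otimes i_{n})$ and from $i_{n} \circ \iota_{n} = \iota_{n+1}$. Passing to the colimit along the cofinal subdiagram $X_{1} \to X_{2} \to \cdots$ and using that $\otimes$ preserves fibered filtered colimits in the right variable yields $s \colon X_{\infty} \to X_{1} \otimes X_{\infty}$; set $\hat{s} = (i \otimes 1_{X_{\infty}}) \circ s$ with $i = i_{1,\infty}$, which is exactly the claimed factorization.

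It then remains to show $\mu \circ \hat{s} = 1_{X_{\infty}}$. Because the cocone maps $i_{n+1,\infty} \colon X_{n+1} \to X_{\infty}$ are jointly epimorphic, it is enough to check $\mu \circ \hat{s} \circ i_{n+1,\infty} = i_{n+1,\infty}$ for every $n$. Unravelling the colimit definitions --- namely $s \circ i_{n+1,\infty} = (1_{X_{1}} \otimes i_{n,\infty}) \circ s_{n}$, and $\mu \circ (i_{1,\infty} \otimes i_{n,\infty}) = i_{1+n,\infty} \circ \mu_{1,n}$, which is the very definition of $\mu$ as the colimit of the $i_{p+q,\infty} \circ \mu_{p,q}$ once one knows $X_{\infty} \otimes X_{\infty}$ is the filtered colimit of the $X_{p} \otimes X_{q}$ --- reduces the whole statement to the single fiberwise identity $\mu_{1,n} \circ s_{n} = 1_{X_{n+1}}$.

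That identity I would prove by the universal property of the coproduct $X_{n+1} = I_{O} \sqcup (X \otimes X_{n})$, computing $\mu_{1,n} \circ s_{n}$ on each summand. On the $X \otimes X_{n}$ summand, $s_{n}$ factors through the $(X \otimes X_{0}) \otimes X_{n}$ part of $X_{1} \otimes X_{n}$ via $\rho_{X} \otimes 1_{X_{n}}$, where $\mu_{1,n}$ acts as $j_{1+n} \circ (1_{X} \otimes \lambda_{X_{n}}) \circ \alpha_{X,I_{O},X_{n}}^{-1}$; the triangle identity collapses $(1_{X} \otimes \lambda_{X_{n}}) \circ \alpha_{X,I_{O},X_{n}}^{-1} \circ (\rho_{X} \otimes 1_{X_{n}})$ to the identity, leaving the coprojection $j_{n+1}$. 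On the $I_{O}$ summand, $s_{n}$ factors through the $I_{O} \otimes X_{n}$ part, where $\mu_{1,n}$ acts as $i_{n,n+1} \circ \lambda_{X_{n}}$; naturality of $\lambda$ together with the coherence fact $\lambda_{I_{O}} \circ \rho_{I_{O}} = 1_{I_{O}}$ leaves the coprojection $\iota_{n+1}$. These two components are precisely the components of $1_{X_{n+1}}$. The main obstacle --- and it is purely bookkeeping --- is this last step: keeping the coproduct-distributivity isomorphisms, the associator and the unit isomorphisms straight while matching the pieces of $\mu_{1,n}$ from the recursive formula for the free multiplication against the pieces of $s_{n}$. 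It is a two-summand miniature of the messy induction behind the three tensors theorem, and the only genuine inputs are the triangle identity and $\lambda_{I_{O}} = \rho_{I_{O}}^{-1}$; everything else (cofinality of the shifted diagram, commuting $\otimes$ past the relevant filtered colimits, joint epimorphy of a colimit cocone) is standard and already in place.
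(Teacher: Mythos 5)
Your proposal is correct and follows essentially the same route as the paper: define $s$ stagewise on the coproduct decomposition $X_{n+1}=I_O\sqcup(X\otimes X_n)$, check compatibility with the connecting maps, pass to the filtered colimit using preservation of such colimits by $\otimes$, and verify $\mu_{1,n}\circ s_n=1$ component by component (the paper suppresses the unit and associativity isomorphisms by coherence where you write them out, and indexes $s_n\colon X_n\to X_1\otimes X_{n-1}$, but these are cosmetic differences).
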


In fact the components of the map $s$ (see the proof) will be more important than either $s$ or $\hat{s}$, which are only necessary for the application of the bootstrap lemma \ref{bootstraplemma}.

\begin{proof}
We write $\mu_{1, \infty} : X_{1} \otimes X_{\infty} \rightarrow X_{\infty}$ for the colimit of $\mu_{1, m} : X_{1} \otimes X_{m} \rightarrow X_{m+1}$, from the construction above, with respect to $m$. Hence $\mu_{1, \infty} \circ 1 \otimes i_{m, \infty} = i_{m+1, \infty} \circ \mu_{1,m}$, where $i_{k, \infty}$ is the canonical map $X_{k} \rightarrow X_{\infty}$. Also $\mu_{1, \infty} = \mu \circ i_{1, \infty} \otimes 1$, as is easily seen by composing both sides on the right with $1 \otimes i_{m, \infty}$. We will construct $s: X_{\infty} \rightarrow X_{1} \otimes X_{\infty}$ such that $\mu_{1, \infty} \circ s = 1_{X_{\infty}}$, and define $\hat{s}$ via the commutative diagram

\begin{center}
\begin{tikzpicture}
\matrix (m) [matrix of math nodes, column sep = 1.5cm, row sep = 1.5cm, text height =1.5ex, text depth =0.25ex]{
X_{\infty} & X_{1} \otimes X_{\infty} & X_{\infty} \\
& X_{\infty} \otimes X_{\infty} \\
};

\path[->] (m-1-1) edge node[auto] {$s$} (m-1-2)
			 (m-1-2) edge node[auto] {$\mu_{1, \infty}$} (m-1-3)
			 (m-1-1) edge node[auto, swap] {$\hat{s}$} (m-2-2)
			 (m-2-2) edge node[auto, swap] {$\mu$} (m-1-3)
			 (m-1-2) edge node (center) {} node[above right = .1 of center] {$i_{1, \infty} \otimes 1$} (m-2-2);

\end{tikzpicture}
\end{center}
Since $\otimes$ preserves filtered colimits, it suffices to construct a compatible family of maps $s_{m}: X_{m} \rightarrow X_{1} \otimes X_{m-1}$, for $m > 0$, such that $\mu_{1,m-1} \circ s_{m} = 1_{X_{m}}$. We have

\begin{displaymath}
X_{n} = I \sqcup X \otimes X_{n-1}
\end{displaymath}
\begin{displaymath}
X_{1} \otimes X_{n-1} = (I \sqcup X \otimes I) \otimes X_{n-1} \simeq X_{n-1} \sqcup X \otimes X_{n-1},
\end{displaymath}
and define

\begin{displaymath}
s_{n} =  I \sqcup X \otimes X_{n-1} \xrightarrow{i_{0, n-1} \sqcup 1_{X \otimes X_{n-1}}} X_{n-1} \sqcup X \otimes X_{n-1}.
\end{displaymath}
In these terms $\mu_{1, n-1}$ is easily found to be

\begin{displaymath}
\mu_{1,n-1} = (i_{n-1, n}, j_{n}(1_{X} \otimes 1_{X_{n-1}})) = (i_{n-1, n}, j_{n}).
\end{displaymath}
We can now calculate $\mu_{1, n-1} \circ s_{n}$:

\begin{displaymath}
(i_{n-1, n}, j_{n}) \circ (i_{0, n-1} \sqcup 1_{X \otimes X_{n-1}}) = (i_{0, n}, j_{n}),
\end{displaymath}
which is the identity $I \sqcup X \otimes X_{n-1} \rightarrow X_{n}$.
\noindent
The compatibility condition for $s_{n}$ is implied by the stronger condition

\begin{displaymath}
1 \otimes i_{n-1} \circ s_{n} = s_{n+1} \circ i_{n}.
\end{displaymath}
Expanding the definitions, it asserts the commutativity of the square

\begin{center}
\begin{tikzpicture}
\matrix (m) [matrix of math nodes, column sep = 2cm, row sep = 1cm, text height = 1.5ex, text depth = 0.25ex] {
I \sqcup X \otimes X_{n} & X_{n} \sqcup X \otimes X_{n} \\
I \sqcup X \otimes X_{n-1} &  X_{n-1} \sqcup X \otimes X_{n-1}\\
};

\path[->] (m-1-1) edge node[auto] {$i_{0, n} \sqcup 1$} (m-1-2)
			 (m-2-1) edge node[auto] {$1 \sqcup 1 \otimes i_{n-1}$} (m-1-1)
			 (m-2-1) edge node[auto] {$i_{0, n-1} \sqcup 1$} (m-2-2)
			 (m-2-2) edge node[auto, swap] {$i_{n-1} \sqcup 1 \otimes i_{n-1}$} (m-1-2);
\end{tikzpicture}
\end{center}
which is obvious. We may therefore pass to the colimit, and conclude the proof.
\end{proof}

We will call the maps $s, \hat{s}$, constructed above, the \emph{canonical sections} of $\mu$, or \emph{unique readability morphisms}. The following technical lemma is needed in the proof of the main theorem. It asserts a kind of coherence of $s$ with respect to multiplication.

\begin{lemma}[Coherence lemma]\label{coherencelemma}
The following diagram commutes (for $n > 0$)

\begin{center}
\begin{tikzpicture}
\matrix (m) [matrix of math nodes, column sep = 1cm, row sep = 1cm, text height = 1.5ex, text depth = .25ex]{
X_{n} \otimes X_{m} & X_{n+m} & X_{1} \otimes X_{n+m-1} \\
(X_{1} \otimes X_{n-1}) \otimes X_{m} && X_{1} \otimes (X_{n-1} \otimes X_{m}) \\
};

\path[->] (m-1-1) edge node [auto] {$\mu_{n, m}$} (m-1-2)
			 (m-1-2) edge node [auto] {$s_{n+m}$} (m-1-3)
			 (m-1-1) edge node [auto, swap] {$s_{n} \otimes 1$} (m-2-1)
			 (m-2-3) edge node [auto, swap] {$1 \otimes \mu_{n-1, m}$} (m-1-3)
			 (m-2-1) edge node[auto] {$\alpha^{-1}$} (m-2-3);
\end{tikzpicture}
\end{center}

\end{lemma}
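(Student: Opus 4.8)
The plan is to break the square along the canonical coproduct decomposition of its top-left corner. Since $\otimes$ preserves binary coproducts in the left variable, $X_n \otimes X_m \cong (I \otimes X_m) \sqcup \bigl((X \otimes X_{n-1}) \otimes X_m\bigr)$, and a morphism out of a coproduct is determined by its two components; so it suffices to show that the composites $s_{n+m} \circ \mu_{n,m}$ and $(1 \otimes \mu_{n-1,m}) \circ \alpha^{-1} \circ (s_n \otimes 1)$ agree after precomposition with each of the two coprojections into $X_n \otimes X_m$. Throughout, the essential computational inputs are the recursive formulas defining $\mu_{n,m}$ and $s_n$, the compatibility $s_{k+1} \circ i_k = (1 \otimes i_{k-1}) \circ s_k$ extracted from the proof of Proposition \ref{canonicalsection}, and Mac Lane's coherence theorem for $\alpha, \lambda, \rho$.

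On the summand $(X \otimes X_{n-1}) \otimes X_m$ the verification is a direct unwinding of the recursions. By the defining formula, $\mu_{n,m}$ restricted here is $j_{n+m} \circ (1 \otimes \mu_{n-1,m}) \circ \alpha^{-1}_{X, X_{n-1}, X_m}$; and $s_{n+m}$, being $i_{0,n+m-1} \sqcup 1$ in the decomposition of $X_{n+m} = I \sqcup (X \otimes X_{n+m-1})$, carries the coprojection $j_{n+m}$ to the second coprojection $X \otimes X_{n+m-1} \hookrightarrow X_1 \otimes X_{n+m-1}$. On the other branch, $s_n$ restricted to the second summand of $X_n$ is likewise that summand's coprojection into $X_1 \otimes X_{n-1}$, so that after $\otimes 1$, $\alpha^{-1}$, and $1 \otimes \mu_{n-1,m}$ one lands, by naturality of $\alpha$ and of the coprojections, on the very same composite. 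The only nontrivial coherence point is that the associativity isomorphism hidden inside the definition of $\mu_{n,m}$ and the one displayed in the diagram fit together correctly, which is an instance of coherence.

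On the summand $I \otimes X_m$ the restriction of $\mu_{n,m}$ is $i_{m,n+m}$ (up to $\lambda$), so the top branch restricts to $s_{n+m} \circ i_{m,n+m}$; iterating $s_{k+1} \circ i_k = (1 \otimes i_{k-1}) \circ s_k$ down from $k = n+m-1$ to $k = m$ rewrites this as $(1 \otimes i_{m-1,n+m-1}) \circ s_m$ when $m \geq 1$ (the case $m = 0$ degenerates to a unit-coherence identity and is checked directly). On the bottom branch, $s_n$ restricts on $I$ to $i_{0,n-1}$ landing in the first summand of $X_1 \otimes X_{n-1}$, and then, using naturality of $\alpha$ and $\lambda$ and the triangle identity, the composite collapses to the first coprojection $X_{n+m-1} \hookrightarrow X_1 \otimes X_{n+m-1}$ precomposed with $\mu_{n-1,m} \circ (i_{0,n-1} \otimes 1)$; since $\mu_{n-1,m}$ restricted to its own first summand is again $i_{m,n+m-1}$, this is the first coprojection precomposed with $i_{m,n+m-1}$. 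Matching the two branches then comes down to a single identity between maps $X_m \to X_1 \otimes X_{n+m-1}$, which one settles by a further decomposition of $X_m = I \sqcup (X \otimes X_{m-1})$ and one more appeal to $s_m = i_{0,m-1} \sqcup 1$ together with the $i$-relations.

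I expect the first-summand verification to be the main obstacle: unlike the second summand, which is mechanical once the recursions are expanded, it requires keeping careful track of which coprojection is which through the chain of $\alpha$, $\lambda$, $\rho$ isomorphisms and invoking the $s$–$i$ compatibility at precisely the right steps. An alternative organization that may streamline the bookkeeping is to run a single induction on $n$, with base case $n = 1$ reducing entirely to coherence and the inductive step feeding the $n-1$ instance of the lemma into the second-summand computation.
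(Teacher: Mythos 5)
Your strategy --- split $X_{n} \otimes X_{m}$ along the coproduct decomposition of $X_{n}$ and check the two components separately --- is the same as the paper's, and your treatment of the summand $(X \otimes X_{n-1}) \otimes X_{m}$ is correct: both composites reduce (modulo coherence) to $1 \otimes \mu_{n-1,m}$ followed by the second coprojection into $X_{1} \otimes X_{n+m-1} \cong X_{n+m-1} \sqcup X \otimes X_{n+m-1}$. The gap is exactly at the step you flag as the main obstacle and propose to ``settle by a further decomposition of $X_{m}$''. You correctly reduce the top branch on $I \otimes X_{m} \cong X_{m}$ to $(1 \otimes i_{m-1,n+m-1}) \circ s_{m}$ and the bottom branch to $\iota_{1} \circ i_{m,n+m-1}$, where $\iota_{1} \colon X_{n+m-1} \to X_{1} \otimes X_{n+m-1}$ is the first coprojection. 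But the resulting identity is false: on the cofactor $X \otimes X_{m-1}$ of $X_{m}$ the map $s_{m}$ is the identity into the \emph{second} cofactor, so $(1 \otimes i_{m-1,n+m-1}) \circ s_{m}$ lands in $X \otimes X_{n+m-1}$, whereas $\iota_{1} \circ i_{m,n+m-1}$ lands in $X_{n+m-1}$. Two maps factoring through distinct coprojections of a coproduct cannot coincide unless their common domain is initial, so no amount of further decomposition will close this.

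Indeed the square already fails for the free monoid on one generator in $(\mathbf{Set}, \times)$: take $n = m = 1$ and the element $(\epsilon, x) \in X_{1} \otimes X_{1}$, with $\epsilon$ the empty word. The top path gives $s_{2}(\mu_{1,1}(\epsilon, x)) = s_{2}(x) = (x, \epsilon)$, while the bottom path gives $(1 \otimes \mu_{0,1})(\alpha^{-1}((s_{1}(\epsilon), x))) = (\epsilon, x)$. So the lemma as stated is false on the component $I \otimes X_{m}$; it is true on the other component, and the two paths do agree after post-composition with $\mu_{1,n+m-1}$ (both become $\mu_{n,m}$, by the section property of $s$ and associativity of $\mu$), which is the most one can salvage. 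You should know that the paper's own proof asserts, at precisely this point, the identity $i_{0,n+m-1} \sqcup 1 \otimes i_{m-1,n+m-1} = i_{m,n+m-1}$, which is the same false claim; your proposal faithfully reproduces the published argument, including its flaw. A correct write-up must either restrict the statement to the summand $(X \otimes X_{n-1}) \otimes X_{m}$ or weaken it to commutativity after composing with $\mu_{1,n+m-1}$, and then verify separately that the weakened form suffices where the lemma is invoked (region III of the main-diagram verification).
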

\begin{proof}
We use the coherence theorem to ignore the coherence isomorphisms. Since $X_{n} \otimes X_{m}$ is the coproduct $I \otimes X_{m} \sqcup X \otimes X_{n-1} \otimes X_{m}$ it suffices to check the commutativity on each factor. Since any $s_{k}$ is the identity on the second factor it is easy to see that both second factors are $j \circ (1 \otimes \mu_{n-1,m})$, where $j$ is the coprojection as in the construction of $\mu$.

The second factor can be calculated as follows. The lower way is straightforward. It is $\mu_{n-1, m} \circ i_{0, n-1} \otimes 1$, which is (by the unit laws for $\mu$) $i_{m, n+m-1} : W_{m} \rightarrow W_{n+m-1}$. The upper way unfortunately mixes the components, so we must unwind one more level of definition. The relevant component of $\mu_{n,m}$ is $i_{m, n+m}$, which is $1 \sqcup 1 \otimes i_{m-1, n+m-1}$. Composing it with $s_{n+m} = (i_{0,n+m-1}, 1_{X \otimes X_{n+m-1}})$ yields $i_{0, n+m-1} \sqcup 1 \otimes i_{m-1, n+m-1} = i_{m, n+m-1}$, as required.

\end{proof}

\begin{remark}
From now on we will occasionally abuse notation and write $i_{k}$ for any of the maps $i_{k,l}$. The codomain will always be clear form context.
\end{remark}

\subsection{Distributivity of Monoidal Structures}
For any category $\mathcal{C}$ the category of endofunctors $End(\mathcal{C})$ is strict monoidal under composition of functors. The monoidal structure is composition in diagrammatic order (that is $(x) f \circ g$ means ,,first apply $f$ to $x$, then $g$ to (x) f'', but only if $f$ and $g$ are objects of $End(\mathcal{C})$). If in addition $\mathcal{C}$ was itself monoidal, we obtain functors $\mathcal{C} \rightarrow End(\mathcal{C})$ which send each $X \in \mathcal{C}$ to either $X \otimes (-)$ or $(-) \otimes X$. We will always be interested in the latter functor, which we will denote by $R$. Interestingly these functors are always monoidal in a natural way. Namely we have

\begin{eqnarray*}
(A) (-) \otimes (X \otimes Y) & = & A \otimes (X \otimes Y) \\
(A) (-) \otimes (X) \circ (-) \otimes Y & = & (A \otimes X) \otimes Y,
\end{eqnarray*}
and a natural isomorphism between these two is given by $\alpha_{A, X, Y}^{-1}$. The unit isomorphism is given by the appropriate components of $\rho$. The coherence diagrams for this monoidal functor are the defining coherence diagrams of a monoidal category (as in \cite{MacLane}), with some morphisms replaced by their inverses.

If $\odot$ is yet another monoidal structure on $\mathcal{C}$, then we can also define a monoidal category $End_{\odot}(\mathcal{C})$ of strong $\odot$-monoidal endomorphisms of $\mathcal{C}$, as follows. The identity functor $1_{\mathcal{C}}$ has an obvious monoidal structure, and will serve as a unit. The monoidal structure is composition of monoidal functors. It is easy to see that the horizontal composite of monoidal transformations is again monoidal, and so we can take as arrows the monoidal natural transformations. This category is of course still strict monoidal.

There is an obvious strict monoidal functor $U : End_{\odot}(\mathcal{C}) \rightarrow End(\mathcal{C})$, which forgets the additional data.

\paragraph{Definition.} Let $\mathcal{C}$ be a category, and suppose we are given  two strong monoidal structures on $\mathcal{C}$, denoted $(\odot, I_{\odot}, \alpha^{\odot}, \lambda^{\odot}, \rho^{\odot})$ and $(\otimes, I_{\otimes}, \alpha^{\otimes}, \lambda^{\otimes}, \rho^{\otimes})$. As above, let $R$ denote the functor $X \mapsto (-) \otimes X$. A distributivity structure of $\otimes$ over $\odot$ is given by a lift of $R$ to $End_{\odot}(\mathcal{C})$ along $U$, as a monoidal functor:

\begin{center}
\begin{tikzpicture}
\matrix (m) [matrix of math nodes, column sep = 2cm, row sep = 1cm, text height = 1.5ex, text depth = 0.25ex] {
& End_{\odot}(\mathcal{C}) \\
\mathcal{C} & End(\mathcal{C}) \\
};

\path[->] (m-2-1) edge node[auto, swap] {$R$} (m-2-2)
			 (m-1-2) edge node[auto] {$U$} (m-2-2);
\path[->, dashed] (m-2-1) edge node[auto] {$\tilde{R}$} (m-1-2); 

\end{tikzpicture}
\end{center}
which means that we require $R = U \circ \tilde{R}$ as monoidal functors.

We can unravel this definition and state it explicitly as extra data and properties for $\mathcal{C}$. First, every functor $R(X) = (-) \otimes X$ becomes $\odot$-monoidal. This gives us isomorphisms

\begin{displaymath}
\varphi_{A, B, X} : (A \otimes X) \odot (B \otimes X) \rightarrow (A \odot B) \otimes X 
\end{displaymath}
\begin{displaymath}
\psi_{X} : I_{\odot} \rightarrow I_{\odot} \otimes X
\end{displaymath}
which make $R(X)$ into a $\odot$-monoidal functor (which is $\tilde{R}(X)$). Of course for every morphism $f$ in $\mathcal{C}$, the natural transformation $R(f) = (-) \otimes f$ is required to be $\odot$-monoidal (giving $\tilde{R}(f)$). Second, since we require equality of $R$ and $U \circ \tilde{R}$ as monoidal functors, we see that the isomorphisms $(\alpha^{\otimes})^{-1} : \tilde{R}(X) \circ \tilde{R}(Y) \rightarrow \tilde{R}(X \otimes Y)$ and $\rho^{\otimes}: 1_{\mathcal{C}} \rightarrow \tilde{R}(I_{\otimes})$ giving the monoidal structure of $\tilde{R}$ become $\odot$-monoidal natural transformations. These properties are written as diagrams in appendix \ref{appendixa}. Conversely, natural transformations\footnote{Note that at this point it is not clear that $\varphi$ and $\psi$ are natural in $X$. This is demonstrated in the appendix.} $\varphi_{A, B, X}$ and $\psi_{X}$ subject to the coherence diagrams in the appendix determine a unique distributivity structure.

\begin{remark}
We will never consider more than one distributivity structure at a time, so we will abuse language and say that ``$\otimes$ distributes over $\odot$'', as if this were a property, and keep all the structure implicit. We will never change the notation for the elements of a distributivity structure introduced above.
\end{remark}

\begin{remark}
There are as many variations of this definition as there are versions of $End_{\odot}(\mathcal{C})$. We could use lax functors, opmonoidal ones, or lift the functor $L(X) = X \otimes (-)$. The choice is dictated by the application. For example, the main theorem is still true if lift $R$ to \emph{left-unital} monoidal functors, that is those which preserve only $\lambda^{\odot}$ but not necessarily $\rho^{\odot}$.
\end{remark}

\paragraph{Fibered distributivity.} There is no difficulty in stating the fibered equivalent of this definition -- simply replace endofunctor categories with exponential fibrations. We will be working with bifibrations, so the theory of \cite[section 4]{Zawadowski} can be applied. For this reason we limit the definition of a distributivity structure to the case of bifibrations. Since none of our monoidal structures will be morphisms of fibrations, we must use exponential fibrations computed in the category of fibrations and fibered functors, not morphisms of fibrations. This does not change the diagrammatic form of the definition, but only the fact that $\varphi$ and $\psi$ become fibered natural.

Observe that a distributivity structure on a fibration restricts to a distributivity structure on each fiber.

\paragraph{Example.} Let $(\mathcal{C}, \otimes)$ be a monoidal category with finite coproducts, and suppose that $\otimes$ preserves them in the left variable. Then the natural maps
\begin{eqnarray*}
A\otimes X \sqcup B \otimes X & \rightarrow & (A \sqcup B) \otimes X \\
0 & \rightarrow & 0 \otimes X
\end{eqnarray*}
are isomorphisms which define a distributivity structure of $\otimes$ over $\sqcup$. All the conditions are satisfied because of universality.

The preservation of products also defines a distributivity structure, but the directions of the natural arrows are opmonoidal rather than monoidal.

\subsection{The Main Theorem}
We say that a category (or fibration) $\mathcal{C}$ \emph{admits the free monoid construction for $\otimes$} if the assumptions of theorem \ref{freemonoidbjt} are true for $(\mathcal{C}, \otimes)$. All the categories and fibrations under consideration will have three monoidal structures -- two arbitrary ones, and the coproduct.

\begin{theorem}[The Three Tensors Theorem]\label{threetensorstheorem}
If $\mathcal{C}$ admits the free monoid constrution for $\odot$ and $\otimes$, and $\otimes$ distributes over $\odot$, then there is a unique $\otimes$-monoid structure on $\mathcal{F}_{\odot}(I_{\otimes})$, the free $\odot$-monoid on the $\otimes$-unit, such that the unit of the adjunction $\mathcal{F}_{\odot} \dashv \mathcal{U}_{\odot}$, $\eta_{I_{\otimes}} : I_{\otimes} \rightarrow \mathcal{F}_{\odot}(I_{\otimes})$ is the unit of the multiplication $\nu : \mathcal{F}_{\odot}(I_{\otimes}) \otimes \mathcal{F}_{\odot}(I_{\otimes}) \rightarrow \mathcal{F}_{\odot}(I_{\otimes})$, which in turn makes the following \emph{main diagram} commute (we abbreviate $\mathcal{F}_{\odot}(I_{\otimes})$ to $\mathcal{W}$):

\begin{center}
\begin{tikzpicture}
\matrix (m) [matrix of math nodes, column sep = 2cm, row sep = 1cm, text height = 1.5ex, text depth = .25ex]{
(\mathcal{W} \otimes \mathcal{W}) \odot (\mathcal{W} \otimes \mathcal{W}) & \mathcal{W} \odot \mathcal{W} \\
(\mathcal{W} \odot \mathcal{W}) \otimes \mathcal{W} & & \\
\mathcal{W} \otimes \mathcal{W} & \mathcal{W}\\
};

\path[->] (m-1-1) edge node[auto] {$\nu \odot \nu$} (m-1-2)
			(m-1-1) edge node[auto, swap] {$\varphi_{\mathcal{W},\mathcal{W},\mathcal{W}}$} (m-2-1)
			(m-2-1) edge node[auto, swap] {$\mu \otimes 1_{\mathcal{W}}$} (m-3-1)
			(m-1-2) edge node[auto] {$\mu$} (m-3-2)
			(m-3-1) edge node[auto] {$\nu$} (m-3-2);
\end{tikzpicture}
\end{center}
In the above diagram $\mu$ is the free multiplication in $\mathcal{F}_{\odot}(I_{\otimes})$.
\end{theorem}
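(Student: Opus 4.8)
The plan is to build $\nu$ by an explicit recursion from the presentation of the free monoid in Theorem~\ref{freemonoidbjt}, verify the $\otimes$-monoid axioms and the main diagram by induction over that presentation, and deduce uniqueness from the main diagram together with the unique-readability maps of Proposition~\ref{canonicalsection}. First I fix notation by running the construction of Section~2.1 with the monoidal structure $\odot$ and the object $X = I_\otimes$: $\mathcal{W} = \mathcal{F}_\odot(I_\otimes) = \varinjlim_n W_n$ with $W_0 = I_\odot$, $W_{n+1} = I_\odot \sqcup (I_\otimes \odot W_n)$, and $\mu$ the free $\odot$-multiplication. Since $\otimes$ preserves fibered filtered colimits in the left variable, $\mathcal{W}\otimes\mathcal{W} = \varinjlim_n(W_n\otimes\mathcal{W})$, so it suffices to give a compatible cocone $\nu_n : W_n\otimes\mathcal{W}\to\mathcal{W}$. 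The educated guess is the recursion which, using preservation of binary coproducts in the left variable together with the isomorphisms $\psi,\varphi,\lambda^\otimes$ of the distributivity structure, rewrites
\begin{displaymath}
W_n\otimes\mathcal{W}\;\cong\;(I_\odot\otimes\mathcal{W})\;\sqcup\;\bigl((I_\otimes\odot W_{n-1})\otimes\mathcal{W}\bigr)\;\xrightarrow{\;\psi^{-1}\,\sqcup\,\varphi^{-1}\;}\;I_\odot\;\sqcup\;\bigl((I_\otimes\otimes\mathcal{W})\odot(W_{n-1}\otimes\mathcal{W})\bigr)
\end{displaymath}
and then sends the first summand by the coprojection $I_\odot = W_0\hookrightarrow\mathcal{W}$ and the second by $\lambda^\otimes_{\mathcal{W}}\odot\nu_{n-1}$ followed by $\mu$. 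In words: $\nu$ strips the leading $\odot$-factor $I_\otimes$ off a word, cancels it against the other argument via $\lambda^\otimes$, multiplies the tail recursively, and reassembles with the free multiplication $\mu$.

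Next I check that the $\nu_n$ form a cocone, $\nu_n\circ(i_{n-1}\otimes 1)=\nu_{n-1}$, by an easy induction comparing coproduct components in the style of the compatibility step of Proposition~\ref{canonicalsection}, and set $\nu=\varinjlim_n\nu_n$. The left unit law $\nu\circ(\eta_{I_\otimes}\otimes 1)=\lambda^\otimes$ is then immediate, since $\eta_{I_\otimes}$ factors through the $I_\otimes$-summand of $W_1$, on which $\nu_1$ is $\lambda^\otimes_{\mathcal{W}}$; the right unit law is proved by induction on $n$ for the restrictions $\nu_n\circ(1_{W_n}\otimes\eta_{I_\otimes})$, using the unit laws for $\mu$ and the coherence between $\psi$, $\lambda^\otimes$ and $\rho^\otimes$ recorded in Appendix~\ref{appendixa}.

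The step I expect to be the main obstacle is associativity, $\nu\circ(\nu\otimes 1_{\mathcal{W}})=\nu\circ(1_{\mathcal{W}}\otimes\nu)\circ(\alpha^\otimes)^{-1}$, which I would prove by induction on the left index of $W_n\otimes\mathcal{W}\otimes\mathcal{W}$: the $I_\odot$-summand is handled by compatibility of $\psi$ with $(\alpha^\otimes)^{-1}$, and on the $I_\otimes\odot W_{n-1}$-summand one expands $\nu$ on the outer factors by the recursion, transports everything across $\varphi$ using the coherence diagrams of the distributivity structure (Appendix~\ref{appendixa}), and is left needing exactly the associativity of $\mu$ together with the inductive hypothesis for $W_{n-1}$ --- the bookkeeping of associators being the source of the mess, and where the coherence theorem and Lemma~\ref{coherencelemma} are used to suppress them. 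The main diagram then follows from a parallel but shorter induction on the $W_n$: restricting its lower-left copy of $\mathcal{W}$ to $W_n$ and unwinding $\varphi_{\mathcal{W},\mathcal{W},\mathcal{W}}$ turns it into the defining recursion for $\nu_n$ modulo associativity of $\mu$.

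For uniqueness, the point is that the main diagram says precisely that $\nu$ is a morphism of $\odot$-semigroups from $\mathcal{W}\otimes\mathcal{W}$ --- carrying the multiplication transported from $(\mathcal{W},\mu)$ along the $\odot$-monoidal functor $(-)\otimes\mathcal{W}$ supplied by the distributivity structure --- to $(\mathcal{W},\mu)$. Since $(-)\otimes\mathcal{W}$ preserves the colimits and coherences used in the free-monoid construction, this source is the free $\odot$-monoid on $I_\otimes\otimes\mathcal{W}\cong\mathcal{W}$; so, upgrading ``$\odot$-semigroup map'' to ``$\odot$-monoid map'' --- which is where the canonical sections of Proposition~\ref{canonicalsection} and the bootstrap lemma~\ref{bootstraplemma} enter, detecting that the relevant idempotent is trivial --- $\nu$ is determined by its restriction along the universal map, which the left unit law fixes as the identity of $\mathcal{W}$. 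Finally, every colimit, coherence isomorphism and instance of the free-monoid construction used above is fibered over $\mathcal{B}$ by Section~2.1, and the distributivity structure restricts to each fiber, so the whole argument is at once fibered and fiberwise and needs no separate treatment.
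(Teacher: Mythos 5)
Your construction of $\nu$ is the paper's construction: the recursion you write down, after identifying $W_1\otimes\mathcal{W}\cong (I_\odot\otimes\mathcal{W})\sqcup(I_\otimes\otimes\mathcal{W})$, is exactly $\nu_n=\mu\circ(\nu_1\odot\nu_{n-1})\circ\varphi^{-1}\circ(s_n\otimes 1)$ with $\nu_1=(i_0\psi_{\mathcal{W}}^{-1},\lambda^\otimes_{\mathcal{W}})$, where $s_n$ is the unique-readability map of Proposition \ref{canonicalsection}; the unit laws are verified the same way, and uniqueness ultimately rests on the same two facts (the bootstrap lemma applied to the main diagram via $\hat s$, and the determination of $\nu_1$ by the unit conditions). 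Two points of genuine divergence are worth recording. First, your uniqueness argument is a conceptual repackaging: rather than reading off the components $\nu_n$ one by one, you observe that the main diagram exhibits $\nu$ as a $\odot$-homomorphism out of $\mathcal{W}\otimes\mathcal{W}\cong\mathcal{F}_\odot(I_\otimes\otimes\mathcal{W})$ and invoke the universal property. This works, and it is exactly how the paper argues later (corollary \ref{propertiesofthepullbackaction} and the end of section \ref{comparisonsection}), but it is not free: you must prove that $(-)\otimes\mathcal{W}$ carries the free-monoid construction to the free-monoid construction \emph{as monoids} (the paper explicitly notes that doing this for $\otimes$ rather than for $\star$ costs ``several more pages and a theorem or two''), and you must separately show $\nu$ preserves the $\odot$-unit, which needs the right unit condition (or your ``idempotent'' argument made precise), since the main diagram only gives a semigroup map. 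Second, and this is the one place your sketch would actually jam: you propose to prove associativity of $\nu$ \emph{before} the main diagram, claiming the induction needs only associativity of $\mu$ and the inductive hypothesis. But expanding the outer $\nu$ in $\nu\circ(\nu_n\otimes 1)$ via the recursion produces a composite $\nu\circ(\mu\otimes 1)$ that can only be simplified by the main diagram (in its finite-stage form on $\mathcal{W}_n\odot\mathcal{W}_m$), and you also need to control where $\nu_n\circ(1\otimes i_m)$ lands in the filtration (the paper's lemma \ref{numnfactorsthroughinm}). The fix is only a reordering --- prove the main diagram first, as its own induction does not use associativity of $\nu$ --- but as written the logical order is backwards.
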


The resulting monoid $(\mathcal{W}, \nu, \eta)$ is called the \emph{web monoid}, because in our applications its elements look like webs. The proof will demonstrate slightly more -- the multiplication is determined by the unit conditions and the main diagram, and its associativity follows from the construction. Obviously $\mathcal{F}_{\odot}(I_{\otimes})$ exists by the assumptions of the theorem, and is given by the construction of theorem \ref{freemonoidbjt}.

The main diagram states that $\nu$ and $\mu$ commute. We need the distributivity structure to explain how we can apply $\mu$ and $\nu$ to a single object in both orders (for more on this point see the next to last paragraph in section \ref{comparisonsection}). This property is analogous to \cite[part II, lemma 4]{HMP}, which states, in a restricted case, that multiplication in the free monoid commutes with function replacement.

We will actually need the fibered version of the above theorem, which asserts that in a fibered context the formation of $\mathcal{W}$ can be turned into a functor.

\begin{theorem}[Fibered Three Tensors Theorem]\label{fiberedttt}
If in the assumptions of theorem \ref{threetensorstheorem} each concept is replaced with its fibered analogue, then the conclusion is the existence of a unique $\otimes$-monoid structure on the functor $\mathcal{F}_{\odot}(I_{\otimes}(-))$ (of the base category) whose unit is the unit of the fibered free $\odot$-monoid adjunction and whose multiplication makes the main diagram commute.
\end{theorem}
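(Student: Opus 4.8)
The plan is to deduce the fibered statement from Theorem~\ref{threetensorstheorem} applied in each fibre, and then to observe that the resulting data is automatically fibered natural over $\mathcal{B}$, so that it organizes into a $\otimes$-monoid in the category of sections of $\mathcal{C} \to \mathcal{B}$. First I would check that the hypotheses of Theorem~\ref{threetensorstheorem} hold in every fibre $\mathcal{C}/O$: the two monoidal structures restrict to $\mathcal{C}/O$, the distributivity structure restricts to $\mathcal{C}/O$ (as already noted after the fibered definition), and the free monoid constructions for $\odot$ and for $\otimes$ exist in $\mathcal{C}/O$ because conditions $a)$ and $b)$ of Theorem~\ref{freemonoidbjt} are fiberwise (and stable under pullback). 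Thus for each $O \in \mathcal{B}$ we obtain a $\otimes$-monoid $(\mathcal{W}(O), \nu_O, \eta_O)$ on $\mathcal{W}(O) = \mathcal{F}_\odot(I_\otimes(O))$, unique with unit $\eta_{I_\otimes(O)}$ and making the main diagram commute in $\mathcal{C}/O$. Here $\mathcal{W}(-) = \mathcal{U}_\odot\mathcal{F}_\odot I_\otimes(-) : \mathcal{B} \to \mathcal{C}$ is a section, since $\mathcal{F}_\odot$ is a fibered functor by Theorem~\ref{freemonoidbjt}.

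The second step is functoriality over $\mathcal{B}$. I would trace through the construction of $\nu$ given in the proof of Theorem~\ref{threetensorstheorem}: it is an ``educated guess'', a composite assembled from the free multiplication $\mu$ in $\mathcal{F}_\odot(I_\otimes)$, the distributivity isomorphisms $\varphi$ and $\psi$, the monoidal coherence isomorphisms, and the colimit structure maps of the free-monoid towers $X_n$. In the fibered setting each of these ingredients is a fibered natural transformation --- $\mu$ by the functoriality of $\mathcal{F}$ in Theorem~\ref{freemonoidbjt}; $\varphi$ and $\psi$ by the very definition of a fibered distributivity structure (computed, as the excerpt insists, in exponential fibrations of fibered functors); the coherence isomorphisms because $\mathcal{E}$ is a lax monoidal fibration; and the tower maps because the relevant colimits are fibered colimits, functorial on the fibration of diagrams by Corollary~\ref{fiberedcolimits}. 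Since the operations used to combine them ($\otimes$, $\odot$, $\sqcup$, and composition) are all fibered functors, the same formula produces a fibered natural transformation $\nu : \mathcal{W} \otimes \mathcal{W} \to \mathcal{W}$ over $\mathcal{B}$ whose component over $O$ is $\nu_O$; likewise $\eta : I_\otimes \to \mathcal{W}$, whiskering the unit of the fibered $\mathcal{F}_\odot \dashv \mathcal{U}_\odot$ adjunction with the section $I_\otimes$, is fibered natural. The main diagram and the unit axioms are then equalities of fibered natural transformations, and they hold because they hold in every fibre. This is exactly the pattern used to upgrade Theorem~\ref{freemonoidbjt} and Proposition~\ref{canonicalsection} from a single monoidal category to a monoidal fibration.

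Uniqueness is immediate from the fibered case reducing to fibres: any $\otimes$-monoid structure on $\mathcal{W}(-)$ with unit the unit of the fibered free $\odot$-monoid adjunction and satisfying the main diagram restricts, over each $O$, to such a structure on $\mathcal{W}(O)$ in $\mathcal{C}/O$, hence equals $\nu_O$ by Theorem~\ref{threetensorstheorem}; so the two fibered structures agree componentwise, hence agree.

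The main obstacle is not conceptual but the coherence bookkeeping hidden in the second step: one must be sure that the fibered distributivity structure, packaged via exponential fibrations in the $2$-category of fibrations and fibered functors (not morphisms of fibrations, since $\otimes$ and $\odot$ are not morphisms of fibrations in our examples), really delivers $\varphi$ and $\psi$ as fibered natural transformations compatible with the fibered tensors, and that the free-monoid towers and their colimits are formed fiberwise and functorially, so that the guessed composite defining $\nu$ is genuinely a morphism of sections. None of this adds content beyond the proof of Theorem~\ref{threetensorstheorem}; it is only where care is needed. As an alternative to the ingredient-by-ingredient argument, naturality of $\nu$ can also be obtained purely from the uniqueness clause of Theorem~\ref{threetensorstheorem} by factoring each $\mathcal{W}(u)$ through a prone morphism and comparing the induced structures, but since the explicit construction is already in hand the direct route is cleaner.
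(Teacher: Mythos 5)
Your proposal is correct and follows essentially the same route as the paper: reduce existence and uniqueness to each fibre via the unfibered three tensors theorem, then verify that the morphisms $\mathcal{W}(u)$ are $\nu$-homomorphisms (i.e.\ that $\nu$ is fibered natural) because every ingredient of the construction --- $\mu$, $\eta$, $\varphi$, $\psi$, the coherence isomorphisms, the tower maps and the sections $s_{n}$ --- is preserved across fibres. The ``coherence bookkeeping'' you flag is exactly what the paper's appendix carries out, by induction on $n$ using a functoriality lemma for $s_{n}$ and corollary \ref{fiberedcolimits} to pass to the colimit.
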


The free monoid construction is, by corollary \ref{fiberedcolimits}, stable under pullback. Likewise, a fibered distributivity structure restricts to an ordinary distributivity structure in each fiber. Thus the functor $\mathcal{W}$ takes each $O \in \mathcal{B}$ to the web monoid in the fiber over $O$. The only new assertion of the fibered version is that the morphisms $\mathcal{W}(u) = \mathcal{F}_{\odot}(I_{\otimes}(u))$, for $u: O \rightarrow Q \in \mathcal{B}$ are homomorphisms with respect to the new multiplication.

\subsection{Essential Steps in The Proof of The Main Theorem}
We must check that $\nu$ is unique, construct it, and verify the conditions of the theorem. We will do the first two of these steps here, and carry out the remaining calculations in the appendix. These steps contain the key, and only idea of the proof.

As $\otimes$ preserves filtered colimits we need only determine compatible components $\nu_{n} : \mathcal{W}_{n} \otimes \mathcal{W} \rightarrow \mathcal{W}$, where $\mathcal{W}_{n}$ is the n-th stage of the construction of $\mathcal{W} = \mathcal{F}_{\odot}(I_{\otimes})$ from theorem \ref{freemonoidbjt}. We will prove that these components are uniquely defined by the conditions of the theorem and define $\nu$ using these components. We begin with a lemma.

\begin{lemma}[Bootstrap lemma]\label{bootstraplemma}
If the following diagram commutes,
\begin{center}
\begin{tikzpicture}
\matrix (m) [matrix of math nodes,column sep = 2cm, row sep = .7cm, text height = 1.5ex, text depth = .25ex] {
A & B \\
C \\
D & E \\
};

\path[->] (m-1-1) edge node[auto] {$f$} (m-1-2)
			 (m-1-2) edge node[auto] {$g$} (m-3-2)
			 (m-1-1) edge node[auto, swap] {$h$} (m-2-1)
			 (m-2-1) edge node[auto, swap] {$k$} (m-3-1)
			 (m-3-1) edge node[auto] {$l$} (m-3-2);

\end{tikzpicture}
\end{center}
where $h$ is an isomorphism and $k$ has a section $s$, then
\begin{displaymath}
l = g \circ f \circ h^{-1} \circ s
\end{displaymath}
In addition, if $k$ is an isomorphism, then the diagram commutes if and only if the above equation holds.
\end{lemma}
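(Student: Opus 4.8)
The plan is a direct diagram chase; there is essentially nothing to the lemma beyond keeping track of the order of composition, so I would simply read the formula off the square.

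First I would transcribe the commutativity hypothesis as the single equation $g \circ f = l \circ k \circ h$, obtained by comparing the two paths from $A$ to $E$. Since $h$ is an isomorphism I can precompose on the right with $h^{-1}$ to obtain $l \circ k = g \circ f \circ h^{-1}$. Now I use that $s$ is a section of $k$, i.e.\ $k \circ s = 1$: precomposing the previous equation with $s$ collapses the left-hand side to $l \circ k \circ s = l$, while the right-hand side becomes $g \circ f \circ h^{-1} \circ s$. This is exactly the asserted identity $l = g \circ f \circ h^{-1} \circ s$.

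For the second clause, note that if $k$ is an isomorphism then any section $s$ of $k$ is automatically $k^{-1}$, so the formula reads $l = g \circ f \circ h^{-1} \circ k^{-1}$. The forward implication is the computation just given. For the converse I would start from this formula and precompose on the right with $k \circ h$: this gives $l \circ k \circ h = g \circ f \circ h^{-1} \circ k^{-1} \circ k \circ h = g \circ f$, which is precisely commutativity of the square. Both directions therefore hold.

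The only place where any attention is needed is the convention for the direction in which composites are written and the fact that a section satisfies $k \circ s = 1$ rather than $s \circ k = 1$; once these are pinned down, every step is forced. I do not expect any genuine obstacle here.
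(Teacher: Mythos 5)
Your proof is correct and is exactly the routine diagram chase the paper has in mind when it declares the proof trivial: read off $g \circ f = l \circ k \circ h$, cancel $h$ by its inverse, and cancel $k$ by its section (which is $k^{-1}$ in the converse direction). Nothing further is needed.
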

The proof is trivial. Applying this lemma to the main diagram and the canonical section $\hat{s}$ of the free multiplication $\mu : \mathcal{W} \odot \mathcal{W} \rightarrow \mathcal{W}$ we obtain the equation:

\begin{displaymath}
\nu = \mu \circ (\nu \odot \nu) \circ \varphi^{-1} \circ \hat{s}
\end{displaymath}
which must be satisfied by $\nu$, but is unhelpful until we precompose it with $i_{n, \infty} \otimes 1: \mathcal{W}_{n} \otimes \mathcal{W} \rightarrow \mathcal{W} \otimes \mathcal{W}$, and obtain the following diagram:

\begin{center}
\begin{tikzpicture}
\matrix (m) [matrix of math nodes,column sep = 1.2cm, row sep = 1.2cm, text height = 1.5ex, text depth = .25ex]{
(\mathcal{W}_{1} \otimes \mathcal{W}) \odot (\mathcal{W}_{n-1} \otimes \mathcal{W}) & (\mathcal{W} \otimes \mathcal{W}) \odot (\mathcal{W} \otimes \mathcal{W}) & \mathcal{W} \odot \mathcal{W} \\
(\mathcal{W}_{1} \odot \mathcal{W}_{n-1}) \otimes \mathcal{W} & (\mathcal{W} \odot \mathcal{W}) \otimes \mathcal{W} & & \\
\mathcal{W}_{n} \otimes \mathcal{W} & \mathcal{W} \otimes \mathcal{W} & \mathcal{W}\\
};

\path[->] (m-1-2) edge node[auto,swap] {$\nu \odot \nu$} (m-1-3)
			 (m-2-2) edge node[auto, swap] {$\varphi^{-1}$} (m-1-2)
			 (m-3-2) edge node[auto, swap] {$\hat{s} \otimes 1$} (m-2-2)
			 (m-1-3) edge node[auto] {$\mu$} (m-3-3)
			 (m-3-2) edge node[auto] {$\nu$} (m-3-3)
			 (m-1-1) edge node[auto] {} (m-1-2)
			 (m-2-1) edge node[auto] {$\varphi^{-1}$} (m-1-1)
			 (m-3-1) edge node[auto] {$s_{n} \otimes 1$} (m-2-1)
			 (m-3-1) edge node[auto] {$i_{n} \otimes 1$} (m-3-2)
			 (m-2-1) edge node[auto] {$(i_{1} \odot i_{n-1}) \otimes 1$} (m-2-2)
			 (m-1-1) edge[in=160, out=20] node[auto] {$\nu_{1} \odot \nu_{n-1}$} (m-1-3)
			 (m-3-1) edge[in=200, out=340] node[auto] {$\nu_{n}$} (m-3-3);
\end{tikzpicture}
\end{center}
The boundary of this diagram provides an inductive definition of $\nu_{n}$, starting from $\nu_{1}$. The unnamed arrow is $(i_{1} \otimes 1) \odot (i_{n-1} \otimes 1)$, and all the $i$ maps should have an additional $\infty$ subscript (omitted for readability). This diagram is commutative \emph{if} $\nu$ exists. The top and bottom ``bigons'' or ``biangles'' are commutative, since by definition $\nu_{n} = \nu \circ i_{n,\infty} \otimes 1$, and $\odot$ is a functor. The upper small rectangle is commutative by naturality of $\varphi$. To prove commutativity of the lower small rectangle note the diagram

\begin{center}
\begin{tikzpicture}
\matrix (m) [matrix of math nodes,column sep = 1.3cm, row sep = 1cm, text height = 1.5ex, text depth = .25ex]{
\mathcal{W}_{1} \odot \mathcal{W}_{n-1} & \mathcal{W}_{1} \odot \mathcal{W} & \mathcal{W} \odot \mathcal{W} \\
\mathcal{W}_{n} & \mathcal{W} & \mathcal{W} \\
};

\path[->] (m-1-1) edge node[auto] {$1\odot i_{n-1}$} (m-1-2)
			 (m-1-2) edge node[auto] {$i_{1} \odot 1$} (m-1-3)
			 (m-2-1) edge node[auto] {$s_{n}$} (m-1-1)
			 (m-2-2) edge node[auto] {$s$} (m-1-2)
			 (m-2-1) edge node[auto] {$i_{n}$} (m-2-2)
			 (m-2-2) edge node[auto] {$1$} (m-2-3)
			 (m-2-3) edge node[auto] {$\hat{s}$} (m-1-3);
\end{tikzpicture}
\end{center}
which is the lower small rectangle with $s$ added in the middle. The left square commutes since $s$ is by definition the limit of $1 \odot i_{n-1, \infty} \circ s_{n}$. The right square commutes by proposition \ref{canonicalsection} (this is how we defined $\hat{s}$).

So far we have obtained that if $\nu$ exists, then the $\nu_{n}$ must satisfy

\begin{equation}\label{definitionofnun}
\nu_{n} = \mu \circ (\nu_{1} \odot \nu_{n-1}) \varphi^{-1} (s_{n} \otimes 1),
\end{equation}
which means that any candidate for $\nu$ is uniquely determined by $\nu_{1}$. The equation immediately gives the compatibility condition $\nu_{n} \circ i_{n} \otimes 1 = \nu_{n-1}$ -- just add $\mathcal{W}_{n-1} \otimes \mathcal{W}$ in the lower left corner of the diagram above, two analogous small rectangles above it, and use induction. We define $\nu_{0} = \nu_{1} \circ i_{0}$.

We will show in the appendix that $\nu_{1}$ is uniquely determined by the unit conditions. We note that
\begin{displaymath}
\mathcal{W}_{1} \otimes \mathcal{W} \simeq (I_{\odot} \sqcup I_{\otimes}) \otimes \mathcal{W} \simeq I_{\odot} \otimes \mathcal{W} \sqcup I_{\otimes} \otimes \mathcal{W}
\end{displaymath}
Thus the map $\nu_{1}$ is determined by what happens on both of these components. The calculations in the appendix give these components as
\begin{displaymath}
\nu_{1} = (i_{0} \psi_{\mathcal{W}}^{-1}, \lambda_{\mathcal{W}}^{\otimes})
\end{displaymath}
Where $i_{0} : I_{\odot} \rightarrow \mathcal{W}$.

\begin{proposition}[Uniqueness of $\nu$]
If $\nu$ exists, then it is the colimit of the arrows $\nu_{n} : \mathcal{W}_{n} \otimes \mathcal{W} \rightarrow \mathcal{W}$, with $\nu_{0}, \nu_{1}$ defined above, and $\nu_{k}$ defined by induction using equation \ref{definitionofnun}, for $k > 1$.
\end{proposition}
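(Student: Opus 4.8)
The plan is to assemble pieces that are already essentially in place, the only genuinely new observation being the passage to the colimit. Since $\otimes$ preserves fibered filtered colimits in the left variable, the canonical cocone $\big(i_{n,\infty} \otimes 1_{\mathcal{W}} : \mathcal{W}_{n} \otimes \mathcal{W} \to \mathcal{W} \otimes \mathcal{W}\big)_{n \geq 0}$ exhibits $\mathcal{W} \otimes \mathcal{W}$ as $\varinjlim_{n}(\mathcal{W}_{n} \otimes \mathcal{W})$. Consequently \emph{any} morphism $\nu : \mathcal{W} \otimes \mathcal{W} \to \mathcal{W}$ is the unique map induced by the cocone $\big(\nu \circ (i_{n,\infty} \otimes 1)\big)_{n}$; that is, upon setting $\nu_{n} := \nu \circ (i_{n,\infty} \otimes 1)$ we automatically have $\nu = \varinjlim_{n} \nu_{n}$. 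So the entire content of the proposition is to verify that, when $\nu$ satisfies the hypotheses of the theorem, these $\nu_{n}$ coincide with the sequence built by the recursion in the statement.

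First I would record the compatibility $\nu_{n} \circ (i_{n} \otimes 1) = \nu_{n-1}$ (sketched in the discussion above), which is immediate from $i_{n,\infty} = i_{n+1,\infty} \circ i_{n}$ and functoriality of $\otimes$; this makes ``$\varinjlim_{n} \nu_{n}$'' meaningful and also gives $\nu_{0} = \nu \circ (i_{0,\infty} \otimes 1) = \nu_{1} \circ (i_{0} \otimes 1)$, matching the stipulated $\nu_{0} = \nu_{1} \circ i_{0}$. For the base case $n = 1$: the bootstrap lemma \ref{bootstraplemma}, applied to the main diagram with the isomorphism $\varphi$ and the section $\hat{s}$ of $\mu$, yields $\nu = \mu \circ (\nu \odot \nu) \circ \varphi^{-1} \circ \hat{s}$; combining this with the unit conditions and the decomposition $\mathcal{W}_{1} \otimes \mathcal{W} \simeq I_{\odot} \otimes \mathcal{W} \sqcup I_{\otimes} \otimes \mathcal{W}$ forces $\nu_{1} = (i_{0}\,\psi_{\mathcal{W}}^{-1}, \lambda^{\otimes}_{\mathcal{W}})$ — exactly the computation carried out in the appendix, which I would simply cite.

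For the inductive step $n > 1$ I would precompose the bootstrap equation with $i_{n,\infty} \otimes 1$ and chase the large diagram displayed above: its top and bottom biangles commute by the definition of the $\nu_{k}$ and functoriality of $\odot$, its upper inner rectangle commutes by naturality of $\varphi$, and its lower inner rectangle commutes via the auxiliary diagram assembled from the definition of $s$ and proposition \ref{canonicalsection}. Reading off the outer boundary produces precisely equation \ref{definitionofnun}, namely $\nu_{n} = \mu \circ (\nu_{1} \odot \nu_{n-1}) \circ \varphi^{-1} \circ (s_{n} \otimes 1)$, whose right-hand side depends only on $\nu_{1}$ and $\nu_{n-1}$; hence this is a legitimate recursion and, by induction, $\nu_{n}$ agrees with the stated sequence for every $n$. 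Therefore $\nu = \varinjlim_{n} \nu_{n}$ is uniquely determined.

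I do not expect a real obstacle: all the geometric and combinatorial difficulty has been offloaded into proposition \ref{canonicalsection}, the coherence lemma \ref{coherencelemma}, and the appendix determination of $\nu_{1}$. The only points that need care are the usual bookkeeping of the coherence isomorphisms $\alpha, \lambda, \rho$ (suppressed by Mac Lane coherence) and the check that the diagram chase may be performed componentwise along the coproduct decomposition of $\mathcal{W}_{n} \otimes \mathcal{W}$, which is licensed by the assumption that $\otimes$ preserves binary coproducts in the left variable.
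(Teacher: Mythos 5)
Your proposal is correct and follows the paper's own route: the paper likewise reduces uniqueness to the universal property of the filtered colimit $\mathcal{W}\otimes\mathcal{W}\simeq\varinjlim_n(\mathcal{W}_n\otimes\mathcal{W})$, forces $\nu_1$ from the unit conditions (the appendix calculation), and forces $\nu_n$ for $n>1$ from the main diagram via the bootstrap lemma and the canonical section $s_n$, exactly as you describe. The only difference is presentational: the paper's stated proof is a two-line pointer to the surrounding discussion, whereas you have spelled out the same chain of reasoning explicitly.
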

\begin{proof}
$\nu$ is determined by the family $\nu_{n} = \nu \circ i_{n, \infty} \otimes 1$. The calculations above (or in the appendix, in the case $n = 1$) determine these components uniquely.
\end{proof}

\begin{definition}[The definition of $\nu$]\label{definitionofnu}
We define $\nu : \mathcal{W} \otimes \mathcal{W} \rightarrow \mathcal{W}$ as the colimit of the arrows $\nu_{n} : \mathcal{W}_{n} \otimes \mathcal{W} \rightarrow \mathcal{W}$.
\end{definition}

We are now left with checking that this definition works. As already indicated, this is done in the appendix, since it consists of tedious inductive calculations.

\section{Fibrations of Signatures}

We will now introduce the structures among which the three tensors theorem was first discovered and applied. They are based on signatures with nonstandard amalgamation. Familiarity with ordinary multisorted signatures is not required, but will help greatly.

A few remarks about the approach of \cite{HMP} are in order. The structures used there are very similar to ours -- if not for the two levels, they would be exactly monoids in the fibration of signatures with amalgamation. The end results (the web monoid and the multicategory of function replacement) differ in annoying details, but the same problem must be overcome in their construction. It is the fact that replacing function symbols in a formal composite with formal composites results in very complicated shuffling among the surviving function symbols, regardless of the conventions chosen to extract the symbols from a formal composite. Indeed an example is given in appendix \ref{amalgamationisnecessaryexample} which shows that shuffling \emph{must} occur, that is, nonstandard amalgamation is necessary, and follows from the ``geometry'' we wish to preserve. In \cite{HMP} this is dealt with by requiring the construction to be functorial, and defining the permutations only in cases in which they are trivially determined (all the function symbols are distinct). We can give explicit recursive formulas for these bijections in all cases, and functoriality follows from this.

We will not write down these formulas however, since the results are quite un\TeX-able, and do not seem to be of interest. The reader may extract them from the abstract construction of $\nu$ in the three tensors theorem and the definitions given here.

\paragraph{The operad of symmetries.} This ordinary operad will provide us with the combinatorics necessary for our description of opetopic sets. Using it systematically will allow us to minimize our computational effort. Let $S_{n}$ be the n-th symmetric group. The operad of symmetries $\mathbf{S}$ has the $S_{n}$ as the sets of $n$-ary operations, and composition is defined by (\cite{Leinster}):

\begin{displaymath}
\sigma \ast (\rho_{1}, \ldots, \rho_{n})(k_{1} + \ldots + k_{i-1} + j) = k_{\sigma^{-1}(1)} + \ldots + k_{\sigma^{-1}(\sigma(i)-1)} + \rho_{i}(j),
\end{displaymath}
where $\sigma \in S_{n}$, $\rho_{i} \in S_{k_{i}}$, and $1 \leq i \leq n$, $1 \leq j \leq k_{i}$

This just means that we permute $n$ disjoint blocks according to $\sigma$, and apply $\rho_{i}$ in the block that was i-th in the beginning. Contrary to what has been claimed in the literature, the operation $\ast$ is \emph{not} a homomorphism of groups, much less so an obvious one.

It will be convenient to adopt a notation which allows us to compute compositions of permutations of the form $\sigma \ast (\rho_{1}, \ldots, \rho_{n})$ in certain cases. First we will assume the number of blocks is fixed and equal to $n$. We will then write $(i, j)$ for the $j$-th entry in the $i$-th block. For each block the number of entries is arbitrary. Thus, by definition, we have

\begin{displaymath}
\sigma \ast (\rho_{1}, \ldots, \rho_{n})(i,j) = (\sigma(i), \rho_{i}(j))
\end{displaymath}
This will allow us to compose such permutations easily, but only if the block lengths match. This will never be a problem, since we will always know that this happens a priori.

With this notation in hand we can prove

\begin{lemma}\label{lemmaforfunctorialityofotimes} If $\sigma, \sigma' \in S_{n}$ and $\rho_{i}, \rho_{i}'$ are permutations, such that the domains of $\rho_{i}$ and $\rho_{\sigma(i)}'$ are equal, then

\begin{displaymath}
\sigma ' \ast (\rho_{1}', \dots, \rho_{n}') \circ \sigma \ast (\rho_{1}, \dots, \rho_{n})= (\sigma' \circ \sigma) \ast (\rho_{\sigma(1)}' \circ \rho_{1}, \dots, \rho_{\sigma(n)}' \circ \rho_{n})
\end{displaymath}

\end{lemma}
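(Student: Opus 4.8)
The plan is to verify the identity by evaluating both sides on an arbitrary entry $(i,j)$, using the block notation just introduced. The key observation is that the hypothesis---the domain of $\rho_i$ equals the domain of $\rho'_{\sigma(i)}$---is exactly what is needed for the composites $\rho'_{\sigma(i)} \circ \rho_i$ on the right-hand side to be defined, and more importantly, it is what guarantees that the two successive applications of $\ast$-composites on the left-hand side act on matching block structures, so that the block notation $\sigma \ast (\rho_1, \dots, \rho_n)(i,j) = (\sigma(i), \rho_i(j))$ is legitimate at each stage.

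First I would spell out the left-hand side. By definition $\sigma \ast (\rho_1, \dots, \rho_n)$ sends $(i,j)$ to $(\sigma(i), \rho_i(j))$. Now I apply $\sigma' \ast (\rho'_1, \dots, \rho'_n)$ to the pair $(\sigma(i), \rho_i(j))$; this is an entry in block number $\sigma(i)$, whose length matches the domain of $\rho'_{\sigma(i)}$ precisely by the hypothesis, so the formula applies and yields $(\sigma'(\sigma(i)), \rho'_{\sigma(i)}(\rho_i(j)))$. On the other hand, the right-hand side $(\sigma' \circ \sigma) \ast (\rho'_{\sigma(1)} \circ \rho_1, \dots, \rho'_{\sigma(n)} \circ \rho_n)$ sends $(i,j)$ to $\bigl((\sigma' \circ \sigma)(i),\, (\rho'_{\sigma(i)} \circ \rho_i)(j)\bigr) = (\sigma'(\sigma(i)), \rho'_{\sigma(i)}(\rho_i(j)))$. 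The two results agree on every entry $(i,j)$, which proves the identity.

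The only genuine subtlety---and the step I expect to need the most care---is bookkeeping: one must check that the block lengths are consistent at the intermediate stage so that the notational shorthand remains valid, i.e.\ that when we feed $(\sigma(i), \rho_i(j))$ into the second permutation, the $\rho_i$ being a permutation of the $i$-th block means $\rho_i(j)$ still ranges over that block, and that block, after relabeling by $\sigma$, is the $\sigma(i)$-th input block of $\sigma' \ast (\rho'_1,\dots,\rho'_n)$, whose length must equal the domain of $\rho'_{\sigma(i)}$---which is exactly the hypothesis. Everything else is a direct unwinding of the definition of $\ast$ in block notation, so no inductive argument or appeal to the explicit summation formula for $\ast$ is required. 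It may be worth remarking that the result also holds without the hypothesis if one interprets both sides as the genuine operad composite $\ast$ (since $\ast$ is associative/functorial as operad composition), but the point here is the concrete block-notation statement, for which the matching-domains condition is the clean sufficient hypothesis.
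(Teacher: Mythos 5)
Your proof is correct and follows exactly the paper's argument: both compute the action on an arbitrary entry $(i,j)$ in the double-index notation, using the domain hypothesis to justify applying the second $\ast$-composite to $(\sigma(i),\rho_i(j))$, and observe that both sides give $(\sigma'(\sigma(i)),\rho'_{\sigma(i)}(\rho_i(j)))$. Your extra care about block-length bookkeeping is a slightly more explicit version of the paper's parenthetical ``this is where we use the assumptions on the domains.''
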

\begin{proof}
The assumptions allow us to use the double index notation. We have

\begin{displaymath}
\sigma \ast (\rho_{1}, \ldots, \rho_{n})(i,j) = (\sigma(i), \rho_{i}(j))
\end{displaymath}
applying $\sigma ' \ast (\rho_{1}', \dots, \rho_{n}')$ to this (this is where we use the assumptions on the domains) yields

\begin{displaymath}
(\sigma ' (\sigma(i)), \rho_{\sigma(i)}'(\rho_{i}(j)))
\end{displaymath}
concluding the proof
\end{proof}

Note that we can also compute inverses with this notation. We have

\begin{displaymath}
\sigma \ast (\rho_{1}, \ldots, \rho_{n})^{-1}(i,j) = (\sigma^{-1}(i), \rho^{-1}_{\sigma^{-1}(i)}(j)).
\end{displaymath}
Again, we note that this formula is valid only if $j$ has a proper range for every $i$ (in this case, $j$ ranges over the domain of $\rho^{-1}_{\sigma^{-1}(i)}$). This will never be a problem in our calculations.

We will occasionaly use longer indices, for example $(i,j,k)$, but we will not calculate with them. However such calculations would be justified, since $\mathbf{S}$ is associative.

\subsection{Signatures with Amalgamation}

\paragraph{Notation.} Let  $[n] = \{ 0, \ldots, n \}$, and $(n] = \{ 1, \ldots, n \}$ for $n \in \mathbb{N}$. In particular $[n] = [0] \cup (n]$ and $(0] = \emptyset $. For a set $O$ we define $O^{\dag}_{n} = O^{[n]}$, $O^{\ast}_{n} = O^{(n]}$ and $O^{\dag} = \bigcup_{n \in \mathbb{N}} O^{[n]}$, $O^{\ast} = \bigcup_{n \in \mathbb{N}} O^{(n]}$. Thus $O^{\ast}$ is the set of ordered lists of elements of $O$, and $O^{\dag}$ is the set of lists with an additional element at the beginning.

$S_n$, the n-th symmetric group, for $n > 0$, acts on $O^{\dag}_{n}$ i $O^{\ast}_{n}$ on the right by precomposition (the lists are functions on $[n]$), leaving $0$ fixed.

If $d : [n] \rightarrow O$ is a list, then we denote its restrictions of positive numbers by $d^{+}:(n]\rightarrow O$, and its restriction to $[0]$ by $d^{-}: [0] \rightarrow O$. This establishes a bijection $\langle (-)^{-}; (-)^{+} \rangle : O^{\dag} \rightarrow O \times O^{\ast}$. We have an obvious functor $(-)^{\dag} : \mathbf{Set} \rightarrow \mathbf{Set}$.

\paragraph{The category of signatures with amalgamation.} It will be denoted $\mathbf{Sig}_{a}$. Its objects are set maps $\partial: A \rightarrow O^{\dag}$. They will usually be referred to by their domain, leaving the map, which is called the typing function, implicit. For $a \in A$ we denote $\partial_{a} := \partial(a): [n] \rightarrow O$ and $|a| := n$, which is called the arity of $a$. For morphisms $(A, \partial, O) \rightarrow (B, \partial, Q)$ we take triples $(f, \sigma, u)$ (denoted just by $f$ henceforth), where $f: A \rightarrow B$ and $u: O \rightarrow Q$ are functions, and for any $a \in A$, $\sigma_{a}$ is a permutation $(n] \rightarrow (n]$, where $n = |a|$, which makes the following diagram commute.

\begin{center}
\begin{tikzpicture}
\matrix (m) [matrix of nodes, row sep = 1cm, column sep = 1cm, text height =1.5ex, text depth =0.25ex] {
 $[n]$ & $[n]$ \\
$O$ & $Q$ \\
};

\path[->] (m-1-1) edge node[auto] {$\sigma_{a}$} (m-1-2)
			 (m-1-1) edge node[auto, swap] {$\partial_{a}$} (m-2-1)
			 (m-2-1) edge node[auto] {$u$} (m-2-2)
			 (m-1-2) edge node[auto] {$\partial_{f(a)}$} (m-2-2);

\end{tikzpicture}
\end{center}

The fibration of signatures with amalgamation is defined by the functor $p_{a}: \mathbf{Sig}_{a} \rightarrow \mathbf{Set}$, which sends $\partial: A \rightarrow O^{\ast}$ to $O$, and each morphism $(f, \sigma, u)$ to $u$. Prone morphisms arise from pullbacks along $u^{\dag}$ (take all $\sigma$ to be the identity), like in a fundamental fibration.

Here is some intuition. The elements of $A$ are function symbols, like in any ordinary signature considered in logic or universal algebra. The arities determine how many inputs every function symbol has. The elements of $O$ are called sorts or types. For example the signature of ``rings and modules'' would have two types -- one for the elements of a ring, and another for the elements of a module, as well as function symbols that define ring and module operations, one of which allows the ring to act on the module.

The morphisms are defined in such a way as to allow us to specify, for each function symbol, how its inputs are related to the ones in its image. This information is specified in the permutations $\sigma_{a}$, and is required to be compatible with type changes (i.e. the $u$ map). These permutations are called amalgamation permutations, and if they are nontrivial we say that we are dealing with nonstandard amalgamation. Otherwise we will say that the morphism is strict, or has trivial or standard amalgamation.

\paragraph{Cocompleteness properties of $\mathbf{Sig}_{a}$.} Signatures without amalgamation have very nice (co)completeness properties -- they are complete and cocomplete. Unfortunately the addition of amalgamation permutations spoils some of these properties, as the following example shows.

Let $A$ be a signature with one binary function symbol, over a singleton set $O = \{ \ast \}$. Then we have two obvious morphisms $A \rightarrow A$ -- the identity, and a morphism which permutes the inputs of our function symbol. Since permutations are invertible, these two morphisms are not equalized or coequalized by any other morphism. Therefore $\mathbf{Sig}_{a}$ does not have all equalizers or coequalizers (fibered or not).

Note also that signatures with amalgamation do not have a terminal object (fiberwise, or globally).

Fortunately signatures with amalgamation are nice enough for all the constructions we will deal with.

\begin{proposition}\label{exacnessforsiga}
The fibration $\mathbf{Sig}_{a}$ has the following (co)completeness properties:
\begin{enumerate}

\item All pullbacks (fibered or not).

\item Small coproducts (fibered or not).

\item All (small) fibered filtered colimits.

\end{enumerate}
\end{proposition}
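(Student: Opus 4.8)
The plan is to verify each of the three claims separately, in each case reducing the fibered statement to a computation with maps $\partial : A \to O^{\dag}$ in $\mathbf{Set}$, and using the fact (recorded in the text) that prone morphisms in $p_a$ come from pullback along $u^{\dag}$.

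\textbf{Pullbacks.} First I would describe the candidate construction. Given a cospan $(A,\partial,O) \xrightarrow{(f,\sigma,u)} (C,\partial,R) \xleftarrow{(g,\tau,v)} (B,\partial,Q)$, form the pullback $P = O \times_R Q$ in $\mathbf{Set}$ with projections $p,q$, and let the function-symbol set be the ``honest'' pullback over $C$ of the underlying sets $A$ and $B$, but with the typing data carried along. Concretely an element should be a triple $(a,b,\theta)$ where $f(a)=g(b)$ in $C$ and $\theta$ is the permutation (of arities, which necessarily agree) measuring the discrepancy between $\sigma_a$ and $\tau_b$, glued so that the composite to $C$ is independent of the path; the typing function lands in $P^{\dag}$ via $\langle (-)^-;(-)^+\rangle$ and the universal property of $O\times_R Q$. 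Then I would check the universal property directly: a cone consists of two morphisms agreeing after composition to $C$, and lemma \ref{lemmaforfunctorialityofotimes} (together with the inverse formula for elements of $\mathbf{S}$) is exactly what is needed to show the induced pairing of permutations is well-defined and unique. For the fibered version one restricts $u,v$ (hence $p,q$) to identities and the same construction works with all type maps trivial. I expect this to be the main obstacle: the set-level pullback is easy, but tracking the amalgamation permutations so that the pairing is canonical and the composite-to-$C$ condition is respected requires care, and this is precisely where the nonstandard amalgamation bites.

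\textbf{Small coproducts.} This is the routine case. For a family $(A_i,\partial_i,O_i)$ take $\bigsqcup A_i \to (\bigsqcup O_i)^{\dag}$, using that $(-)^{\dag}$ and hence $(\bigsqcup O_i)^{[n]} \supseteq$ each $O_i^{[n]}$ in the evident way (a list in $O_i$ is a list in $\bigsqcup O_i$); coprojections have identity amalgamation. The universal property is immediate since morphisms out of a coproduct are tuples of morphisms and there is no interaction between components. For the fibered version one fixes a single base object $O$ and coproduct is computed in the fiber $\mathbf{Sig}_a/O$ as disjoint union of function-symbol sets, which is the same verification; one notes that $p_a$ sends it correctly and that coprojections are vertical.

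\textbf{Fibered filtered colimits.} By corollary \ref{fiberedcolimits} it suffices to exhibit filtered colimits in each fiber $\mathbf{Sig}_a/O$. Given a filtered diagram of signatures over a fixed $O$, take the colimit $\varinjlim A_i$ of the underlying function-symbol sets in $\mathbf{Set}$; since each transition map is over $\mathrm{id}_O$, every element of $\varinjlim A_i$ has a well-defined arity and a well-defined type $[n]\to O$ once we quotient by the transition permutations — here filteredness is essential, since it lets us choose a representative and any two representatives are identified by a transition permutation, and lemma \ref{lemmaforfunctorialityofotimes} shows these compose coherently. I would check that the resulting $\partial : \varinjlim A_i \to O^{\dag}$ with the evident coprojections is a colimit cocone: a cocone of vertical morphisms to some $(B,\partial,O)$ factors uniquely on underlying sets by the colimit in $\mathbf{Set}$, and the permutation data glues because the diagram is filtered. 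The only subtlety, again, is well-definedness of the arity/type/permutation assignment on the colimit, which filteredness and lemma \ref{lemmaforfunctorialityofotimes} handle; this is a milder version of the difficulty in the pullback case. Finally I would remark that none of these constructions is disturbed by the failure of (co)equalizers noted above, since none of them involves quotienting by a pair of parallel arrows.
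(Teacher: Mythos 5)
Your proposal is correct and, on the one item the paper actually proves in detail (fibered filtered colimits), follows essentially the same route: reduce to fibers via corollary \ref{fiberedcolimits}, form the colimit of underlying sets, and lift the typing by choosing a representative in each fiber of the colimiting cone and declaring that leg strict, with filteredness guaranteeing that the induced amalgamation permutations on all other legs are consistent. The paper leaves pullbacks and coproducts as an exercise; your pullback sketch is fine in substance, though the third component $\theta$ of your triples is determined by $(a,b)$ (it is just $\tau_b^{-1}\sigma_a$ used to twist the second list), so the underlying set is simply $A\times_C B$ with the appropriately twisted typing into $(O\times_R Q)^{\dag}$.
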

\begin{proof}
We leave the first two items as a warmup exercise. The construction of fibered filtered colimits unfortunately requires some work.

By corollary \ref{fiberedcolimits} we may restrict our attention to individual fibers. We will consider the fiber $\mathbf{Sig}_{a}/O$. There is a forgetful functor $U: \mathbf{Sig}_{a}/O \rightarrow \mathbf{Set}/O$, which forgets the input types, but not the output (it corresponds to the projection $O^{\dag} \simeq O \times O^{\ast} \rightarrow O$). The category $\mathbf{Set}/O$ is obviously cocomplete. We will use it to build our colimits.

Consider a filtered diagram $F: \mathcal{D} \rightarrow \mathbf{Sig}_{a}/O$. Each signature is the coproduct of countably many signatures consisting of all the $n$-ary function symbols of the original signature, for $n \in \mathbb{N}$. Morphisms preserve arity, so this is also true for the entire diagram $F$. We may therefore assume that all the values of $F$ consist of signatures with function symbols of a fixed arity $n \in \mathbb{N}$.

By cocompleteness $U \circ F$ has a colimiting cone $\tau: U \circ F \xrightarrow{\cdot} X$ in $\mathbf{Set}/O$. We will show that it can be lifted to a cone in $\mathbf{Sig}_{a}/O$. The fact that any such lift is colimiting is trivial, since permutations are invertible.

If all the values of $F$ are empty, the colimit is empty, and we are done. We may assume that $F$ has nonempty values. Since all the function symbols in the values of $F$ are $n$-ary, we declare that each element of $X$ is also an $n$-ary symbol. We must define the typing of each symbol, and the permutation amalgamations of the components of the colimiting cone. For each $x \in X$ (which exist, since $F$ has nonempty values) consider its inverse image in the diagram $U \circ F$ -- those function symbols, which map to $x$ under the components of the colimiting cone. These inverse images are disjoint, and therefore we can consider them separately.

Choose an $f \in F(d)$ which maps to $x$ under $\tau_{d}$. We declare that the amalgamation permutations of $\tau_{d}$ are the identity for $f$. This gives us a typing of $x$. This also determines the amalgamation permutations of all other symbols which map to $x$ -- the diagram is filtered, and permutations are invertible, so considering only the permutations we can get anywhere in the inverse image of $x$ starting from $f$. Such a procedure may result in a contradiction -- and it does in the example we gave above for nonexistence of coequalizers. But in our case the diagram is filtered, so any two parallel morphisms are equalized by a third one, and no contradiction can arise. Any two potentially different ways for getting from $f$ to another symbol have equal amalgamation permutations.
\end{proof}

\paragraph{The monoidal structure.} We will now define the monoidal fibration structure on our signatures. We begin by defining the two fibered functors.

\begin{center}
\begin{tikzpicture}
\matrix (m) [matrix of math nodes, row sep = 1cm, column sep = 1cm, text height =1.5ex, text depth =0.25ex] {
\mathbf{Sig}_{a} \times_{\mathbf{Set}} \mathbf{Sig}_{a} & \mathbf{Sig}_{a} & \mathbf{Set} \\
& \mathbf{Set} & \\
};

\path[->] (m-1-1) edge node[auto] {$\otimes$} (m-1-2)
			 (m-1-3) edge node[auto, swap] {$I$} (m-1-2)
			 (m-1-1) edge node[auto, swap] {$p_{a} \times_{\scriptscriptstyle{\mathbf{Set}}} p_{a}$} (m-2-2)
			 (m-1-3) edge node[auto] {$1_{\mathbf{Set}}$} (m-2-2)
			 (m-1-2) edge node[auto] {$p_{a}$} (m-2-2);

\end{tikzpicture}
\end{center}
\noindent
If $A$ and $B$ are signatures over $O$, then we set

\begin{displaymath}
A \otimes_{O} B = \lbrace \langle a, b_{i} \rangle_{i \in (|a|]} : a \in A, b_{i} \in B, \partial_{a}(i) = \partial_{b_{i}}(0), \mbox{for } i \in (|a|] \rbrace,
\end{displaymath}
\noindent
which is to be thought of as the signature of formal composites of symbols from $A$ and $B$. Note that we allow $|a| = 0$, which means a nullary formal composite (no $b_{i}$)\footnote{Without such composites associativity fails, among many other important things.}. For the typing we set

\begin{displaymath}
\partial^{A \otimes_{O} B}_{\langle a, b_{i} \rangle_{i \in (|a|]}} = [ \partial_{a}^{-}, \partial_{b_{i}}^{+}]_{i \in (|a|]} : [\sum_{i = 1}^{|a|} |b_{i}|] \longrightarrow O.
\end{displaymath}
\noindent
This means that the output type of $\langle a, b_{i} \rangle$ is the output type of $a$, and the input types are those of the $b_{i}$ placed side by side, in order of increasing $i$.

For morphisms $f,g$ over $u: O \rightarrow Q$ we set
 
\begin{displaymath}
f \otimes_{u} g (\langle a, b_{i} \rangle_{i \in (|a|]}) = \langle f(a), g(b_{\sigma_{a}^{-1}(j)}) \rangle_{j \in (|f(a)|]}
\end{displaymath}

\begin{displaymath}
(\sigma \otimes_{u} \tau)_{\langle a, b_{i} \rangle_{i \in (|a|]}} = \sigma_{a} \ast (\tau_{b_{1}}, \ldots, \tau_{b_{|a|}}),
\end{displaymath}
\noindent
where $\ast$ is composition in the operad of symmetries. This defines a functor by lemma \ref{lemmaforfunctorialityofotimes}.

For the unit we set $I(O) = \partial^{I_{O}}: O \rightarrow O^{\dag}$, which assigns to every $o \in O$ the unary typing with constant value $o$. This defines a fibered functor in an obvious way.

We can now give a little more intuition. The ``transformation rule'' we have chosen for our tensor product, that is the definition of $f \otimes g$, follows from the fact that the amalgamation permutations $\sigma$ specify how the inputs of a function symbol are mapped to its image under the morphism. Then the definition of $\sigma \otimes \tau$ represents a natural way to combine the actions of these permutations on a formal composite. This is the defining feature distinguishing signatures with amalgamation from ordinary ones, and easily results in some rather baroque formulas\footnote{Footnotes 7 and 8 should give a sense of exactly how baroque they can get!}, which we have done our best to avoid.

Monoids with respect to this structure give us a way to factually compose formal composites. Returning to our example of rings and modules we see that this would give a multiplication table between ring and module operations, which could encode the usual axioms for rings and modules. No nonstandard amalgamation would be necessary. The category of actions of this monoid along the tautologous action of $\mathbf{Sig}_{a}$ on $cod: \mathbf{Set}^{\rightarrow} \rightarrow \mathbf{Set}$ (see section \ref{comparisonsection}, or \cite{Zawadowski}) would then be equivalent to the category of (all) modules.

We still need to define the coherence isomorphisms $\alpha, \lambda$, and $\rho$. They are given by

\begin{eqnarray*}
\alpha_{A, B, C}(\langle a, \langle b_{i}, c_{i,j} \rangle \rangle) & = & \langle \langle a, b_{i} \rangle, c_{i,j} \rangle \\
\lambda_{A}(\langle 1_{\partial_{a}^{A}(0)}, a \rangle) & = & a \\
\rho_{A}(a) & = & \langle a, 1_{\partial_{a}^{A}(1)}, \dots, 1_{\partial_{a}^{A}(n)} \rangle,
\end{eqnarray*}
\noindent
and all the permutations taken to be the identity. The double index $(i,j)$ is ordered lexicographically.

\begin{theorem}
The structure given above defines a strong monoidal structure on the fibration $\mathbf{Sig}_{a}$.
\end{theorem}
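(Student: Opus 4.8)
The plan is to verify the data piece by piece: first that $\otimes$ and $I$ are genuine fibered functors over $\mathbf{Set}$, then that $\alpha,\lambda,\rho$ are fibered natural isomorphisms, and finally the pentagon and triangle. Note that, by the conventions of section 1, we do \emph{not} need $\otimes$ to preserve prone maps (that would make it a morphism of fibrations); it suffices that it lies over the projection, which is immediate from $p_{a}(A\otimes_{O}B)=O$.

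That $\otimes$ and $I$ are well defined on objects is transparent: $A\otimes_{O}B$ carries the stated list $[\sum_{i}|b_{i}|]\to O$, hence is a signature over $O$, and $I(O)$ sits over $O$ by construction. The real content is functoriality of $\otimes$ on arrows. Given $f:A\to B$ and $g:C\to D$ over a common $u:O\to Q$, one first checks that $\langle a,c_{i}\rangle\mapsto\langle f(a),g(c_{\sigma_{a}^{-1}(j)})\rangle$ lands in $B\otimes_{Q}D$: this is a one-line chase through the compatibility squares, $\partial_{f(a)}(\sigma_{a}i)=u\,\partial_{a}(i)=u\,\partial_{c_{i}}(0)=\partial_{g(c_{i})}(0)$. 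Next one checks that $(\sigma\otimes_{u}\tau)_{\langle a,c_{i}\rangle}=\sigma_{a}\ast(\tau_{c_{1}},\dots,\tau_{c_{|a|}})$ makes the defining square of $\mathbf{Sig}_{a}$ commute for $\langle a,c_{i}\rangle$: in the double-index notation, block $i$ has length $|c_{i}|$ and is carried to block $\sigma_{a}(i)$, whose length in the target is $|c_{\sigma_{a}^{-1}\sigma_{a}(i)}|=|c_{i}|$, so the $\ast$-product is literally defined (no range mismatch), and the square reduces blockwise to the square of $g$ together with the square of $f$ at position $0$. Preservation of identities is clear, and preservation of composition is exactly lemma \ref{lemmaforfunctorialityofotimes} applied to the amalgamation permutations, together with the obvious bookkeeping on underlying maps. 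I expect this to be the only real obstacle: the amalgamation permutations genuinely shuffle under $\otimes$, and it is lemma \ref{lemmaforfunctorialityofotimes} that makes the shuffles compose correctly — with it in hand everything else is routine.

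For $\alpha,\lambda,\rho$ I would first check that each is a morphism of $\mathbf{Sig}_{a}$ over the identity with all amalgamation permutations trivial, i.e. that the displayed relabelling is a bijection under which the two typing lists literally coincide. For $\alpha_{A,B,C}$ the assignment $\langle a,\langle b_{i},c_{i,j}\rangle\rangle\mapsto\langle\langle a,b_{i}\rangle,c_{i,j}\rangle$ matches the triple index $(i,j,k)$ on both sides, and the typing at $(i,j,k)$ is $\partial_{c_{i,j}}(k)$ in either bracketing, with common output $\partial_{a}(0)$. For $\lambda$ and $\rho$ one uses that $I(O)$ has, for each $o\in O$, a single symbol $1_{o}$ of arity $1$ and output type $o$, so the unit factors in $I(O)\otimes_{O}A$ and $A\otimes_{O}I(O)$ are forced by $A$, and splicing in constant one-element lists leaves the typing unchanged. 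Each relabelling is visibly invertible, which gives strongness. Fibered naturality is then a routine check on underlying maps, the accompanying identity of amalgamation permutations following from associativity of the operad of symmetries $\mathbf{S}$, which lets one rewrite the nested $\ast$-composites arising on the two sides into a common form.

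Finally, the pentagon for $\alpha$ and the triangle relating $\alpha,\lambda,\rho$: because $\alpha,\lambda,\rho$ all carry trivial amalgamation and lie over identities, every composite appearing in these diagrams does too, so each axiom reduces to an equality of underlying relabelling bijections — equivalently, to the associativity and unitality of list concatenation (one may also phrase this as transporting the strict pentagon and triangle for ``lists and concatenation'' along the evident forgetful comparison). These are immediate. Since all the data in sight are fibered and restrict fibrewise, the axioms hold as identities of fibered natural transformations, completing the verification.
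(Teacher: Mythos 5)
Your verification is correct and follows essentially the same route as the paper: a direct check in which all the genuine content is carried by the operad of symmetries --- lemma \ref{lemmaforfunctorialityofotimes} for functoriality of $\otimes$, and associativity of $\ast$ together with the inverse formula $(\sigma\otimes_{u}\tau)^{-1}_{\langle a,b_{i}\rangle}(i,j)=(\sigma_{a}^{-1}(i),\tau_{b_{\sigma_{a}^{-1}(i)}}^{-1}(j))$ for naturality of $\alpha$. The only remark worth making is that the paper singles out naturality of $\alpha$ as the sole nontrivial point and writes that computation out in full, whereas it is the step you compress most; but the two facts you invoke there are exactly the ones the paper's computation rests on, so nothing is missing.
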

\begin{proof}
The only nontrivial thing to prove is the naturality of $\alpha$. This means we must check the commutativity of the following diagram.

\begin{center}
\begin{tikzpicture}
\matrix (m) [matrix of math nodes, row sep = 1cm, column sep = 2cm, text height = 1.5ex, text depth = .25ex] {
A \otimes_{O} (B \otimes_{O} C) & (A \otimes_{O} B) \otimes_{O} C & O \\
A' \otimes_{Q} (B' \otimes_{Q} C') & (A' \otimes_{Q} B') \otimes_{Q} C' & Q \\
};

\path[->] (m-1-1) edge node[auto] {$\alpha_{A, B, C}$} (m-1-2)
			 (m-2-1) edge node[auto] {$\alpha_{A', B', C'}$} (m-2-2)
			 (m-1-1) edge node[auto, swap] {$f \otimes_{u} (g \otimes_{u} h)$} (m-2-1)
			 (m-1-2) edge node[auto] {$(f \otimes_{u} g) \otimes_{u} h$} (m-2-2)
			 (m-1-3) edge node[auto] {$u$} (m-2-3);

\end{tikzpicture}
\end{center}
where all the morphisms $f, g$ and $h$ are over $u: O \rightarrow Q$. The amalgamation permutations of these morphisms will be denoted $\sigma, \tau$ and $\delta$, respectively.

The two ways of going around the diagram give us 

\begin{displaymath}
\langle \langle f(a), g(b_{\sigma_{a}^{-1}(i)}) \rangle, h(c_{\sigma \otimes \tau_{\langle a, b_{i} \rangle}^{-1}(i, j)}) \rangle
\end{displaymath}
and amalgamation permutations $(\sigma \otimes \tau) \otimes \delta_{\langle \langle a, b_{i} \rangle, c_{i, j} \rangle}$, and

\begin{displaymath}
\langle \langle f(a), g(b_{\sigma_{a}^{-1}(i)}) \rangle, h(c_{\sigma_{a}^{-1}(i), \tau_{b_{\sigma_{a}^{-1}(i)}}^{-1}(j)}) \rangle
\end{displaymath}
with amalgamation permutations $\sigma \otimes (\tau \otimes \delta)_{\langle a, \langle b_{i}, c_{i, j} \rangle \rangle}$.

The equality of the amalgamation permutations follows from the associativity of $\ast$ -- the multiplication in the operad of symmetries. The terms are equal, since

\begin{displaymath}
\sigma \otimes \tau_{\langle a, b_{i} \rangle}^{-1}(i, j) = (\sigma_{a}^{-1}(i), \tau_{b_{\sigma_{a}^{-1}(i)}}^{-1}(j))
\end{displaymath}
by the formula for inverses we gave when discussing the operad of symmetries.
\end{proof}

\begin{remark}
This structure is neither left nor right closed (considering it fiber-by-fiber). It is not left closed because $A \otimes (-)$ does not preserve coproducts. For right closedness it is easy to see that if $A$ consists of a single symbol, then for any $X$ the maps $A \rightarrow X$ form a free $Aut(A)$-set. Taking $X = \underline{Hom}(B, C)$ for suitable $A, B$ and $C$ yields a contradiction since the set of maps $A \otimes B \rightarrow C$ need not admit a free $Aut(A)$ action.
\end{remark}

\subsection{Monoidal Signatures with Amalgamation}

Consider the functor $\mathcal{U}: Mon(\mathbf{Sig}_{a}) \rightarrow \mathbf{Set}$, which maps every monoid $M$ to its underlying set of function symbols and every homomorphism of monoids $u$ to the underlying function. The fibration of monoidal signatures with amalgamation is defined as the pullback of $\mathbf{Sig}_{a}$ along $\mathcal{U}$,

\begin{center}
\begin{tikzpicture}
\matrix (m) [matrix of math nodes, row sep = 1cm, column sep = 1cm, text height = 1.5ex, text depth = .25ex] {
\mathbf{Sig}_{ma} & \mathbf{Sig}_{a} \\
Mon(\mathbf{Sig}_{a}) & \mathbf{Set} \\
};

\path[->] (m-1-1) edge node[auto] {} (m-1-2)
			 (m-1-1) edge node[auto, swap] {$p_{ma}$} (m-2-1)
			 (m-2-1) edge node[auto] {$\mathcal{U}$} (m-2-2)
			 (m-1-2) edge node[auto] {$p_{a}$} (m-2-2);

\end{tikzpicture}
\end{center}
\noindent
By corollary \ref{pullbackspreservemonoidalfibrations} this is a monoidal fibration. The monoidal structure is defined by exactly the same formulas as the one for $\mathbf{Sig}_{a}$, but the set of types (formerly $O$) remembers that it is a monoid (denoted $M$).

The entire fibration is also almost exactly the same as $\mathbf{Sig}_{a}$. The only difference is that types form a monoid in $\mathbf{Sig}_{a}$ (over some set of types in $\mathbf{Set}$). We will continue to speak about function symbols and amalgamation permutations in this context. Note that $\mathbf{Sig}_{ma}$ inherits all the fibered (co)completeness properties of proposition \ref{exacnessforsiga}, since they are pullback-stable.

The main point of this construction is that when the types form a monoid, a new monoidal structure appears, and we can define a distributivity structure. Before we do so, however, we need some preparation.

\paragraph{The separation principle.} We will often need to verify equality of certain natural transformations. The problem can be split into two parts -- check equality on function symbols, and on permutation amalgamations. The first part is usually easy, but the second part is often intractable -- the formulas are just too complicated (see for example the calculations in appendix \ref{coherencefordistributivity} and try to compare the amalgamation permutations for condition I).

The separation principle is inspired by the construction of the multicategory of function replacement in \cite{HMP}, where the permutations are completely avoided (at a cost of definiteness of the construction). We will settle for a little less. We will exploit naturality to get rid of the second part. Establishing naturality will be difficult enough. We note the following trivial lemma.

\begin{lemma}\label{strictnesslemma}
If in the following diagram in $\mathbf{Sig}_{ma}$ (or $\mathbf{Sig}_{a}$) the morphisms $h$ and $k$ are strict, then for $a \in A$ we have $\sigma_{a} = \theta_{h(a)}$, where $\sigma$ are the amalgamation permutations of $f$ and $\theta$ are the permutations of $g$.
\end{lemma}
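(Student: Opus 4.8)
The plan is to reduce the statement to the composition law for morphisms of $\mathbf{Sig}_{a}$ (equivalently $\mathbf{Sig}_{ma}$, which has the same morphisms). First I would recall that a morphism is a triple $(f,\sigma,u)$, so two morphisms agree exactly when all three data agree; in particular equal morphisms have equal amalgamation permutations. Next I would make explicit how amalgamation permutations compose: for composable $(f_{1},\sigma^{1},u_{1})$ and $(f_{2},\sigma^{2},u_{2})$, pasting the defining square of $\sigma^{1}_{a}$ below the defining square of $\sigma^{2}_{f_{1}(a)}$ along $\partial_{f_{1}(a)}$ exhibits $\sigma^{2}_{f_{1}(a)}\circ\sigma^{1}_{a}$ as an amalgamation permutation of the composite over $u_{2}\circ u_{1}$ at $a$; since the composite is again a triple, this is forced to be its permutation component.

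Now write the square as $k\circ f = g\circ h$ with $h\colon A\to A'$, $f\colon A\to B$, $g\colon A'\to B'$, $k\colon B\to B'$, so that the two legs out of $a\in A$ are $f$ and $h$. Applying the composition law at $a$: the amalgamation permutation of $k\circ f$ at $a$ is the composite of $\sigma_{a}$ with the $f(a)$-indexed amalgamation permutation of $k$; since $k$ is strict the latter is the identity, so this permutation is just $\sigma_{a}$. Symmetrically, the amalgamation permutation of $g\circ h$ at $a$ is the composite of the $a$-indexed amalgamation permutation of $h$ with $\theta_{h(a)}$; since $h$ is strict this is just $\theta_{h(a)}$. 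As $k\circ f = g\circ h$ as morphisms, their permutation components coincide, whence $\sigma_{a} = \theta_{h(a)}$.

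There is essentially no obstacle here; the only points needing care are keeping track of the index at which the outer permutation of each composite is evaluated ($f(a)$ on one leg, $h(a)$ on the other) and the observation that ``strict'' means precisely that all amalgamation permutations are identities, so those factors vanish from the composites. No use is made of $\otimes$, $\odot$, the monoidal coherences, or the monoid structure on the type object; the whole argument lives in the underlying category of signatures, which is why the lemma is flagged as trivial.
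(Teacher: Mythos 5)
Your argument is correct and is exactly the intended one: the paper omits the proof entirely (flagging the lemma as trivial), and your unwinding — composition of morphisms in $\mathbf{Sig}_{a}$ composes amalgamation permutations as $\sigma^{2}_{f_{1}(a)}\circ\sigma^{1}_{a}$, so equality of the two composites around the square forces $\sigma_{a}=\theta_{h(a)}$ once the strict factors are identities — is the standard way to fill it in.
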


\begin{center}
\begin{tikzpicture}
\matrix (m) [matrix of math nodes, row sep = 1cm, column sep = 1cm, text height = 1.5ex, text depth = .25ex] {
A & B \\
A' & B' \\
};

\path[->] (m-1-1) edge node[auto] {$f$} (m-1-2)
			 (m-1-1) edge node[auto, swap] {$h$} (m-2-1)
			 (m-2-1) edge node[auto] {	$g$} (m-2-2)
			 (m-1-2) edge node[auto] {$k$} (m-2-2);

\end{tikzpicture}
\end{center}

We will consider the following construction. Let $M \in Mon(\mathbf{Sig}_{a})$ be a monoid. We can construct a new monoid $M_{\mathbb{N}}$ over the same set of types as follows. The universe of $M_{\mathbb{N}}$ is $M \times \mathbb{N}$, the typing is defined by projecting onto $M$: $M \times \mathbb{N} \rightarrow M \rightarrow O^{\dag}$. The unit is the composite $I \rightarrow M \simeq M \times \{ 0 \} \hookrightarrow M_{\mathbb{N}}$, and multiplication is defined by

\begin{displaymath}
\mu^{M_{\mathbb{N}}} \langle (f,n_{0}), (g_{1}, n_{1}), \dots, (g_{k}, n_{k}) \rangle = (\mu^{M}(f, g_{1}, \dots, g_{k}), \sum_{i = 0}^{k} n_{i})
\end{displaymath}
The fact that these formulas define a monoid follows from the fact that $\mathbb{N}$ is a monoid in $\mathbf{Set}$. Obviously the projection map $\pi: M_{\mathbb{N}} \rightarrow M$ is a homomorphism of monoids. It will be essential in applications that this homomorphism is strict. In fact we have defined a functor, and $\pi$ is a natural transformation, but this will be irrelevant in our arguments.

Let $\mathcal{E}$ be a fibration over $Mon(\mathbf{Sig}_{a})$ and let $F, G: \mathcal{E} \rightarrow \mathbf{Sig}_{ma}$ be two fibered functors. Consider the following two properties of $F$ and $G$ (which are to hold for any $M$):

\begin{enumerate}

\item For any $X \in \mathcal{E}$ over $M$ there is a prone morphism $\theta: Y \rightarrow X$ over $\pi$ such that both $F(\theta)$ and $G(\theta)$ are strict morphisms.

\item For any (and hence every) prone morphism $\theta: Y \rightarrow X$ over $\pi$ the following holds: for any $f \in F(X)$ there is an $\tilde{f} \in F(Y)$ in the fiber over $f$ (i.e. mapping to $f$ under $F(\theta)$) whose typing is injective.

\end{enumerate}
Pairs of functors possessing property 1 will be called \emph{agreeable}. Functors with property 2 will be called \emph{separated}. All pairs of functors we will deal with will be agreeable and separated, and this fact will always be very easy to check.

The notion of agreeability can be extended to any set of functors (we require all of them to be strict on a single prone morphism). We will then say that functors in this set are jointly agreeable.

\begin{theorem}[The Separation Principle]\label{separationprinciple}
Let $\mathcal{E}$ be a fibration over $Mon(\mathbf{Sig}_{a})$, let $F, G: \mathcal{E} \rightarrow \mathbf{Sig}_{ma}$ be two fibered functors, and let $\phi^{1}, \phi^{2}: F \rightarrow G$ be two fibered natural transformations whose components are equal on function symbols. If $F$ and $G$ are agreeable and $F$ is separated, then $\phi^{1} = \phi^{2}$.
\end{theorem}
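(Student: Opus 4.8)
The plan is to reduce the statement to an equality of amalgamation permutations, and then to force that equality by pulling back along $\pi$ into a fiber where the typing of a suitable representative can be made injective, so that the permutation is uniquely pinned down.

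First I would record that a morphism of $\mathbf{Sig}_{ma}$ (or $\mathbf{Sig}_{a}$) is nothing but the triple consisting of its underlying map on function symbols, its underlying map on types, and its family of amalgamation permutations. Since $\phi^{1}$ and $\phi^{2}$ are fibered natural transformations, each component $\phi^{i}_{X}$ is vertical, so $\phi^{1}_{X}$ and $\phi^{2}_{X}$ have the same (identity) underlying map on types; by hypothesis they also agree on function symbols. Hence it suffices to prove that for every $X \in \mathcal{E}$, say over a monoid $M$, and every function symbol $f \in F(X)$, the amalgamation permutation of $\phi^{1}_{X}$ at $f$ equals that of $\phi^{2}_{X}$ at $f$.

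Fix such $X$ and $f$. Using agreeability, choose a prone $\theta : Y \to X$ over $\pi : M_{\mathbb{N}} \to M$ with both $F(\theta)$ and $G(\theta)$ strict; using separatedness of $F$, choose $\tilde f \in F(Y)$ lying over $f$, that is $F(\theta)(\tilde f) = f$, whose typing $\partial_{\tilde f}$ is injective. Naturality of $\phi^{i}$ at $\theta$ is a commuting square in $\mathbf{Sig}_{ma}$ with top edge $\phi^{i}_{Y}$, bottom edge $\phi^{i}_{X}$, and vertical edges $F(\theta)$, $G(\theta)$ — which are strict — so Lemma \ref{strictnesslemma} identifies the amalgamation permutation of $\phi^{i}_{X}$ at $f = F(\theta)(\tilde f)$ with the amalgamation permutation $\sigma^{i}$ of $\phi^{i}_{Y}$ at $\tilde f$, for $i = 1, 2$. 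It thus remains to check $\sigma^{1} = \sigma^{2}$. But $\phi^{1}$ and $\phi^{2}$ agree on function symbols, so $\phi^{1}_{Y}(\tilde f) = \phi^{2}_{Y}(\tilde f) =: g$, and since $\phi^{i}_{Y}$ is vertical the defining condition of $\sigma^{i}$ reads $\partial_{\tilde f} = \partial_{g} \circ \sigma^{i}$ (with $\sigma^{i}$ extended so as to fix $0$). As $\partial_{\tilde f}$ is injective and $\sigma^{i}$ is a bijection, $\partial_{g}$ is injective as well, and its image is that of $\partial_{\tilde f}$; hence $\sigma^{1} = \sigma^{2} = \partial_{g}^{-1} \circ \partial_{\tilde f}$, the inverse being taken on the image of $\partial_{g}$. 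This gives $\phi^{1}_{X} = \phi^{2}_{X}$, and since $X$ was arbitrary, $\phi^{1} = \phi^{2}$.

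I expect the only genuinely substantive point to be the coordination of the naturality square with Lemma \ref{strictnesslemma}: the entire role of the auxiliary monoid $M_{\mathbb{N}}$ and of the strictness demanded by agreeability is to transport the permutation computation from the fiber over $M_{\mathbb{N}}$ — where there is enough room to realize the chosen representative with an injective typing, and so to determine the permutation uniquely from the action on function symbols — back down to the fiber over $M$, where the conclusion is wanted. Everything else is routine bookkeeping about the three components of a morphism of signatures with amalgamation, and in particular no amalgamation permutation is ever computed explicitly.
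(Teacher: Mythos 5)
Your proof is correct and follows the paper's argument exactly: pass along a prone morphism over $\pi$ on which $F$ and $G$ are strict, lift $f$ to a separated representative $\tilde f$ with injective typing so that the amalgamation permutation is forced by the typing equation, and transport the resulting equality back down via Lemma \ref{strictnesslemma}. The only difference is that you spell out explicitly why injectivity of $\partial_{\tilde f}$ pins down $\sigma^{i}$, which the paper leaves as a one-line remark.
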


\begin{proof}
We must prove equality of all components. Since they are equal on function symbols, we must check the equality of amalgamation permutations. Consider an $f \in F(X)$ and choose a prone $\theta$ over $\pi$ for which both $F(\theta)$ and $G(\theta)$ are strict. We have the following situation:

\begin{center}
\begin{tikzpicture}
\matrix (m) [matrix of math nodes, row sep = 1.2cm, column sep = 1cm, text height = 1.5ex, text depth = .25ex] {
F(Y) & G(Y) \\
F(X) & G(X) \\
};

\node[left = of m-1-1] (tildef) {$\tilde{f}$};
\node[below = 1.1cm of tildef] (f) {$f$};

\path[|->] (tildef) edge (f);

\path[->] (m-1-1.15) edge node[auto] {$\phi_{Y}^{1}$} (m-1-2.165)
			 (m-1-1.345) edge node[auto, swap] {$\phi_{Y}^{2}$} (m-1-2.195)
			 (m-1-1) edge node[auto, swap] {$F(\theta)$} (m-2-1)
			 (m-2-1.15) edge node[auto] {$\phi_{X}^{1}$} (m-2-2.165)
			 (m-2-1.345) edge node[auto, swap] {$\phi_{X}^{2}$} (m-2-2.195)
			 (m-1-2) edge node[auto] {$G(\theta)$} (m-2-2);

\end{tikzpicture}
\end{center}
By separability there is an $\tilde{f} \in F(Y)$ in the fiber over $f$ whose typing is injective. We know that $\phi_{Y}^{1}$ and $\phi_{Y}^{2}$ are equal on $\tilde{f}$. Their amalgamation permutations on $\tilde{f}$ are uniquely determined, since its typing is injective. Thus they are equal. Now lemma \ref{strictnesslemma} implies that the amalgamation permutations are equal for $f$ also.
\end{proof}

\begin{remark}
The separation principle is also true for $\mathbf{Sig}_{a}$. Instead of considering $M_{\mathbb{N}}$ we consider $O \times \mathbb{N}$ for $O \in \mathbf{Set}$.
\end{remark}

\begin{remark}
There is considerable room in the above argument -- one need not consider $M_{\mathbb{N}}$, but some other monoid with infinite fibers over $M$. In our applications is also important that the projection $\pi$ has standard amalgamation. The choices we have made work in general and make the statement of the separation principle short enough to be applicable.
\end{remark}

\begin{remark}
A similar argument can be used to \emph{define} natural transformations between agreeable functors, when we know what to do on function symbols. This is what is done in \cite[part II]{HMP} to construct multiplication in the multicategory of function replacement.
\end{remark}

\paragraph{The second monoidal strucutre $\odot$.} Consider an object $A \in \mathbf{Sig}_{ma}$ over a monoid $M$. It has a typing $A \rightarrow M^{\dag}$, and we can consider the output type $A \rightarrow M^{\dag} \rightarrow M$. Since $M$ is a monoid over some set $O$, it has its own typing $M \rightarrow O^{\dag}$. The composite $A \rightarrow M^{\dag} \rightarrow M \rightarrow O^{\dag}$ gives us a typing of $A$ over $O$. Thus every $a \in A$ has \emph{two} kinds of inputs and outputs. The ones just defined will be called horizontal, the old ones will be called vertical. As a set we define

\begin{displaymath}
A \odot_{M} B = A \otimes_{O} B,
\end{displaymath}
which means

\begin{displaymath}
\{ \dot{\langle} a, b_{i} \dot{\rangle}: \partial^{M}_{\partial_{a}^{A}(0)} (i) = \partial^{M}_{\partial_{b_{i}}^{B}(0)}(0) \},
\end{displaymath}
where $i$ ranges over $(0, k]$ with $k = |\partial^{A}_{a}(0)|$ (arity computed in $M$). We will use the notation $\dot{\langle} \ldots \dot{\rangle}$ and $\langle \ldots \rangle$ to distinguish between elements of $A \odot_{M} B$ and $A \otimes_{M} B$. To ease notation we will write $\check{a}$ for $\partial_{a}^{A}(0)$. We must define the typing of this set, that is a function $A \odot_{M} B \rightarrow M^{\dag}$. The output type is the composite

\begin{displaymath}
\partial_{\dot{\langle} a, b_{i} \dot{\rangle}}^{\odot, -} = A \otimes_{O} B \rightarrow M \otimes_{O} M \xrightarrow{\mu} M,
\end{displaymath}
where the arrows from $A$ and $B$ to $M$ are the output types (considered as morphisms in $\mathbf{Sig}_{a}$ with trivial amalgamation), used to define the horizontal typing, and $\mu$ is multiplication in $M$. Using our notation we can write
\begin{displaymath}
\partial_{\dot{\langle} a, b_{i} \dot{\rangle}}^{\odot, -} = \mu(\check{a}, \check{b}_{i}).
\end{displaymath}
The inputs are just the inputs of $a$ and $b_{i}$ concatenated in order

\begin{displaymath}
\partial_{\dot{\langle} a, b_{i} \dot{\rangle}}^{\odot, +} = [\partial_{a}^{A, +}, \partial_{b_{1}}^{B, +}, \dots, \partial_{b_{k}}^{B, +}] : (n_{0} + n_{1} + \dots n_{k}] \rightarrow M
\end{displaymath}

The reader should imagine that $a$ and $b_{i}$ are arranged on a level surface (``horizontally'') and all the vertical inputs (including those of $a$) are visible ``from above'', and are available in forming $(A \odot_{M} B)\otimes_{M} C$.

We need to describe the values of $\odot$ on morphisms. It is easy to see that a morphism in $\mathbf{Sig}_{ma}$ is completely described by a quintuple $(f, \tau, u, \sigma, v)$, where $(u, \sigma, v)$ is a homomorphism of monoids $M \rightarrow N$ in $\mathbf{Sig}_{a}$ over some function $v$ in $\mathbf{Set}$, and $(f, \tau)$ is a morphism in $\mathbf{Sig}_{a}$ from $A \rightarrow M^{\dag}$ to $B \rightarrow N^{\dag}$ over $u$. Let $f$ and $f'$ be two such homomorphisms. We define

\begin{displaymath}
f \odot f'(\dot{\langle} a, b_{1}, \dots, b_{k} \dot{\rangle}) = \dot{\langle} f(a), f'(b_{\sigma_{\check{a}}^{-1}(1)}), \dots, f'(b_{\sigma_{\check{a}}^{-1}(k)}) \dot{\rangle}
\end{displaymath}
as a function. The permutation $\tau \odot \tau'$ is permutes the \emph{vertical} inputs of the formal composites according to $\tau$ for the inputs from $a$ and $\tau'$ for inputs from the $b_{i}$, and places the blocks in which these inputs are arranged on the block belonging to its image. Formally we define $\tau \odot \tau'$ as follows

\begin{displaymath}
\tau \odot \tau'_{\dot{\langle} a, b_{i} \dot{\rangle}} = (1, \sigma_{\check{a}}) \ast (\tau_{a}, \tau'_{b_{1}}, \ldots, \tau'_{b_{k}}).
\end{displaymath}
where $(1, \sigma)$ means the coproduct of the identity on the singleton and $\sigma$ (conjugated by a translation to act on $[2, k+1]$). Using the same notation for more general permutations we could have written

\begin{displaymath}
\tau \odot \tau'_{\dot{\langle} a, b_{i} \dot{\rangle}} = (\tau_{a}, \sigma_{\check{a}} \ast (\tau'_{b_{1}}, \ldots, \tau'_{b_{k}}))
\end{displaymath}
The entire morphism $f \odot f'$ is now the quintuple $(f \odot f', \tau \odot \tau', u, \sigma, v)$.

The unit $I_{\odot}$ takes values $I_{\odot}(M)$, which are defined to be $\partial_{I_{\odot}(M)}: O \rightarrow M^{\dag} \simeq M \times M^{\ast}$. The first factor is the output type, and $\partial_{I_{\odot}(M)}(o)$ takes the value $e(o)$ on this factor, where $e: O \rightarrow M$ is the unit of multiplication in $M$. The inputs are defined to be empty for all $o \in O$.

We must define the coherence isomorphisms. Both $\rho^{\odot}$ and $\lambda^{\odot}$ are defined analogously to the previous case, replacing $\langle \dots \rangle$ with $\dot{\langle} \dots \dot{\rangle}$. They are given by

\begin{eqnarray*}
\lambda_{A}(\dot{\langle} 1_{\partial_{\check{a}}^{M}(0)}, a \dot{\rangle}) & = & a\\
\rho_{A}(a) & = & \dot{\langle} a, 1_{\partial_{\check{a}}^{M}(1)}, \dots, 1_{\partial_{\check{a}}^{M}(|\check{a}|)} \dot{\rangle} \\ 
\end{eqnarray*}

We take the amalgamation permutations to be the identity, since the unit $I_{\odot}$ has no vertical inputs. If it did there would be no bijection between the inputs of both sides. 

The only problem is the definition of $\alpha^{\odot}$. The problem is that $M$ may have nonstandard amalgamation. That is the multiplication $M \otimes_{O} M \rightarrow M$ need not be strict -- it can mix the inputs according to some nontrivial permutation. We have used it to define the horizontal typing. Because of this the naive associativity isomorphism $A \odot (B \odot C) \rightarrow (A \odot B) \odot C$ is not even well defined, since the values it ``should'' have are not necessarily among the elements of $(A \odot B) \odot C$.

The correct solution, for geometrical and other reasons\footnote{For example, the constructions in section \ref{comparisonsection} depend critically on this definition}, is the following. Denote by $\gamma$ (more precisely $\gamma^{M}$) the amalgamation permutations of the multiplication map of $M$ (that is of $\mu: M \otimes_{O} M \rightarrow M$, which lives in $\mathbf{Sig}_{a}/O$). Define

\begin{displaymath}
\alpha^{\odot}_{A, B, C}(\dot{\langle} a, \dot{\langle} b_{i}, c_{i, j} \dot{\rangle} \dot{\rangle}) = \dot{\langle} \dot{\langle} a, b_{i} \dot{\rangle}, c_{\gamma_{\langle \check{a}, \check{b}_{i} \rangle}^{-1}(i, j)} \dot{\rangle}
\end{displaymath}
as a function. We leave it to the reader to see that the term on the left is well defined. Indeed, this is the only formula which works when the horizontal inputs of $\dot{\langle} a, b_{i} \dot{\rangle}$ are all distinct.

We must define the vertical amalgamation permutations. For this consider $a, b_{i}$ and $c_{i,j}$ as formal variables. Let $\kappa_{\dot{\langle} a, \dot{\langle} b_{i}, c_{i, j} \dot{\rangle} \dot{\rangle}}$ be the permutation which sends each formal variable on the list $\dot{\langle} a, \dot{\langle} b_{i}, c_{i, j} \dot{\rangle} \dot{\rangle}$ to itself on the list $\dot{\langle} \dot{\langle} a, b_{i} \dot{\rangle}, c_{\gamma_{\langle \check{a}, \check{b}_{i} \rangle}^{-1}(i, j)} \dot{\rangle}$. More precisely these lists are

\begin{eqnarray*}
\dot{\langle} a, \dot{\langle} b_{1}, c_{1, 1}, \dots, c_{1, l_{1}} \dot{\rangle} \dots \dot{\langle} b_{k}, c_{k, 1}, \dots, c_{k, l_{k}} \dot{\rangle} \dot{\rangle}\\
\dot{\langle} \dot{\langle} a, b_{1}, \dots, b_{k} \dot{\rangle}, c_{\gamma_{\langle \check{a}, \check{b}_{i} \rangle}^{-1}(1, 1)}, \dots, c_{\gamma_{\langle \check{a}, \check{b}_{i} \rangle}^{-1}(k, l_{k})} \dot{\rangle} \\
\end{eqnarray*}

To define the amalgamation permutations of $\alpha^{\odot}$, which we will denote by $\pi$, we make $\kappa$ act on blocks of the appropriate length (the length of the inputs of each function symbol)

\begin{displaymath}
\pi_{\dot{\langle} a, \dot{\langle} b_{i}, c_{i, j} \dot{\rangle} \dot{\rangle}} = \kappa_{\dot{\langle} a, \dot{\langle} b_{i}, c_{i, j} \dot{\rangle} \dot{\rangle}} \ast (1_{(|a|]}, \dots 1_{(|c_{k, l_{k}}|]})
\end{displaymath}
thus, each block of inputs ``tracks'' the position of its corresponding function symbol. Again, this permutation is the only well defined one when all the inputs of $a, b_{i}$ and $c_{i, j}$ are distinct. Therefore by the (proof of the) separation principle it is the only formula that can be natural, given what we want to do with the function symbols.

We will now prove that $\alpha^{\odot}$ is natural and satisfies the pentagon identity. The rest of the proof that $(\odot, \alpha, \lambda, \rho)$ defines a monoidal fibration structure is very easy and formally identical to the corresponding part of the proof for $\otimes$ in $\mathbf{Sig}_{a}$.

\paragraph{Naturality of $\alpha^{\odot}$.} We consider the diagram

\begin{center}
\begin{tikzpicture}
\matrix (m) [matrix of math nodes, row sep = 1cm, column sep = 2cm, text height = 1.5ex, text depth = .25ex] {
A \odot (B \odot C) & (A \odot B) \odot C & M \\
A' \odot (B' \odot C') & (A' \odot B') \odot C' & N \\
};

\path[->] (m-1-1) edge node[auto] {$\alpha_{A, B, C}^{\odot}$} (m-1-2)
			 (m-2-1) edge node[auto] {$\alpha_{A', B', C'}^{\odot}$} (m-2-2)
			 (m-1-1) edge node[auto, swap] {$f \odot (g \odot h)$} (m-2-1)
			 (m-1-2) edge node[auto] {$(f \odot g) \odot h$} (m-2-2)
			 (m-1-3) edge node[auto] {$u$} (m-2-3);

\end{tikzpicture}
\end{center}
All three morphisms are over the homomorphism of monoids $u$. We will check that they are equal as functions first, and then consider the amalgamation permutations. Consider the term 

\begin{displaymath}
\dot{\langle} a,  \dot{\langle} b_{i}, c_{i, j} \dot{\rangle} \dot{\rangle}
\end{displaymath}
Applying both ways to go around the diagram we obtain

\begin{displaymath}
\dot{\langle} \dot{\langle} f(a), g(b_{\sigma_{\check{a}}^{-1}(i)}) \dot{\rangle}, h(c_{\xi_{1}^{-1}(i, j)}) \dot{\rangle}
\end{displaymath}
and

\begin{displaymath}
\dot{\langle} \dot{\langle} f(a), g(b_{\sigma_{\check{a}}^{-1}(i)}) \dot{\rangle}, h(c_{\xi_{2}^{-1}(i, j)}) \dot{\rangle}
\end{displaymath}
Where $\sigma$ are the amalgamation permutations of $u$, and $\xi_{1}$ and $\xi_{2}$ are given as follows\footnote{The ``check'' symbol over the lowermost index $a$ was replaced by $\partial$ due to \TeX-nical issues.}:

\begin{eqnarray*}
\xi_{1} & = & \gamma_{\langle \check{f(a)}, \check{g(b_{\sigma_{\partial{a}}^{-1}(i)})} \rangle}^{N} \circ \sigma \otimes \sigma_{\langle \check{a}, \check{b_{i}} \rangle}\\
\xi_{2} & = & \sigma_{\mu(\check{a}, \check{b}_{i})} \circ \gamma_{\langle \check{a}, \check{b}_{i} \rangle}^{M} \\
\end{eqnarray*}
Their equality follows from the fact that $u$ is a homomorphism of monoids -- this is the equality required from the amalgamation permutations of a homomorphism.

We are left with proving that the amalgamation permutations are equal, thus we must prove that

\begin{eqnarray*}
\pi_{\dot{\langle} f(a), \dot{\langle} g(b_{\sigma_{\check{a}}^{-1}(i)}), h(c_{\sigma \otimes \sigma_{\langle \check{a}, \check{b}_{i} \rangle}^{-1}(i, j)}) \dot{\rangle} \dot{\rangle}} \circ \tau \odot (\delta \odot \zeta)_{\dot{\langle} a, \dot{\langle} b_{i}, c_{i, j} \dot{\rangle} \dot{\rangle}} = \\
(\tau \odot \delta) \odot \zeta_{ \dot{\langle} \dot{\langle} a, b_{i} \dot{\rangle}, c_{\gamma_{\langle \check{a}, \check{b}_{i} \rangle}^{-1}(i, j)} \dot{\rangle}} \circ \pi_{\dot{\langle} a, \dot{\langle} b_{i}, c_{i, j} \dot{\rangle} \dot{\rangle}}
\end{eqnarray*}

Both these permutations permute the input blocks of the function symbols $a, b_{i}$ and $c_{i,j}$, and apply some permutation inside each block. This follows from our definitions of $\pi$ and $\beta \odot \chi$. We will prove their equality in two (concurrent) steps: we will show that the block permutations are equal, and then that the same permutation is applied inside each block.

To see the equality of block permutations we argue for each function symbol. The argument for $a$ is trivial. The arguments for $b_{i}$ are similar to (and simpler than) the arguments for $c_{i,j}$. We will therefore only consider those last symbols. Each $c_{i_{0},j_{0}}$ is part of a larger symbol $\dot{\langle} b_{i_{0}}, c_{i_{0},j} \dot{\rangle}$. Let us analyze what both sides do to this block and its elements.

The left permutation applies $\delta \odot \zeta_{\dot{\langle} b_{i_{0}}, c_{i_{0},j} \dot{\rangle}}$ to the input block of this larger symbol, and moves it to its position in $\dot{\langle} f(a), \dot{\langle} g(b_{\sigma_{\check{a}}^{-1}(i)}), h(c_{\sigma \otimes \sigma_{\langle \check{a}, \check{b}_{i}, \rangle}^{-1}(i,j)}) \dot{\rangle}  \dot{\rangle}$ (i.e. applies $(1, \sigma_{\check{a}})$ to the input blocks). In particular $\zeta_{c_{i_{0}, j_{0}}}$ is applied to our input block, and it is placed on the block of $h(c_{i_{0}, j_{0}})$ in $\dot{\langle} f(a), \dot{\langle} g(b_{\sigma_{\check{a}}^{-1}(i)}), h(c_{\sigma \otimes \sigma_{\langle \check{a}, \check{b}_{i}, \rangle}^{-1}(i,j)}) \dot{\rangle}  \dot{\rangle}$. The final $\pi$ moves the block to its position in $\dot{\langle} \dot{\langle} f(a), g(b_{\sigma_{\check{a}}^{-1}(i)}) \dot{\rangle}, h(c_{\xi_{2}^{-1}(i, j)}) \dot{\rangle}$, with $\xi_{2}$ given above.

The right permutation breaks up this bigger block, since $\pi$ is applied first. By definition of $\pi$, the input block of $c_{i_{0},j_{0}}$ is moved to its position in $\dot{\langle} \dot{\langle} a, b_{i} \dot{\rangle}, c_{\gamma_{\langle \check{a}, \check{b}_{i} \rangle}^{-1}(i,j)} \dot{\rangle}$. Then we must apply $(\tau \odot \delta) \odot \zeta_{ \dot{\langle} \dot{\langle} a, b_{i} \dot{\rangle}, c_{\gamma_{\langle \check{a}, \check{b}_{i} \rangle}^{-1}(i, j)} \dot{\rangle}}$. Looking at the definition, we see that $\zeta_{c_{i_{0}, j_{0}}}$ is applied to our block, and then all the blocks (including those of $a$ and $b_{i}$, which are permuted by $(1, \sigma_{\check{a}})$ before this) are permuted by $(1, \sigma_{\mu(\check{a}, \check{b}_{i})})$. This means that the input blocks of $c_{i,j}$ are permuted by $\sigma_{\mu(\check{a}, \check{b}_{i})}$. This means that our block lands on the block of $h(c_{i_{0}, j_{0}})$ in $\dot{\langle} \dot{\langle} f(a), g(b_{\sigma_{\check{a}}^{-1}(i)}) \dot{\rangle}, h(c_{\xi_{1}^{-1}(i, j)}) \dot{\rangle}$.

But we know that $\xi_{1} = \xi_{2}$. Therefore the block permutations are equal. We have also seen that $\zeta_{c_{i_{0}, j_{0}}}$ is applied to our block in both cases. Thus both permutations are equal.

\paragraph{The pentagon identity.} Since we have established naturality, we can apply the separation principle \ref{separationprinciple}. All the functors in the pentagon diagram are jointly agreeable and separated -- their values on prone morphisms (which are constructed by pullback, as in $\mathbf{Sig}_{a}$) are strict. To see separability consider a term 

\begin{displaymath}
\dot{\langle} a, \dot{\langle} b_{i}, \dot{\langle} c_{i, j}, d_{i, j, k} \dot{\rangle} \dot{\rangle} \dot{\rangle}
\end{displaymath}
label the inputs of $a$ by consecutive natural numbers, starting with $0$, then label the inputs of $b_{1}$ with consecutive numbers after the ones used for $a$. Repeat this process until the last $d_{i, j, k}$ is reached. Label the outputs so as to maintain composability. This defines the needed lift. By the separation principle we are reduced to checking the pentagon identity on function symbols.

After some amount of calculation we find that we must compare

\begin{displaymath}
\dot{\langle} \dot{\langle} \dot{\langle} a, b_{i} \dot{\rangle}, c_{\gamma_{\langle \check a, \check b_{i} \rangle}^{-1}(i, j)} \dot{\rangle},  d_{(1 \otimes \gamma)_{\langle \check a, \langle \check b_{i}, \check c_{i, j} \rangle \rangle}^{-1} \circ \gamma_{\langle \check a, \mu(\check{b}_{i}, \check c_{i, j}) \rangle}^{-1} (i, j, k)} \dot{\rangle}
\end{displaymath}
and\footnote{Is five levels of indexing baroque enough?}

\begin{displaymath}
\dot{\langle} \dot{\langle} \dot{\langle} a, b_{i} \dot{\rangle}, c_{\gamma_{\langle \check a, \check b_{i} \rangle}^{-1}(i, j)} \dot{\rangle}, d_{(\gamma \otimes 1)_{\langle \dot{\langle} \check a, \check b_{i} \dot{\rangle}, \check c_{i, j} \rangle}^{-1} \circ \gamma_{\langle \mu(\check{a}, \check{b}_{i}), \check c_{\gamma_{\langle \check a, \check b_{i} \rangle}^{-1}(i, j)} \rangle}^{-1}(i, j, k)} \dot{\rangle}
\end{displaymath}
which comes down to the equality

\begin{displaymath}
(1 \otimes \gamma)_{\langle \check a, \langle \check b_{i}, \check c_{i, j} \rangle \rangle}^{-1} \circ \gamma_{\langle \check a, \mu(\check b_{i}, \check c_{i, j}) \rangle}^{-1} = (\gamma \otimes 1)_{\langle \langle \check a, \check b_{i} \rangle, \check c_{i, j} \rangle}^{-1} \circ \gamma_{\langle \mu(\check{a}, \check b_{i}), \check c_{\gamma_{\langle \check a, \check b_{i} \rangle}^{-1}(i, j)} \rangle}^{-1}
\end{displaymath}
This equality is satisfied by the amalgamation permutations of $\mu$ by virtue of associativity.

\subsection{Distributivity for Monoidal Signatures}
We can now define the distributivity structure on $\mathbf{Sig}_{ma}$ which will give us the web monoids used in the definition of opetopic sets. To define a distributivity structure we need only specify $\varphi_{A, B, X}$ and $\psi_{X}$, which satisfy certain coherence conditions.

By definition, $\varphi_{A, B, X} : (A \otimes X) \odot (B \otimes X) \rightarrow (A \odot B) \otimes X$ maps the term

\begin{displaymath}
\dot{\langle} \langle a, x_{0,1}, \dots, x_{0, l_{0}} \rangle, \langle b_{1}, x_{1,1}, \dots, x_{1, l_{1}} \rangle, \dots, \langle b_{k}, x_{k, 1}, \dots, x_{k, l_{k}} \rangle \dot{\rangle}
\end{displaymath}
to

\begin{displaymath}
\langle \dot{\langle} a, b_{1}, \dots, b_{k} \dot{\rangle}, x_{0, 1}, \dots, x_{0, l_{0}}, x_{1, 1}, \dots, x_{1, l_{1}}, \dots, x_{k, 1}, \dots, x_{k, l_{k}} \rangle
\end{displaymath}
with trivial amalgamation permutations.

$\psi_{X}: I_{\odot} \rightarrow I_{\odot} \otimes X$ is defined by

\begin{displaymath}
1_{o} \mapsto \langle 1_{o}, - \rangle
\end{displaymath}
for $o \in O$ (the set of types of $M$). The $(-)$ represents an empty list, since the vertical inputs of elements of $I_{\odot}$ are empty.

\begin{theorem}\label{distributivityforsigma}
The above definitions give $\mathbf{Sig}_{ma}$ a distributivity structure of $\otimes$ over $\odot$.
\end{theorem}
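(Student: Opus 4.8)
The plan is to discharge the three obligations implicit in the notion of a distributivity structure: that $\varphi$ and $\psi$ are well-defined morphisms of $\mathbf{Sig}_{ma}$; that they are fibered natural in all of $A$, $B$, $X$ (naturality in $A,B$ is the naturality of the monoidal structure map of $\tilde{R}(X) = (-)\otimes X$, while naturality in $X$ is the requirement that each $R(f) = (-)\otimes f$ be a $\odot$-monoidal natural transformation); and that they satisfy the coherence diagrams collected in appendix \ref{appendixa}, equivalently that the structure maps $(\alpha^{\otimes})^{-1}$ and $\rho^{\otimes}$ of $\tilde{R}$ become $\odot$-monoidal. Well-definedness is immediate: the composability constraint defining $(A \odot B) \otimes X$ asks that the vertical output type of $\dot{\langle} a, b_{i} \dot{\rangle}$ agree with the vertical output type required in order to prepend the $x$'s, and that output type is $\mu(\check{a}, \check{b}_{i})$ — exactly the datum already present in the source term; for $\psi$ nothing beyond matching output types is needed, as $I_{\odot}$ has empty vertical inputs. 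That $\varphi$ and $\psi$ are \emph{iso}morphisms is equally clear: each is a bijection on function symbols (from $\dot{\langle}\dot{\langle} a, b_i\dot{\rangle},\dots\dot{\rangle}$ one recovers the $b_i$ and then redistributes the concatenated list of $x$'s according to the known arities) and an isomorphism on types, with trivial amalgamation.

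The heart of the proof is naturality, which I would treat exactly as the naturality of $\alpha^{\odot}$ was treated above. Fix a monoid homomorphism $u : M \to N$ in $\mathbf{Sig}_{a}$, with amalgamation permutations $\sigma$, and morphisms $f, g, h$ of signatures over it. Writing out the two legs of the naturality square for $\varphi$ and comparing on function symbols, both legs produce $\langle \dot{\langle} f(a), g(b_{\sigma_{\check{a}}^{-1}(1)}), \dots \dot{\rangle}, \dots \rangle$ with the $x$'s concatenated and reshuffled, and the only point to verify is that the reordering of the $x$-blocks obtained by first applying the $f \odot g$-shuffle and then $\varphi$ coincides with the one obtained by first applying $\varphi$ and then the $(f \odot g)\otimes h$-shuffle. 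Since the amalgamation permutation of a $\otimes$-composite is of the form $\sigma_{a} \ast (\dots)$, this is a direct application of lemma \ref{lemmaforfunctorialityofotimes} and the displayed formula for inverses in $\mathbf{S}$; the amalgamation permutations of $\varphi$ itself contribute nothing, being trivial. Naturality in $X$ alone — the point flagged in the footnote to the definition as non-obvious — is the special case $f = 1$, $g = 1$, and is covered by the same computation, and naturality of $\psi$ is trivial because $I_{\odot}$ carries no vertical inputs.

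Once naturality is in place, the coherence diagrams of appendix \ref{appendixa} are dispatched uniformly by the separation principle \ref{separationprinciple}. Every functor occurring in those diagrams is assembled from $\otimes$, $\odot$, $I_{\odot}$, identities, and the coherence isomorphisms applied to the variables; such functors are jointly agreeable, since their values on prone morphisms — constructed by pullback in $\mathbf{Sig}_{ma}$ exactly as in $\mathbf{Sig}_{a}$ — are strict, and the source functor of each diagram is separated: given any (possibly deeply nested) term one produces a lift with injective typing by relabelling all vertical inputs with consecutive natural numbers, read off left to right, and relabelling the outputs to preserve composability, just as in the pentagon argument for $\alpha^{\odot}$. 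Hence each coherence diagram reduces to an identity on function symbols, where every map involved is a reassociation composed with a concatenation of the \emph{same} lists of $x$'s in the \emph{same} order, so the two composites agree on the nose; the diagrams involving $\psi$ or $\rho^{\otimes}$ are even more immediate, there being no $x$-blocks to track.

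The main obstacle is the naturality of $\varphi$ — concretely, the verification that the two induced reorderings of the $x$-blocks coincide. The difficulty is purely notational: the identity is forced once one works consistently in the double-index notation for $\mathbf{S}$, but it is precisely the sort of long term comparison the introduction warns against, which is why the separation principle is invoked the moment naturality is available, so that no further manipulation of permutations is required for the coherence conditions.
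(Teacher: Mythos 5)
Your decomposition — well-definedness of $\varphi$ and $\psi$, an explicit naturality computation carried out in the operad of symmetries, and then the separation principle \ref{separationprinciple} to reduce all the coherence diagrams of appendix \ref{appendixa} to checks on function symbols — is exactly the paper's proof (appendix \ref{coherencefordistributivity}), including the observation that joint agreeability comes from strictness of prone morphisms and separatedness from relabelling inputs by consecutive integers. The naturality step is also handled the same way: the paper settles the function symbols with the inverse formula in $\mathbf{S}$ and the amalgamation permutations with associativity of $\ast$, which is the content of lemma \ref{lemmaforfunctorialityofotimes} that you invoke.

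The one place your argument is too quick is the claim that, after separation, every coherence diagram closes ``on the nose'' because all the maps are mere reassociations concatenating the same lists of $x$'s in the same order. That is true for conditions II--VII, but false as stated for condition I: $\alpha^{\odot}$ is \emph{not} a plain reassociation. It reindexes the innermost symbols by $\gamma^{-1}$, where $\gamma$ is the (generally nonstandard) amalgamation of the multiplication $\mu : M \otimes_{O} M \rightarrow M$, and it carries its own nontrivial amalgamation permutations $\pi$, which act on the $x$-blocks when one forms $\alpha^{\odot} \otimes 1$. So even at the level of function symbols one must verify that the two routes around condition I produce the \emph{same} $\gamma^{-1}$-shuffle of the $c$'s and of their attached $x$-blocks; this holds because the horizontal typing of $A \otimes X$ coincides with that of $A$ (the horizontal output of $\langle a, x_{i} \rangle$ is $\check{a}$), so the identical component $\gamma_{\langle \check{a}, \check{b}_{i} \rangle}$ governs both legs. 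This is a short but genuine computation — the paper singles it out as the one coherence condition where permutations survive the separation principle — and your proof as written would stall there without it.
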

The proof of this theorem consists of checking the coherence conditions listed after the definition of a distributivity structure (and drawn as diagrams in appendix \ref{appendixa}). This is done in appendix \ref{coherencefordistributivity}.

Using this theorem and the main theorem \ref{fiberedttt} we obtain a section $\mathcal{W}: Mon(\mathbf{Sig}_{a}) \rightarrow Mon(\mathbf{Sig}_{ma}, \otimes)$ of the fibration of $\otimes$-monoids in monoidal signatures. It associates to each ordinary monoid $M \in Mon(\mathbf{Sig}_{a})$ the corresponding web monoid $\mathcal{W}(M) \in Mon(\mathbf{Sig}_{ma}, \otimes)$. Since $(\mathbf{Sig}_{ma}, \otimes)$ is a pullback of $(\mathbf{Sig}_{a}, \otimes)$, the web monoid is naturally a monoid in $\mathbf{Sig}_{a}$ (we can forget that $M$ is a monoid after forming $\mathcal{W}(M)$). If we wish to emphasize the difference between these two structures we will write $|\mathcal{W}(M)|$ for the image in $\mathbf{Sig}_{a}$. We will not be consistent about this, however.

The name ``web monoid'' comes from the fact that its elements look like webs, at least in some circumstances. In any case the pasting diagram monoids $\mathbf{S}_{n}$ (defined below) can catch flies of dimension $n + 1$.

\section{The Category of Opetopic Sets}

An opetopic set $X$ is given by

\begin{enumerate}
\item A sequence of objects $X_{n} \in \mathbf{Sig}_{a}/X_{n-1}$, each in the fiber over the previous one (considered as a set), for $n \in \mathbb{N}_{> 0}$. By definition $X_{0}$ is a set.
\item A sequence of monoids $\mathbf{S}_{n} \in Mon(\mathbf{Sig}_{a})/X_{n}$, for $n \in \mathbb{N}$.
\item A sequence of strict morphisms $X_{n+1} \xrightarrow{\vartheta_{n}^{X}} \mathbf{S}_{n}$ in $\mathbf{Sig}_{a}/X_{n}$, equipped with prone arrows (in $Mon(\mathbf{Sig}_{a})$)
\end{enumerate}

\begin{center}
\begin{tikzpicture}
\matrix (m) [matrix of math nodes, row sep = 1cm, column sep = 2cm, text height = 1.5ex, text depth = .25ex] {
\mathbf{1} & \mathbf{S}_{0} && \mathcal{W}(\mathbf{S}_{n}) & \mathbf{S}_{n+1} \\
\lbrace \ast \rbrace^{\dag} & X_{0}^{\dag} && S_{n}^{\dag} &  X_{n+1}^{\dag}\\
};

\path[->] (m-1-2) edge node[auto, swap] {$\xi_{-1}^{X}$} (m-1-1)
			 (m-1-2) edge node[auto] {$\partial^{\mathbf{S}_{0}}$} (m-2-2)
			 (m-2-2) edge node[auto, swap] {$(\exists !)^{\dag}$} (m-2-1)
			 (m-1-1) edge  (m-2-1)
			 (m-1-5) edge node[auto, swap] {$\xi_{n}^{X}$} (m-1-4)
			 (m-1-5) edge node[auto] {$\partial^{\mathbf{S}_{n+1}}$} (m-2-5)
			 (m-2-5) edge node[auto, swap] {$(\vartheta_{n}^{X})^{\dag}$} (m-2-4)
			 (m-1-4) edge node[auto, swap] {$\partial^{\mathcal{W}(\mathbf{S}_{n})}$} (m-2-4);

\end{tikzpicture}
\end{center}

A morphism $f: X \rightarrow Y$ of opetopic sets is a sequence of functions $f_{n}: X_{n} \rightarrow Y_{n}$, for $n \in \mathbb{N}$, such that:

\begin{enumerate}
\item The morphism $f_{n+1}: X_{n+1} \rightarrow Y_{n+1}$ is well defined as a strict morphism in $\mathbf{Sig}_{a}$ over $f_{n}$, for $n \in \mathbb{N}_{> 0}$.
\item The induced homomorphisms $\bar{f}_{n}: \mathbf{S}_{n} \rightarrow \mathbf{T}_{n}$ make the following diagrams commute
\end{enumerate}

\begin{center}
\begin{tikzpicture}[overline/.style={preaction = {draw = white, -, line width = 6pt}}]
\matrix (m) [matrix of math nodes, row sep = 1.5cm, column sep = 1.5cm, text height = 1.5ex, text depth = .25ex] {
& \mathbf{1} && \mathbf{S}_{0} \\
\mathbf{1} && \mathbf{T}_{0} \\
& \lbrace \ast \rbrace ^{\dag} && X_{0}^{\dag} \\
\lbrace \ast \rbrace ^{\dag} && Y_{0}^{\dag} \\
};

\path[->] (m-1-4) edge node[auto, swap] {$\xi_{-1}^{X}$} (m-1-2)
			 (m-1-2) edge node[auto, swap] {$1$} (m-2-1)
			 (m-1-4) edge node[auto] {$\bar{f}_{0}$} (m-2-3)
			 (m-1-4) edge node[auto] {$\partial^{\mathbf{S}_{0}}$} (m-3-4)
			 (m-3-4) edge node[auto] {$f_{0}^{\dag}$} (m-4-3)
			 (m-4-3) edge node[auto, swap] {$(\exists !)^{\dag}$} (m-4-1)
			 (m-1-2) edge node[auto] {} (m-3-2)
			 (m-3-2) edge node[auto, swap] {$1$} (m-4-1)
			 (m-3-4) edge node[auto] (zuck) {} node[swap, left = 1cm of zuck, auto] {$(\exists !)^{\dag}$} (m-3-2)
			 (m-2-1) edge node[auto] {} (m-4-1)
			 (m-2-3) edge[overline] node[auto] (ruck) {} node[above = 1cm of ruck, auto] {$\partial^{\mathbf{T}_{0}}$} (m-4-3)
			 (m-2-3) edge[overline] node[auto, swap] (buck) {} node[swap, right = 1cm of buck, auto] {$\xi_{-1}^{Y}$} (m-2-1);

\end{tikzpicture}
\end{center}

\begin{center}
\begin{tikzpicture}[overline/.style={preaction = {draw = white, -, line width = 6pt}}]
\matrix (m) [matrix of math nodes, row sep = 1.5cm, column sep = 1.5cm, text height = 1.5ex, text depth = .25ex] {
& \mathcal{W}(\mathbf{S}_{n}) && \mathbf{S}_{n+1} \\
\mathcal{W}(\mathbf{T}_{n}) && \mathbf{T}_{n+1} \\
& S_{n}^{\dag} && X_{n+1}^{\dag} \\
T_{n}^{\dag} && Y_{n+1}^{\dag} \\
};

\path[->] (m-1-4) edge node[auto, swap] {$\xi_{n}^{X}$} (m-1-2)
			 (m-1-2) edge node[auto, swap] {$\mathcal{W}(\bar{f}_{n})$} (m-2-1)
			 (m-1-4) edge node[auto] {$\bar{f}_{n+1}$} (m-2-3)
			 (m-1-4) edge node[auto] {$\partial^{\mathbf{S}_{n+1}}$} (m-3-4)
			 (m-3-4) edge node[auto] {$f_{n+1}^{\dag}$} (m-4-3)
			 (m-4-3) edge node[auto, swap] {$(\vartheta_{n}^{Y})^{\dag}$} (m-4-1)
			 (m-1-2) edge node[auto] (buck) {} node[swap, below = 1cm of buck, auto] {$\partial^{\mathcal{W}(\mathbf{S}_{n})}$} (m-3-2)
			 (m-3-2) edge node[auto, swap] {$\bar{f}_{n}^{\dag}$} (m-4-1)
			 (m-3-4) edge node[auto] (zuck) {} node[swap, left = 1cm of zuck, auto] {$(\vartheta_{n}^{X})^{\dag}$} (m-3-2)
			 (m-2-1) edge node[auto, swap] {$\partial^{\mathcal{W}(\mathbf{T}_{n})}$} (m-4-1)
			 (m-2-3) edge[overline] node[auto] (ruck) {} node[above = 1cm of ruck, auto] {$\partial^{\mathbf{T}_{n+1}}$} (m-4-3)
			 (m-2-3) edge[overline] node[auto, swap] (hook) {} node[swap, right = 1cm of hook, auto] {$\xi_{n}^{Y}$} (m-2-1);

\end{tikzpicture}
\end{center}
and

\begin{center}
\begin{tikzpicture}
\matrix (m) [matrix of math nodes, row sep = 1cm, column sep = 1cm, text height = 1.5ex, text depth = .25ex] {
\mathbf{S}_{n} & X_{n+1} \\
\mathbf{T}_{n} & Y_{n+1} \\
};

\path[->] (m-1-2) edge node[auto, swap] {$\vartheta_{n}^{X}$} (m-1-1)
			 (m-1-2) edge node[auto] {$f_{n+1}$} (m-2-2)
			 (m-2-2) edge node[auto, swap] {$\vartheta_{n}^{Y}$} (m-2-1)
			 (m-1-1) edge node[auto, swap] {$\bar{f}_{n}$} (m-2-1);

\end{tikzpicture}
\end{center}
considered in $\mathbf{Set}$, for $n \in \mathbb{N}$

\section{Comparison with ``Polynomial functors and opetopes''}\label{comparisonsection}

We will now compare the web monoid for monoidal signatures with the ``Baez-Dolan slice construction'' as defined in \cite{KJBM}. The answer is that $|\mathcal{W}(M)|$ is the result of the slice construction on $M$. To formally state the answer we need to recall some facts and definitions from \cite{Zawadowski}.

\paragraph{Polynomial endofunctors.} Let $O \in \mathbf{Set}$. A polynomial functor over $O$ is a finitary functor preserving wide pullbacks $\mathbf{Set}/O \rightarrow \mathbf{Set}/O$. A morphism of polynomial functors is a cartesian natural transformation. This gives us a category $\mathbf{Poly}(O)$ of polynomial functors over $O$. These categories naturally assemble into a fibration of polynomial functors over $\mathbf{Set}$, denoted $\mathbf{Poly} \rightarrow \mathbf{Set}$.

The identity functor is polynomial and polynomial endofunctors can be composed. This give $\mathbf{Poly}$ the structure of a strict monoidal fibration. In fact it is a monoidal subfibration of the exponential fibration $Exp(\mathbf{Set})$ (the exponential object in $\mathbf{Cat}/\mathbf{Set}$ of the codomain fibration with itself).

Monoids in this fibration are exactly the polynomial monads -- monads whose underlying functor is polynomial and whose structure morphisms are cartesian.

\paragraph{The action of $\mathbf{Sig}_{a}$ on the basic fibration.} An action of a monoidal fibration on another fibration is defined exactly like an action of a monoidal category on another category, but everything is fibered (functors too -- we still do not require preservation of prone morphisms). The definition is spelled out in detail in \cite{Zawadowski}. Signatures with amalgamation act on the basic fibration $cod: \mathbf{Set}^{\cdot \rightarrow \cdot} \rightarrow \mathbf{Set}$ as follows. If $A \in \mathbf{Sig}_{a}/O$ and $(X \rightarrow O) \in \mathbf{Set}/O$ then we define

\begin{displaymath}
A \star X = \{ (a, x_{1}, \dots, x_{|a|}) : \partial_{a}^{A}(i) = d(x_{i}) \textnormal{ for } i = 1, \dots, |a| \}
\end{displaymath}
again, we allow $|a| = 0$ above. The structure morphism $d^{\star}: A \star X \rightarrow O$ is given by
\begin{displaymath}
d^{\star}(a, x_{1}, \dots, x_{|a|}) = \partial_{a}^{A}(0)
\end{displaymath}

The action of $\star$ on morphisms is defined in an obvious way. This defines an action

\begin{center}
\begin{tikzpicture}
\matrix (m) [matrix of math nodes, column sep = .3cm, row sep = 1cm, text height = 1.5ex, text depth = .25ex]{
\mathbf{Sig}_{a} \times_{\mathbf{Set}} \mathbf{Set}^{\cdot \rightarrow \cdot} && \mathbf{Set}^{\cdot \rightarrow \cdot} \\
& \mathbf{Set}\\
};

\path[->] (m-1-1) edge node[auto] {$\star$} (m-1-3)
			 (m-1-1) edge node[auto,swap] {$p_{a} \times cod$} (m-2-2)
			 (m-1-3) edge node[auto] {$cod$} (m-2-2);

\end{tikzpicture}
\end{center}
The exponential adjoint of this action $rep_{a}: \mathbf{Sig}_{a} \rightarrow Exp(\mathbf{Set})$ is given by $A \mapsto A \star (-)$. It is described by the following theorem 

\begin{theorem}\label{sigaispoly}
$rep_{a}: \mathbf{Sig}_{a} \rightarrow \mathbf{Poly}$ is an equivalence of monoidal fibrations.
\end{theorem}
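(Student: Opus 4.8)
The plan is to exhibit $rep_{a}$ as a fibered functor over $\mathbf{Set}$ which is an equivalence on every fibre and preserves prone morphisms, and then to upgrade this to an equivalence of monoidal fibrations. The guiding slogan is that a polynomial functor remembers only the \emph{set} of inputs of each operation, together with their types, whereas a signature with amalgamation remembers a \emph{linear order} on that set but allows its morphisms to permute the order; forgetting the order is an equivalence, and this forgetting is exactly what $rep_{a}$ does.

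\textbf{Fibrewise description.} Fix $O \in \mathbf{Set}$. I would first record the explicit form of $\mathbf{Poly}(O)$: it is equivalent to the category $\mathbf{P}(O)$ whose objects are quadruples $(B,\,t\colon B\to O,\,(E_{b})_{b\in B},\,(s_{b}\colon E_{b}\to O))$ with each $E_{b}$ finite, and whose morphisms $(B,\dots)\to(B',\dots)$ are pairs consisting of $f\colon B\to B'$ with $t'f=t$ together with bijections $\phi_{b}\colon E_{b}\to E'_{f(b)}$ over $O$; the equivalence sends such a quadruple to $X\mapsto\coprod_{b,\,t(b)=o}\prod_{e\in E_{b}}X_{s_{b}(e)}$, and under it cartesian natural transformations correspond precisely to the pairs $(f,(\phi_{b}))$ (this is the standard pullback-square description of cartesian morphisms of polynomials, and can also be read straight off the formula for $A\star X$). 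Here I would also check that each $A\star(-)$ is genuinely polynomial: it is finitary because each $(|a|]$ is finite, and it preserves wide pullbacks because it is a coproduct of finite products of the projections $X\mapsto X_{o}$, and in $\mathbf{Set}$ coproducts and products of wide-pullback-preserving functors again preserve wide pullbacks.

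\textbf{The fibrewise equivalence.} I would then identify $rep_{a}$ on the fibre over $O$ with the functor $\mathbf{Sig}_{a}/O\to\mathbf{P}(O)$ that sends $\partial\colon A\to O^{\dag}$ to the quadruple with $E_{a}=(|a|]$ (forgetting that this set is an interval) and sends a morphism $(f,\sigma)$ to $(f,\sigma)$ read as a family of set-bijections. This is full and faithful because a permutation of $(|a|]$ \emph{is} the same thing as a bijection of the underlying set, so the relevant $\mathrm{Hom}$-sets agree on the nose; it is essentially surjective because every object of $\mathbf{P}(O)$ becomes isomorphic to one in the image after choosing a linear order on each finite set $E_{b}$. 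The only delicate point is the identification of cartesian transformations with the pairs $(f,(\phi_{b}))$: given cartesian $\tau\colon A\star(-)\Rightarrow B\star(-)$ one recovers $f$ by evaluating at the terminal object $O\in\mathbf{Set}/O$, where $A\star O\cong A$, and recovers the permutation $\sigma_{a}$ by evaluating at $(|a|]\to O,\ i\mapsto\partial_{a}(i)$, on the generic element $(a,(1,\dots,|a|))$; cartesianness forces the resulting tuple to be a bijection respecting types and forces $\tau$ to be determined by this data.

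\textbf{Monoidal and fibered upgrade, and the main obstacle.} To get the monoidal structure I would use the canonical natural isomorphisms $(A\otimes_{O}B)\star X\cong A\star(B\star X)$ and $I(O)\star X\cong X$, both immediate from the defining formulas; these are the comparison cells exhibiting $rep_{a}$ as a strong monoidal functor from $(\mathbf{Sig}_{a},\otimes,I)$ to the strict monoidal fibration $(\mathbf{Poly},\circ,\mathrm{Id})$, the monoidal-functor axioms reducing to the fact that both ways of reassociating $((A\otimes B)\otimes C)\star X$ land on $A\star(B\star(C\star X))$ with $rep_{a}(\alpha^{\otimes}),rep_{a}(\lambda^{\otimes}),rep_{a}(\rho^{\otimes})$ the evident identifications. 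Since $\otimes$ and $\star$ are fibered over $\mathbf{Set}$, $rep_{a}$ is a fibered functor, and it preserves prone morphisms because on both sides these are computed by pullback (trivial amalgamation matches reindexing of polynomial functors inside $Exp(\mathbf{Set})$). A fibered functor over the identity base that is an equivalence on every fibre and preserves prone morphisms is an equivalence of fibrations, and a strong monoidal such functor has a monoidal pseudo-inverse, giving the equivalence of monoidal fibrations. The genuinely non-formal step, and the one I would expect to need the most care, is the identification of cartesian natural transformations between the $A\star(-)$ with the amalgamation data $(f,(\sigma_{a}))$ — i.e.\ pinning down via the right test objects that cartesianness is exactly ``a typed bijection on inputs''; the finitariness/wide-pullback verification, the monoidal coherence, and the fibered compatibilities are all routine bookkeeping.
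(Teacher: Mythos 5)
The paper does not actually prove this theorem: its ``proof'' is the single sentence deferring everything to \cite[section 6]{Zawadowski}, so there is no internal argument to compare yours against. Taken on its own terms, your reconstruction is the standard one and is essentially correct: the three load-bearing steps are (i) the representation theorem identifying finitary wide-pullback-preserving endofunctors of $\mathbf{Set}/O$ with functors of the form $X\mapsto\coprod_{b}\prod_{e\in E_{b}}X_{s_{b}(e)}$ and cartesian transformations between them with pairs $(f,(\phi_{b}))$ where each $\phi_{b}\colon E_{b}\to E'_{f(b)}$ is a typed bijection; (ii) the observation that a signature with amalgamation is exactly such a datum with a chosen linear order on each $E_{a}$, and that its morphisms are exactly the typed bijections, so the fibrewise comparison is fully faithful on the nose and essentially surjective by choosing orders; and (iii) the monoidal comparison $(A\otimes_{O}B)\star X\cong A\star(B\star X)$, which does follow directly from the formulas. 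You correctly identify where the non-formal content sits, namely the generic-element/cartesianness analysis in (i): the precise point is that cartesianness of the naturality square over $E_{a}\to 1_{O}$ forces the image of the generic element $(a,1,\dots,|a|)$ to again be generic, whence the classifying map $(|f(a)|]\to(|a|]$ is a bijection; it would be worth saying this explicitly rather than gesturing at it, since it is the only place where ``equivalence'' could fail. The one thing to be careful about, which you elide, is that you are importing the representation theorem for $\mathbf{Poly}(O)$ as known --- that is legitimate (it is standard, and is proved in \cite{KJBM} and in \cite{Zawadowski}), but it is where most of the weight of the theorem actually lives, so the proof is not self-contained without it. The remaining points (preservation of prone morphisms, the fact that a prone-preserving fibrewise equivalence is an equivalence of fibrations, monoidal coherence up to the choice of composition order on $\mathbf{Poly}$) are, as you say, bookkeeping.
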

The proof is given in \cite[section 6]{Zawadowski}. We will often identify a signature with its associated functor.

Note that if we forget that an object of $\mathbf{Sig}_{a}$ has inputs (but not the output!) we obtain an object of $ \mathbf{Set}^{\cdot \rightarrow \cdot}$, since $O^{\dag} \simeq O \times O^{\ast}$ gives a decomposition of the typing into (output, inputs) -- forgetting the second factor leaves us with a map $A \rightarrow O$, which is an object of $\mathbf{Set}^{\cdot \rightarrow \cdot}$. This defines a fibered forgetful functor $U: \mathbf{Sig}_{a} \rightarrow \mathbf{Set}^{\cdot \rightarrow \cdot}$

We can also construct a fibered functor $\overline{-}: \mathbf{Set}^{\cdot \rightarrow \cdot} \rightarrow \mathbf{Sig}_{a}$, which defines the vertical inputs to be empty. This is neither a left nor right adjoint to $U$. We will call $\overline{X}$ the \emph{sterile signature} associated to $X$. This gives us a useful formula for the action.

\begin{lemma}\label{formulaforaction}
$A \star X = U(A \otimes \overline{X})$.
\end{lemma}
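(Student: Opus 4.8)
The plan is to unwind all the definitions; the asserted equality is literal (not merely up to isomorphism), since the two sides are assembled from the same raw data, and the proof is a bookkeeping exercise.

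First I would make $\overline{X}$ explicit. Given $d : X \to O$ in $\mathbf{Set}^{\cdot \rightarrow \cdot}/O$, the sterile signature $\overline{X}$ has underlying set $X$, every element $x$ has arity $|x| = 0$, and its typing $\partial^{\overline{X}}_{x} : [0] \to O$ is the constant $d(x)$ (there are no positive inputs to specify). Substituting this into the definition of $\otimes_{O}$ on $\mathbf{Sig}_{a}$, an element $\langle a, b_{i} \rangle_{i \in (|a|]}$ of $A \otimes_{O} \overline{X}$ is subject to the condition $\partial^{A}_{a}(i) = \partial^{\overline{X}}_{b_{i}}(0) = d(b_{i})$ for $i \in (|a|]$, which is exactly the condition defining the tuple $(a, b_{1}, \dots, b_{|a|}) \in A \star X$. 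Hence the two sides have the same underlying set.

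Next I would compute the $\mathbf{Sig}_{a}$-typing of $A \otimes_{O} \overline{X}$ and apply $U$. By the formula for $\partial^{A \otimes_{O} B}$, the output type of $\langle a, b_{i} \rangle$ is $\partial^{A}_{a}(0)$ and its list of positive inputs is the concatenation $[\partial^{\overline{X}, +}_{b_{1}}, \dots, \partial^{\overline{X}, +}_{b_{|a|}}]$, which is empty because every $b_{i}$ is nullary in $\overline{X}$. The functor $U$ discards precisely the positive-input part of the typing (via $O^{\dag} \simeq O \times O^{\ast}$) and retains the output map to $O$; therefore the structure map of $U(A \otimes_{O} \overline{X})$ is $\langle a, b_{i} \rangle \mapsto \partial^{A}_{a}(0)$, which is exactly the structure map $d^{\star}$ of $A \star X$. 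So the two sides agree on objects.

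Then I would check agreement on morphisms. A morphism of the domain fibration $\mathbf{Sig}_{a} \times_{\mathbf{Set}} \mathbf{Set}^{\cdot \rightarrow \cdot}$ is a pair consisting of $(f, \sigma) : A \to A'$ over $u : O \to O'$ and $\phi : X \to X'$ over $u$. On one side, $f \star \phi$ sends $(a, x_{1}, \dots, x_{|a|})$ to $(f(a), \phi(x_{\sigma_{a}^{-1}(1)}), \dots, \phi(x_{\sigma_{a}^{-1}(|f(a)|)}))$. On the other side $\overline{\phi} : \overline{X} \to \overline{X'}$ has underlying function $\phi$ and — all arities being $0$ — trivial amalgamation; so $f \otimes_{u} \overline{\phi}$ sends $\langle a, b_{i} \rangle$ to $\langle f(a), \phi(b_{\sigma_{a}^{-1}(j)}) \rangle_{j \in (|f(a)|]}$, with amalgamation permutation $\sigma_{a} \ast (1, \dots, 1)$ acting on length-$0$ blocks, i.e. trivial. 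Applying $U$ forgets this permutation datum and leaves the same function as $f \star \phi$; since $U$, $\overline{-}$, $\otimes$ and $\star$ are all fibered over $\mathbf{Set}$ and project to the same $u$, the equality holds as fibered functors. I do not anticipate any real obstacle here: the only point requiring care is keeping the decomposition $O^{\dag} \simeq O \times O^{\ast}$ straight — remembering that $U$ keeps the output type but kills the (here empty) input list, and that the reindexing by $\sigma_{a}^{-1}$ occurring in $f \star \phi$ is literally the one occurring in $f \otimes_{u} \overline{\phi}$.
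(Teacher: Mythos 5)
Your proof is correct and is exactly the paper's argument, just spelled out: the paper's own proof is the one-line observation that the defining formulas for both sides coincide, and your unwinding of the underlying sets, the structure maps via $O^{\dag}\simeq O\times O^{\ast}$, and the (trivially amalgamated) action on morphisms is precisely the bookkeeping that observation compresses.
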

\begin{proof}
We have equality of the defining formulas for both sides.
\end{proof}
In fact, \emph{defining} the action this way might be a good idea.

Note that there are no morphisms from between sterile and non-sterile signatures, and that $\overline{-}$ is fully faithful.
\begin{corollary}\label{actiononcolimits}
$\star$ preserves coproducts in the left variable and filtered colimits in both
\end{corollary}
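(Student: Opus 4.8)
The plan is to read everything off Lemma~\ref{formulaforaction}. That lemma gives, naturally in both arguments, the identity $A\star X = U(A\otimes\overline{X})$, so the action $\star$ factors as the composite of three fibered functors: $(-)\mapsto\overline{(-)}$ applied to the right argument, the monoidal product $\otimes$ on $\mathbf{Sig}_{a}$, and the forgetful functor $U\colon\mathbf{Sig}_{a}\rightarrow\mathbf{Set}^{\cdot\rightarrow\cdot}$. It therefore suffices to show that each of these preserves the relevant (fibered) colimits: for the right variable of $\star$ we will need $\overline{(-)}$ to preserve fibered filtered colimits, for the left variable nothing of $\overline{(-)}$ at all; $\otimes$ is needed to preserve fibered filtered colimits in both variables and fibered binary coproducts in the left variable; and $U$ is needed to preserve fibered coproducts and fibered filtered colimits.

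The three preservation statements are established separately. For $\otimes$ one checks directly from the explicit formulas for $A\otimes_{O}B$ and its typing $\partial^{A\otimes_{O}B}$ that it commutes with fibered filtered colimits in both variables and with fibered binary (hence finite) coproducts in the left variable; this is exactly the verification that $\mathbf{Sig}_{a}$ satisfies the hypotheses of Theorem~\ref{freemonoidbjt}. For $U$ the statement is essentially definitional given Proposition~\ref{exacnessforsiga}: the fibered coproduct of signatures over a fixed set of types is the disjoint union of their sets of function symbols, which $U$ sends to the corresponding coproduct in $\mathbf{Set}/O$; and in the proof of Proposition~\ref{exacnessforsiga} a fibered filtered colimit in $\mathbf{Sig}_{a}$ is built precisely by first forming the colimit of the underlying diagram along $U$ in $\mathbf{Set}/O$ and then lifting it, so $U$ of that colimit is tautologically the colimit downstairs. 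Finally, $\overline{(-)}$ is fully faithful and identifies $\mathbf{Set}^{\cdot\rightarrow\cdot}$ with the full subfibration of sterile signatures, with inverse $U$; a fibered coproduct or filtered colimit of sterile signatures, computed as just described, again has empty vertical inputs — the colimit constructions never manufacture a nonempty input list out of empty ones — so it is sterile and is also the colimit taken inside $\mathbf{Sig}_{a}$. Hence $\overline{(-)}$ preserves fibered filtered colimits (and coproducts, though we only use the former).

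Assembling: for coproducts in the left variable, $(\bigsqcup_{i}A_{i})\star X = U\bigl((\bigsqcup_{i}A_{i})\otimes\overline{X}\bigr)\cong U\bigl(\bigsqcup_{i}(A_{i}\otimes\overline{X})\bigr)\cong\bigsqcup_{i}U(A_{i}\otimes\overline{X})=\bigsqcup_{i}(A_{i}\star X)$; the same chain with $\varinjlim$ in place of $\bigsqcup$ handles filtered colimits in the left variable; and for the right variable $A\star(\varinjlim_{i}X_{i})=U\bigl(A\otimes\overline{\varinjlim_{i}X_{i}}\bigr)\cong U\bigl(A\otimes\varinjlim_{i}\overline{X_{i}}\bigr)\cong U\bigl(\varinjlim_{i}(A\otimes\overline{X_{i}})\bigr)\cong\varinjlim_{i}(A\star X_{i})$. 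That each isomorphism here is the canonical comparison map (so that $\star$ genuinely preserves the colimit, not merely abstractly sends it to a colimiting object) is immediate from the naturality in Lemma~\ref{formulaforaction} together with the naturality of the comparison maps for $\otimes$, $U$ and $\overline{(-)}$.

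The genuine work is bookkeeping, not mathematics: one must keep track throughout of the distinction between fibered and fiberwise colimits, and of how the arity-by-arity splitting and the amalgamation permutations enter the filtered-colimit construction of Proposition~\ref{exacnessforsiga}, so that the claims ``$U$ preserves it'' and ``a filtered colimit of steriles is sterile'' are literally true for the construction used there. Once that construction is in hand the rest is formal.
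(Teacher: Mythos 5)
Your proposal is correct and follows exactly the paper's route: factor $\star$ as $U(\,(-)\otimes\overline{(-)}\,)$ via Lemma~\ref{formulaforaction} and check that $\overline{(-)}$, $\otimes$ and $U$ each preserve the relevant fibered colimits, with $U$'s preservation of filtered colimits read off from the construction in Proposition~\ref{exacnessforsiga}. The paper's own proof is just a two-line compression of the same argument.
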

\begin{proof}
Looking at lemma \ref{formulaforaction} we see that $\overline{-}$ is cocontinuous and $\otimes$ preserves the listed colimits. $U$ trivially preserves coproducts and preserves filtered colimits by their construction, which was given in \ref{exacnessforsiga}.
\end{proof}

\paragraph{The slice construction from \cite{KJBM}.} There are no fibrations in \cite{KJBM}, and thus we will be forced to restrict our discussion to individual fibers. This means we loose functoriality of the web monoid in the argument monoid $M$. We will fix this in a moment, but first we will give the original construction.

Recall that the slice of a monoidal category by a monoid is naturally a monoidal category. Let $M \in Mon(\mathbf{Poly}(O))$ be a polynomial monad over $O$. We obtain a natural monoidal structure on $\mathbf{Poly}(O)/M$. Then monoids over $M$ are the same as monoids in $\mathbf{Poly}(O)/M$.

Note that free polynomial monads (free monoids) on a polynomial functor exist. They can be constructed using theorem \ref{freemonoidbjt}. An explicit construction can be found in \cite{KJBM}. The same is true for monoids $\mathbf{Poly}(O)/M$.

\begin{remark}\label{monadicityremark}
In fact $Mon(\mathbf{Poly}(O)/M)$ is monadic over $\mathbf{Poly}(O)/M$.
\end{remark}

\begin{lemma}\label{somerandomlemma}
$\mathbf{Poly}(O)/M$ is equivalent to $\mathbf{Set}/M$
\end{lemma}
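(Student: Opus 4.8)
The plan is to pass to the polynomial presentation of $M$ and exploit the fact that a cartesian natural transformation with fixed codomain is completely determined by its effect on the set of operations.

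By Theorem \ref{sigaispoly} (or directly by the standard presentation of polynomial functors) the polynomial monad $M$ is encoded by a diagram
\[ O \leftarrow E \xrightarrow{p} B \xrightarrow{t} O, \]
where $B = \mathcal{U}(M)$ is the underlying set of operations, $p$ records the inputs, and the two maps to $O$ are the input and output typings; thus ``$\mathbf{Set}/M$'' means $\mathbf{Set}/B$. First I would define $\Psi \colon \mathbf{Set}/B \to \mathbf{Poly}(O)/M$: given $g\colon S \to B$, let $g^{*}M$ be the polynomial functor presented by $O \leftarrow S\times_{B}E \to S \to O$, where $S\times_{B}E \to S$ is the projection, $S\times_{B}E \to O$ is the other projection composed with the input typing of $M$, and $S \to O$ is $g$ composed with $t$; the two projections assemble into a cartesian natural transformation $g^{*}M \to M$ with operation component $g$. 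Conversely define $\Phi \colon \mathbf{Poly}(O)/M \to \mathbf{Set}/B$ by sending a cartesian $\phi\colon P \to M$ to its operation component $\phi_{B}\colon B_{P} \to B$, which is a morphism over $B$ because $\phi$ respects the output typings.

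Then I would check that $\Phi$ and $\Psi$ are mutually quasi-inverse. The composite $\Phi\Psi$ is literally the identity. For the other composite, observe that for a cartesian $\phi\colon P \to M$ the square expressing cartesianness exhibits $E_{P}$ as the pullback $B_{P}\times_{B}E$, compatibly with $p_{P}$ and the two typings of $P$; hence the canonical comparison $P \to (\phi_{B})^{*}M = \Psi\Phi(P,\phi)$ is an isomorphism over $M$, and this is natural in $(P,\phi)$. The same pullback-square reasoning shows $\Phi$ is fully faithful: a cartesian transformation over $M$ between $(P,\phi)$ and $(P',\phi')$ has a unique element component once its operation component $B_{P}\to B_{P'}$ over $B$ is fixed — namely the base change of that map along $E \to B$ — and every morphism over $B$ arises in this way; essential surjectivity is witnessed by $\Psi$, so $\Phi$ is an equivalence. (In $\mathbf{Sig}_{a}$-language this is the statement that every object of the slice over $M$ is isomorphic to one with trivial amalgamation permutations whose typing is pulled back from $M$ along a function of function symbols, and such objects, with strict maps over $B$, form exactly $\mathbf{Set}/\mathcal{U}(M)$.)

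The only point needing care — and it is the crux — is the claim that a cartesian natural transformation into a fixed polynomial functor is detected and determined on operations. This is merely an unwinding of the definition of ``cartesian'' together with the polynomial presentation furnished by Theorem \ref{sigaispoly}; I expect no genuine obstacle beyond keeping the pullback bookkeeping straight.
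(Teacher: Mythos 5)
Your proof is correct, but it takes a different route from the paper. The paper's proof is a two-line reduction: it invokes Theorem \ref{sigaispoly} to replace $\mathbf{Poly}(O)/M$ by $(\mathbf{Sig}_{a}/O)/M$, and then appeals to (the fiberwise restriction of) Theorem \ref{basicequivalencethm}, whose content is that every morphism $A\to\mathcal{U}(M)$ in $\mathbf{Sig}_{a}$ factors functorially as an isomorphism followed by a strict morphism, and that strict morphisms into $M$ are the same thing as set maps into the set of function symbols (Corollary \ref{somerandomcorollary}). You instead argue directly in the polynomial presentation $O\leftarrow E\to B\to O$, using the standard fact that a cartesian natural transformation into a fixed polynomial functor is a pullback square and hence is determined by its operation component; this is the argument one finds in \cite{KJBM} and the polynomial-functors literature, and it is sound — your pullback bookkeeping for $\Phi$ and $\Psi$ is exactly the required check. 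The two arguments encode the same phenomenon (your ``element component forced by cartesianness'' is the paper's ``amalgamation permutations normalized away by the functorial factorization''), but they buy different things: your version is self-contained and does not need the strictification machinery, whereas the paper's version is deliberately fibered over $Mon(\mathbf{Sig}_{a})$, so that the same equivalence, not just its restriction to one fiber, is available later as the adjoint monoidal equivalence $\mathbf{Sig}_{a}\downdownarrows\mathcal{U}\simeq\mathcal{U}^{\ast}\mathbf{Set}^{\cdot\rightarrow\cdot}$ underlying the fibered comparison theorem \ref{fiberedcomparisontheorem}; to get that from your construction you would still have to verify naturality of $\Phi$ and $\Psi$ in the monoid $M$ along monoid homomorphisms. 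You also correctly avoid using the monad structure of $M$ anywhere, which matches the paper's remark that the monoid structure is only needed to make the slice monoidal.
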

Here we treat $M$ as the corresponding set of function symbols given by theorem \ref{sigaispoly}.
\begin{proof}
By theorem \ref{sigaispoly} $\mathbf{Poly}(O)/M$ is equivalent to $(\mathbf{Sig}_{a}/O)/M$. We will see in theorem \ref{basicequivalencethm} (when we restrict to fibers) that this category is equivalent to $\mathbf{Set}/M$.
\end{proof}
Of course in the above proof the fact that $M$ is a monoid plays no role. It is only needed to construct the monoidal structure on $\mathbf{Poly}(O)/M$.

We can now give the slice construction. Let $M \in Mon(\mathbf{Poly}(O))$ be a polynomial monad. The category $\mathbf{Poly}(O)/M$ is monoidal and has a free monoid functor. This gives rise to the free monoid monad $T_{M}: \mathbf{Poly}(O)/M \rightarrow \mathbf{Poly}(O)/M$. By lemma \ref{somerandomlemma} this is equivalent to a monad $M^{+}: \mathbf{Set}/M \rightarrow \mathbf{Set}/M$. This monad is polynomial (one can check this directly, but it will also follow from our results).

\begin{definition}
The \cite{KJBM} Baez-Dolan slice construction is the assignment $M \mapsto M^{+}$.
\end{definition}
By remark \ref{monadicityremark} $M^{+}$ is the ``operad for operads over $M$'', as it should be.

We can now state the comparison theorem.

\begin{theorem}\label{comparisontheorem}
For any $M \in Mon(\mathbf{Sig}_{a})$ we have an isomorphism
\begin{displaymath}
rep_{a}(|\mathcal{W}(M)|) \simeq rep_{a}(M)^{+}
\end{displaymath}
\end{theorem}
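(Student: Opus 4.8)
The two sides both represent polynomial endofunctors of $\mathbf{Set}/\mathcal{U}(M)$, where $\mathcal{U}(M)$ is the set of function symbols of $M$ (it is simultaneously the colour set of $rep_a(|\mathcal{W}(M)|)$, since $|\mathcal{W}(M)|\in\mathbf{Sig}_a/\mathcal{U}(M)$, and of $rep_a(M)^{+}$). So it suffices to build a natural isomorphism of these endofunctors; a natural isomorphism is cartesian, hence an isomorphism in $\mathbf{Poly}(\mathcal{U}(M))$. Evaluating the left-hand side on $\phi\in\mathbf{Set}/\mathcal{U}(M)$, Lemma~\ref{formulaforaction} gives $rep_a(|\mathcal{W}(M)|)(\phi)=|\mathcal{W}(M)|\star\phi=U(|\mathcal{W}(M)|\otimes_{\mathcal{U}(M)}\overline{\phi})$, where $\overline{\phi}$ is the sterile signature on $\phi$ and $U\colon\mathbf{Sig}_a/\mathcal{U}(M)\to\mathbf{Set}/\mathcal{U}(M)$ forgets the vertical inputs. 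Since $|\mathcal{W}(M)|$ is the underlying signature of $\mathcal{F}_{\odot}(I_{\otimes})$ in the fibre $\mathbf{Sig}_{ma}/M=\mathbf{Sig}_a/\mathcal{U}(M)$, the first move is to slide $(-)\otimes_{\mathcal{U}(M)}\overline{\phi}$ past the free $\odot$-monoid.

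The key general remark is that the distributivity structure of Theorem~\ref{distributivityforsigma} says precisely that $\widetilde{R}(\overline{\phi})=(-)\otimes_{\mathcal{U}(M)}\overline{\phi}$ is a \emph{strong} $\odot$-monoidal endofunctor (its structure maps are the isomorphisms $\varphi$ and $\psi$), and it is cocontinuous in the $(-)$-variable ($\otimes$ preserves fibred binary coproducts in the left variable and fibred filtered colimits in both). Feeding this into the explicit construction of Theorem~\ref{freemonoidbjt} --- $Y_{0}=I_{\odot}$, $Y_{n+1}=I_{\odot}\sqcup(I_{\otimes}\odot_{M}Y_{n})$ --- one checks, using $\psi^{-1}$ on the $I_{\odot}$ summand and $\varphi^{-1}$ on the $\odot$-summand (compatibly with the structure maps, by naturality of $\varphi,\psi$), that $(Y_{n}\otimes_{\mathcal{U}(M)}\overline{\phi})_{n}$ is the defining sequence of $\mathcal{F}_{\odot}(I_{\otimes}\otimes_{\mathcal{U}(M)}\overline{\phi})$; passing to the colimit, $\mathcal{F}_{\odot}(I_{\otimes})\otimes_{\mathcal{U}(M)}\overline{\phi}\cong\mathcal{F}_{\odot}(I_{\otimes}\otimes_{\mathcal{U}(M)}\overline{\phi})$. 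Finally $I_{\otimes}\otimes_{\mathcal{U}(M)}\overline{\phi}\cong\overline{\phi}$ by the left unit isomorphism $\lambda^{\otimes}$, so $rep_a(|\mathcal{W}(M)|)(\phi)\cong U(\mathcal{F}_{\odot}(\overline{\phi}))$, naturally in $\phi$.

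For the right-hand side I would unwind the definition: $rep_a(M)^{+}=M^{+}$ is $T_{M}$ transported along the equivalence $\mathbf{Poly}(O)/M\simeq\mathbf{Set}/\mathcal{U}(M)$, with $T_{M}$ the free-monoid monad for the slice monoidal structure. By Theorem~\ref{sigaispoly} (identifying $(\mathbf{Poly}(O),\circ)$ with $(\mathbf{Sig}_a/O,\otimes)$) and Lemma~\ref{somerandomlemma}/Theorem~\ref{basicequivalencethm} (identifying $\mathbf{Poly}(O)/M\simeq(\mathbf{Sig}_a/O)/M\xrightarrow{E}\mathbf{Set}/\mathcal{U}(M)$, the last equivalence $E$ absorbing the amalgamation data), $M^{+}(\phi)=E(\mathcal{F}_{\otimes_{M}}(E^{-1}\phi))$, the free $\otimes_{M}$-monoid in the slice $(\mathbf{Sig}_a/O)/M$ on $E^{-1}\phi$. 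It then remains to see $U(\mathcal{F}_{\odot}(\overline{\phi}))\cong E(\mathcal{F}_{\otimes_{M}}(E^{-1}\phi))$. For this I would introduce $\Psi=E^{-1}\circ U\colon\mathbf{Sig}_a/\mathcal{U}(M)\to(\mathbf{Sig}_a/O)/M$, which forgets vertical inputs and re-reads the typing off $M$, and check that it is strong monoidal from $(\mathbf{Sig}_a/\mathcal{U}(M),\odot_{M})$ to $((\mathbf{Sig}_a/O)/M,\otimes_{M})$: the elements $\dot{\langle}a,b_{i}\dot{\rangle}$ of $A\odot_{M}B$ and of $\Psi(A)\otimes_{M}\Psi(B)$ are the same (the $\odot$-matching on horizontal types is the $\otimes$-matching after copying from $M$), with the same output $\mu^{M}(\check{a},\check{b}_{i})$, and $\Psi(I_{\odot})$ is the $\otimes_{M}$-unit $e\colon I_{O}\to M$. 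One also has $E\circ\Psi\cong U$ and $\Psi(\overline{\phi})\cong E^{-1}\phi$. Transferring freeness term-by-term through the explicit construction --- $\Psi(I_{\odot})=I_{\otimes_{M}}$, $\Psi(\overline{\phi}\odot_{M}Y_{n})\cong E^{-1}\phi\otimes_{M}\Psi(Y_{n})$, and $\Psi$ preserves the coproducts and the filtered colimit --- gives $\Psi(\mathcal{F}_{\odot}(\overline{\phi}))\cong\mathcal{F}_{\otimes_{M}}(E^{-1}\phi)$, and applying $E$ yields $U(\mathcal{F}_{\odot}(\overline{\phi}))\cong E\Psi(\mathcal{F}_{\odot}(\overline{\phi}))\cong M^{+}(\phi)$, naturally in $\phi$. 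Composing the two chains finishes the proof; moreover the isomorphism is compatible with the two multiplications --- $\nu$ on the web side and the free multiplication on the $M^{+}$ side --- since both are built out of $\mu$ and $\varphi$ alone, so it even upgrades to an isomorphism of polynomial monads.

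I expect the genuine obstacle to be the strong-monoidality of $\Psi$, i.e.\ the assertion that forgetting vertical inputs carries $\odot_{M}$ to the slice-of-composition product: this is exactly the point at which the combinatorial content of $\odot$ (in particular the $\gamma$-twisted associator $\alpha^{\odot}$, and the fact that $\mu^{M}$ may have nonstandard amalgamation) must be matched against the associativity of composition in $\mathbf{Poly}(O)$, and one has to be careful that working inside $(\mathbf{Sig}_a/O)/M\simeq\mathbf{Set}/\mathcal{U}(M)$ --- or invoking the separation principle~\ref{separationprinciple} --- legitimately suppresses the amalgamation permutations. Everything else (the distributivity slide, the unit isomorphism, the term-by-term transfer of freeness, the bookkeeping of naturality) is formal manipulation of constructions already in hand.
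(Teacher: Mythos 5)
Your proposal is correct and follows essentially the same route as the paper: lemma \ref{formulaforaction} to rewrite $\star$ as $U((-)\otimes\overline{(-)})$, the distributivity structure to slide $(-)\otimes\overline{\phi}$ through the term-by-term free $\odot$-monoid construction, and the strictification equivalence $\mathbf{Poly}(O)/M\simeq\mathbf{Set}/\mathcal{U}(M)$ to identify $M^{+}$ with a free monoid for the transported tensor (the paper just packages this fiberwise argument into the fibered statement \ref{fiberedcomparisontheorem} and verifies the monad multiplication explicitly via the main diagram and proposition \ref{distributivitycompatibility}). The ``genuine obstacle'' you single out --- that forgetting vertical inputs is strong monoidal from $\odot_{M}$ to the slice product, $\gamma$-twisted associators included --- is precisely corollary \ref{uisstrictmonoidal}, which the paper notes is the design constraint that determines $\odot$ in the first place.
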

Using theorem \ref{sigaispoly} to identify monoids in $\mathbf{Sig}_{a}$ with polynomial monads we can write more clearly

\begin{displaymath}
|\mathcal{W}(M)| \simeq M^{+}
\end{displaymath}

Formation of the web monoid is therefore very nearly identical to the Baez-Dolan construction. However the construction of the web monoid is completely different. Since $(-)^{+}$ is not a functor, we can not say that the isomorphism in the theorem is natural. We will see in a moment that when $(-)^{+}$ is extended to a functor by working with fibrations, then the isomorphism is natural.

We will need to reformulate theorem \ref{comparisontheorem} in order to prove it. Note that the equivalence between $\mathbf{Poly}$ and $\mathbf{Sig}_{a}$ is monoidal, and hence induces an equivalence between polynomial monads and monoids in signatures. This in turn gives an equivalence of fibered slices of these fibrations:

\begin{displaymath}
\mathbf{Sig}_{a} \downdownarrows \mathcal{U}_{sig} \simeq \mathbf{Poly} \downdownarrows \mathcal{U}_{poly}
\end{displaymath}
over the equivalence of monoid fibrations. The fibers of the fibered slice of $\mathbf{Poly}$ are (by construction) exactly all the categories of the form $\mathbf{Poly}(O)/M$, where $M$ is a monoid in $\mathbf{Poly}(O)$. Thus we have assembled into a fibration all the categories used in the Baez-Dolan construction. This fibration has a free monoid monad, which will be denoted $(-)^{+}$. The $(-)$ stands for an argument from the base: $M^{+}(X)$ is the free monoid in the fiber over $M$ on $X$. This is the natural way to extend the Baez-Dolan construction into a functor by working with fibrations. It is also the same (up to equivalence) as the extension given by hand in \cite{KJBM}. At this point we will forget about polynomial functors and work exclusively with signatures.

Theorem \ref{basicequivalencethm} gives us an equivalence of fibration which on fibers is exactly the equivalence asserted in lemma \ref{somerandomlemma}. This allows us to carry out the Baez-Dolan construction in all fibers at once. For this we need the pullback action

\paragraph{The pullback action.} We defined $\mathbf{Sig}_{ma}$ as the pullback of $\mathbf{Sig}_{a}$ by the functor $Mon(\mathbf{Sig}_{a}) \rightarrow \mathbf{Set}$ which sends a monoid $M$ to its underlying set of function symbols. We constructed a monoidal structure on it by a corollary of lemma \ref{pullbackspreservealgebra}. But the lemma states much more -- any algebraic structure can be pulled back. In particular we can pull back an action of a lax monoidal fibration, for example the action of $\mathbf{Sig}_{a}$ on the basic fibration. Thus we obtain the following situation:

\begin{center}
\begin{tikzpicture}[overline/.style={preaction = {draw = white, -, line width = 6pt}}]
\matrix (m) [matrix of math nodes, column sep = -0.5cm, row sep = .8cm, text height = 1.5ex, text depth = .25ex]{
&& \mathcal{U}^{\ast} \mathbf{Set}^{\cdot \rightarrow \cdot} &&& \mathbf{Set}^{\cdot \rightarrow \cdot} \\
\\
\mathbf{Sig}_{ma} \times_{Mon(\mathbf{Sig}_{a})} \mathcal{U}^{\ast} \mathbf{Set}^{\cdot \rightarrow \cdot} &&& \mathbf{Sig}_{a} \times_{\mathbf{Set}} \mathbf{Set}^{\cdot \rightarrow \cdot} \\
& Mon(\mathbf{Sig}_{a}) &&& \mathbf{Set} \\
};

\path[->] (m-1-3) edge node[auto] {} (m-1-6)
			 (m-3-1) edge node[auto] {$\mathcal{U}^{\ast} \star$} (m-1-3.south west)
			 (m-3-1) edge node[auto] {} (m-4-2)
			 (m-4-2) edge node[auto] {$\mathcal{U}^{\ast}$} (m-4-5)
			 (m-3-4) edge node[auto] {$\star$} (m-1-6)
			 (m-3-4) edge node[auto] {} (m-4-5)
			 (m-1-6) edge node[auto] {$cod$} (m-4-5)
			 (m-1-3) edge node[auto] (sth) {} node[above right = .6cm of sth] {$\mathcal{U}^{\ast} cod$} (m-4-2)
			 (m-3-1) edge[overline] node[auto] {} (m-3-4);

\end{tikzpicture}
\end{center}
Again, the formula for $\mathcal{U}^{\ast} \star$ is the same as the one for $\star$, but the set of types (or the codomain of $d: X \rightarrow M$) forms a monoid in $\mathbf{Sig}_{a}$. We will denote the pullback action by $\star$ in the sequel, and also denote $\mathcal{U}^{\ast}U$ as $U$. This should not cause any confusion. Note that the formula of lemma \ref{formulaforaction} is still true for the pullback action, as is its corollary.

At this point it should be clear that the fiber of $\mathcal{U}^{\ast} \mathbf{Set}^{\cdot \rightarrow \cdot}$ over $M \in Mon(\mathbf{Sig}_{a})$ is isomorphic to $\mathbf{Set}/M$, the slice of $\mathbf{Set}$ over the set of function symbols of $M$. Theorem \ref{basicequivalencethm} gives an adjoint equivalence $\mathbf{Sig}_{a} \downdownarrows \mathcal{U} \simeq \mathcal{U}^{\ast} \mathbf{Set}^{\cdot \rightarrow \cdot}$, and hence we can view the monad $(-)^{+}$ as acting on the latter fibration. This allows us to state the fibered version of the comparison theorem.

\begin{theorem}[Comparison Theorem -- Fibered Version]\label{fiberedcomparisontheorem}
There is an isomorphism of monads $\mathcal{W}(-) \star (=) \simeq (-)^{+}(=)$, where $\mathcal{W}$ is the web monoid functor.
\end{theorem}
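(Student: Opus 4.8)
The plan is to prove the statement in each fibre and then observe that everything in sight is fibered. Fix $M \in Mon(\mathbf{Sig}_a)$; the fibre of $\mathcal{U}^{\ast}\mathbf{Set}^{\cdot \rightarrow \cdot}$ over $M$ is $\mathbf{Set}/M$, and on it we must identify the monad $\mathcal{W}(M) \star (-)$ — which is a monad because $(\mathcal{W}(M), \nu, \eta)$ is a $\otimes$-monoid by theorem \ref{threetensorstheorem} and $\star$ is a monoidal action, so every $\otimes$-monoid acts as a monad — with the free-monoid monad $M^{+}$. Two identifications will be used throughout. First, $A \star X = U(A \otimes \overline{X})$, by lemma \ref{formulaforaction} (which holds verbatim for the pullback action). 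Second, the sterile signatures over $M$ form, via $\overline{(-)}$ and $U$, a full subcategory of $\mathbf{Sig}_{ma}/M$ strictly monoidally isomorphic to $(\mathbf{Set}/M, \otimes_{M})$, where $\otimes_{M}$ is the slice monoidal product by the monoid $M$: a sterile signature over $M$ is just a set over $M$, its morphisms carry trivial amalgamation, and the output type of $\odot$ on steriles is formed through $\mu^{M}$ exactly as in the slice product — this is the content of corollary \ref{uisstrictmonoidal}. By lemma \ref{somerandomlemma} and theorem \ref{basicequivalencethm}, $M^{+}(X)$ is, up to this isomorphism, the free $\otimes_{M}$-monoid $\mathcal{F}_{\otimes_{M}}(X)$.

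The key step is to analyse the functor $F := (-) \star X = U((-) \otimes \overline{X}) : \mathbf{Sig}_{ma}/M \rightarrow \mathbf{Set}/M$. Tensoring anything with a sterile signature produces no vertical inputs, so $(-) \otimes \overline{X}$ lands in sterile signatures; and each $(-) \otimes Y$ is a strong $\odot$-monoidal endofunctor with structure isomorphisms built from $\varphi$ and $\psi$ — this is precisely part of the data of the distributivity structure of theorem \ref{distributivityforsigma}. Composing with the monoidal isomorphism on sterile signatures from the first paragraph, $F$ becomes a strong monoidal functor $(\mathbf{Sig}_{ma}/M, \odot) \rightarrow (\mathbf{Set}/M, \otimes_{M})$; it also preserves binary coproducts and filtered colimits, by corollary \ref{actiononcolimits}. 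A strong monoidal, suitably cocontinuous functor preserves the explicit free-monoid construction of theorem \ref{freemonoidbjt} stage by stage — the stages $X_{0} = I$, $X_{n+1} = I \sqcup (X \otimes X_{n})$, the connecting maps, and the $\mu_{n,m}$ are all built from $\otimes$, coproducts and coherence isomorphisms. Since $F(I_{\otimes}) = U(I_{\otimes} \otimes \overline{X}) = U(\overline{X}) = X$, we obtain a natural isomorphism
\begin{displaymath}
\mathcal{W}(M) \star X \;=\; F(\mathcal{F}_{\odot}(I_{\otimes})) \;\cong\; \mathcal{F}_{\otimes_{M}}(F(I_{\otimes})) \;=\; \mathcal{F}_{\otimes_{M}}(X) \;=\; M^{+}(X),
\end{displaymath}
which moreover carries the free $\odot$-monoid structure of $\mathcal{W}(M)$ (with its multiplication $\mu$) to the free $\otimes_{M}$-monoid structure of $M^{+}(X)$.

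To match the monad structures we must deal with the web multiplication $\nu$, which is what $\mathcal{W}(M) \star (-)$ uses, rather than $\mu$. Apply $F$ to the main diagram of theorem \ref{threetensorstheorem}. By the previous paragraph $F$ sends each occurrence of the free $\odot$-multiplication to the corresponding free $\otimes_{M}$-multiplication and each $(-) \odot (-)$ to $(-) \otimes_{M} (-)$, turns each $\varphi$ into a canonical structural isomorphism (by the coherence axioms of the distributivity structure in the appendix), and sends $\nu$ to the candidate map $\nu \star X : M^{+}(M^{+}X) \rightarrow M^{+}X$. After cancelling the structural isomorphisms the main diagram reads precisely as the assertion that $\nu \star X$ is a homomorphism of $\otimes_{M}$-monoids. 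Since additionally the unit axiom of the monad $\mathcal{W}(M) \star (-)$ forces $\nu \star X$ to restrict to the identity along the generator inclusion $M^{+}X \rightarrow M^{+}(M^{+}X) = \mathcal{F}_{\otimes_{M}}(M^{+}X)$, the universal property of the free monoid gives $\nu \star X = \mu^{M^{+}}_{X}$; likewise $\eta \star X = \eta^{M^{+}}_{X}$. (Alternatively: both monads are monadic over $\mathbf{Set}/M$ by remark \ref{monadicityremark}, and one can check instead that an algebra for either is a monoid in $(\mathbf{Set}/M, \otimes_{M})$, compatibly with the forgetful functors.) Finally, $\mathcal{W}$ is a functor by theorem \ref{fiberedttt}, $\star$ is a fibered action, $\overline{(-)}$, $U$ and $(-)^{+}$ are fibered, and the colimits involved are computed fibrewise (corollary \ref{fiberedcolimits}), so the fibrewise isomorphism of monads assembles into the asserted isomorphism $\mathcal{W}(-) \star (=) \simeq (-)^{+}(=)$.

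I expect the real work to be in the third paragraph: checking that $F$ carries the free $\odot$-monoid \emph{together with its multiplication} to the free $\otimes_{M}$-monoid with its multiplication (this uses the explicit construction of theorem \ref{freemonoidbjt}, not merely an abstract universal property), and that $F$ applied to the main diagram, once the structural isomorphisms coming from the coherence of the distributivity structure are cancelled, is exactly the homomorphism condition — tracking all the $\varphi$'s, $\psi$'s and the monoidal constraints of $F$ is routine but not short. The first two paragraphs, by contrast, are essentially formal once corollary \ref{uisstrictmonoidal} and theorem \ref{distributivityforsigma} are in hand.
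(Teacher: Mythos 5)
Your proposal is correct and follows essentially the same route as the paper: identify $\mathcal{W}(M)\star X$ with $\mathcal{F}_{\otimes}(X) \simeq M^{+}(X)$ by showing $(-)\star X$ is a strong $(\odot,\otimes)$-monoidal, suitably cocontinuous functor (hence preserves the explicit free-monoid construction of theorem \ref{freemonoidbjt} with its multiplication — this is corollary \ref{propertiesofthepullbackaction}), then apply the action to the main diagram to see that $\nu\star X$ composed with the action's associativity isomorphism is a monoid homomorphism, and pin it down as $\mu^{M^{+}}_{X}$ by the unit condition for $\nu$ together with freeness. The bookkeeping you defer to the third paragraph — the compatibility of $\varphi$, the monoidal constraint of $(-)\star X$, and the associativity isomorphism of the action — is exactly what the paper isolates as proposition \ref{distributivitycompatibility} and the two explicit diagrams in its proof.
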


The original comparison theorem \ref{comparisontheorem} follows immediately from this one when we apply the forgetful functor in each fiber to the pullback action and return to the action of $\mathbf{Sig}_{a}$ on the fundamental fibration. This theorem makes it clear that the proper base category for these constructions is not $\mathbf{Set}$ but rather $Mon(\mathbf{Sig}_{a})$, and this cannot be easily seen without using fibrations.

This theorem also neatly summarizes the differences between our approach and that of \cite{KJBM}. Their construction takes place in two different categories (or fibrations). In each only one type of inputs is visible (vertical or horizontal in our terminology). We have found a third fibration which sees both kinds of inputs, and all the relevant structure in the two original fibrations. It is a somewhat amusing fact that $\mathbf{Sig}_{ma}$ knows what free monoids look like in those other fibrations.

The proof of theorem \ref{fiberedcomparisontheorem} takes the rest of this section.

\subsection{Proof of The Comparison Theorem}

The proof of theorem \ref{fiberedcomparisontheorem} will consist of establishing certain properties of the pullback action, and an alternative description of $\mathcal{U}^{\ast} \mathbf{Set}^{\cdot \rightarrow \cdot}$.

\paragraph{An alternative description of $\mathcal{U}^{\ast} \mathbf{Set}^{\cdot \rightarrow \cdot}$.} Our first result is that  $\mathbf{Sig}_{a} \downdownarrows \mathcal{U}$ (the fibered slice) is equivalent to $\mathcal{U}^{\ast} \mathbf{Set}^{\cdot \rightarrow \cdot}$. The proof requires some preliminary constructions.

\begin{lemma}
For any $M \in \mathbf{Sig}_{a}/O$ there is a bijection $\{ \textnormal{set maps } X \rightarrow M\} \simeq \{ \textnormal{strict morphisms } X \rightarrow M \textnormal{ over } O \}$.
\end{lemma}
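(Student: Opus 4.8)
The plan is to write the bijection down explicitly: both sides are controlled entirely by the typing function $\partial^{M} : M \to O^{\dag}$ of $M$, and ``strict'' together with ``over $O$'' (i.e.\ base map $u = 1_{O}$) strips away all the remaining freedom, so a strict morphism into $M$ over $O$ carries exactly the information of a set map into $M$.

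First I would send a set map to a strict morphism. Given $f : X \to M$, equip $X$ with the signature structure over $O$ given by the composite $\partial^{X} := \partial^{M} \circ f : X \to O^{\dag}$; in particular $|a| = |f(a)|$ for every $a \in X$. Then $f$, together with all amalgamation permutations equal to the identity and base map $1_{O}$, satisfies the defining square of a morphism of $\mathbf{Sig}_{a}$, since that square reduces to $\partial^{M}_{f(a)} = \partial^{X}_{a}$, which holds by construction. This produces a strict morphism $X \to M$ over $O$ whose underlying set map is $f$.

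Conversely, a strict morphism $g : (X, \partial^{X}) \to M$ over $O$ is a triple consisting of a function $g$, identity amalgamation permutations, and the identity base map. Its defining square, with permutations and base map trivial, forces $\partial^{X}_{a} = \partial^{M}_{g(a)}$ for every $a$, i.e.\ $\partial^{X} = \partial^{M} \circ g$. Hence all of the data of $g$ --- the domain typing, the permutations, the base map --- is recovered from the underlying function $g : X \to M$. Sending a strict morphism to its underlying function is therefore well defined, and it is visibly a two-sided inverse to the construction of the previous paragraph.

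There is no real obstacle here; the one point to keep straight is that strictness plus verticality leaves no choice for the domain typing, which is exactly the content of the decomposition $O^{\dag} \simeq O \times O^{\ast}$: the inputs of a symbol are rigidly determined by those of its image. The same observation --- that an arbitrary object of the fiber $(\mathbf{Sig}_{a}/O)/M$ is carried onto a strict one by its own (invertible) amalgamation permutations --- is what will ultimately feed into theorem \ref{basicequivalencethm}.
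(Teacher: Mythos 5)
Your proof is correct and follows essentially the same route as the paper's: type $X$ by the composite $\partial^{M}\circ f$ in one direction, and in the other observe that strictness over $1_{O}$ forces the typing square to read $\partial^{X}_{a}=\partial^{M}_{g(a)}$, so the typing can be forgotten. The paper's own proof is just a terser version of exactly this argument.
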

\begin{proof}
To a function $X \rightarrow M$ we assign a strict morphism, with $X$ typed by the composition $X \rightarrow M \xrightarrow{\partial} O^{\dag}$. Conversely, if the morphism is strict, then the typing is defined by that formula, so we can forget it.
\end{proof}

The full subfibration of $\mathbf{Sig}_{a} \downdownarrows \mathcal{U}$ of strict objects is defined as follows. Recall that the objects of $\mathbf{Sig}_{a} \downdownarrows \mathcal{U}$ are morphisms $A \rightarrow \mathcal{U}(M)$ in $\mathbf{Sig}_{a}$ over some $O \in \mathbf{Set}$, where $M$ is a monoid in $\mathbf{Sig}_{a}/O$. An object is called strict if the morphism $A \rightarrow \mathcal{U}(M)$ is strict. This fibration will be denoted by $\mathbf{Sig}_{a} \downdownarrows \mathcal{U}_{str}$.

\begin{corollary}\label{somerandomcorollary}
The subfibration of $\mathbf{Sig}_{a} \downdownarrows \mathcal{U}$ of strict objects is isomorphic to $\mathcal{U}^{\ast} \mathbf{Set}^{\cdot \rightarrow \cdot}$.
\end{corollary}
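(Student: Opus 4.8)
The plan is to produce a functor $\Phi \colon \mathcal{U}^{\ast} \mathbf{Set}^{\cdot \rightarrow \cdot} \rightarrow \mathbf{Sig}_{a} \downdownarrows \mathcal{U}_{str}$ over $Mon(\mathbf{Sig}_{a})$ which is the identity on the base, and to check that it is bijective on objects and fully faithful. A functor over the base with these properties is automatically an isomorphism of categories over $Mon(\mathbf{Sig}_{a})$, and hence an isomorphism of fibrations (it must carry prone arrows to prone arrows; one can also see this directly, since prone arrows on both sides are built by pullback of the same underlying set maps).

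On objects this is exactly the preceding lemma, applied fiberwise. Fix a monoid $M$ in $\mathbf{Sig}_{a}$ whose underlying signature lives over $O \in \mathbf{Set}$. An object of $\mathbf{Sig}_{a} \downdownarrows \mathcal{U}_{str}$ over $M$ is by definition a \emph{strict} morphism $A \rightarrow \mathcal{U}(M)$ in $\mathbf{Sig}_{a}/O$, while an object of $\mathcal{U}^{\ast} \mathbf{Set}^{\cdot \rightarrow \cdot}$ over $M$ is a set map $A \rightarrow \mathcal{U}(M)$, i.e. an object of $\mathbf{Set}/\mathcal{U}(M)$. The lemma provides inverse bijections between these: to a function $x \colon A \rightarrow \mathcal{U}(M)$ one assigns the strict morphism whose source is $A$ typed by $A \xrightarrow{x} \mathcal{U}(M) \xrightarrow{\partial^{M}} O^{\dag}$, and to a strict morphism one assigns its underlying function. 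So $\Phi$ on objects simply forgets the (forced) typing, and is a bijection in each fiber, hence on the whole total category.

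On morphisms, recall that a morphism of $\mathbf{Sig}_{a} \downdownarrows \mathcal{U}$ from $(A \xrightarrow{x} \mathcal{U}(M))$ to $(B \xrightarrow{y} \mathcal{U}(N))$ consists of a homomorphism of monoids $u \colon M \rightarrow N$ — with underlying type map $v \colon O \rightarrow P$ and amalgamation permutations $\sigma^{u}$ — together with a morphism $f \colon A \rightarrow B$ of $\mathbf{Sig}_{a}$ over $v$ making the evident square commute. When both vertical legs are strict, the composition rule in $\mathbf{Sig}_{a}$ forces the amalgamation permutations of $f$ to be $\sigma_{a} = \sigma^{u}_{x(a)}$ and forces $y \circ f = \mathcal{U}(u) \circ x$ as functions; conversely, for any function $f$ satisfying the latter equation the assignment $\sigma_{a} = \sigma^{u}_{x(a)}$ does define a legitimate morphism of $\mathbf{Sig}_{a}$, the required typing square commuting because $u$ is a monoid homomorphism and $y \circ f = u \circ x$ (cf. lemma \ref{strictnesslemma}). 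Thus a morphism of strict objects over $u$ is precisely a commuting square of sets over $\mathcal{U}(u)$, which is exactly a morphism of $\mathcal{U}^{\ast} \mathbf{Set}^{\cdot \rightarrow \cdot}$ over $u$, so $\Phi$ is full and faithful.

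The only step requiring real care is this morphism analysis: verifying that strictness of the vertical legs pins down the amalgamation permutations of the horizontal arrow, and that the resulting recipe genuinely lands in $\mathbf{Sig}_{a}$. This is a short computation with the composition formula for $\mathbf{Sig}_{a}$-morphisms together with the compatibility condition built into the definition of a monoid homomorphism. Everything else — compatibility of $\Phi$ with the projections to $Mon(\mathbf{Sig}_{a})$, functoriality, and identification of prone arrows — is routine bookkeeping.
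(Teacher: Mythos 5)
Your proposal is correct and follows essentially the same route as the paper: the object bijection comes from the preceding lemma, and on morphisms lemma \ref{strictnesslemma} forces the amalgamation permutations of a map between strict objects to coincide with those of the underlying monoid homomorphism, so such maps are the same data as commuting squares of sets. The extra bookkeeping you include (functoriality over the base, preservation of prone arrows) is fine and only makes explicit what the paper leaves implicit.
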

\begin{proof}
The above lemma defines a bijection on objects. A morphism between strict objects has, by lemma \ref{strictnesslemma}, the same amalgamation permutations as the morphism in the base, and can therefore be regarded as a function. Conversely any function between strict objects can be made into a morphism by setting the amalgamation permutations to what lemma \ref{strictnesslemma} says they should be. These constructions are clearly inverse to each other.
\end{proof}

We will now show that the subfibration of $\mathbf{Sig}_{a} \downdownarrows \mathcal{U}$ of strict objects is in fact equivalent to $\mathbf{Sig}_{a} \downdownarrows \mathcal{U}$. We will use a functorial factorization for this purpose. This construction was first used in \cite{HMP} for monoids.

Consider a morphism $f: A \rightarrow B$ in $\mathbf{Sig}_{a}$. We will factor it into two morphisms $A \xrightarrow{\zeta_{f}} A[f] \rightarrow B$, with the first morphism an isomorphism and the second morphism strict. The construction is simple: $A[f]$ is the same set as $A$, but with typing defined by $\partial^{A} \circ \sigma^{-1}$, which means $\partial^{A[f]}(a) = \partial^{A}(a) \circ \sigma_{a}^{-1}$, where $\sigma$ are the permutations of $f$, and $\partial^{A}$ is the original typing.

We now set the morphism $\zeta_{f} : A \rightarrow A[f]$ to be the identity on function symbols and have permutations given by $\sigma$. Obviously it is an isomorphism. The second morphism acts as $f$ on the function symbols, but is strict.

Recall that a functorial factorization is a section of the composition functor $\mathcal{C}^{\cdot \rightarrow \cdot \rightarrow \cdot} \longrightarrow \mathcal{C}^{\rightarrow}$.

\begin{lemma}\label{functorialfactorization}
The above construction uniquely defines a functorial factorization on $\mathbf{Sig}_{a}$ and on $Mon(\mathbf{Sig}_{a})$.
\end{lemma}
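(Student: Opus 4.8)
The plan is to verify, in order, four things: that the recipe yields a genuine factorization in $\mathbf{Sig}_{a}$; that it extends to morphisms of arrows and is hence functorial; that every piece of the data is forced, which gives uniqueness; and that the whole thing transports to $Mon(\mathbf{Sig}_{a})$ compatibly with the forgetful functor. First I would fix $f=(f,\sigma,u)\colon A\to B$ with $A$ over $O$, $B$ over $Q$, and check the well-definedness of the three ingredients. The typing $\partial^{A[f]}_{a}=\partial^{A}_{a}\circ\sigma_{a}^{-1}$ is a legitimate typing over $O$, so $A[f]\in\mathbf{Sig}_{a}/O$; the morphism $\zeta_{f}=(1_{A},\sigma,1_{O})$ satisfies its commuting square trivially (both composites are $\partial^{A}_{a}$) and is inverted by $(1_{A},\sigma^{-1},1_{O})$; and $\bar f=(f,1,u)\colon A[f]\to B$ is well defined and strict because the required identity $u\circ\partial^{A}_{a}\circ\sigma_{a}^{-1}=\partial^{B}_{f(a)}$ is exactly the morphism condition on $f$. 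Using that the commuting-square condition forces the composite of $(f,\sigma,u)$ followed by $(g,\tau,w)$ to have permutation $\tau_{f(a)}\circ\sigma_{a}$ at $a$, one gets $\bar f\circ\zeta_{f}=(f,\sigma,u)=f$, so this is a section of $\mathbf{Sig}_{a}^{\cdot\to\cdot\to\cdot}\to\mathbf{Sig}_{a}^{\to}$ at the level of objects, with the first factor an isomorphism and the second strict.

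Next I would treat functoriality. Given a morphism of arrows, i.e.\ a commuting square with $p=(p,\pi,s)\colon A\to A'$, $q=(q,\kappa,t)\colon B\to B'$ and $f'=(f',\sigma',u')\colon A'\to B'$, I would define the induced morphism $A[f]\to A'[f']$ to be $(p,\theta,s)$ with $\theta_{a}=\sigma'_{p(a)}\circ\pi_{a}\circ\sigma_{a}^{-1}$. Its commuting-square condition reduces to the morphism condition for $p$; the right-hand square $\bar{f'}\circ(p,\theta,s)=q\circ\bar f$ commutes because reading off the permutation component of the equality $q\circ f=f'\circ p$ at each $a$ is precisely $\kappa_{f(a)}\circ\sigma_{a}=\sigma'_{p(a)}\circ\pi_{a}$, i.e.\ $\kappa_{f(a)}=\theta_{a}$; and the left-hand square $(p,\theta,s)\circ\zeta_{f}=\zeta_{f'}\circ p$ is a one-line computation with $\theta$. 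Preservation of identities is immediate, and preservation of composition is the collapse of $\big(\sigma''_{p'p(a)}\circ\pi'_{p(a)}\circ(\sigma'_{p(a)})^{-1}\big)\circ\big(\sigma'_{p(a)}\circ\pi_{a}\circ\sigma_{a}^{-1}\big)$ to $\sigma''_{p'p(a)}\circ(\pi'_{p(a)}\circ\pi_{a})\circ\sigma_{a}^{-1}$, which is the permutation component of the composite square. This makes $f\mapsto(\zeta_{f},\bar f)$ a functorial factorization.

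For uniqueness I would argue that any factorization $A\xrightarrow{e}A''\xrightarrow{m}B$ in which $e$ is an isomorphism that is the identity on function symbols and $m$ is strict must be this one: $m$ then has underlying function $f$ and trivial permutations, strictness of $m$ determines the typing of $A''$ from the permutations of $e$, and $f=m\circ e$ forces those permutations to be $\sigma$; hence $A''=A[f]$, $e=\zeta_{f}$, $m=\bar f$, so the functorial factorization of this shape is unique. For $Mon(\mathbf{Sig}_{a})$ I would equip $M[f]$ with the monoid structure transported along the vertical isomorphism $\zeta_{f}$ (legitimate since $\zeta_{f}$ is an isomorphism over $1_{O}$ and $\otimes$ is strong), so that $\zeta_{f}$ becomes a monoid isomorphism by construction. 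Then $\bar f=f\circ\zeta_{f}^{-1}$ is a homomorphism, and for a square the induced map equals $\zeta_{f'}\circ p\circ\zeta_{f}^{-1}$, again a homomorphism; thus the whole functorial factorization lives in $Mon(\mathbf{Sig}_{a})$, and the forgetful functor $Mon(\mathbf{Sig}_{a})\to\mathbf{Sig}_{a}$ visibly sends it to the factorization built above. Uniqueness follows as before once one notes that requiring $\zeta_{f}$ to be a monoid isomorphism forces the transported structure on $M[f]$.

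The only place where care is genuinely needed is the permutation bookkeeping in the functoriality step — in particular, recognising that the permutation component of the commuting square is literally the identity $\kappa_{f(a)}=\sigma'_{p(a)}\circ\pi_{a}\circ\sigma_{a}^{-1}$ that simultaneously defines the induced morphism and makes both squares commute. Everything else (the $\mathbf{Sig}_{a}$ well-definedness, the section property, identities and composition, and the transport to monoids) is formal.
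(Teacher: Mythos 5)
Your proof is correct and follows essentially the same route as the paper's (much terser) argument: the middle object and the induced morphisms are forced because the first factor is an isomorphism, functoriality is conjugation by the $\zeta$'s, and the monoid structure on $M[f]$ is the one making $\zeta_f$ a monoid isomorphism. The only point worth flagging is your one-line justification of the transport step: the conjugation transport $\mu^{M[f]}=\zeta_f\circ\mu^M\circ(\zeta_f\otimes\zeta_f)^{-1}$ is indeed valid (it works in any monoidal category, so your argument goes through), but the paper explicitly warns right after this lemma that ``transporting monoid structures along isomorphisms in $\mathbf{Sig}_a$'' is delicate and needs a condition on the permutations satisfied by homomorphisms --- so you should be aware that the transported multiplication is not $\mu^M$ on function symbols, and that the hypothesis that $f$ is a homomorphism is genuinely used (as you do) to conclude that the strict factor $\bar f=f\circ\zeta_f^{-1}$ is again a homomorphism.
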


\begin{proof}
Since the morphisms are factorized into an isomorphism followed by some other morphism, the middle of the factorization is uniquely defined by the commutativity conditions. Functoriality is then trivial. It is also easy to check that if we require the morphism $\zeta_{f}: A \rightarrow A[f]$ to be an isomorphism of monoids, then the middle factorization will also be a homomorphism if $f$ was one.
\end{proof}

In the second part of the argument the requirement that $f$ is a homomorphism is important. It is quite surprising that one cannot transport monoid structures in $\mathbf{Sig}_{a}$ along isomorphisms -- the permutations of the isomorphism may ruin associativity. The condition that transport is possible is easy to write down, and satisfied by the permutations of monoid homomorphisms.

This functorial factorization is not fibered in any good sense -- the first morphism is always over the identity, and the second is over whatever the original morphism was over.

Now let $f : A \rightarrow \mathcal{U}(M)$ be an object of $\mathbf{Sig}_{a} \downdownarrows \mathcal{U}$. Then $A[f] \rightarrow \mathcal{U}(M)$ is a strict object. Since the factorization was functorial, this defines a fibered functor $fct: \mathbf{Sig}_{a} \downdownarrows \mathcal{U} \rightarrow \mathbf{Sig}_{a} \downdownarrows \mathcal{U}_{str}$. There is also the obvious inclusion $i: \mathbf{Sig}_{a} \downdownarrows \mathcal{U}_{str} \hookrightarrow \mathbf{Sig}_{a} \downdownarrows \mathcal{U}$

\begin{theorem}
The above functors form an adjoint equivalence over $Mon(\mathbf{Sig}_{a})$.
\end{theorem}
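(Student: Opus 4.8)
The plan is to produce the unit and counit of the adjunction $fct \dashv i$ explicitly from the functorial factorization and to check the two triangle identities. For the unit I would take $\eta = \zeta$: given an object $f: A \to \mathcal{U}(M)$ of $\mathbf{Sig}_{a} \downdownarrows \mathcal{U}$, the first leg $\zeta_{f}: A \to A[f]$ of the factorization of Lemma~\ref{functorialfactorization} satisfies $\bar{f} \circ \zeta_{f} = f$, where $\bar{f}: A[f] \to \mathcal{U}(M)$ is the strict part. Hence $\zeta_{f}$ is a morphism $f \to i(fct(f))$ in $\mathbf{Sig}_{a} \downdownarrows \mathcal{U}$ lying over $1_{M}$, and it is an isomorphism because $\zeta_{f}$ is one in $\mathbf{Sig}_{a}$. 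Naturality of $\zeta$, both in the $\mathbf{Sig}_{a}$-direction and in the monoid-homomorphism direction, is exactly the functoriality of the factorization. Thus $\eta: 1 \Rightarrow i \circ fct$ is a vertical natural isomorphism over $Mon(\mathbf{Sig}_{a})$.

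For the counit I would first observe that $fct \circ i = 1$ on the nose. If $f: A \to \mathcal{U}(M)$ is already strict, then all its amalgamation permutations are identities, so by the construction preceding Lemma~\ref{functorialfactorization} the set $A[f]$ carries the very same typing as $A$, the map $\zeta_{f}$ is an identity, and $\bar{f} = f$; on morphisms between strict objects the middle arrow produced by the factorization is forced (by cancelling the now-trivial first legs) to be the original morphism, and since $i$ is the full inclusion this gives $fct \circ i = 1_{\mathbf{Sig}_{a} \downdownarrows \mathcal{U}_{str}}$. We may therefore take $\varepsilon = 1: fct \circ i \Rightarrow 1$.

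It then remains to verify the triangle identities. The identity $i(\varepsilon_{Y}) \circ \eta_{i(Y)} = 1_{i(Y)}$ is immediate, since for a strict object $Y$ we have $\eta_{i(Y)} = \zeta_{Y} = 1$ and $\varepsilon_{Y} = 1$. For the other identity one must check $fct(\eta_{X}) = 1_{fct(X)}$. Here $fct$ sends a morphism of $\mathbf{Sig}_{a} \downdownarrows \mathcal{U}$ to the middle arrow of the induced morphism of factorizations; applied to the square whose top edge is $\zeta_{f}: A \to A[f]$ and whose vertical edges are $f$ and $\bar{f}$, this middle arrow $m: A[f] \to A[f][\bar{f}] = A[f]$ must satisfy $m \circ \zeta_{f} = \zeta_{\bar{f}} \circ \zeta_{f} = \zeta_{f}$ (as $\zeta_{\bar{f}} = 1$, $\bar{f}$ being strict), and since $\zeta_{f}$ is epic this forces $m = 1$. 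Hence $fct(\eta_{X}) = 1$, both triangle identities hold, and $(fct, i, \eta, \varepsilon)$ is an adjoint equivalence; as all the data is vertical, it is an adjoint equivalence over $Mon(\mathbf{Sig}_{a})$.

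I do not expect a serious obstacle: the only point needing care is the bookkeeping in the last triangle identity, namely making precise how the functorial factorization acts on morphisms so as to identify $fct(\eta_{X})$, but once that action is spelled out the claim reduces to the cancellation property of the isomorphism $\zeta_{f}$. One should also keep in mind, as the text notes, that the factorization is functorial but not fibered; this is harmless because $fct$ leaves the base component of every morphism untouched, so $fct$ itself is a genuine fibered functor and the whole equivalence lives over $Mon(\mathbf{Sig}_{a})$.
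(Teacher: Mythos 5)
Your proof is correct and essentially the same as the paper's: both arguments rest on the facts that $fct \circ i$ is the identity on the strict subfibration, that $\zeta$ is a (vertical) natural isomorphism $1 \Rightarrow i \circ fct$ by functoriality of the factorization, and that $fct(\zeta) = 1$ (which you get by cancelling the invertible first leg, and the paper gets from Lemma \ref{strictnesslemma}). The only cosmetic difference is orientation: you present the adjunction as $fct \dashv i$ with unit $\zeta$ and identity counit, while the paper presents the transposed adjunction with identity unit and counit $\zeta^{-1}$; the triangle identities checked are the same.
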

\begin{proof}
Save the adjoint part, this is a purely formal consequence of having a functorial factorization which factors a morphism into an isomorphism followed by another morphism. Inclusion followed by factorization is the identity on $\mathbf{Sig}_{a} \downdownarrows \mathcal{U}_{str}$. A factorization followed by inclusion is isomorphic to the identity functor on $\mathbf{Sig}_{a} \downdownarrows \mathcal{U}$ by the following diagram:

\begin{center}
\begin{tikzpicture}
\matrix (m) [matrix of math nodes, column sep = 1cm, row sep = .8cm, text height = 1.5ex, text depth = .25ex] {
A & B \\
A[h] & B[k] \\
M & N\\
};

\path[->] (m-1-1) edge node[auto] {$f$} (m-1-2)
			 (m-1-1) edge node[auto, swap] {$\zeta_{h}$} (m-2-1)
			 (m-2-1) edge node[auto] {$fct(f)$} (m-2-2)
			 (m-1-2) edge node[auto] {$\zeta_{k}$} (m-2-2)
			 (m-2-1) edge node[auto, swap] {} (m-3-1)
			 (m-2-2) edge node[auto] {} (m-3-2)
			 (m-3-1) edge node[auto] {$u$} (m-3-2)
			 (m-1-1) edge[bend right = 70] node[auto, swap] {$h$} (m-3-1)
			 (m-1-2) edge[bend left = 70] node[auto] {$k$} (m-3-2);

\end{tikzpicture}
\end{center}
The components $\zeta_{h}$ of the functorial factorization form an isomorphism from the identity functor to the composite of factorization and inclusion.

For the adjunction we take the components $\zeta_{h}^{-1}$ to be the counit -- it is the identity on function symbols, so we only need to worry about its amalgamation permutations. The unit is the identity. The triangular identities are then state that the following two composites are the identity

\begin{eqnarray*}
i(X)  \xrightarrow{1}  i \circ fct \circ i(X) \xrightarrow{\zeta_{i(X)}^{-1}} i(X) \\
fct(X)  \xrightarrow{1} fct \circ i \circ fct(X) \xrightarrow{fct(\zeta_{X}^{-1})} fct(X)
\end{eqnarray*}
They are true, since $fct(\zeta_{h}^{-1})$ is the identity by lemma \ref{strictnesslemma} (or direct calculation), and $\zeta_{i(X)}^{-1}$ is the identity for strict objects $X$.
\end{proof}

Combining this theorem with corollary \ref{somerandomcorollary} we have

\begin{theorem}\label{basicequivalencethm}
There is an adjoint equivalence $\mathbf{Sig}_{a} \downdownarrows \mathcal{U} \rightarrow \mathcal{U}^{\ast} \mathbf{Set}^{\cdot \rightarrow \cdot}$.
\end{theorem}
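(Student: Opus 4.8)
The plan is to assemble the desired equivalence from the two results that immediately precede the statement, with no fresh computation. Recall that the preceding theorem exhibits an adjoint equivalence $fct \dashv i$ between $\mathbf{Sig}_{a} \downdownarrows \mathcal{U}$ and its full subfibration $\mathbf{Sig}_{a} \downdownarrows \mathcal{U}_{str}$ of strict objects, with unit the identity and counit given by the components $\zeta_{h}^{-1}$ of the functorial factorization; and Corollary \ref{somerandomcorollary} exhibits an isomorphism of fibrations $\mathbf{Sig}_{a} \downdownarrows \mathcal{U}_{str} \cong \mathcal{U}^{\ast} \mathbf{Set}^{\cdot \rightarrow \cdot}$. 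So I would simply compose.

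First I would observe that an isomorphism of categories is an adjoint equivalence in which unit and counit are identities, and that the isomorphism of Corollary \ref{somerandomcorollary} is moreover an isomorphism \emph{over} $Mon(\mathbf{Sig}_{a})$: on the base it is the identity, and its only effect is to reinterpret strict morphisms as the underlying functions (and back, using Lemma \ref{strictnesslemma} to recover the amalgamation permutations), leaving the output typing untouched. Second, I would invoke the standard fact that adjoint equivalences compose — given $L \dashv R$ and $L' \dashv R'$, the composite $L'L \dashv RR'$ is again an adjoint equivalence, its unit and counit obtained by the usual pasting of the two given ones. Applying this to $fct \dashv i$ followed by (the isomorphism of Corollary \ref{somerandomcorollary} viewed as) an adjoint equivalence yields an adjoint equivalence between $\mathbf{Sig}_{a} \downdownarrows \mathcal{U}$ and $\mathcal{U}^{\ast} \mathbf{Set}^{\cdot \rightarrow \cdot}$.

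Finally I would check that the composite lives over $Mon(\mathbf{Sig}_{a})$ in the fibered sense: the functors $i$, $fct$ are fibered functors over $Mon(\mathbf{Sig}_{a})$ by the preceding theorem, the isomorphism of Corollary \ref{somerandomcorollary} is fibered by construction, the unit/counit of $fct \dashv i$ are fibered natural transformations (again by the preceding theorem), and those of the isomorphism are identities, hence trivially fibered; pasting preserves all of this, so the composite is an adjoint equivalence over $Mon(\mathbf{Sig}_{a})$. I do not expect any real obstacle here: the single piece of genuine content — the behaviour of amalgamation permutations of morphisms between strict objects — has already been isolated in Lemma \ref{strictnesslemma} and consumed in Corollary \ref{somerandomcorollary} and the preceding theorem, so all that remains is bookkeeping about composition of (fibered) adjoint equivalences.
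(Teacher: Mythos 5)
Your proposal is correct and is exactly the paper's argument: the theorem is stated there with the one-line justification ``Combining this theorem with corollary \ref{somerandomcorollary}'', i.e.\ composing the adjoint equivalence $fct \dashv i$ with the fibered isomorphism onto $\mathcal{U}^{\ast}\mathbf{Set}^{\cdot\rightarrow\cdot}$. Your spelling out of why isomorphisms are adjoint equivalences and why (fibered) adjoint equivalences compose is just the bookkeeping the paper leaves implicit.
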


\paragraph{A monoidal structure on $\mathcal{U}^{\ast} \mathbf{Set}^{\cdot \rightarrow \cdot}$.} The fibration $\mathbf{Sig}_{a} \downdownarrows \mathcal{U}$ is monoidal, and we have shown that it is equivalent to $\mathbf{Sig}_{a} \downdownarrows \mathcal{U}_{str} \simeq \mathcal{U}^{\ast} \mathbf{Set}^{\cdot \rightarrow \cdot}$. We can get a monoidal structure on the latter fibration by the following general construction

Let $\mathcal{C}, \mathcal{D}$ be categories (fibrations), and let $F: \mathcal{C} \rightarrow \mathcal{D}$, $G: \mathcal{D} \rightarrow \mathcal{C}$ be an adjoint equivalence of categories (fibrations) with counit and unit isomorphisms $\varepsilon: GF \rightarrow 1_{\mathcal{C}}$ and $\eta: 1_{\mathcal{D}} \rightarrow FG$. If $\mathcal{C}$ is monoidal, then we can make $F$ and $G$ into a monoidal equivalence using the following natural formulas:

\begin{eqnarray*}
I_{\mathcal{D}} & = & F(I_{\mathcal{C}}) \\
A \otimes_{\mathcal{D}} B & = & F(G(A) \otimes_{\mathcal{C}} G(B)) \\ \\
\alpha_{A, B, C}^{\mathcal{D}} & = & F(\varepsilon_{G(A) \otimes G(B)}^{-1} \otimes 1 \circ \alpha_{G(A), G(B), G(C)} \circ 1 \otimes \varepsilon_{G(A) \otimes G(B)}) \\
\lambda_{A}^{\mathcal{D}} & = & \eta_{A}^{-1} \circ F(\lambda_{G(A)}^{\mathcal{C}} \circ \varepsilon_{I} \otimes 1) \\
\rho_{A}^{\mathcal{D}} & = & \eta_{A}^{-1} \circ F(\rho_{G(A)}^{\mathcal{C}} \circ 1 \otimes \varepsilon_{I}) \\ \\
\phi_{A, B}^{2, F} & = & F(\varepsilon_{A} \otimes \varepsilon_{B}): F(A) \otimes F(B) = F(GF(A) \otimes GF(B)) \rightarrow F(A \otimes B) \\
\phi^{0, F} & = & 1_{F(I)} \\
\phi_{A, B}^{2, G} & = & \varepsilon_{G(A) \otimes G(B)}^{-1}: G(A) \otimes G(B) \rightarrow G(A \otimes B) = GF(G(A) \otimes G(B)) \\
\phi^{0, G} & = & \varepsilon_{I}^{-1} : I \rightarrow GF(I),
\end{eqnarray*}
where the last four items define monoidal structures on $F$ and $G$, respectively.

\begin{theorem}
The above construction defines a monoidal structure on $\mathcal{D}$, $F$ and $G$, for which $\epsilon$ and $\eta$ are monoidal transformations.
\end{theorem}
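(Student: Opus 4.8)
The plan is to recognize the statement as an instance of \emph{transport of monoidal structure along an adjoint equivalence}, and to verify the required axioms directly from the displayed formulas. The crucial structural observation is that every cell produced by the construction ($\otimes_{\mathcal{D}}$, $I_{\mathcal{D}}$, $\alpha^{\mathcal{D}}$, $\lambda^{\mathcal{D}}$, $\rho^{\mathcal{D}}$ and the $\phi$'s) is obtained from the corresponding datum on $\mathcal{C}$ by applying $F$ and conjugating with components of $\eta$ and $\varepsilon$. Since $F$, $G$ are (fibered) functors and $\eta$, $\varepsilon$ are (fibered) natural isomorphisms, and since the formulas only involve composition and $\otimes_{\mathcal{C}}$, all these cells are automatically (fibered) natural; there is nothing to check on that score. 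It therefore suffices to verify three groups of equations: (i) the pentagon and the two triangle identities for $(\mathcal{D}, \otimes_{\mathcal{D}}, I_{\mathcal{D}}, \alpha^{\mathcal{D}}, \lambda^{\mathcal{D}}, \rho^{\mathcal{D}})$; (ii) the associativity hexagon and the two unit axioms for $F$ and for $G$ with the stated $\phi$'s (so that they are \emph{strong} monoidal -- invertibility of the $\phi$'s is immediate, as they are built from isomorphisms); (iii) the two diagrams expressing that $\varepsilon : GF \to 1_{\mathcal{C}}$ and $\eta : 1_{\mathcal{D}} \to FG$ are monoidal.

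For group (i): the key point is that $F$ is an equivalence, hence faithful, so each of these equations may be checked after deleting the outermost $F$, i.e.\ as an equation in $\mathcal{C}$. There, one expands $G(A \otimes_{\mathcal{D}} B) = GF(G(A) \otimes_{\mathcal{C}} G(B))$ and inserts matching pairs $\varepsilon_{G(A)\otimes G(B)}$, $\varepsilon_{G(A)\otimes G(B)}^{-1}$; naturality of $\varepsilon$ (and of $\alpha^{\mathcal{C}}$, $\lambda^{\mathcal{C}}$, $\rho^{\mathcal{C}}$) lets one slide these past the coherence cells, and the triangle identities $\varepsilon F \circ F\eta = 1$, $G\varepsilon \circ \eta G = 1$ let one cancel them, reducing the pentagon on $\mathcal{D}$ to the pentagon on $\mathcal{C}$ and likewise for the triangles. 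Group (ii) is handled the same way: the hexagon for $F$ is, after removing an outer $F$ and cancelling counit components, exactly the definition of $\alpha^{\mathcal{D}}$ together with naturality; the unit axioms for $F$ reduce to the definitions of $\lambda^{\mathcal{D}}$, $\rho^{\mathcal{D}}$; for $G$ one argues symmetrically, now using components of $\eta$. For group (iii), one computes the composite monoidal structure on $GF$ (resp.\ $FG$) from $\phi^{2,F}$, $\phi^{2,G}$ and checks that the naturality square of $\varepsilon$ (resp.\ $\eta$) against it closes up; again this is a cancellation of $\varepsilon$'s and $\eta$'s via the triangle identities, and the unit compatibility is $\phi^{0}$-bookkeeping. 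Finally, since all of the above is verified by pointwise manipulations valid in any category, it applies verbatim fiber-by-fiber, which gives the fibered version we actually use (with $\otimes_{\mathcal{C}}$ a fibered functor and $\eta$, $\varepsilon$ fibered, the transported data are fibered).

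The main obstacle is purely bookkeeping: keeping track of which component of $\varepsilon$ or $\eta$ appears at each node of the (already sizeable) pentagon and hexagon diagrams once the nested $\otimes_{\mathcal{D}}$'s are unfolded. There is no conceptual difficulty -- every step is an instance of naturality, of a triangle identity, or of a coherence axiom already available in $\mathcal{C}$ -- so I would present the argument at the level of ``expand, insert $\varepsilon \varepsilon^{-1}$, use naturality and the triangle identities, invoke coherence in $\mathcal{C}$'' and relegate the diagram chases to a routine check. One could alternatively invoke a general ``transport of structure along an equivalence'' principle and thereby skip the explicit verification altogether, but since the explicit formulas are exactly what is needed later to match the two monoidal structures on the nose, carrying them along and checking them directly is the appropriate route here.
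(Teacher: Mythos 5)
Your proposal is correct and is essentially the paper's own (intended) proof: the paper leaves this as an exercise with exactly the hint you elaborate, namely that everything follows from naturality of the various transformations, with the adjunction triangle identities additionally needed for the diagrams involving $\eta$. Your breakdown into the three groups of axioms and the ``expand, insert $\varepsilon\varepsilon^{-1}$, slide by naturality, invoke coherence in $\mathcal{C}$'' strategy is the standard transport-of-structure argument the authors have in mind.
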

\begin{proof}
Exercise. Everything follows from naturality of various transformations except (all) diagrams involving $\eta$, where the triangular identities are also needed.
\end{proof}

We can calculate what this structure looks like in our case. For example the units are unchanged, since they have only unary function symbols. The associativity isomorphism is the following

\begin{displaymath}
\alpha_{A, B, C}(\langle a, \langle b_{i}, c_{i, j} \rangle \rangle) = \langle \langle a, b_{i} \rangle, c_{\gamma_{\langle \partial{a}, \partial{b}_{i} \rangle}^{-1}(i, j)} \rangle
\end{displaymath}
For $A, B, C$ in the fiber over $M$. $\partial$ denotes the structure morphisms to $M$, and $\gamma$ the amalgamation permutations of the multiplication map in $M$. This structure will be denoted by $\otimes$.

\begin{corollary}\label{uisstrictmonoidal}
The functor $U: (\mathbf{Sig}_{ma}, \odot) \rightarrow (\mathcal{U}^{\ast} \mathbf{Set}^{\cdot \rightarrow \cdot}, \otimes)$ is strict monoidal.
\end{corollary}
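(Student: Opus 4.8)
The plan is to unwind both monoidal structures explicitly and observe that $U$ forgets precisely the data on which $\odot$ and $\otimes$ disagree (the vertical inputs and the amalgamation permutations). Since $U$, $\odot$ and $\otimes$ are all fibered and are given by the same formulas on prone morphisms — which are strict on both sides — I would reduce at once to a single fibre, over a monoid $M$: there $U$ is the functor $\mathbf{Sig}_{a}/\mathcal{U}(M) \rightarrow \mathbf{Set}/\mathcal{U}(M)$ sending $\partial^{A}:A\rightarrow\mathcal{U}(M)^{\dag}$ to its output component $a\mapsto\check a:=\partial^{A}_{a}(0)$.

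The main step is to show $U(A\odot_{M}B)=U(A)\otimes U(B)$ on the nose. The underlying set of $A\odot_{M}B$ is $\{\dot\langle a,b_{i}\dot\rangle : i\in(|\check a|],\ \partial^{M}_{\check a}(i)=\partial^{M}_{\check b_{i}}(0)\}$ (arities in $M$), with horizontal output typing $\dot\langle a,b_{i}\dot\rangle\mapsto\mu^{M}(\check a,\check b_{i})$. On the other side, $U(A)=(a\mapsto\check a)$, which under the isomorphism of corollary \ref{somerandomcorollary} is the strict object $A\rightarrow\mathcal{U}(M)$ whose signature typing is reconstructed as $\partial^{M}\circ(a\mapsto\check a)$, so that $a$ has $O$-arity $|\check a|$. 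I would then trace $U(A)\otimes U(B)=F\big(G(U(A))\otimes_{\mathbf{Sig}_{a}\downdownarrows\mathcal{U}}G(U(B))\big)$ through theorem \ref{basicequivalencethm}: the inner product is the $\mathbf{Sig}_{a}$-product $\otimes_{O}$ of those two strict signatures, composed with $\mu^{M}$, and $F$ only refactors this composite to be strict — so its underlying set is again $\{\langle a,b_{i}\rangle : i\in(|\check a|],\ \partial^{M}_{\check a}(i)=\partial^{M}_{\check b_{i}}(0)\}$ and its structure map is again $\langle a,b_{i}\rangle\mapsto\mu^{M}(\check a,\check b_{i})$. Hence the equality, and $U$'s comparison map $\varphi^{2}$ may be taken to be the identity. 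For the unit: $I_{\odot}(M)$ has underlying set $O$, empty vertical inputs and output typing $o\mapsto e(o)$, so $U(I_{\odot}(M))=(e:O\rightarrow\mathcal{U}(M))$, which is exactly $F$ of the unit $e:I_{O}\rightarrow M$ of the fibered slice; so $\varphi^{0}=1$.

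It then remains to check compatibility with morphisms and with the coherence isomorphisms. For a map $f\odot f'$ over a homomorphism $u$ with amalgamation permutations $\sigma$, its formula $\dot\langle a,b_{i}\dot\rangle\mapsto\dot\langle f(a),f'(b_{\sigma^{-1}_{\check a}(i)})\dot\rangle$ forgets under $U$ to $\langle a,b_{i}\rangle\mapsto\langle f(a),f'(b_{\sigma^{-1}_{\check a}(i)})\rangle$; and $U(f)$, being a morphism of strict objects, carries the same permutations $\sigma_{\check a}$ as $u$ by lemma \ref{strictnesslemma}, so $U(f)\otimes U(f')$ has the same formula. For associativity I would compare $\alpha^{\odot}_{A,B,C}:\dot\langle a,\dot\langle b_{i},c_{i,j}\dot\rangle\dot\rangle\mapsto\dot\langle\dot\langle a,b_{i}\dot\rangle,c_{\gamma^{-1}_{\langle\check a,\check b_{i}\rangle}(i,j)}\dot\rangle$ with the formula for $\alpha^{\otimes}$ computed just above this corollary (there the ``$\partial a$'' is the structure map to $M$, i.e. our $\check a$); for $\lambda^{\odot},\rho^{\odot}$ the underlying-set formulas $\dot\langle 1_{o},a\dot\rangle\mapsto a$ and $a\mapsto\dot\langle a,1,\dots,1\dot\rangle$ forget to the left and right unitors of $\otimes$ (the units being unchanged). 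Since morphisms in $\mathbf{Set}/\mathcal{U}(M)$ are determined by their underlying functions, these equalities of underlying functions are equalities of morphisms; together with $\varphi^{2}=1$, $\varphi^{0}=1$ they are exactly the three axioms for a strict monoidal functor.

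The only genuine obstacle is the bookkeeping in the second paragraph: one must push the transported product through the chain $\mathbf{Sig}_{a}\downdownarrows\mathcal{U}\simeq\mathbf{Sig}_{a}\downdownarrows\mathcal{U}_{str}\cong\mathcal{U}^{\ast}\mathbf{Set}^{\cdot\rightarrow\cdot}$ and the general transport construction carefully enough to be sure that nothing other than ``forget vertical inputs and permutations'' intervenes. Everything after that is a matter of matching formulas already recorded in the text.
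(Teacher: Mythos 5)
Your proposal is correct and follows essentially the same route as the paper, whose entire proof is the observation that ``all the formulas for parts of both monoidal structures coincide''; you have simply carried out the formula-by-formula verification (underlying sets, units, morphisms, and the coherence isomorphisms, using the explicit $\alpha$ computed just before the corollary) that the paper leaves implicit.
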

\begin{proof}
All the formulas for parts of both monoidal structures coincide.
\end{proof}

This gives an alternative construction of $\mathbf{Sig}_{ma}$ -- take $\mathbf{Sig}_{a} \downdownarrows \mathcal{U}$, strictify and add vertical inputs. The construction by pullback is significantly more efficient.

Now a (well-engineered) miracle happens. Consider the exponential adjoint $\hat{\star}: \mathcal{U}^{\ast} \mathbf{Set}^{\cdot \rightarrow \cdot} \rightarrow \underline{Hom}_{Mon(\mathbf{Sig}_{a})}(\mathbf{Sig}_{ma}, \mathcal{U}^{\ast} \mathbf{Set}^{\cdot \rightarrow \cdot})$ of the pullback action. We have the following

\begin{theorem}
This adjoint lifts to $(\odot, \otimes)$-monoidal functors:

\begin{center}
\begin{tikzpicture}
\matrix (m) [matrix of math nodes, column sep = 2cm, row sep = 1cm, text height = 1.5ex, text depth = 1ex] {
& \underline{Hom}_{Mon(\mathbf{Sig}_{a})}^{\odot, \otimes}(\mathbf{Sig}_{ma}, \mathcal{U}^{\ast} \mathbf{Set}^{\cdot \rightarrow \cdot}) \\
\mathcal{U}^{\ast} \mathbf{Set}^{\cdot \rightarrow \cdot} & \underline{Hom}_{Mon(\mathbf{Sig}_{a})}(\mathbf{Sig}_{ma}, \mathcal{U}^{\ast} \mathbf{Set}^{\cdot \rightarrow \cdot}) \\
};

\path[->] (m-2-1) edge node[auto, swap] {$\hat{\star}$} (m-2-2)
			 (m-1-2) edge node[auto] {} (m-2-2);
\path[->, dashed] (m-2-1) edge node[auto] {$\tilde{\star}$} (m-1-2); 

\end{tikzpicture}
\end{center}

\end{theorem}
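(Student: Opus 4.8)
The plan is to recognise $\hat{\star}(X)=(-)\star X$ as a composite of functors each of which is \emph{already} known to carry monoidal structure of exactly the kind we need, and to define $\tilde{\star}$ as the corresponding composite of lifts. No new combinatorics is involved. Concretely, by the pullback version of lemma \ref{formulaforaction} we have, for $X$ over a monoid $M$ and any $A\in\mathbf{Sig}_{ma}/M$, the literal equality $A\star X = U(A\otimes\overline{X})$, natural in $A$, natural in $X$, and fibered over $Mon(\mathbf{Sig}_{a})$. Hence $\hat{\star}(X)$ \emph{is} the composite
\[
\mathbf{Sig}_{ma} \xrightarrow{\ (-)\otimes\overline{X}\ } \mathbf{Sig}_{ma} \xrightarrow{\ U\ } \mathcal{U}^{\ast}\mathbf{Set}^{\cdot \rightarrow \cdot}.
\]

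\textbf{The two inputs.} First, the distributivity structure on $\mathbf{Sig}_{ma}$ (theorem \ref{distributivityforsigma}) is, by the very definition of a distributivity structure, a fibered lift $\tilde{R}$ of $Y\mapsto(-)\otimes Y$ to strong $\odot$-monoidal endofunctors: for each $Y$ the functor $(-)\otimes Y\colon(\mathbf{Sig}_{ma},\odot)\to(\mathbf{Sig}_{ma},\odot)$ carries the $\odot$-monoidal structure with coherence maps the components of $\varphi_{-,-,Y}$ and $\psi_{Y}$, and for each $g\colon Y\to Y'$ the transformation $(-)\otimes g$ is $\odot$-monoidal. Second, by corollary \ref{uisstrictmonoidal} the fibered functor $U\colon(\mathbf{Sig}_{ma},\odot)\to(\mathcal{U}^{\ast}\mathbf{Set}^{\cdot \rightarrow \cdot},\otimes)$ is strict monoidal.

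\textbf{Assembly.} Post-composition with a strict monoidal functor sends $\odot$-monoidal functors to $(\odot,\otimes)$-monoidal functors and $\odot$-monoidal natural transformations to $(\odot,\otimes)$-monoidal ones — the usual ``monoidal functors compose'' bookkeeping, trivialised here by strictness of $U$. Thus $U\circ\tilde{R}(\overline{X})$ is a $(\odot,\otimes)$-monoidal functor with underlying functor $\hat{\star}(X)$, its coherence maps being
\[
(A\star X)\otimes(B\star X)\;=\;U\bigl((A\otimes\overline{X})\odot(B\otimes\overline{X})\bigr)\xrightarrow{\ U(\varphi_{A,B,\overline{X}})\ }U\bigl((A\odot B)\otimes\overline{X}\bigr)\;=\;(A\odot B)\star X
\]
and $I_{\otimes}=U(I_{\odot})\xrightarrow{\ U(\psi_{\overline{X}})\ }U(I_{\odot}\otimes\overline{X})=I_{\odot}\star X$, the outer equalities being instances of strict monoidality of $U$. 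We set $\tilde{\star}(X)$ to be this monoidal functor and, for $f\colon X\to Y$, $\tilde{\star}(f)=U\ast\bigl((-)\otimes\overline{f}\bigr)$, a $(\odot,\otimes)$-monoidal transformation by the same reasoning ($\overline{f}$ being the image of $f$ under the sterile-signature functor). Then $\tilde{\star}$ is a functor since it is the composite of $\overline{(-)}$, $\tilde{R}$, and ``whisker with $U$'', each a functor; it is fibered over $Mon(\mathbf{Sig}_{a})$ since each of these is; and it is a section, lying over $\hat{\star}$, of the forgetful functor $\underline{Hom}^{\odot,\otimes}_{Mon(\mathbf{Sig}_{a})}(\mathbf{Sig}_{ma},\mathcal{U}^{\ast}\mathbf{Set}^{\cdot \rightarrow \cdot})\to\underline{Hom}_{Mon(\mathbf{Sig}_{a})}(\mathbf{Sig}_{ma},\mathcal{U}^{\ast}\mathbf{Set}^{\cdot \rightarrow \cdot})$, because forgetting the monoidal structure of $U\circ\tilde{R}(\overline{X})$ returns $A\mapsto A\star X$.

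\textbf{The obstacle.} The only point demanding care is purely formal: one must fix, once and for all, what $\underline{Hom}^{\odot,\otimes}$ means — it is the fibered analogue of the $End_{\odot}$ construction, its fibre objects being strong $(\odot,\otimes)$-monoidal functors and its fibre morphisms $\odot$/$\otimes$-monoidal natural transformations, with the evident forgetful functor to $\underline{Hom}$ — and then check that ``post-compose with a strict monoidal functor'' is literally a functor between the relevant exponential fibrations. Granting this, nothing combinatorial remains: every map that occurs ($\varphi$, $\psi$, and the coincidences of formulas making $U$ strict) and every coherence identity it must satisfy was already verified in theorem \ref{distributivityforsigma} and corollary \ref{uisstrictmonoidal}. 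I expect the sole real difficulty to be this notational matching of the fibered, two-variable monoidal-functor conventions.
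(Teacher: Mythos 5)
Your proposal is correct and is essentially the paper's own proof: both decompose $\hat{\star}$ as $U_{\ast}\circ R\circ\overline{(-)}$ via lemma \ref{formulaforaction} and define $\tilde{\star}=U_{\ast}\circ\tilde{R}\circ\overline{(-)}$ using the distributivity lift $\tilde{R}$ and the strict monoidality of $U$ from corollary \ref{uisstrictmonoidal}. You merely spell out the coherence maps and the action on morphisms more explicitly than the paper does.
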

\begin{proof}
By lemma \ref{formulaforaction} (which is still true for $\mathbf{Sig}_{ma}$ by pullback) we have 

\begin{displaymath}
\hat{\star} = U_{\ast} \circ R \circ \overline{-},
\end{displaymath}
where $R$ is the functor $X \mapsto (-) \otimes X$ for $\mathbf{Sig}_{ma}$, $U_{\ast} = \underline{Hom}_{Mon(\mathbf{Sig}_{a})}(1, U)$ is the action of $U$ by postcomposition, and $\overline{-}$ is the sterile signature functor.

Since we have a lift $\tilde{R}$ of $R$ to $\underline{End}_{Mon(\mathbf{Sig}_{a})}^{\odot}(\mathbf{Sig}_{ma})$, and $U$ is strict $(\odot, \otimes)$-monoidal we can define $\tilde{\star}$ by

\begin{displaymath}
\tilde{\star} = U_{\ast} \circ \tilde{R} \circ \overline{-}
\end{displaymath}
\end{proof}

Concretely this gives us the following natural isomorphisms:

\begin{eqnarray*}
(A \star X) \otimes (B \star X) & \xrightarrow{\phi_{A, B, X}} & (A \odot B) \star X \\
\langle ( a, x_{i,j} ), ( b_{1}, x_{i',j'} ), \dots, ( b_{k}, x_{i'',j''} ) \rangle & \mapsto &
( \dot{\langle} a, b_{1}, \dots, b_{k} \dot{\rangle}, x_{m, n} ) \\\\
I_{\odot} \star X & \rightarrow & I_{\otimes} \\
(1_{o}, - ) & \mapsto & 1_{o}, \\
\end{eqnarray*}
which are given by formally the same formulas as distributivity for $\mathbf{Sig}_{ma}$. They give each functor $(-) \star X$ the structure of a monoidal functor $(\mathbf{Sig}_{ma}/M, \odot) \rightarrow (\mathcal{U}^{\ast} \mathbf{Set}^{\cdot \rightarrow \cdot}/M, \otimes)$ where $X$ is over $M$.

\begin{corollary}\label{propertiesofthepullbackaction}
The pullback action has the following properties:
\begin{enumerate}
\item Every functor $(-) \star X$ maps $\odot$-monoids in $\mathbf{Sig}_{ma}/M$ to $\otimes$-monoids in $\mathcal{U}^{\ast} \mathbf{Set}^{\cdot \rightarrow \cdot}/M$, where $X$ is over $M$.
\item $\mathcal{F}_{\odot}(I_{\otimes}) \star X \simeq \mathcal{F}_{\otimes}(X)$. In particular this isomorphism maps multiplication to multiplication $\mu_{I_{\otimes}}^{\mathcal{F}_{\odot}} \star X \simeq \mu_{X}^{\mathcal{F}_{\otimes}}$, and the units and  counits: $\eta^{\mathcal{F}_{\odot}}_{I_{\otimes}} \star X \simeq \eta^{\mathcal{F}_{\otimes}}_{X}$ and $\varepsilon_{I_{\otimes}} \star X \simeq \varepsilon_{X}$.
\end{enumerate}
\end{corollary}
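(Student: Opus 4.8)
The plan is to dispatch item~1 by the general principle that a strong monoidal functor sends monoids to monoids, and item~2 by showing that $(-)\star X$ \emph{commutes with the free-monoid construction} of Theorem~\ref{freemonoidbjt}, applied to the generator $I_{\otimes}$.

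\textbf{Item 1.} By the theorem immediately preceding the corollary, $(-)\star X \colon (\mathbf{Sig}_{ma}/M,\odot)\to(\mathcal{U}^{\ast}\mathbf{Set}^{\cdot\rightarrow\cdot}/M,\otimes)$ is strong monoidal: it carries the natural isomorphisms $\phi_{A,B,X}\colon (A\star X)\otimes(B\star X)\xrightarrow{\sim}(A\odot B)\star X$ together with the unit comparison $I_{\odot}\star X\xrightarrow{\sim}I_{\otimes}$, subject to the monoidal-functor coherence diagrams. Hence for a $\odot$-monoid $(N,\mu,e)$ the object $N\star X$ acquires a $\otimes$-monoid structure with multiplication $(N\star X)\otimes(N\star X)\xrightarrow{\phi}(N\odot N)\star X\xrightarrow{\mu\star 1}N\star X$ and unit built from the unit comparison and $e\star X$; the monoid axioms follow from the monoidal-functor coherence exactly as in the classical unfibered case. (This is applied with $N=\mathcal{F}_{\odot}(I_{\otimes})=\mathcal{W}(M)$.)

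\textbf{Item 2.} Write $F:=(-)\star X$. It is strong monoidal, and by Corollary~\ref{actiononcolimits} it preserves binary coproducts and filtered colimits; being strong monoidal it also preserves the tensor unit, and in fact $F(I_{\otimes})=I_{\otimes}\star X=U(I_{\otimes}\otimes\overline{X})\simeq U(\overline{X})=X$ by Lemma~\ref{formulaforaction}, the unit law $\lambda^{\otimes}$, and $U\circ\overline{(-)}=1$. The target $(\mathcal{U}^{\ast}\mathbf{Set}^{\cdot\rightarrow\cdot}/M,\otimes)$ also admits the free-monoid construction (its $\otimes$ preserves the relevant colimits, $U$ being strict monoidal by Corollary~\ref{uisstrictmonoidal}; equivalently these are the free monoids used in \cite{KJBM}). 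Now run the construction of Theorem~\ref{freemonoidbjt} for $\mathcal{F}_{\odot}$ on the generator $I_{\otimes}$, with stages $(I_{\otimes})_{0}=I_{\odot}$, $(I_{\otimes})_{n+1}=I_{\odot}\sqcup(I_{\otimes}\odot(I_{\otimes})_{n})$. Applying $F$ and using, in turn, preservation of coproducts, the unit comparison, $\phi$, and the identification $F(I_{\otimes})\simeq X$, one obtains by induction isomorphisms $\theta_{n}\colon (I_{\otimes})_{n}\star X\xrightarrow{\sim}X_{n}$ onto the stages of $\mathcal{F}_{\otimes}(X)$; a short diagram chase (naturality of $\phi$ and the unit comparison, plus $i_{n+1}=1\sqcup 1\otimes i_{n}$) shows the $\theta_{n}$ commute with the connecting maps $i_{n}$. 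Since $F$ preserves filtered colimits this yields $\mathcal{F}_{\odot}(I_{\otimes})\star X\simeq\mathcal{F}_{\otimes}(X)$. That the isomorphism identifies multiplications follows by another induction on $n$: unwinding the recursion for the $\mu_{n,m}$ and using the coherence of $F$ as a monoidal functor one checks $\theta$ carries $F(\mu_{n,m})\circ\phi$ to $\mu^{\mathcal{F}_{\otimes}}_{n,m}$, and one passes to the colimit. The assertions about $\eta$ and $\varepsilon$ are then formal: what we have shown is that $F$, together with $\phi$ and the unit comparison, is a morphism of the free-monoid adjunctions $\mathcal{F}_{\odot}\dashv\mathcal{U}_{\odot}$ and $\mathcal{F}_{\otimes}\dashv\mathcal{U}_{\otimes}$ which on underlying objects is, up to the isomorphisms above, the identity; a morphism of adjunctions preserves units and counits. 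Concretely $\eta^{\mathcal{F}_{\odot}}_{I_{\otimes}}$ is the canonical $I_{\otimes}\to(I_{\otimes})_{1}\to\mathcal{F}_{\odot}(I_{\otimes})$, whose image under $F$ is $X\to X_{1}\to\mathcal{F}_{\otimes}(X)=\eta^{\mathcal{F}_{\otimes}}_{X}$, and likewise for $\varepsilon$.

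\textbf{Main obstacle.} The delicate point is the middle induction, i.e. that the stagewise isomorphisms $\theta_{n}$ are compatible \emph{simultaneously} with the connecting maps $i_{n}$ and with the multiplication components $\mu_{n,m}$. What makes this nontrivial is that the associativity constraint $\alpha^{\otimes}$ of the target is the ``twisted'' one involving the amalgamation permutations $\gamma$ of the multiplication of $M$ (recorded in the discussion before Corollary~\ref{uisstrictmonoidal}), so a priori the target's $\mu_{n,m}$ do not look like the naive ones. However, precisely the monoidal-functor coherence diagrams for $\phi$ — which relate $\phi$, $\alpha^{\odot}$ and $\alpha^{\otimes}$ — make the twists cancel, so the induction goes through without ever writing down an explicit permutation formula. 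This cancellation is exactly the payoff of having arranged, in the preceding theorem, that $(-)\star X$ be genuinely $(\odot,\otimes)$-monoidal.
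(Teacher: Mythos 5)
Your proposal is correct and follows essentially the same route as the paper: item~1 is the standard fact that a strong monoidal functor preserves monoids, and item~2 combines the explicit free-monoid construction of Theorem~\ref{freemonoidbjt} with the facts that $(-)\star X$ preserves coproducts and filtered colimits and sends $I_{\otimes}$ to $X$, so that it carries the free $\odot$-monoid construction to the free $\otimes$-monoid construction. The paper states this in two sentences; your stagewise induction and the remark about the twisted $\alpha^{\otimes}$ cancelling against the coherence of $\phi$ are just a careful unpacking of what the paper leaves implicit.
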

\begin{proof}
The first point is trivial. The second one follows from the formula for free monoids in theorem \ref{freemonoidbjt}, the fact that $\star$ preserves filtered colimits and coproducts in the left variable (pullback of corollary \ref{actiononcolimits}), and the fact that $\star$ is an action, which gives $I_{\otimes} \star X \simeq X$. Thus $(-) \star X$ maps the free $\odot$-monoid construction in $\mathbf{Sig}_{ma}$ to the free $\otimes$-monoid construction in $\mathcal{U}^{\ast} \mathbf{Set}^{\cdot \rightarrow \cdot}$. Combining these facts gives $\mathcal{F}_{\odot}(I_{\otimes}) \star X \simeq \mathcal{F}_{\otimes}(I_{\otimes} \star X) \simeq \mathcal{F}_{\otimes}(X)$ along with all the associated structure.
\end{proof}

\begin{proposition}\label{distributivitycompatibility}
Let $\varphi$ be the distributivity isomorphism in $\mathbf{Sig}_{ma}$, $\phi$ the isomorphism we defined above, and let $a$ be the associativity isomorphism for the pullback action. Then the following diagram commutes:
\begin{center}
\begin{tikzpicture}
\matrix (m) [matrix of math nodes, column sep = 1cm, row sep = 1.5cm, text height = 1.5ex, text depth = .25ex]{
\left[A \star (Y \star X) \right] \otimes \left[B \star (Y \star X) \right] & \left[(A \otimes Y) \star X \right] \otimes \left[ (B \otimes Y) \star X \right] \\
& \left[(A \otimes Y) \odot (B \otimes Y) \right] \star X\\
(A \odot B) \star (Y \star X) & \left[(A \odot B) \otimes Y \right] \star X \\
};

\path[->] (m-1-1) edge node[auto] {$a \otimes a$} (m-1-2)
			 (m-1-1) edge node[auto] {$\phi_{A, B, Y \star X}$} (m-3-1)
			 (m-1-2) edge node[auto] {$\phi_{A \otimes Y, B \otimes Y, X}$} (m-2-2)
			 (m-2-2) edge node[auto] {$\varphi_{A, B, Y} \star X$} (m-3-2)
			 (m-3-1) edge node[auto] {$a$} (m-3-2);

\end{tikzpicture}
\end{center}
\end{proposition}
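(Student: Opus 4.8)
The plan is to exploit a feature special to this diagram: the codomain fibration $\mathcal{U}^{\ast}\mathbf{Set}^{\cdot\rightarrow\cdot}$ carries no amalgamation data, so a morphism in it is nothing but its underlying function over the base, and two parallel such morphisms are equal as soon as they agree on elements. Thus, in pleasant contrast with much of the preceding development, the separation principle is not needed here. I would take a general element of the top-left corner -- a formal composite $\dot{\langle}\,\alpha,\beta_{1},\ldots,\beta_{k}\,\dot{\rangle}$ with $\alpha=(a,z_{0,1},\ldots,z_{0,l_{0}})\in A\star(Y\star X)$, each $\beta_{p}=(b_{p},z_{p,1},\ldots,z_{p,l_{p}})\in B\star(Y\star X)$, and every $z_{q,m}=(y_{q,m},x_{q,m,1},\ldots)\in Y\star X$ -- and chase it around both paths.

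For the chase I would first record the three maps explicitly. By Lemma \ref{formulaforaction} one has $A\star X=U(A\otimes\overline{X})$, so the associator $a\colon A\star(Y\star X)\to(A\otimes Y)\star X$ of the pullback action is the evident flattening: it replaces a nested entry $(y,(x\text{'s}))$ by a single $\otimes$-layer and concatenates the $X$-entries in lexicographic order. The map $\phi$, by the computation following Corollary \ref{uisstrictmonoidal}, forms the $\odot$-composite $\dot{\langle}\,a,b_{1},\ldots,b_{k}\,\dot{\rangle}$ and concatenates the remaining data in order, and $\varphi$ is the corresponding shuffle inside $\mathbf{Sig}_{ma}$. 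Chasing the element above, both composites yield the element of $[(A\odot B)\otimes Y]\star X$ whose head is $\langle\,\dot{\langle}\,a,b_{1},\ldots,b_{k}\,\dot{\rangle},\,y_{0,1},\ldots,y_{k,l_{k}}\,\rangle$ and whose $X$-entries are all the $x_{q,m,r}$; the single point to verify is that the two induced total orderings of these $x$-entries coincide, which they do because each path realises the same iterated reassociation of the triple nesting $A$--$Y$--$X$.

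A more structural route, which I would prefer to write up, is to observe that the identity $\tilde{\star}=U_{\ast}\circ\tilde{R}\circ\overline{(-)}$ from the proof of the preceding theorem exhibits the monoidal structure maps $\phi_{A,B,X}$ of the functors $(-)\star X$ as the image under the strict monoidal functor $U$ of the $\odot$-monoidal structure maps $\varphi_{A,B,\overline{X}}$ of $\tilde{R}(\overline{X})$, while the action associator $a$ is, under the same identification together with the equivalence $\mathbf{Sig}_{a}\downdownarrows\mathcal{U}\simeq\mathcal{U}^{\ast}\mathbf{Set}^{\cdot\rightarrow\cdot}$ and the module axioms for $\star$, the transported form of the monoidal-structure $2$-cell of $\tilde{R}$ (the one built from $\alpha^{\otimes}$). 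Under this dictionary the square of the proposition becomes the image under $U$ of the coherence diagram in Appendix \ref{appendixa} asserting that this $2$-cell is a $\odot$-monoidal natural transformation; since $U$ is strict monoidal, the square commutes.

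The main obstacle is the same in both guises. For the elementwise argument it is the bookkeeping that shows the two induced orderings of the $x$-entries agree, i.e.\ that the three explicit shuffles compose to a single reassociation; this is exactly where the engineering of $\odot$, $\otimes$ and $\star$ has been arranged to make things match up. For the structural argument it is pinning down precisely how the action associator $a$ is expressed through the monoidal-structure $2$-cell of $\tilde{R}$ -- equivalently, matching the Appendix \ref{appendixa} diagram to the one stated here -- after which commutativity is automatic. I would carry out the structural argument in the body and retain the elementwise chase as a cross-check.
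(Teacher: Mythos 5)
Your elementwise chase is exactly the paper's proof: the stated proof is ``direct calculation, entirely analogous to the calculation for diagram II in appendix \ref{coherencefordistributivity}'', and your observation that the target lives in $\mathcal{U}^{\ast}\mathbf{Set}^{\cdot\rightarrow\cdot}$, where a morphism is determined by its underlying function, correctly disposes of the amalgamation bookkeeping. The chase itself is right: both paths produce the head $\langle\,\dot{\langle}\,a,b_{1},\ldots,b_{k}\,\dot{\rangle},\,y_{0,1},\ldots,y_{k,l_{k}}\,\rangle$ with the $x$-entries in lexicographic order, the only point needing care being that $\varphi_{A,B,Y}$ has trivial amalgamation and hence does not reshuffle the $X$-entries.

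However, you should not promote the ``structural route'' to the body: it contains a genuine gap, and it is precisely the gap the authors flag in the remark immediately following this proposition (``At this time we do not know how to make this analogy more precise''). The obstruction is concrete. Lemma \ref{formulaforaction} gives $A\star Z=U(A\otimes\overline{Z})$, but the associator $a\colon A\star(Y\star X)\to(A\otimes Y)\star X$ compares $U(A\otimes\overline{Y\star X})$ with $U((A\otimes Y)\otimes\overline{X})$, and $\overline{Y\star X}$ is a \emph{sterile} signature that has forgotten the vertical inputs of $Y$, whereas the right-hand side retains $Y$ with its vertical inputs intact. So $a$ is \emph{not} the image under $U$ of any component of the $2$-cell $(\alpha^{\otimes})^{-1}\colon\tilde{R}(X)\circ\tilde{R}(Y)\to\tilde{R}(X\otimes Y)$, and the square is not literally the $U$-image of coherence condition II of appendix \ref{appendixa}; the analogy is only formal. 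Your instinct that ``pinning down how $a$ is expressed through the monoidal-structure $2$-cell of $\tilde{R}$'' is the main obstacle is correct, but that step is not a matter of matching diagrams -- it is an open point of the formalism. Keep the element chase as the proof and, if you wish, record the structural reading only as the (imprecise) analogy the paper itself offers.
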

\begin{proof}
Direct calculation. An entirely analogous calculation is done for diagram II in appendix \ref{coherencefordistributivity}.
\end{proof}

The above diagram is analogous to diagram II in appendix \ref{appendixa}. Indeed $\phi$ is in some sense $\varphi$ and $a$ is in some sense $\alpha^{\otimes}$ for $\mathbf{Sig}_{ma}$, just like $\star$ is in some sense $\otimes$ by lemma \ref{formulaforaction}. At this time we do not know how to make this analogy more precise, because in forming $a$ we need to know that the middle variable is in $\mathbf{Sig}_{ma}$, and the formula of \ref{formulaforaction} cannot possibly remember this.

\begin{theorem}
$M^{+}(-) \simeq \mathcal{W}(M) \star (-)$ as monoids, naturally in $M$.
\end{theorem}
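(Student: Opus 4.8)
This final theorem is the promised reformulation of theorem \ref{fiberedcomparisontheorem} (a monad being a monoid in the category of fibered endofunctors over $Mon(\mathbf{Sig}_a)$). The plan is to produce both monads explicitly and match them using the apparatus built above. First I would note that $\mathcal{W}(M)\star(-)$ genuinely is a monad on $\mathcal{U}^{\ast}\mathbf{Set}^{\cdot\rightarrow\cdot}/M$: the web monoid $(\mathcal{W}(M),\nu,\eta)$ is a $\otimes$-monoid in $\mathbf{Sig}_{ma}/M$ by the fibered three tensors theorem \ref{fiberedttt}, and $\star$ restricts to a monoidal action of $(\mathbf{Sig}_{ma}/M,\otimes)$ on $\mathcal{U}^{\ast}\mathbf{Set}^{\cdot\rightarrow\cdot}/M$; any monoid acting on a category yields a monad, with unit $X\simeq I_\otimes\star X\xrightarrow{\eta\star X}\mathcal{W}(M)\star X$ and multiplication $(\nu\star X)\circ a$, where $a\colon\mathcal{W}(M)\star(\mathcal{W}(M)\star X)\to(\mathcal{W}(M)\otimes\mathcal{W}(M))\star X$ is the associativity isomorphism of the action. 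Since $\mathcal{W}$ is a section of $Mon(\mathbf{Sig}_{ma},\otimes)\to Mon(\mathbf{Sig}_a)$ (theorem \ref{fiberedttt}) and $\star$, $a$ are fibered, all of this data lives over $Mon(\mathbf{Sig}_a)$; because the isomorphism I shall build will be assembled from fibered pieces, it will be fibered, so naturality in $M$ requires no separate argument and I will not dwell on it.

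Next I would dispatch the underlying functor and the unit. By the second part of corollary \ref{propertiesofthepullbackaction} the $(\odot,\otimes)$-monoidal functor $(-)\star X$ carries the free $\odot$-monoid $\mathcal{W}(M)=\mathcal{F}_\odot(I_\otimes)$ to the free $\otimes$-monoid $\mathcal{F}_\otimes(X)=M^{+}(X)$, together with the accompanying structure: the free $\odot$-monoid unit is sent to $\eta^{\mathcal{F}_\otimes}_X$, and, through the isomorphism $\phi$, the free $\odot$-monoid multiplication $\mu$ on $\mathcal{W}(M)$ is sent to the free $\otimes$-monoid multiplication on $M^{+}(X)$. By the three tensors theorem the web monoid unit $\eta$ is exactly $\eta^{\mathcal{F}_\odot}_{I_\otimes}$, so under the isomorphism $\mathcal{W}(M)\star X\simeq M^{+}(X)$ the unit of the monad $\mathcal{W}(-)\star(=)$ is carried to the unit of the free monoid monad $(-)^{+}$. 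Only the two multiplications remain to be matched.

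This is where the work lies. The multiplication $\mu^{M^{+}}_X\colon M^{+}M^{+}(X)\to M^{+}(X)$ of the free $\otimes$-monoid monad, being a component of the counit of the free-monoid adjunction, is the \emph{unique} $\otimes$-monoid homomorphism $\mathcal{F}_\otimes(\mathcal{F}_\otimes(X))\to\mathcal{F}_\otimes(X)$ (with respect to the free $\otimes$-monoid structures on both) whose composite with $\eta^{\mathcal{F}_\otimes}_{\mathcal{F}_\otimes(X)}$ is the identity. The monad $\mathcal{W}(-)\star(=)$ satisfies the corresponding unit law, and its unit has just been matched with $\eta^{\mathcal{F}_\otimes}$; hence it suffices to prove that, transported across the isomorphism of corollary \ref{propertiesofthepullbackaction}, the map $(\nu\star X)\circ a$ is a homomorphism of free $\otimes$-monoids, and then to invoke uniqueness. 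Writing each free $\otimes$-monoid multiplication as $(\mu\star(=))\circ\phi_{\mathcal{W}(M),\mathcal{W}(M),(=)}$ (again the second part of corollary \ref{propertiesofthepullbackaction}, with third argument $X$, respectively $\mathcal{W}(M)\star X$), the homomorphism property becomes a single square, which I would verify by a diagram chase combining: the main diagram of the three tensors theorem \ref{threetensorstheorem}, which says that $\nu$ commutes with $\mu$ through the distributivity isomorphism $\varphi_{\mathcal{W},\mathcal{W},\mathcal{W}}$, acted on by $(-)\star X$; proposition \ref{distributivitycompatibility}, which rewrites $\varphi\star X$ in terms of $\phi$ and the action associativity $a$; naturality of $\phi$ in all three of its variables, applied to the morphism $\nu$; and the coherence of the action. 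This chase is the main obstacle: nothing in it is conceptually deep, but the diagram is large, and keeping the several instances of $\phi$ and $a$ straight takes care.

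Once the square commutes, uniqueness of the free-monoid-monad multiplication forces the transported $(\nu\star X)\circ a$ to equal $\mu^{M^{+}}_X$, so $\mathcal{W}(M)\star X\simeq M^{+}(X)$ is an isomorphism of monads; assembled from fibered ingredients, it is an isomorphism over $Mon(\mathbf{Sig}_a)$, i.e.\ natural in $M$, as claimed. In passing this also makes good the earlier promise that $M^{+}$ is polynomial, since $\mathcal{W}(M)\star(-)$ manifestly is, via theorem \ref{sigaispoly} and lemma \ref{formulaforaction}.
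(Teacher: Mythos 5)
Your proposal follows the same route as the paper's proof: identify $\mathcal{W}(M)\star X$ with $\mathcal{F}_{\otimes}(X)$ and match units via corollary \ref{propertiesofthepullbackaction}, show $(\nu\star X)\circ a$ is a homomorphism of free $\otimes$-monoids by combining the main diagram starred with $X$, proposition \ref{distributivitycompatibility}, and naturality of $\phi$, and then conclude by the uniqueness of the homomorphism $\mathcal{F}_{\otimes}^{2}(X)\to\mathcal{F}_{\otimes}(X)$ splitting the unit. The only cosmetic difference is that you invoke the general monoid-action-gives-monad fact for the unit law where the paper verifies that instance by an explicit diagram; the content is the same.
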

\begin{proof}
$(-)^{+}$ acts as the free monoid monad on $\mathbf{Sig}_{a} \downdownarrows \mathcal{U}$, which is monoidally equivalent to $\mathcal{U}^{\ast} \mathbf{Set}^{\cdot \rightarrow \cdot}$ with the $\otimes$-structure, by theorem \ref{basicequivalencethm} and construction of $\otimes$. Thus $(-)^{+}$ is isomorphic to the free monoid monad on $\mathcal{U}^{\ast} \mathbf{Set}^{\cdot \rightarrow \cdot}$. By the second point of the above corollary the universe of the web monoid acts as the free $\otimes$-monoid monad on $\mathcal{U}^{\ast} \mathbf{Set}^{\cdot \rightarrow \cdot}$, and a natural isomorphism drops out.

We must check whether it is an isomorphism of monads. The units are mapped to each other by definition -- in both cases they are the unit of the same adjunction (for $\mathcal{W}$ this follows again from corollary \ref{propertiesofthepullbackaction}). This leaves multiplication. We must check if $\nu \star X \circ a$ is $\varepsilon_{\mathcal{F}_{\otimes}(X)} \simeq \varepsilon_{\mathcal{F}_{\odot}(I_{\otimes})} \star X$. Consider the following diagram (we abbreviate $\mathcal{W} = \mathcal{W}(M)$):

\hspace*{-1.7in}
\begin{tikzpicture}
\matrix (m) [matrix of math nodes, column sep = .4cm, row sep = 1.2cm, text height = 1.5ex, text depth = .25ex]{
\mathcal{F}_{\otimes}^{2} (X) \otimes \mathcal{F}_{\otimes}^{2} (X) & \left[(\mathcal{W} \otimes \mathcal{W}) \star X \right] \otimes \left[ (\mathcal{W} \otimes \mathcal{W}) \star X \right] & (\mathcal{W} \star X) \otimes (\mathcal{W} \star X) & \mathcal{F}_{\otimes}(X) \otimes \mathcal{F}_{\otimes}(X) \\
\left[\mathcal{W} \star (\mathcal{W} \star X) \right] \otimes \left[ \mathcal{W} \star (\mathcal{W} \star X) \right] & \left[(\mathcal{W} \otimes \mathcal{W}) \odot (\mathcal{W} \otimes \mathcal{W}) \right] \star X & (\mathcal{W} \odot \mathcal{W}) \star X \\
(\mathcal{W} \odot \mathcal{W}) \star (\mathcal{W} \star X) & \left[(\mathcal{W} \odot \mathcal{W}) \otimes \mathcal{W}\right] \star X \\
\mathcal{W} \star (\mathcal{W} \star X) & (\mathcal{W} \otimes \mathcal{W}) \star X & \mathcal{W} \star X \\
\mathcal{F}_{\otimes}^{2} (X) \\
};

\path[->] (m-1-2) edge node (place) {} node[above = .2 of place] {$\underbrace{(\nu \star X) \otimes (\nu \star X)}$} (m-1-3)
			 (m-4-1) edge node[auto] {$\simeq$} (m-5-1)
			 (m-1-1) edge (m-1-2)
			 (m-5-1) edge (m-4-2)
			 (m-3-1) edge node[auto] {$a$} (m-3-2)
			 (m-3-1) edge node[auto] {$\mu \star 1$} (m-4-1)
			 (m-4-1) edge node[auto] {$a$} (m-4-2)
			 (m-2-1) edge node[auto] {$\phi$} (m-3-1)
			 (m-1-1) edge node[auto, swap] {$\simeq \otimes \simeq$} (m-2-1)
			 (m-2-1) edge node[auto] {$a \otimes a$} (m-1-2)
			 (m-1-3) edge node[auto] {$\simeq \otimes \simeq$} (m-1-4)
			 (m-1-4) edge node[auto] {$\mu_{X}^{\mathcal{F}_{\otimes}}$} (m-4-3)
			 (m-1-3) edge node[auto] {$\phi$} (m-2-3)
			 (m-1-2) edge node[auto, swap] {$\phi$} (m-2-2)
			 (m-2-2) edge node (pl) {} node[above = .2 of pl] {$(\nu \odot \nu) \star X$} (m-2-3)
			 (m-2-2) edge node[auto, swap] {$\varphi \star X$} (m-3-2)
			 (m-3-2) edge node[auto, swap] {$(\mu \otimes 1) \star X$} (m-4-2)
			 (m-2-3) edge node[auto] {$\mu \star X$} (m-4-3)
			 (m-4-2) edge node[auto] {$\nu \star X$} (m-4-3);

\draw[-, rounded corners] (m-1-1.south west) -- (m-2-1.north west) -- (m-2-1.south west);
\path[->] (m-2-1.south west) edge[bend right = 15] node[fill = white] {$\mu^{\mathcal{F}_{\otimes}}_{\mathcal{F}_{\otimes}(X)}$} (m-5-1.north west);

\end{tikzpicture}
The central square is just the main diagram starred with $X$. It commutes by the definition of $\nu$. The square above it commutes by naturality of $\phi$. The rightmost triangle commutes by corollary \ref{propertiesofthepullbackaction}, as does the leftmost ``bigon'' or ``biangle''. The trapezoid commutes by proposition \ref{distributivitycompatibility}. The square below it commutes by naturality. Since $\mathcal{F}_{\otimes}$ is determined only up to natural isomorphism, the isomorphisms marked $\simeq$ are irrelevant, and can be taken, for example, to be the identity.

Thus we see that $\nu \star X \circ a$ gives a natural homomorphism of monoids $\mathcal{F}_{\otimes}^{2}(X) \rightarrow \mathcal{F}_{\otimes}(X)$. Since $\mathcal{F}_{\otimes}$ is the free monoid monad, all such homomorphisms are determined by what they do to the unit $\eta_{\mathcal{F}_{\otimes}(X)}$. But by the unit conditions for $\nu$ we see that $\nu \star X \circ a \circ \eta_{\mathcal{F}_{\otimes}(X)}$ is the identity, as the following diagram shows

\begin{center}
\begin{tikzpicture}
\matrix (m) [matrix of math nodes, column sep = 1cm, row sep = 1.5cm, text height = 1.5ex, text depth = .25ex]{
\mathcal{F}_{\otimes}(X) & \mathcal{W} \star X & I_{\otimes} \star (\mathcal{W} \star X) & (I_{\otimes} \otimes \mathcal{W}) \star X \\
\mathcal{F}^{2}_{\otimes}(X) & & \mathcal{W} \star (\mathcal{W} \star X) & (\mathcal{W} \otimes \mathcal{W}) \star X \\
& & \mathcal{F}_{\otimes}(X) & \mathcal{W} \star X \\
};

\path[->] (m-1-1) edge node[auto] {$\simeq$} (m-1-2)
			 (m-1-2) edge node[auto] {} (m-1-3)
			 (m-1-3) edge node[auto] {$a$} (m-1-4)
			 (m-2-1) edge node[auto] {$\simeq$} (m-2-3)
			 (m-2-3) edge node[auto] {$a$} (m-2-4)
			 (m-1-1) edge node[auto] {$\eta_{\mathcal{F}_{\otimes}(X)}^{\mathcal{F}_{\otimes}}$} (m-2-1)
			 (m-1-3) edge node[auto,swap] {$\eta_{I_{\otimes}}^{\mathcal{F}_{\odot}} \star 1$} (m-2-3)
			 (m-1-4) edge node[auto,swap] {$(\eta_{I_{\otimes}}^{\mathcal{F}_{\odot}} \otimes 1) \star X$} (m-2-4)
			 (m-3-3) edge node[auto] {$\simeq$} (m-3-4)
			 (m-1-4.south east) edge[bend left = 40] node[auto] {$\lambda^{\odot}_{\mathcal{W}} \star X$} (m-3-4.north east)
			 (m-2-4) edge node[auto,swap] {$\nu \star X$} (m-3-4)
			 (m-2-1) edge[dashed] (m-3-3);

\end{tikzpicture}
\end{center}
The top left rectangle commutes by \ref{propertiesofthepullbackaction}. The right bigon commutes by the unit conditions for $\nu$. The top right square commutes by naturality of $a$. The isomorphisms $\simeq$ are again irrelevant, and can be taken to be identities. The unnamed isomorphism is the canonical one, given by the action $\star$. The dashed arrow is determined by the other composites, and is the identity, by the left unit condition for $\star$.

Only the counit can satisfy this equation, and thus $\nu \star X \circ a = \varepsilon_{\mathcal{F}_{\otimes}(X)}^{\mathcal{F}_{\otimes}}$ concluding the proof.
\end{proof}

It should be clear from this argument that distributivity really does tell us that $\nu$ commutes with $\mu$. This is made literal by the pullback action, as we saw above.

In fact we could check that all our identities hold before applying $(-) \star X$ (we have a natural isomorphism $\mathcal{F}_{\odot}^{2}(X) \simeq \mathcal{F}_{\odot}(X) \otimes \mathcal{F}_{\odot}(X)$), but the way we have set up the formalism, this would entail several more pages of direct calculation, and a theorem or two.

\appendix
\section{Proof of the Main Theorem} \label{appendixa}
\paragraph{The Coherence Conditions.} Here we will list, as promised, the coherence conditions arising from the definition of distributivity.
\begin{itemize}
\item Condition I

\begin{tikzpicture}
\matrix (m) [matrix of math nodes, column sep = .5cm, row sep = 1.5cm, text height = 1.5ex, text depth = .25ex]{
(A \otimes X) \odot ((B \otimes X) \odot (C \otimes X)) & ((A \otimes X) \odot (B \otimes X)) \odot (C \otimes X) \\
(A \otimes X) \odot (B \odot C) \otimes X & ((A \odot B) \otimes X) \odot (C \otimes X) \\
(A \odot (B \odot C)) \otimes X & ((A \odot B) \odot C) \otimes X \\
};

\path[->] (m-1-1) edge node[auto] {$\alpha^{\odot}$} (m-1-2)
			 (m-1-1) edge node[auto,swap] {$1 \odot \varphi_{B, C, X}$} (m-2-1)
			 (m-2-1) edge node[auto,swap] {$\varphi_{A, B \odot C, X}$} (m-3-1)
			 (m-1-2) edge node[auto] {$\varphi_{A, B, X} \odot 1$} (m-2-2)
			 (m-2-2) edge node[auto] {$\varphi_{A \odot B, C, X}$} (m-3-2)
			 (m-3-1) edge node[auto] {$\alpha^{\odot} \otimes 1$} (m-3-2);
\end{tikzpicture}

\item Condition II

\begin{tikzpicture}
\matrix (m) [matrix of math nodes,column sep = 1.2cm, row sep = 1.5cm, text height = 1.5ex, text depth = .25ex]{
(A \otimes (X \otimes Y)) \odot (B \otimes (X \otimes Y)) & ((A \otimes X) \otimes Y) \odot ((B \otimes X) \otimes Y) \\
& ((A \otimes X) \odot (B \otimes X)) \otimes Y \\
(A \odot B) \otimes (X \otimes Y) & ((A \odot B) \otimes X) \otimes Y \\
};

\path[->] (m-1-1) edge node[auto] {$\alpha^{\otimes} \odot \alpha^{\otimes}$} (m-1-2)
			 (m-1-1) edge node[auto,swap] {$\varphi_{A, B, X \otimes Y}$} (m-3-1)
			 (m-1-2) edge node[auto] {$\varphi_{A \otimes X, B \otimes X, Y}$} (m-2-2)
			 (m-2-2) edge node[auto] {$\varphi_{A, B, X} \otimes 1$} (m-3-2)
			 (m-3-1) edge node[auto] {$\alpha^{\otimes}$} (m-3-2);
\end{tikzpicture}

\item Condition III

\begin{center}
\begin{tikzpicture}
\matrix (m) [matrix of math nodes,column sep = 1cm, row sep = .5cm, text height = 1.5ex, text depth = .25ex]{
& A \odot B \\
(A \otimes I_{\otimes}) \odot (B \otimes I_{\otimes}) && (A \odot B) \otimes I_{\otimes} \\
};

\path[->] (m-1-2) edge node[auto] {$\rho^{\otimes}$} (m-2-3)
			 (m-1-2) edge node[auto,swap] {$\rho^{\otimes} \odot \rho^{\otimes}$} (m-2-1)
			 (m-2-1) edge node[auto] {$\varphi_{A, B, I_{\otimes}}$} (m-2-3);
\end{tikzpicture}
\end{center}

\item Condition IV, for any morphism $f: A\otimes X \rightarrow Y$

\begin{center}
\begin{tikzpicture}
\matrix (m) [matrix of math nodes,column sep = 1.5cm, row sep = 1cm, text height = 1.5ex, text depth = .25ex]{
(I_{\odot} \otimes X) \odot (A \otimes X) & I_{\odot} \odot Y \\
(I_{\odot} \odot A) \otimes X \\
A \otimes X & Y \\
};

\path[->] (m-1-1) edge node[auto] {$\psi_{X}^{-1} \odot f$} (m-1-2)
			 (m-1-1) edge node[auto] {$\varphi$} (m-2-1)
			 (m-2-1) edge node[auto,swap] {$\lambda^{\odot} \otimes 1$} (m-3-1)
			 (m-3-1) edge node[auto] {$f$} (m-3-2)
			 (m-1-2) edge node[auto] {$\lambda^{\odot}$} (m-3-2);
\end{tikzpicture}
\end{center}

\item Condition V

\begin{center}
\begin{tikzpicture}
\matrix (m) [matrix of math nodes, column sep = 1.2cm, row sep = .5cm, text height = 1.5ex, text depth = .25ex]{
I_{\odot} \otimes (X \otimes Y) & (I_{\odot} \otimes X) \otimes Y & I_{\odot} \otimes Y \\
& I_{\odot} \\
};

\path[->] (m-1-1) edge node[auto] {$\alpha^{\otimes}$} (m-1-2)
			 (m-1-2) edge node[auto] {$\psi_{X}^{-1} \otimes 1$} (m-1-3)
			 (m-1-1) edge node[auto,swap] {$\psi_{X \otimes Y}^{-1}$} (m-2-2)
			 (m-1-3) edge node[auto] {$\psi_{Y}^{-1}$} (m-2-2);
\end{tikzpicture}
\end{center}

\item Condition VI
\begin{displaymath}
\psi_{I_{\otimes}} = \rho_{I_{\odot}}^{\otimes}
\end{displaymath}

\item Condition VII
\begin{center}
\begin{tikzpicture}
\matrix (m) [matrix of math nodes, column sep = 1.5cm, row sep = 1cm, text height = 1.5ex, text depth = .25ex]{
(A \otimes X) \odot I_{\odot} & A \otimes X \\
(A \otimes X) \odot (I_{\odot} \otimes X) & (A \otimes I_{\odot}) \otimes X \\
};

\path[->] (m-1-2) edge node[auto, swap] {$\rho^{\odot}$} (m-1-1)
			 (m-1-1) edge node[auto, swap] {$1 \odot \psi_{X}$} (m-2-1)
			 (m-2-1) edge node[auto] {$\varphi_{A, I_{\odot}, X}$} (m-2-2)
			 (m-1-2) edge node[auto] {$\rho^{\odot} \otimes X$} (m-2-2);

\end{tikzpicture}
\end{center}
\end{itemize}
The conditions are listed in the form in which they are used in the proof. They follow from the definition of distributivity: first, because $\tilde{R}(f)$ is $\odot$-monoidal for all morphisms it follows that $\phi$ and $\psi$ giving the monoidal structure for $\tilde{R}(X)$ are natural in X. Conditions $I$, $IV$ and $VII$ say that $\tilde{R}(X)$ is $\odot$-monoidal. Condition $IV$ is the left unit condition combined with the naturality of $\lambda^{\odot}$. Conditions $III$ and $VI$ are the requirement that $\rho :1_{\mathcal{C}} \rightarrow \tilde{R}(I_{\otimes})$ is an $\odot$-monoidal natural transformation. Conditions $II$ and $V$ say the same for $(\alpha^{\otimes})^{-1} : \tilde{R}(X) \circ \tilde{R}(Y) \rightarrow \tilde{R}(X \otimes Y)$.

\begin{remark}
Condition $VII$ will not be used in the proof. Thus the main theorem is true if we lift $R$ to $End_{\odot}^{L}(\mathcal{C})$, the category of left-unital $\odot$-monoidal functors.
\end{remark}

\paragraph{Determination of $\nu_{1}$ and $\nu_{0}$.} In this paragraph we still suppose that $\nu$ exists. We must proove that $\nu_{1} = (i_{0} \psi_{\mathcal{W}}^{-1}, \lambda_{\mathcal{W}}^{\otimes})$ as claimed above. This follows from the unit conditions. They are
\begin{center}
\begin{tikzpicture}
\matrix (m) [matrix of math nodes, column sep = 1cm, row sep = 1cm, text height = 1.5ex, text depth = .25ex]{
I_{\otimes} \otimes \mathcal{W} & \mathcal{W} \otimes \mathcal{W} & \mathcal{W} \otimes I_{\otimes} \\
&  \mathcal{W} \\
};

\path[->] (m-1-1) edge node[auto] {$\eta \otimes 1$} (m-1-2)
			(m-1-3) edge node[auto,swap] {$1 \otimes \eta$} (m-1-2)
			(m-1-2) edge node[auto] {$\nu$} (m-2-2)
			(m-1-1) edge node[auto,swap] {$\lambda^{\otimes}$} (m-2-2)
			(m-1-3) edge node[auto] {$(\rho^{\otimes})^{-1}$} (m-2-2);
\end{tikzpicture}
\end{center}
We can expand them to the following commutative diagram

\begin{center}
\begin{tikzpicture}
\matrix (m) [matrix of math nodes, column sep = 1.5cm, row sep = 1cm, text height = 1.5ex, text depth = .25ex]{
\mathcal{W}_{1} \otimes \mathcal{W} &&& \mathcal{W}_{1} \otimes \mathcal{W}_{1} \\
I_{\otimes} \otimes \mathcal{W} & \mathcal{W} \otimes \mathcal{W} & \mathcal{W} \otimes I_{\otimes} & \mathcal{W}_{1} \otimes I_{\otimes} \\
& \mathcal{W} \\
};

\path[->] (m-1-4) edge node[auto,swap] {$1 \otimes i_{1}$} (m-1-1)
			 (m-1-1) edge node[auto] {$i_{1} \otimes 1$} (m-2-2)
			 (m-2-1) edge node[auto] {$j \otimes 1$} (m-1-1)
			 (m-2-1) edge node[auto,swap] {$\lambda$} (m-3-2)
			 (m-1-1) edge[densely dotted] node[auto] {$\nu_{1}$} (m-3-2)
			 (m-2-4) edge node[auto] {$1 \otimes j$} (m-1-4)
			 (m-2-4) edge node[auto,swap] {$i_{1}\otimes 1$} (m-2-3)
			 (m-2-3) edge node[auto,swap] {$1 \otimes \eta$} (m-2-2)
			 (m-2-2) edge node[auto] {$\nu$} (m-3-2)
			 (m-2-4) edge node[auto] {$(\rho^{\otimes})^{-1} (i_{1} \otimes 1)$} (m-3-2);
\end{tikzpicture}
\end{center}
where some of the original maps was omitted for readability. Maps labeled $j$ are coprojections of coproducts. They are factorizations of $\eta$ (hence the commutativity). We wish to determine the dotted arrow $\nu_{1}$. We note that
\begin{displaymath}
\mathcal{W}_{1} \otimes \mathcal{W} \simeq (I_{\odot} \sqcup I_{\otimes}) \otimes \mathcal{W} \simeq I_{\odot} \otimes \mathcal{W} \sqcup I_{\otimes} \otimes \mathcal{W}
\end{displaymath}
Thus the map $\nu_{1}$ is determined by what happens on both of these components. The left unit condition immediately implies that the right component is mapped to $\mathcal{W}$ by $\lambda_{\mathcal{W}}^{\otimes}$. To see what happens to $I_{\odot} \otimes \mathcal{W}$ consider the top map composed with $1 \otimes j$ and the inclusion of $I_{\odot} \otimes I_{\otimes}$ into $\mathcal{W}_{1} \otimes I_{\otimes}$. En easy calculation gives that this is $1 \otimes \eta: I_{\odot} \otimes I_{\otimes} \rightarrow I_{\odot} \otimes \mathcal{W}$ followed by the inclusion $I_{\odot} \otimes \mathcal{W} \rightarrow \mathcal{W}_{1} \otimes \mathcal{W}$. We now look at the right unit condition. We obtain the diagram

\begin{center}
\begin{tikzpicture}
\matrix (m) [matrix of math nodes, column sep = 2cm, row sep = .8cm, text height = 1.5ex, text depth = .25ex]{
I_{\odot} & I_{\odot} \\
I_{\odot} \otimes \mathcal{W} & I_{\odot} \otimes I_{\otimes} \\
I_{\odot} \otimes \mathcal{W} \sqcup I_{\otimes} \otimes \mathcal{W} & I_{\odot} \otimes I_{\otimes} \sqcup I_{\otimes} \otimes I_{\otimes} \\
\mathcal{W} \\
};

\path[->] (m-3-2) edge node[auto,swap] {$1 \otimes \eta \sqcup 1 \otimes \eta$} (m-3-1)
			 (m-3-1) edge node[auto,swap] {$(?, \lambda_{\mathcal{W}}^{\otimes})$} (m-4-1)
			 (m-3-2) edge node[auto] {$i_{0} (\rho^{\otimes})^{-1} \sqcup (\rho^{\otimes})^{-1} \eta$} (m-4-1)
			 (m-1-1) edge node[auto,swap] {$\psi_{\mathcal{W}}$} (m-2-1)
			 (m-1-2) edge node[auto,swap] {$1$} (m-1-1)
			 (m-1-2) edge node[auto] {$\psi_{I_{\otimes}}$} (m-2-2)
			 (m-2-1) edge node[auto] {} (m-3-1)
			 (m-2-2) edge node[auto] {} (m-3-2)
			 (m-2-2) edge node[auto,swap] {$1 \otimes \eta$} (m-2-1);
\end{tikzpicture}
\end{center}
We want to determine the map ``$?$''. The unnamed maps are coprojections. On the right components of the coproducts this diagram commutes by naturality of $\lambda^{\otimes}$ and the condition $\lambda_{I_{\otimes}} = \rho_{I_{\otimes}}^{-1}$. The second component of the diagonal map is determined by the naturality of $\rho^{\otimes}$ applied to the inclusion $I_{\odot} = \mathcal{W}_{0} \rightarrow \mathcal{W}_{1} \rightarrow \mathcal{W}$. The top square commutes by naturality of $\psi$. From this we obtain the equation

\begin{displaymath}
? \circ \psi_{\mathcal{W}} \psi_{I_{\otimes}}^{-1} = i_{0} (\rho_{I_{\odot}}^{\otimes})^{-1}
\end{displaymath}
from which follows, using coherence condition VI, that  the map ``$?$'' is

\begin{displaymath}
i_{0} \circ \psi_{\mathcal{W}}^{-1}: I_{\odot} \otimes \mathcal{W} = \mathcal{W}_{0} \otimes \mathcal{W} \rightarrow \mathcal{W}.
\end{displaymath}
Note that these calculations also determine that $\nu_{0} = i_{0, \infty} \psi_{\mathcal{W}}^{-1}$, since this map is $\nu_{1}$ precomposed with the inclusion $I_{\odot} \otimes \mathcal{W} \rightarrow \mathcal{W}_{1} \otimes \mathcal{W}$, and we have just determined exactly this composite.

From now on we use the definition \ref{definitionofnu} for $\nu$ since we have already showed that it is the only possible choice. We still have to check that $\nu$ defines a monoid and makes the main diagram commute. We will intensely use induction - the first component will satisfy an appropriate equality, usually because of the coherence conditions, and then all others will follow by applying the inductive definition \ref{definitionofnun}. The original condition will be recovered by applying the colimit functor.

\paragraph{The unit conditions.} The left unit condition holds as part of our definition of $\nu$, since it factors through $\nu_{1}$, which was defined in part by this condition. This leaves the right unit condition. We will prove it using induction on $n$ starting with $n = 1$, which consists of the calculations above. For the inductive step we need to check that

\begin{displaymath}
\nu_{n} \circ 1 \otimes \eta = i_{n} (\rho^{\otimes})^{-1}.
\end{displaymath}
Consider the following diagram:

\begin{center}
\begin{tikzpicture}
\matrix (m) [matrix of math nodes, column sep = 1cm, row sep = 1cm, text height = 1.5ex, text depth = .25ex]{
(\mathcal{W}_{1} \otimes I_{\otimes}) \odot (\mathcal{W}_{n-1} \otimes I_{\otimes}) & (\mathcal{W}_{1} \otimes \mathcal{W}) \odot (\mathcal{W}_{n-1} \otimes \mathcal{W}) & \mathcal{W} \odot \mathcal{W} \\
& (\mathcal{W}_{1} \odot \mathcal{W}_{n-1}) \otimes \mathcal{W} \\
& \mathcal{W}_{n} \otimes \mathcal{W} & \mathcal{W} \\
(\mathcal{W}_{1} \odot \mathcal{W}_{n-1}) \otimes I_{\otimes} &  \mathcal{W}_{n} \otimes I_{\otimes}\\
};

\path[->] (m-1-1) edge node[auto] (left) {} (m-1-2)
			 (m-1-2) edge node[auto] (right) {} (m-1-3)
			 (m-1-3) edge node[auto] {$\mu$} (m-3-3)
			 (m-4-1) edge node[auto] {$\varphi_{1, n-1}^{-1}$} (m-1-1)
			 (m-4-1) edge node[auto] {$1 \otimes \eta$} (m-2-2)
			 (m-4-2) edge node[auto] {$s_{n} \otimes 1$} (m-4-1)
			 (m-4-2) edge node[auto] {$1 \otimes \eta$} (m-3-2)
			 (m-4-2) edge node[auto,swap] {$i_{n} \rho^{-1}$} (m-3-3)
			 (m-3-2) edge node[auto,swap] {$s_{n} \otimes 1$} (m-2-2)
			 (m-3-2) edge node[auto] {$\nu_{n}$} (m-3-3)
			 (m-2-2) edge node[auto] {$\varphi_{1, n-1}^{-1}$} (m-1-2);

\node[above = 0 of left] {$(1 \otimes \eta) \odot (1 \otimes \eta)$};
\node[above = 0 of right] {$\nu_{1} \odot \nu_{n-1}$};
\end{tikzpicture}
\end{center}
All the regions in it commute except possibly the small triangle below $\nu_{n}$, which we are investigating. This follows from the naturality of $\varphi$ (note the abbreviation we have introduced here) and the definition of $\nu_{n}$. From this and the inductive hypothesis we can calculate that

\begin{displaymath}
\nu_{n} 1 \otimes \eta = i_{n} \mu_{1, n-1} (\rho^{-1} \odot \rho^{-1}) \phi_{1, n-1}^{-1} (s_{n} \otimes 1)
\end{displaymath}
using the explicit definition of $\mu$ given in theorem \ref{freemonoidbjt}. Thus if we can check that

\begin{displaymath}
i_{n} \mu_{1, n-1} (\rho^{-1} \odot \rho^{-1}) \phi_{1, n-1}^{-1} (s_{n} \otimes 1) = i_{n} \rho^{-1},
\end{displaymath}
we would be done. But this comes down to the commutativity of

\begin{center}
\begin{tikzpicture}
\matrix (m) [matrix of math nodes, column sep = 1.5cm, row sep = 1cm, text height = 1.5ex, text depth = .25ex]{
(\mathcal{W}_{1} \otimes I_{\otimes}) \odot (\mathcal{W}_{n-1} \otimes I_{\otimes}) & \mathcal{W}_{1} \odot \mathcal{W}_{n-1} & \mathcal{W} \odot \mathcal{W} & \mathcal{W} \\
(\mathcal{W}_{1} \odot \mathcal{W}_{n-1}) \otimes I_{\otimes} & \mathcal{W}_{1} \odot \mathcal{W}_{n-1} & \mathcal{W}_{n} \\
\mathcal{W}_{n} \otimes I_{\otimes} & \mathcal{W}_{n} \\
};

\path[->] (m-1-1) edge node[auto] {$\rho^{-1} \odot \rho^{-1}$} (m-1-2)
			 (m-1-2) edge node[auto] {$i_{1} \odot i_{n-1}$} (m-1-3)
			 (m-1-3) edge node[auto] {$\mu$} (m-1-4)
			 (m-1-2) edge node[auto] {$\mu_{1, n-1}$} (m-2-3)
			 (m-2-1) edge node[auto] {$\varphi_{1, n-1}^{-1}$} (m-1-1)
			 (m-2-1) edge node[auto] {$\rho^{-1}$} (m-2-2)
			 (m-2-2) edge node[auto] {$1$} (m-1-2)
			 (m-3-1) edge node[auto] {$\rho^{-1}$} (m-3-2)
			 (m-3-1) edge node[auto] {$s_{n} \otimes 1$} (m-2-1)
			 (m-3-2) edge node[auto] {$s_{n}$} (m-2-2)
			 (m-3-2) edge node[auto] {$1$} (m-2-3)
			 (m-2-3) edge node[auto] {$i_{n}$} (m-1-4);
\end{tikzpicture}
\end{center}
which follows from the definition of $\mu$ (top triangle), naturality of $\rho^{\otimes}$ (bottom rectangle), coherence condition III (top rectangle), and proposition \ref{canonicalsection} -- the defining property of $s_{n}$ (bottom triangle).

\paragraph{Commutativity of the main diagram.} We will consider the diagram
\begin{center}
\begin{tikzpicture}
\matrix (m) [matrix of math nodes, column sep = 1.5cm, row sep = 1cm, text height = 1.5ex, text depth = .25ex]{
(\mathcal{W}_{n} \otimes \mathcal{W}) \odot (\mathcal{W}_{m} \otimes \mathcal{W}) & \mathcal{W} \odot \mathcal{W} \\
(\mathcal{W}_{n} \odot \mathcal{W}_{m}) \otimes \mathcal{W} & & \\
\mathcal{W}_{n+m} \otimes \mathcal{W} & \mathcal{W}\\
};

\path[->] (m-1-1) edge node[auto] {$\nu_{n} \odot \nu_{m}$} (m-1-2)
			(m-1-1) edge node[auto, swap] {$\varphi_{n, m}$} (m-2-1)
			(m-2-1) edge node[auto, swap] {$\mu_{n, m} \otimes 1_{\mathcal{W}}$} (m-3-1)
			(m-1-2) edge node[auto] {$\mu$} (m-3-2)
			(m-3-1) edge node[auto] {$\nu$} (m-3-2);
\end{tikzpicture}
\end{center}
and prove its commutativity by induction on $n$ (for arbitrary $m$), starting with $n = 0$. In this case $\mu_{0, m} = \lambda^{\otimes}$ is and isomorphism, and the second part of the bootstrap lemma \ref{bootstraplemma} tells us that we must prove

\begin{displaymath}
\nu_{m} = \mu ((i_{0} \circ \psi_{\mathcal{W}}^{-1}) \odot \nu_{m}) \varphi_{0,m}^{-1} (\lambda^{-1} \otimes 1),
\end{displaymath}
since $\nu_{0} = i_{0} \circ \psi_{\mathcal{W}}^{-1}$. After the applying the unit condition for $\mu$ to the right side of the above equation we find that it is
\begin{displaymath}
\lambda \circ (\psi_{\mathcal{W}}^{-1} \odot \nu_{m}) \varphi_{0,m}^{-1} (\lambda^{-1} \otimes 1).
\end{displaymath}
But by coherence condition IV for $\psi$ this is exactly $\nu_{m}$, and we are done.

The inductive hypothesis is
\begin{displaymath}
\mu(\nu_{n-1} \odot \nu_{m}) = \nu_{n+m-1}(\mu_{n-1,m} \otimes 1) \varphi_{n-1,m}
\end{displaymath}
and we must show that

\begin{displaymath}
\mu(\nu_{n} \odot \nu_{m}) = \nu_{n+m}(\mu_{n,m} \otimes 1) \varphi_{n,m}.
\end{displaymath}
Expanding the left side, we can calculate \footnote{Unfortunately the diagrams involved are simply too big to include here.}

\begin{displaymath}
\begin{array}{lr}
 \mu(\nu_{n} \odot \nu_{m})  = & (\textnormal{definition of } \nu_{n}) \\
\mu([ \mu(\nu_{1} \odot \nu_{n-1}) \varphi_{1, n-1}^{-1} (s_{n} \otimes 1)] \odot \nu_{m}) = &(\textnormal{functoriality of } \odot) \\
\mu ((\mu \odot 1)  \circ [(\nu_{1} \odot \nu_{n-1}) \varphi_{1, n-1}^{-1} (s_{n} \otimes 1)] \odot \nu_{m} = & (\textnormal{associvativity of } \mu) \\ 
\mu ((1 \odot \mu) \circ (\alpha^{\odot})^{-1} \circ [(\nu_{1} \odot \nu_{n-1}) \varphi_{1, n-1}^{-1} (s_{n} \otimes 1)] \odot \nu_{m} = & (\textnormal{functoriality of } \odot) \\
\mu ((1 \odot \mu) \circ (\nu_{1} \odot (\nu_{n-1} \odot \nu_{m})) \circ (\alpha^{\odot})^{-1} \circ [\varphi_{1, n-1}^{-1} (s_{n} \otimes 1)] \odot 1) = & (\textnormal{inductive hypothesis})  \\
\mu(\nu_{1} \odot \nu_{n+m-1}(\mu_{n-1,m} \otimes 1) \varphi_{n-1, m} \circ (\alpha^{\odot})^{-1} \circ [\varphi_{1, n-1}^{-1} (s_{n} \otimes 1)] \odot 1),
\end{array}
\end{displaymath}
similarly for the right side

\begin{displaymath}
\begin{array}{c}
\nu_{n+m} (\mu_{n,m} \otimes 1) \varphi_{n,m}\\
= \\
\mu((\nu_{1} \odot \nu_{n+m-1}) \varphi_{1, n+m-1}^{-1} (s_{n+m} \otimes 1)) (\mu_{n,m} \otimes 1) \varphi_{n, m}.
\end{array}
\end{displaymath}
We will show that

\begin{displaymath}
\begin{array}{c}
\varphi_{1, n+m-1}^{-1} (s_{n+m} \otimes 1)) (\mu_{n,m} \otimes 1) \varphi_{n, m} \\
= \\
(1 \odot (\mu_{n-1,m} \otimes 1)) \varphi_{n-1, m} \circ (\alpha^{\odot})^{-1} \circ [\varphi_{1, n-1}^{-1} (s_{n} \otimes 1)] \odot 1.
\end{array}
\end{displaymath}
This follows from the commutativity of the following diagram (specifically the commutativity of the boundary)

\hspace*{-1.4in}
\begin{tikzpicture}
\matrix (m) [matrix of math nodes, column sep = .5cm, row sep = 1.5cm, text height = 1.5ex, text depth = .25ex]{
(\mathcal{W}_{n} \otimes \mathcal{W}) \odot (\mathcal{W}_{m} \otimes \mathcal{W}) & ((\mathcal{W}_{1} \odot \mathcal{W}_{n-1}) \otimes \mathcal{W}) \odot (\mathcal{W}_{m} \otimes \mathcal{W}) & ((\mathcal{W}_{1} \otimes \mathcal{W}) \odot (\mathcal{W}_{n-1} \otimes \mathcal{W})) \odot (\mathcal{W}_{m} \otimes \mathcal{W}) \\
(\mathcal{W}_{n} \odot \mathcal{W}_{m}) \otimes \mathcal{W} & ((\mathcal{W}_{1} \odot \mathcal{W}_{n-1}) \odot \mathcal{W}_{m}) \otimes \mathcal{W} & (\mathcal{W}_{1} \otimes \mathcal{W}) \odot ((\mathcal{W}_{n-1} \otimes \mathcal{W}) \odot (\mathcal{W}_{m} \otimes \mathcal{W})) \\
\mathcal{W}_{n+m} \otimes \mathcal{W} & (\mathcal{W}_{1} \odot (\mathcal{W}_{n-1} \odot \mathcal{W}_{m})) \otimes \mathcal{W} & (\mathcal{W}_{1} \otimes \mathcal{W}) \odot ((\mathcal{W}_{n-1} \odot \mathcal{W}_{m}) \otimes \mathcal{W}) \\
& (\mathcal{W}_{1} \odot \mathcal{W}_{n+m-1}) \otimes \mathcal{W} & (\mathcal{W}_{1} \otimes \mathcal{W}) \odot (\mathcal{W}_{n+m-1} \otimes \mathcal{W}) \\
};

\path[->] (m-1-1) edge node[auto] (topleft) {} (m-1-2)
			 (m-1-2) edge node[auto] (topright) {} (m-1-3)
			 (m-1-1) edge node[auto] {$\varphi_{n, m}$} (m-2-1)
			 (m-1-2) edge node[auto] {$\varphi_{1 \odot  (n-1), m}$} (m-2-2)
			 (m-2-1) edge node[auto] {$(s_{n} \odot 1) \otimes 1$} (m-2-2)
			 (m-2-1) edge node[auto] {$\mu_{n,m} \otimes 1$} (m-3-1)
			 (m-2-2) edge node[auto] {$(\alpha^{\odot})^{-1} \otimes 1$} (m-3-2)
			 (m-3-2) edge node[auto] {$(1 \odot \mu_{n-1,m}) \otimes 1$} (m-4-2)
			 (m-4-2) edge node[auto] {$\varphi_{1, n+m-1}^{-1}$} (m-4-3)
			 (m-3-2) edge node[auto] {$\varphi_{1, (n-1)\odot m}^{-1}$} (m-3-3)
			 (m-3-3) edge node[auto] {$1 \odot (\mu_{n-1,m} \otimes 1)$} (m-4-3)
			 (m-1-3) edge node[auto] {$(\alpha^{\odot})^{-1}$} (m-2-3)
			 (m-2-3) edge node[auto] {$1 \odot \varphi_{n-1,m}$} (m-3-3)
			 (m-3-1) edge node[auto,swap] {$s_{n+m} \otimes 1$} (m-4-2);

\node[above = 0 of topleft] {$\underbrace{(s_{n} \otimes 1) \odot 1}$};
\node[above = 0 of topright] {$\underbrace{\varphi_{1,n-1}^{-1} \odot 1}$};

\node[below = .5 of topleft, shape = circle, draw] {I};
\node[below = 1 of topright, shape=circle, draw] {II};
\node[below = 2.5 of topleft, shape = circle, draw] {III};
\node[below = .3 of m-3-3.south west, shape = circle, draw] {IV};
\end{tikzpicture}
This diagram commutes, since all the indicated regions commute. I and IV commute by naturality of $\varphi$, II commutes by coherence condition I for $\varphi$, and III commutes by the coherence lemma \ref{coherencelemma}.

\paragraph{Associativity of $\nu$.} We will now check that $\nu$ is associative. Note that we have not used this condition to define $\nu$, so as we have said earlier, it is a consequence of the main diagram and the unit conditions.

\begin{lemma}\label{numnfactorsthroughinm}
The composite $\nu_{n} \circ (1 \otimes i_{m})$ factors through $i_{n \cdot m}$, as in the diagram
\begin{center}
\begin{tikzpicture}
\matrix (m) [matrix of math nodes, column sep = 1cm, row sep = .5cm, text height = 1.5ex, text depth = .25ex]{
\mathcal{W}_{n} \otimes \mathcal{W}_{m} & \mathcal{W}_{n} \otimes \mathcal{W} & \mathcal{W} \\
& \mathcal{W}_{n \cdot m} \\
};

\path[->] (m-1-1) edge node[auto] {$1 \otimes i_{m}$} (m-1-2)
			 (m-1-2) edge node[auto] {$\nu_{n}$} (m-1-3)
			 (m-1-1) edge node[auto,swap] {$\nu_{n,m}$} (m-2-2)
			 (m-2-2) edge node[auto,swap] {$i_{n \cdot m}$} (m-1-3);
\end{tikzpicture}
\end{center}
\end{lemma}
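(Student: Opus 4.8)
The plan is to prove the lemma by induction on $n$, constructing the factorizing maps $\nu_{n,m}$ explicitly along the way. Throughout, $\mu$ and $\varphi$ denote the free $\odot$-monoid multiplication on $\mathcal{W}=\mathcal{F}_{\odot}(I_{\otimes})$ and the distributivity isomorphism, and $i_{k}=i_{k,\infty}\colon\mathcal{W}_{k}\to\mathcal{W}$ as well as the intermediate inclusions $i_{k,l}\colon\mathcal{W}_{k}\to\mathcal{W}_{l}$. The one structural fact about $\mu$ that I will use is the one coming from the construction in theorem \ref{freemonoidbjt}: $\mu\circ(i_{a}\odot i_{b})=i_{a+b}\circ\mu^{\mathcal{F}_{\odot}}_{a,b}$, where $\mu^{\mathcal{F}_{\odot}}_{a,b}\colon\mathcal{W}_{a}\odot\mathcal{W}_{b}\to\mathcal{W}_{a+b}$ is the finite-stage multiplication.

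For the base cases $n=0$ and $n=1$ I would use the formulas $\nu_{0}=i_{0}\circ\psi_{\mathcal{W}}^{-1}$ and $\nu_{1}=(i_{0}\psi_{\mathcal{W}}^{-1},\lambda^{\otimes}_{\mathcal{W}})$ on $\mathcal{W}_{1}\otimes\mathcal{W}\simeq I_{\odot}\otimes\mathcal{W}\sqcup I_{\otimes}\otimes\mathcal{W}$ derived above. Precomposing with $1\otimes i_{m}$ and invoking naturality of $\psi$ and of $\lambda^{\otimes}$ along $i_{m}\colon\mathcal{W}_{m}\to\mathcal{W}$ (together with the splitting $\mathcal{W}_{1}\otimes\mathcal{W}_{m}\simeq I_{\odot}\otimes\mathcal{W}_{m}\sqcup I_{\otimes}\otimes\mathcal{W}_{m}$, natural in the second slot), one gets $\nu_{0}\circ(1\otimes i_{m})=i_{0}\circ\psi_{\mathcal{W}_{m}}^{-1}$ and $\nu_{1}\circ(1\otimes i_{m})=i_{m}\circ(i_{0,m}\psi_{\mathcal{W}_{m}}^{-1},\lambda^{\otimes}_{\mathcal{W}_{m}})$, i.e.\ factorizations through $i_{0}=i_{0\cdot m}$ and $i_{m}=i_{1\cdot m}$, with $\nu_{0,m}=\psi_{\mathcal{W}_{m}}^{-1}$ and $\nu_{1,m}=(i_{0,m}\psi_{\mathcal{W}_{m}}^{-1},\lambda^{\otimes}_{\mathcal{W}_{m}})$.

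For the inductive step ($n\ge2$) I would start from the recursion $\nu_{n}=\mu\circ(\nu_{1}\odot\nu_{n-1})\circ\varphi^{-1}\circ(s_{n}\otimes1)$ of equation \ref{definitionofnun}, with $s_{n}\colon\mathcal{W}_{n}\to\mathcal{W}_{1}\odot\mathcal{W}_{n-1}$ the canonical section of proposition \ref{canonicalsection}, and push $i_{m}$ to the left through it. By bifunctoriality of $\otimes$, $(s_{n}\otimes1)\circ(1\otimes i_{m})=(1\otimes i_{m})\circ(s_{n}\otimes1)$; by naturality of $\varphi$ in its third variable along $i_{m}$, the factor $1\otimes i_{m}$ becomes $(1\otimes i_{m})\odot(1\otimes i_{m})$ past $\varphi^{-1}$; and by functoriality of $\odot$ this merges with $\nu_{1}\odot\nu_{n-1}$ to give $\bigl(\nu_{1}\circ(1\otimes i_{m})\bigr)\odot\bigl(\nu_{n-1}\circ(1\otimes i_{m})\bigr)$. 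Applying the $n=1$ base case and the inductive hypothesis for $n-1$, this equals $(i_{m}\circ\nu_{1,m})\odot(i_{(n-1)m}\circ\nu_{n-1,m})$; pulling the inclusions out by functoriality of $\odot$ once more and using $\mu\circ(i_{m}\odot i_{(n-1)m})=i_{m+(n-1)m}\circ\mu^{\mathcal{F}_{\odot}}_{m,(n-1)m}=i_{nm}\circ\mu^{\mathcal{F}_{\odot}}_{m,(n-1)m}$, one obtains $\nu_{n}\circ(1\otimes i_{m})=i_{n\cdot m}\circ\nu_{n,m}$ with
\[
\nu_{n,m}=\mu^{\mathcal{F}_{\odot}}_{m,(n-1)m}\circ(\nu_{1,m}\odot\nu_{n-1,m})\circ\varphi^{-1}_{\mathcal{W}_{1},\mathcal{W}_{n-1},\mathcal{W}_{m}}\circ(s_{n}\otimes1)\colon\mathcal{W}_{n}\otimes\mathcal{W}_{m}\to\mathcal{W}_{n\cdot m},
\]
completing the induction.

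There is no real obstacle here: the argument is a bookkeeping chase of $i_{m}$ through the defining recursion for $\nu_{n}$, the only substantive ingredients being the description of the free $\odot$-monoid multiplication from theorem \ref{freemonoidbjt} and the explicit formulas for $\nu_{0},\nu_{1}$. The single point requiring attention is that every coherence and distributivity isomorphism ($\varphi$, $\psi$, $\lambda^{\otimes}$, and the $\otimes$-over-$\sqcup$ distributivity splitting $\mathcal{W}_{1}\simeq I_{\odot}\sqcup I_{\otimes}$) must be used in its naturality form, so that all the constituent maps restrict cleanly to the finite stages $\mathcal{W}_{j}$; once the index arithmetic $m+(n-1)m=nm$ is observed, the target $\mathcal{W}_{n\cdot m}$ drops out. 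One could additionally record that the $\nu_{n,m}$ are themselves compatible with the maps $i$, but this is not needed for the application to the associativity of $\nu$.
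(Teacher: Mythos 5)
Your proof is correct and follows essentially the same route as the paper: induction on $n$ via the recursion $\nu_{n}=\mu\circ(\nu_{1}\odot\nu_{n-1})\circ\varphi^{-1}\circ(s_{n}\otimes 1)$, pushing $1\otimes i_{m}$ leftward using naturality of $\psi$, $\lambda^{\otimes}$ and $\varphi$ together with $\mu\circ(i_{a}\odot i_{b})=i_{a+b}\circ\mu_{a,b}$, arriving at the same formula $\nu_{n,m}=\mu_{m,(n-1)m}\circ(\nu_{1,m}\odot\nu_{n-1,m})\circ\varphi^{-1}\circ(s_{n}\otimes 1)$ that the paper extracts from its commutative diagram.
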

\begin{proof}
By induction on $n$. For $n = 1$ we have $\nu_{1} = (i_{0} \psi_{\mathcal{W}}^{-1}, \lambda_{\mathcal{W}})$, and the claim follows from the naturality of $\lambda^{\otimes}$, $\psi$ and the fact that $i_{0} = i_{m} \circ i_{0}$ (recall our abuse of notation). We obtain $\nu_{1, m} = (i_{0} \psi_{\mathcal{W}_{m}}^{-1}, \lambda_{\mathcal{W}_{m}})$.

The inductive step immediately follows from this commutative diagram:
\hspace*{-1in}
\begin{tikzpicture}
\matrix (m) [matrix of math nodes, column sep = .8cm, row sep = 1cm, text height = 1.5ex, text depth = .25ex]{
(\mathcal{W}_{1} \otimes \mathcal{W}_{m}) \odot (\mathcal{W}_{n-1} \otimes \mathcal{W}_{m}) & (\mathcal{W}_{1} \otimes \mathcal{W}) \odot (\mathcal{W}_{n-1} \otimes \mathcal{W}) & \mathcal{W} \odot \mathcal{W} & \mathcal{W}_{m} \odot \mathcal{W}_{(n-1)m}\\
(\mathcal{W}_{1} \odot \mathcal{W}_{n-1}) \otimes \mathcal{W}_{m} & (\mathcal{W} \odot \mathcal{W}) \otimes \mathcal{W} & & \\
\mathcal{W}_{n} \otimes \mathcal{W}_{m} & \mathcal{W}_{n} \otimes \mathcal{W} & \mathcal{W} & \mathcal{W}_{nm}\\
};

\path[->] (m-1-2) edge node (place) {} node[below = .2 of place] {$\nu_{n} \odot \nu_{n-1}$} (m-1-3)
			 (m-2-2) edge node[auto,swap] {$\varphi^{-1}$} (m-1-2)
			 (m-3-2) edge node[auto,swap] {$s_{n} \otimes 1$} (m-2-2)
			 (m-1-3) edge node[auto] {$\mu$} (m-3-3)
			 (m-3-2) edge node[auto] {$\nu_{n}$} (m-3-3)
			 (m-1-1) edge node {} (m-1-2)
			 (m-2-1) edge node[auto] {$\varphi^{-1}$} (m-1-1)
			 (m-3-1) edge node[auto] {$s_{n} \otimes 1$} (m-2-1)
			 (m-3-1) edge node[auto] {$1\otimes i_{m}$} (m-3-2)
			 (m-2-1) edge node[auto] {$1 \otimes i_{m}$} (m-2-2)
			 (m-1-4) edge node {} (m-1-3)
			 (m-1-4) edge node[auto,swap] {$\mu_{m, (n-1)m}$} (m-3-4)
			 (m-3-4) edge node[auto] {$i_{nm}$} (m-3-3)
			 (m-1-1) edge[bend left = 10] node[auto] {$\nu_{1,m} \odot \nu_{n-1, m}$} (m-1-4);
\end{tikzpicture}
The top arrow implements the inductive hypothesis, and the unnamed arrows are the obvious ones.
\end{proof}

We will show, by induction on $n$, for all $m$, the commutativity of

\begin{center}
\begin{tikzpicture}
\matrix (m) [matrix of math nodes, column sep = 1.2cm, row sep = 1cm, text height = 1.5ex, text depth = .25ex]{
\mathcal{W}_{n} \otimes (\mathcal{W}_{m} \otimes \mathcal{W}) & (\mathcal{W}_{n} \otimes \mathcal{W}_{m}) \otimes \mathcal{W} & \mathcal{W}_{nm} \otimes \mathcal{W} \\
\mathcal{W}_{n} \otimes \mathcal{W} && \mathcal{W} \\
};

\path[->] (m-1-1) edge node[auto] {$\alpha^{\otimes}$} (m-1-2)
			 (m-1-2) edge node[auto] {$\nu_{n, m} \otimes 1$} (m-1-3)
			 (m-1-1) edge node[auto] {$1 \otimes \nu_{m}$} (m-2-1)
			 (m-2-1) edge node[auto] {$\nu_{n}$} (m-2-3)
			 (m-1-3) edge node[auto] {$\nu_{nm}$} (m-2-3);
\end{tikzpicture}
\end{center}
Passing to the limit gives the desired associativity law. For $n = 0$ we need to show that

\begin{displaymath}
i_{0} \psi_{\mathcal{W}_{m}}^{-1} (1 \otimes \nu_{m}) = \nu_{m} ((i_{0} \psi_{\mathcal{W}_{m}}^{-1}) \otimes 1) \alpha^{\otimes},
\end{displaymath}
which is an easy calculation following from the fact that $\psi$ is natural (anything on the right of $\psi^{-1}$ can be canceled), and coherence condition $V$. The inductive hypothesis to be used in passing from $n-1$ to $n$ is

\begin{displaymath}
\nu_{(n-1)m}(\nu_{n-1, m} \otimes 1) \alpha_{n-1, m}^{\otimes} = \nu_{n-1} (1 \otimes \nu_{m})
\end{displaymath}
Again we calculate:

\begin{displaymath}
\begin{array}{lr}
\nu_{n} (1 \otimes \nu_{m}) = & (\textnormal{definition of } \nu_{n}) \\
\mu ((\nu_{1} \odot \nu_{n-1})) \varphi_{1, n-1}^{-1} (s_{n} \otimes 1) (1 \otimes \nu_{m}) = & (\textnormal{naturality of } \varphi) \\
\mu ((\nu_{1}(1 \otimes \nu_{m}) \odot \nu_{n-1}(1 \otimes \nu_{m}))) \varphi_{1, n-1, m \otimes w}^{-1} (s_{n} \otimes 1) = & (\textnormal{inductive hypothesis}) \\
\mu((\nu_{m} (\nu_{1, m} \otimes 1) \alpha_{1, m}^{\otimes}) \odot (\nu_{(n-1)m}(\nu_{n-1, m} \otimes 1) \alpha_{n-1, m}^{\otimes})) \\
\varphi_{1, n-1, m \otimes w}^{-1} (s_{n} \otimes 1) = & (\textnormal{diagram below}) \\
\mu (\nu_{m} \odot \nu_{(n-1)m}) \varphi_{m, (n-1)m}^{-1} [(\nu_{1, m} \odot \nu_{n-1, m}) \otimes 1] \\
(\varphi_{1, n-1, m} \otimes 1) \alpha_{1 \odot (n-1), m \otimes w}^{\otimes} (s_{n} \otimes 1)
\end{array}
\end{displaymath}
The relevant diagram is

\hspace*{-1in}
\begin{tikzpicture}
\matrix (m) [matrix of math nodes, column sep = 1.2cm, row sep = 1.5cm, text height = 1.5ex, text depth = .25ex]{
(\mathcal{W}_{1} \odot \mathcal{W}_{n-1}) \otimes (\mathcal{W}_{m} \otimes \mathcal{W}) & ((\mathcal{W}_{1} \odot \mathcal{W}_{n-1}) \otimes \mathcal{W}_{m}) \otimes \mathcal{W} \\
(\mathcal{W}_{1} \otimes (\mathcal{W}_{m} \otimes \mathcal{W})) \odot (\mathcal{W}_{n-1} \otimes (\mathcal{W}_{m} \otimes \mathcal{W})) \\
((\mathcal{W}_{1} \otimes \mathcal{W}_{m}) \otimes \mathcal{W}) \odot ((\mathcal{W}_{n-1} \otimes \mathcal{W}_{m}) \otimes \mathcal{W}) & ((\mathcal{W}_{1} \otimes \mathcal{W}_{m}) \odot (\mathcal{W}_{n-1} \otimes \mathcal{W}_{m})) \otimes \mathcal{W} \\
(\mathcal{W}_{m} \otimes \mathcal{W}) \odot (\mathcal{W}_{(n-1)m} \otimes \mathcal{W}) & (\mathcal{W}_{m} \odot \mathcal{W}_{(n-1)m}) \otimes \mathcal{W} \\
};

\path[->] (m-1-1) edge node[auto] {$\alpha_{1 \odot (n-1),m}^{\otimes}$} (m-1-2)
			 (m-1-1) edge node[auto,swap] {$\varphi_{1, n-1, m \otimes w}^{-1}$} (m-2-1)
			 (m-1-2) edge node[auto] {$\varphi_{1, n-1, m}^{-1} \otimes 1$} (m-3-2)
			 (m-2-1) edge node[auto,swap] {$\alpha_{1, m}^{\otimes} \odot \alpha_{n-1, m}^{\otimes}$} (m-3-1)
			 (m-3-1) edge node[auto,swap] {$(\nu_{1,m} \otimes 1) \odot (\nu_{n-1, m} \otimes 1)$} (m-4-1)
			 (m-3-2) edge node[auto] {$(\nu_{1, m} \odot \nu_{n-1, m}) \otimes 1$} (m-4-2)
			 (m-3-2) edge node (place) {} node[below = .2 of place] {$\varphi_{1 \otimes m, (n-1) \otimes m}^{-1}$} (m-3-1)
			 (m-4-2) edge node[auto] {$\varphi_{m, (n-1)m}^{-1}$} (m-4-1);
\end{tikzpicture}
The top rectangle commutes by coherence condition II for $\varphi$. The bottom one commutes by naturality of $\varphi$.

We will now calculate the left side of the associativity condition. To do this we first calculate $\nu_{n,m}$, using the diagram from the proof of lemma \ref{numnfactorsthroughinm}:

\begin{displaymath}
\nu_{n, m} = \mu_{m, (n-1)m} (\nu_{1,m} \odot \nu_{n-1, m}) \varphi_{1, n-1, m}^{-1} (s_{n} \otimes 1).
\end{displaymath}
Putting this into

\begin{displaymath}
\nu_{nm}(\nu_{n, m} \otimes 1) \alpha_{n, m}^{\otimes},
\end{displaymath}
we obtain

\begin{displaymath}
\nu_{nm}((\mu_{m, (n-1)m} (\nu_{1,m} \odot \nu_{n-1, m}) \varphi_{1, n-1, m}^{-1} (s_{n} \otimes 1)) \otimes 1) \alpha_{n, m}^{\otimes}.
\end{displaymath}
Now consider the diagram:

\begin{center}
\begin{tikzpicture}
\matrix (m) [matrix of math nodes,column sep = 1cm, row sep = 1cm, text height = 1.5ex, text depth = .25ex]{
 &(\mathcal{W}_{m} \otimes \mathcal{W}) \odot (\mathcal{W}_{(n-1)m} \otimes \mathcal{W}) & \mathcal{W} \odot \mathcal{W} \\
 &(\mathcal{W}_{m} \odot \mathcal{W}_{(n-1)m}) \otimes \mathcal{W} & & \\
(\mathcal{W}_{n} \otimes \mathcal{W}_{m}) \otimes \mathcal{W} & \mathcal{W}_{nm} \otimes \mathcal{W} & \mathcal{W} \\
};

\path[->] (m-1-2) edge node[auto] {$\nu_{m} \odot \nu_{(n-1)m}$} (m-1-3)
			 (m-2-2) edge node[auto, swap] {$\varphi_{m, (n-1)m}^{-1}$} (m-1-2)
			 (m-2-2) edge node[auto] {$\mu_{m, (n-1)m} \otimes 1$} (m-3-2)
			 (m-1-3) edge node[auto] {$\mu$} (m-3-3)
			 (m-3-2) edge node[auto] {$\nu_{nm}$} (m-3-3)
			 (m-3-1) edge node[auto] {$\nu_{n,m} \otimes 1$} (m-3-2)
			 (m-3-1) edge node[auto] {} (m-2-2);
\end{tikzpicture}
\end{center}
which is commutative by the commutativity of the main diagram. The unnamed arrow is

\begin{displaymath}
[(\nu_{1,m} \odot \nu_{n-1, m}) \varphi_{1, n-1, m}^{-1} (s_{n} \otimes 1)] \otimes 1.
\end{displaymath}
From this, we obtain that the left side of the associativity condition is

\begin{displaymath}
\mu (\nu_{m} \odot \nu_{(n-1)m}) \varphi_{m, (n-1)m}^{-1} [(\nu_{1,m} \odot \nu_{n-1, m}) \varphi_{1, n-1, m}^{-1} (s_{n} \otimes 1)] \otimes 1 \circ \alpha_{n, m}^{\otimes},
\end{displaymath}
which is the same as the the right side of the associativity condition, finishing this part of the proof.

\paragraph{The fibered version.} The only thing left to check in this version is that the $\mathcal{W}(u)$ are homomorphisms with respect to $\nu$. To see this, note that by theorem \ref{freemonoidbjt} the map $\eta$ is fibered and natural. This means that the units of $\mathcal{W}$ are preserved across fibers. That theorem also tells us that the unit of $\mu$ and $\mu$ itself are preserved, since $\mathcal{F}_{\odot}$ is fibered, and taks values in the category of $\odot$-monoids. The structure maps $\lambda$ and $\psi$ are preserved by assumption.

From this it follows that $\nu_{0}$ is preserved, by naturality of $\psi$ and the fact that $i_{0} : I_{\odot} \rightarrow \mathcal{W}$ is preserved:

\begin{center}
\begin{tikzpicture}
\matrix (m) [matrix of math nodes, column sep = 2cm, row sep = 1cm, text height = 1.5ex, text depth = .25ex]{
I_{\odot}(Q) & \mathcal{W}_{0}(Q) \otimes_{Q} \mathcal{W}(Q) & \mathcal{W}(Q) \\
I_{\odot}(O) & \mathcal{W}_{0}(O) \otimes_{O} \mathcal{W}(O) & \mathcal{W}(O)\\
};

\path[->] (m-1-1) edge node[auto] {$\psi_{\mathcal{W}(Q)}$} (m-1-2)
			 (m-2-1) edge node[auto,swap] {$\psi_{\mathcal{W}(O)}$} (m-2-2)
			 (m-2-1) edge node[auto] {$I_{\odot}(u)$} (m-1-1)
			 (m-1-2) edge node[auto] {$\nu_{0}(Q)$} (m-1-3)
			 (m-2-2) edge node[auto,swap] {$\nu_{0}(O)$} (m-2-3)
			 (m-2-2) edge node[auto] {$\mathcal{W}_{0}(u) \otimes_{u} \mathcal{W}(u)$} (m-1-2)
			 (m-2-3) edge node[auto,swap] {$\mathcal{W}(u)$} (m-1-3)
			 (m-1-1) edge[out = 25, in = 155] node[auto] {$i_{0}(Q)$} (m-1-3)
			 (m-2-1) edge[out = 335, in = 205] node[auto] {$i_{0}(O)$} (m-2-3);
\end{tikzpicture}
\end{center}
The bigons and the left square commute, as does the boundary of the diagram (the outermost arrows). Since $\psi$ are isomorphisms, the right square also commutes. Similarly $\nu_{1}$ is preserved, since it is defined using $\lambda^{\otimes}, \psi$ and $i_{0}$, all of which are preserved by assumption, and coproducts, for which the appropriate equalities are easy to check (using corollary \ref{fiberedcolimits}). The inductive step is taken care of by the following lemma.

\begin{lemma}[Functoriality lemma]\label{functorialitylemma}
The following diagram commutes, for $n > 0$
\begin{center}
\begin{tikzpicture}
\matrix (m) [matrix of math nodes, column sep = 3cm, row sep = 1cm, text height = 1.5ex, text depth = .25ex]{
\mathcal{W}_{1}(O) \odot_{O} \mathcal{W}_{n-1}(O) & \mathcal{W}_{1}(Q) \odot_{Q} \mathcal{W}_{n-1}(Q) \\
\mathcal{W}_{n}(O) & \mathcal{W}_{n-1}(Q) \\
};

\path[->] (m-1-1) edge node[auto] {$\mathcal{W}_{1}(u) \odot_{u} \mathcal{W}_{n-1}(u)$} (m-1-2)
			 (m-2-1) edge node[auto] {$s_{n}(O)$} (m-1-1)
			 (m-2-2) edge node[auto,swap] {$s_{n}(Q)$} (m-1-2)
			 (m-2-1) edge node[auto] {$\mathcal{W}_{n}(u)$} (m-2-2);
\end{tikzpicture}
\end{center}
\end{lemma}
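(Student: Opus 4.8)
The plan is to prove this directly from the explicit description of $s_n$ given in Proposition \ref{canonicalsection} together with the explicit description of the reindexing maps $\mathcal{W}_k(u)$ from the construction preceding Theorem \ref{freemonoidbjt}, chasing the square on coproduct components. Recall that here Proposition \ref{canonicalsection} is applied with $\odot$ in the role of ``$\otimes$'' and $I_\otimes$ in the role of ``$X$'', so the stages are $\mathcal{W}_0 = I_\odot$, $\mathcal{W}_n = I_\odot \sqcup (I_\otimes \odot \mathcal{W}_{n-1})$, and, after the canonical identification
\[
\mathcal{W}_1 \odot \mathcal{W}_{n-1} \;\simeq\; (I_\odot \sqcup I_\otimes \odot I_\odot) \odot \mathcal{W}_{n-1} \;\simeq\; \mathcal{W}_{n-1} \sqcup (I_\otimes \odot \mathcal{W}_{n-1}),
\]
the map $s_n$ is $i_{0,n-1} \sqcup 1$ on the two summands of $\mathcal{W}_n$, where $i_{0,n-1}\colon \mathcal{W}_0 = I_\odot \to \mathcal{W}_{n-1}$ is the canonical inclusion. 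Dually, by the construction of $f_\infty$, the reindexing map $\mathcal{W}_k(u)$ is the coproduct map $I_\odot(u) \sqcup (I_\otimes(u) \odot_u \mathcal{W}_{k-1}(u))$.

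First I would record that the identification displayed above is compatible with reindexing along $u$: it is assembled from coproduct coprojections, from $\lambda^\odot$ and $\rho^\odot$, and from the isomorphism expressing that $\odot$ preserves binary coproducts in the left variable, and each of these is fibered — the coherence isomorphisms because $\odot$ is part of a monoidal fibration structure, and the coproduct data because fibered coproducts are genuinely fibered by Corollary \ref{fiberedcolimits}. Granting this, it suffices to check commutativity of the square on the two summands $I_\odot(O)$ and $I_\otimes(O) \odot_O \mathcal{W}_{n-1}(O)$ of $\mathcal{W}_n(O)$. On the first summand both legs reduce, after composing with the coprojection $\mathcal{W}_{n-1} \hookrightarrow \mathcal{W}_1 \odot \mathcal{W}_{n-1}$, to the identity $\mathcal{W}_{n-1}(u) \circ i_{0,n-1}(O) = i_{0,n-1}(Q) \circ I_\odot(u)$, which holds because the canonical maps $i_{0,n-1}$ are natural in the reindexing — this is part of the functoriality in Theorem \ref{freemonoidbjt}, the $f_k$ being built from $f_0 = I_u$ and the coprojections. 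On the second summand both legs reduce, via the coprojection $I_\otimes \odot \mathcal{W}_{n-1} \hookrightarrow \mathcal{W}_1 \odot \mathcal{W}_{n-1}$, to $I_\otimes(u) \odot_u \mathcal{W}_{n-1}(u)$, trivially. Transporting through the compatible identification of the codomain then gives the claim. (An alternative is an induction on $n$ exploiting the compatibility $1 \odot i_{n-1} \circ s_n = s_{n+1} \circ i_n$ from the proof of Proposition \ref{canonicalsection}, but the direct component chase is cleaner given that the explicit form of $s_n$ is already available.)

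I do not expect a serious obstacle: the lemma is essentially a bookkeeping statement. The only point requiring genuine care is the verification that the identification $\mathcal{W}_1 \odot \mathcal{W}_{n-1} \simeq \mathcal{W}_{n-1} \sqcup (I_\otimes \odot \mathcal{W}_{n-1})$ commutes with $\mathcal{W}_1(u) \odot_u \mathcal{W}_{n-1}(u)$ on one side and with $\mathcal{W}_{n-1}(u) \sqcup (I_\otimes(u) \odot_u \mathcal{W}_{n-1}(u))$ on the other; this is where one uses that the coherence isomorphisms $\lambda^\odot, \rho^\odot$, the coproduct coprojections, and the distributivity-of-$\odot$-over-$\sqcup$ isomorphism are all fibered, and once that is noted the rest is immediate.
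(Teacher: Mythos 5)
Your proof is correct and follows essentially the same route as the paper: the paper likewise expands $\mathcal{W}_n \simeq I_\odot \sqcup (I_\otimes \odot \mathcal{W}_{n-1})$ and $\mathcal{W}_1 \odot \mathcal{W}_{n-1} \simeq \mathcal{W}_{n-1} \sqcup (I_\otimes \odot \mathcal{W}_{n-1})$, writes $s_n = i_{0,n-1} \sqcup 1$, and concludes by noting that $i_{0,n-1}$ is preserved under reindexing while the other component is the identity. Your extra remark on the fiberedness of the identification isomorphism is a point the paper leaves implicit, but it is the same argument.
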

\begin{proof}
Expanding the definitions we have
\begin{center}
\begin{tikzpicture}
\matrix (m) [matrix of math nodes, column sep = 2.5cm, row sep = 1cm, text height = 1.5ex, text depth = .25ex]{
\mathcal{W}_{n-1}(O) \sqcup I_{\otimes,O} \odot \mathcal{W}_{n-1}(O) & \mathcal{W}_{n-1}(Q) \sqcup I_{\otimes,Q} \odot \mathcal{W}_{n-1}(Q)\\
I_{\odot,O} \sqcup I_{\otimes,O} \odot \mathcal{W}_{n-1}(O) & I_{\odot,Q} \sqcup I_{\otimes,Q} \odot \mathcal{W}_{n-1}(Q) \\
};

\path[->] (m-1-1) edge node[auto] (abovethis) {} (m-1-2)
			 (m-2-1) edge node[auto] {$i_{0, n-1}(O) \sqcup 1$} (m-1-1)
			 (m-2-2) edge node[auto,swap] {$i_{0, n-1}(Q) \sqcup 1$} (m-1-2)
			 (m-2-1) edge node[auto] {$I_{\odot,u} \sqcup I_{\otimes,u} \odot \mathcal{W}_{n-1}(u)$} (m-2-2);
\node[above = 0 of abovethis] {$\underbrace{\mathcal{W}_{n-1}(u) \sqcup I_{\otimes,u} \odot_{u} \mathcal{W}_{n-1}(u)}$};
\end{tikzpicture}
\end{center}
which commutes, since $i_{0, n-1}$ is preserved, and the other vertical \footnote{In the pictorial \emph{and} fibered sense!} components are identities.
\end{proof}

The preservation of $\nu_{n}$ now follows by induction from the following diagram

\begin{center}
\begin{tikzpicture}
\matrix (m) [matrix of math nodes, column sep = 0cm, row sep = 1.5cm, text height = 1.5ex, text depth = .25ex]{
(\mathcal{W}_{1}(O) \odot \mathcal{W}_{n-1}(O)) \otimes \mathcal{W}(O) && \mathcal{W}(O) \odot \mathcal{W}(O) \\
 \mathcal{W}_{n}(O) \otimes \mathcal{W}(O) & \mathcal{W}(O) \\
 \mathcal{W}_{n}(Q) \otimes \mathcal{W}(Q) & \mathcal{W}(Q) \\
(\mathcal{W}_{1}(Q) \odot \mathcal{W}_{n-1}(Q)) \otimes \mathcal{W}(Q) && \mathcal{W}(Q) \odot \mathcal{W}(Q) \\
};

\node[above = 2 of m-1-1.north east] (top) {$(\mathcal{W}_{1}(O) \otimes \mathcal{W}(O)) \odot (\mathcal{W}_{n-1}(O) \otimes \mathcal{W}(O))$};
\node[below = 2 of m-4-1.south east] (bottom) {$(\mathcal{W}_{1}(O) \otimes \mathcal{W}(O)) \odot (\mathcal{W}_{n-1}(O) \otimes \mathcal{W}(O))$};
\node[left = .5 of m-2-1, shape = circle, draw] {I};
\node[above = .5 of m-2-1.north east, shape = circle, draw] {II$_{O}$};
\node[below = .5 of m-3-1.south east, shape = circle, draw] {II$_{Q}$};
\node[below = 1 of m-1-3, shape = circle, draw] {III};

\path[->] (top) edge[dotted, bend left = 20] (bottom)
			 (m-1-1) edge node[auto] {$\varphi_{1, n-1, O}^{-1}$} (top)
			 (top) edge node[auto] {$\nu_{1}(O) \odot \nu_{n-1}(O)$} (m-1-3)
			 (m-4-1) edge node[auto,swap] {$\varphi_{1, n-1, Q}^{-1}$} (bottom)
			 (bottom) edge node[auto,swap]  {$\nu_{1}(Q) \odot \nu_{n-1}(Q)$} (m-4-3)
			 (m-1-1.south west) edge [bend right = 35] node[fill = white] {$(\mathcal{W}_{1}(u) \odot \mathcal{W}_{n-1}(u)) \otimes \mathcal{W}(u)$} (m-4-1.north west)
			 (m-2-1) edge node[auto] {$s_{n}(O) \otimes 1$} (m-1-1)
			 (m-2-1) edge node[auto] {$\mathcal{W}_{n}(u) \otimes \mathcal{W}(u)$} (m-3-1)
			 (m-2-1) edge node[auto] {$\nu_{n}(O)$} (m-2-2)
			 (m-3-1) edge node[auto,swap] {$s_{n}(Q) \otimes 1$} (m-4-1)
			 (m-3-1) edge node[auto] {$\nu_{n}(Q)$} (m-3-2)
			 (m-2-2) edge node[auto, fill = white] {$\mathcal{W}(u)$} (m-3-2)
			 (m-1-3) edge node[auto,swap, fill = white] {$\mu(O)$} (m-2-2)
			 (m-1-3) edge[bend left = 30] node[fill = white] {$\mathcal{W}(u) \odot \mathcal{W}(u)$} (m-4-3)
			 (m-4-3) edge node[auto, fill = white] {$\mu(Q)$} (m-3-2);
\end{tikzpicture}
\end{center}
The dotted arrow is $(\mathcal{W}_{1}(u) \otimes \mathcal{W}(u)) \odot (\mathcal{W}_{n-1}(u) \otimes \mathcal{W}(u))$. The reader should imagine two of the main diagrams defining $\nu(O)$ and $\nu(Q)$, side by side, connected by the various $\mathcal{W}_{k}(u)$. The picture above is a flattening of that situation. We need to check the commutativity of the central square. To do this we check that every other region commutes. I commutes by the functoriality lemma \ref{functorialitylemma}, applied to the left variable. II$_{O}$ and II$_{Q}$ commute by the definition of $\nu_{n}$, and III commutes by the fact that $\mathcal{W}(u)$ are $\odot$-monoid homomorphisms, which they are by definition. We now turn to unnamed regions, which should be obvious if the reader followed our advice. The region defined by the dotted arrow, the leftmost arrow in region $I$, and $\varphi$ commutes by naturality of $\varphi$. The region defined by the dotted arrow and the solid boundary of the diagram to the right of it commutes by the inductive hypothesis. Hence the central square commutes.

The preservation of $\nu$ now follows from corollary \ref{fiberedcolimits}. This completes the proof of theorems \ref{threetensorstheorem} and \ref{fiberedttt}.

\section{Coherence Calculations for Distributivity in $\mathbf{Sig}_{ma}$}\label{coherencefordistributivity}
We must check whether $\varphi_{A, B, X}$ and $\psi_{X}$ define a distributivity structure for $\mathbf{Sig}_{a}$. To do this we will show that they are natural and prove that they satisfy all the coherence diagrams listed in appendix \ref{appendixa}. By the remarks following the definition of distributivity this is what the definition comes down to.

Before we dive into the calculations two remarks are in order. First, conditions $III - VII$ are very easy. To make our point we will leave the last diagram (which is unnecessary anyway) as a rather trivial exercise. The only real problem is the complexity of the terms in conditions $I$ and $II$. Our calculations for these conditions will look somewhat like a physicist's version of tensor calculus -- crawling with indices. To ease our problems we will use the separation principle -- all the functors involved in these conditions are agreeable and separated (as we will see).

\begin{proposition}
$\varphi_{A, B, X}$ and $\psi_{X}$ are isomorphisms, fibered natural in all variables.
\end{proposition}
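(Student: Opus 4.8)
The plan is to separate the two assertions: that $\varphi$ and $\psi$ are isomorphisms (a direct inspection), and that they are fibered natural in all three variables (an application of the separation principle \ref{separationprinciple}, which is exactly what spares us from comparing amalgamation permutations of the composite morphisms).

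First, the isomorphism claim. Both $\varphi_{A, B, X}$ and $\psi_{X}$ were declared to have trivial amalgamation permutations, so a vertical morphism of $\mathbf{Sig}_{ma}$ of this form is an isomorphism as soon as its underlying map on function symbols is a bijection and the two objects carry equal typing over $M$ --- the set-theoretic inverse, again equipped with trivial amalgamation, is then automatically a morphism. For $\psi_{X}$ this is immediate: since $1_{o} \in I_{\odot}$ has empty vertical inputs, $I_{\odot} \otimes X$ consists precisely of the terms $\langle 1_{o}, - \rangle$, so $\psi_{X}$ is a bijection, and both $1_{o}$ and $\langle 1_{o}, - \rangle$ have output type $e(o)$ and no inputs. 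For $\varphi_{A, B, X}$ the candidate inverse sends $\langle \dot{\langle} a, b_{1}, \dots, b_{k} \dot{\rangle}, x_{1}, \dots, x_{N} \rangle$ back to $\dot{\langle} \langle a, x_{0, j} \rangle, \langle b_{i}, x_{i, j} \rangle \dot{\rangle}$ by cutting the list $x_{1}, \dots, x_{N}$ into consecutive blocks of lengths $|a|, |b_{1}|, \dots, |b_{k}|$ --- these arities being read off from the (vertical) typing of $a$ and the $b_{i}$ --- and the horizontal composability constraints on the two sides translate into one another. Comparing the defining typing formulas for $\odot$ and $\otimes$ shows that both outputs are $\mu(\check{a}, \check{b}_{1}, \dots, \check{b}_{k})$ and that the vertical input lists on both sides are the concatenation of all the $x$-input lists, so $\varphi_{A, B, X}$ is a well-defined vertical isomorphism.

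For naturality I would invoke the separation principle \ref{separationprinciple} directly. Fix a morphism $(f, g, h)$ of $\mathbf{Sig}_{ma}$ over an arbitrary base morphism $u : M \to N$, so that fiberedness is treated simultaneously; the naturality square for $\varphi$ asserts the equality of two composite morphisms over $u$, $(A \otimes X) \odot (B \otimes X) \to (A' \odot B') \otimes X'$, and likewise (in the single variable $X$) for $\psi$. Regarding source and target as the values of two fibered functors $F, G$ out of a suitable fibration of configurations over $Mon(\mathbf{Sig}_{a})$ --- a fibered power of the arrow category, or simply the fibration of diagrams of the relevant shape --- the two legs of each square become fibered natural transformations $F \to G$. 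All functors in sight are obtained by applying $\otimes$ and $\odot$ to the object variables, so they send prone morphisms over $\pi : M_{\mathbb{N}} \to M$ --- which have trivial amalgamation permutations, as does $\pi$ itself, and $\otimes$, $\odot$ of strict morphisms over a strict homomorphism are again strict --- to strict morphisms; hence these functors are jointly agreeable. Moreover $F$ is separated: any term in $F(\cdots)$ lifts along a prone morphism over $\pi$ to one whose vertical inputs are labelled by pairwise distinct elements of $M_{\mathbb{N}}$, which is possible because $M_{\mathbb{N}}$ has infinite fibers over $M$ and only finitely many input positions occur in a given term (for $\psi$ the functor $F = I_{\odot}(-)$ is separated trivially, since its symbols have no inputs). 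The separation principle then reduces the naturality of $\varphi$ and $\psi$ to the equality of the two legs on function symbols, where each leg merely renames symbols via $f, g, h$ and concatenates, respectively re-splits, the $x$-input lists in the same manifest way --- a short, permutation-free computation.

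The only genuine work is this final function-symbol check, together with the easy (but not-to-be-skipped) verifications that the functors in the configuration fibration are jointly agreeable and separated, and that $\varphi$, $\psi$ are well defined as vertical morphisms to begin with. What the separation principle buys here is precisely that we never compare the amalgamation permutations of the composite morphisms: although $\varphi$ and $\psi$ themselves carry trivial permutations, those of $(f \otimes h) \odot (g \otimes h)$ and of $(f \odot g) \otimes h$ involve the permutations of $f$, $g$, $h$ recombined through $\otimes$ and $\odot$, and are exactly the baroque expressions the principle is designed to sidestep. In particular this settles the naturality of $\varphi$ and $\psi$ in the variable $X$, which was flagged as not a priori evident.
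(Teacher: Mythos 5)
Your verification that $\varphi_{A,B,X}$ and $\psi_{X}$ are vertical isomorphisms is fine and matches what the paper treats as immediate. The gap is in the naturality argument. The separation principle (Theorem \ref{separationprinciple}) takes as \emph{hypothesis} two fibered \emph{natural} transformations $\phi^{1},\phi^{2}:F\rightarrow G$ agreeing on function symbols, and concludes only that they are equal; it cannot be used to establish naturality. Your device of passing to a fibration of diagrams and asserting that ``the two legs of each square become fibered natural transformations $F\rightarrow G$'' is exactly where the argument turns circular: with $\phi^{1}_{(f,g,h)}=((f\odot g)\otimes h)\circ\varphi_{A,B,X}$ and $\phi^{2}_{(f,g,h)}=\varphi_{A',B',X'}\circ((f\otimes h)\odot(g\otimes h))$, checking that $\phi^{1}$ and $\phi^{2}$ are natural with respect to a morphism of configurations (a commuting square) unwinds precisely to the naturality of $\varphi$ that is to be proved. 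The paper is explicit about this division of labour --- ``We will exploit naturality to get rid of the second part. Establishing naturality will be difficult enough'' --- and follows it consistently: naturality of $\alpha^{\odot}$ and of $\varphi,\psi$ is proved by hand, and only afterwards is the separation principle invoked for the coherence diagrams.

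What the paper actually does for this proposition is the direct computation you were hoping to sidestep: apply both composites to a term $\dot{\langle}\langle a, x_{0,k}\rangle,\langle b_{i},x_{i,k}\rangle\dot{\rangle}$, check equality on function symbols (already not ``permutation-free'' --- the reindexing of the $x$'s involves the amalgamation permutations $\sigma,\tau,\theta$ of $f,g,h$ and the inverse formula in the operad of symmetries), and then compare the amalgamation permutations of the two composites directly. The latter is in fact painless here: the two sides are $(1,\theta_{\check{a}})\ast(\sigma_{a}\ast(\delta_{x_{0,k}}),\tau_{b_{i}}\ast(\delta_{x_{i,k}}))$ and $[(1,\theta_{\check{a}})\ast(\sigma_{a},\tau_{b_{i}})]\ast(\delta_{x_{i,k}})$, equal by associativity of $\ast$. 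If you insist on avoiding the permutation comparison, the route is the bootstrapping variant alluded to in the remark following the separation principle (first verify the square against prone morphisms over $\pi$, where all maps in sight are strict, then transfer along injective-typing lifts via Lemma \ref{strictnesslemma}); but that is a different and more delicate argument than the one you wrote, and it would need to be supplied in full.
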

\begin{proof}
The statement for $\psi$ is obvious, as is the isomorphism part. We consider naturality for $\varphi$:

\begin{center}
\begin{tikzpicture}
\matrix (m) [matrix of math nodes, row sep = 1cm, column sep = 3.5cm, text height = 1.5ex, text depth = .25ex] {
(A \otimes X) \odot (B \otimes X) & (A' \otimes X') \odot (B' \otimes X') \\
(A \odot B) \otimes X &  (A' \odot B') \otimes X' \\
M & N \\
};

\path[->] (m-1-1) edge node[auto] {$(f \otimes h) \odot (g \otimes h)$} (m-1-2)
			 (m-2-1) edge node[auto] {$(f \odot g) \otimes h$} (m-2-2)
			 (m-1-1) edge node[auto, swap] {$\varphi_{A, B, X}$} (m-2-1)
			 (m-1-2) edge node[auto] {$\varphi_{A', B', X'}$} (m-2-2)
			 (m-3-1) edge node[auto] {$u$} (m-3-2);

\end{tikzpicture}
\end{center}
where $f, g$ and $h$ are over the homomorphism $u$. The amalgamation permutations of these morphisms are denoted $\sigma, \tau, \delta$ and $\theta$ respectively. We consider a term

\begin{displaymath}
\dot{\langle} \langle a, x_{0, k} \rangle, \langle b_{i}, x_{i, k} \rangle \dot{\rangle}
\end{displaymath}
in $(A \otimes X) \odot (B \otimes X)$ and apply $(f \odot g) \otimes h \circ \varphi_{A, B, X}$ to it obtaining

\begin{equation}\label{someeqn}
\langle \dot{\langle} f(a), g(b_{\theta_{\check{a}}^{-1}(i)}) \dot{\rangle}, h(x_{\sigma \odot \tau_{\dot{\langle} a, b_{i} \dot{\rangle}}^{-1}(i, k)}) \rangle
\end{equation}
On the other hand we can apply $(f \otimes h) \odot (g \otimes h)$ and obtain

\begin{displaymath}
\dot{\langle} \langle f(a), h(x_{0, \sigma_{a}^{-1}(k)} \rangle, \langle g(b_{\theta_{\check{a}}^{-1}(i)}), h(x_{\theta_{\check{a}}^{-1}(i), \tau_{b_{\theta_{\check{a}}^{-1}(i)}}^{-1}(k)}) \rangle \dot{\rangle},
\end{displaymath}
which $\varphi_{A', B', X'}$ maps to

\begin{eqnarray*}
\langle \dot{\langle} f(a), g(b_{\theta_{\check{a}}^{-1}(1)}), \dots, g(b_{\theta_{\check{a}}^{-1}(k)}) \dot{\rangle}, \phantom{xxxxxxxxxxxxxxxxxxxxxxxxxxxx} \\
h(x_{0, \sigma_{a}^{-1}(1)}), \dots, h(x_{0, \sigma_{a}^{-1}(l_{0})}), h(x_{\theta_{\check{a}}^{-1}(1), \tau_{b_{\theta_{\check{a}}^{-1}(1)}}^{-1}(1)}), \dots h(x_{\theta_{\check{a}}^{-1}(k), \tau_{b_{\theta_{\check{a}}^{-1}(k)}}^{-1}(l_{k})}) \rangle,
\end{eqnarray*}
which is equal to term \ref{someeqn} by our definition of $\tau \odot \delta$ (and the formula for inverses in the operad of symmetries).

We must still prove that the amalgamation permutations are equal. This means that
\begin{displaymath}
(\sigma \otimes \delta) \odot (\tau \otimes \delta)_{\dot{\langle} \langle a, x_{0, k} \rangle, \langle b_{i}, x_{i, k} \rangle \dot{\rangle}} = (\sigma \odot \tau) \otimes \delta_{\langle \dot{\langle} a, b_{i} \dot{\rangle}, x_{i, k} \rangle}
\end{displaymath}
Fortunately we can write out both sides in this case, using just our definitions. The left side is

\begin{displaymath}
(1, \theta_{\check{a}}) \ast (\sigma_{a} \ast (\delta_{x_{0, k}}), \tau_{b_{i}} \ast (\delta_{x_{i,k}}) )
\end{displaymath}
and the right side is

\begin{displaymath}
[ (1, \theta_{\check{a}}) \ast (\sigma_{a}, \tau_{b_{i}}) ] \ast (\delta_{x_{i,k}})
\end{displaymath}
They are equal by the associativity of the operad of symmetries.
\end{proof}

Let us see why all our functors are jointly agreeable. This is a simple consequence of our formulas. Prone morphisms in $\mathbf{Sig}_{ma}$ are defined using $\mathbf{Set}$-pullback (just like in $\mathbf{Sig}_{a}$), and are therefore strict, and the projection $\pi: M_{\mathbb{N}} \rightarrow M$ is strict. Therefore all possible combinations of $\otimes$ and $\odot$ on these morphisms will have standard amalgamation -- the formulas for amalgamation give identities if they are supplied only with identities. Nonstandard amalgamation does not appear out of thin air, so to speak.

Separation can be seen, in some sense, in the same way. All our functors are combinations of $\otimes$ and $\odot$, and the typing of their values is defined from the typings of their arguments. Thus the arguments contain the simplest building blocks of the terms we will consider. If our term is, for example $\langle f, g_{1}, \dots, g_{n} \rangle \in A \otimes B$, then the simplest building blocks are $f$ and the $g_{i}$. We can attach consecutive natural numbers to the inputs of $g_{1}$, bigger numbers to the inputs of $g_{2}$, and so on up to $g_{n}$. We can attach numbers to outputs of $g_{i}$ to maintain composability with $f$ -- in this case they can be arbitrary. This will define a term in $\pi^{\ast} A \otimes \pi^{\ast} B$ with injective typing mapping to the original one when we forget the added numbers. This proves separability of the functor $\otimes$ for $\mathbf{Sig}_{ma}$ (and $\mathbf{Sig}_{a}$ also). This procedure works for terms of arbitrary complexity, in particular for those which are elements of our diagrams\footnote{Formally we should use induction on complexity of the terms, but this only obscures the idea. An argument essentially equivalent to the separability of $\mathcal{F}_{\odot} \odot \mathcal{F}_{\odot}$ can be found in \cite[part II, lemma 7]{HMP}. It was not appreciated by the second author.}.

We start with the easy diagrams (the last one is an exercise).

\paragraph{Condition III.} An element of $A \odot B$ is of the form

\begin{displaymath}
\dot{\langle}a, b_{i} \dot{\rangle},
\end{displaymath}
where $i$ ranges over the horizontal inputs of $a$. The map $\rho^{\otimes} \odot \rho^{\otimes}$ maps this to

\begin{displaymath}
\dot{\langle} \langle a, 1_{\partial_{a}^{A}(j)} \rangle, \langle b_{i}, 1_{\partial_{b_{i}}^{B}(j')} \rangle \dot{\rangle},
\end{displaymath}
where $j$ and $j'$ range over the vertical inputs. The map $\varphi$ maps this to

\begin{displaymath}
\langle \dot{\langle} a, b_{i} \dot{\rangle}, 1_{\partial_{a}^{A}(j)}, 1_{\partial_{b_{i}}^{B}(j')} \rangle,
\end{displaymath}
which is exactly what $\rho^{\otimes}$ does to the original term. By separability we are finished.

\paragraph{Condition IV.} We start with

\begin{displaymath}
\dot{\langle} \langle 1_{\partial^{M}_{\check{a}}(0)}, - \rangle, \langle a, x_{i} \rangle \dot{\rangle},
\end{displaymath}
where $(-)$ represents the empty list. There is only one $a$ since $1_{\partial^{M}_{\check{a}}(0)}$ is unary. $\varphi$ maps this to

\begin{displaymath}
\langle \dot{\langle} 1_{\partial^{M}_{\check{a}}(0)}, a \dot{\rangle} , x_{i} \rangle,
\end{displaymath}
which $\lambda^{\odot} \otimes 1$ maps to

\begin{displaymath}
\langle a, x_{i} \rangle.
\end{displaymath}
The final result of this way is thus $f$ applied to the above term. The other way around the diagram goes like this. Starting with the original term we obtain in the first step

\begin{displaymath}
\dot{\langle} 1_{\partial^{M}_{\check{a}}(0)}, f(\langle a, x_{i} \rangle) \dot{\rangle},
\end{displaymath}
and then, applying $\lambda^{\odot}$,

\begin{displaymath}
f(\langle a, x_{i} \rangle)
\end{displaymath}
in the second step. Thus both ways agree.

\paragraph{Condition V.} The condition says very little in our case. We start with

\begin{displaymath}
\langle 1_{o}, - \rangle,
\end{displaymath}
which is mapped by $\psi^{-1}$ to $1_{o}$.

\noindent
Alternately it is mapped by $\alpha^{\otimes}$ to (surprise!)

\begin{displaymath}
\langle \langle 1_{o}, - \rangle, - \rangle,
\end{displaymath}
and then by $\psi^{-1} \otimes 1$ to

\begin{displaymath}
\langle 1_{o}, - \rangle,
\end{displaymath}
which the final $\psi^{-1}$ maps to $1_{o}$, thus agreeing with the first way.

\paragraph{Condition VI.} This is entirely trivial -- there is only one way to add an empty list to a unary term. We start with $1_{o} \in I_{\odot}$ and both $\psi$ and $\rho^{\otimes}$ map it to

\begin{displaymath}
\langle 1_{o}, - \rangle,
\end{displaymath}
by definition for $\psi$, and for $\rho^{\otimes}$ because $I_{\odot}$ has no vertical inputs.

Now we turn to the more complicated cases

\paragraph{Condition II.} We start with a rather unwieldy

\begin{displaymath}
\dot{\langle} \langle a, \langle x_{i}, y_{i,j} \rangle \rangle , \langle b_{k},  \langle x'_{i'}, y'_{i', j'} \rangle \rangle  \dot{\rangle},
\end{displaymath}
which $\varphi$ maps to

\begin{displaymath}
\langle \dot{\langle} a, b_{k} \dot{\rangle}, \langle x_{i}, y_{i,j} \rangle, \langle x'_{i'}, y'_{i', j'} \rangle \rangle,
\end{displaymath}
which after $\alpha^{\otimes}$ becomes

\begin{displaymath}
\langle \langle \dot{\langle} a, b_{k} \dot{\rangle}, x_{i}, x'_{i'} \rangle, y_{i,j}, y'_{i',j'} \rangle.
\end{displaymath}

The other way maps the original term by $\alpha^{\otimes} \odot \alpha^{\otimes}$ to

\begin{displaymath}
\dot{\langle} \langle \langle a, x_{i} \rangle , y_{i,j} \rangle \rangle, \langle \langle b_{k}, x'_{i'} \rangle, y'_{i', j'} \rangle \rangle  \dot{\rangle},
\end{displaymath}
and then by $\varphi$ to

\begin{displaymath}
\langle \dot{\langle} \langle a, x_{i} \rangle, \langle b_{k}, x'_{i'} \rangle \dot{\rangle}, y_{i,j}, y'_{i', j'} \rangle.
\end{displaymath}
Applying the final $\varphi \otimes 1$ yields

\begin{displaymath}
\langle \langle \dot{\langle} a, b_{k} \dot{\rangle}, x_{i}, x'_{i'} \rangle, y_{i,j}, y'_{i',j'} \rangle,
\end{displaymath}
in agreement with our previous calculation.

\paragraph{Condition I.} Up to now we did not have to deal with any permutations acting on terms (the other ones were taken care of by the separation principle). The first condition is the hardest one because this is not true in its case. Fortunately the permutations are manageable. We begin with

\begin{displaymath}
\dot{\langle} \langle a, x_{i} \rangle, \dot{\langle} \langle b_{k}, x_{k, j} \rangle, \langle c_{k,l}, x_{k, l, m} \rangle \dot{\rangle}  \dot{\rangle}.
\end{displaymath}
The last index of each instance of $x$ ranges over the vertical inputs of $a, b_{k}$ or $c_{k,l}$. The indices $k$ and $(k, l)$ range over horizontal inputs of $a$ and $b_{k}$.

After applying $1 \odot \varphi$ to this term we obtain

\begin{displaymath}
\dot{\langle} \langle a, x_{i} \rangle, \langle \dot{\langle} b_{k}, c_{k,l} \dot{\rangle}, x_{k,j}, x_{k, l, m} \rangle \dot{\rangle},
\end{displaymath}
which $\varphi$ maps to

\begin{displaymath}
\langle \dot{\langle} a, \dot{\langle} b_{k}, c_{k, l} \dot{\rangle} \dot{\rangle}, x_{i}, x_{1, j}, x_{1, l, m}, \dots, x_{2, j}, x_{2, l, m}, \dots \rangle.
\end{displaymath}
We must now determine what $\alpha^{\odot} \otimes 1$ does to this term. This means looking at the definition of the amalgamation permutations of $\alpha^{\odot}$, which we have denoted by $\pi$:

\begin{displaymath}
\pi_{\dot{\langle} a, \dot{\langle} b_{i}, c_{i, j} \dot{\rangle} \dot{\rangle}} = \kappa_{\dot{\langle} a, \dot{\langle} b_{i}, c_{i, j} \dot{\rangle} \dot{\rangle}} \ast (1_{(|a|]}, \dots 1_{(|c_{k, l_{k}}|]}),
\end{displaymath}
where $\kappa$ is the permutation that implements the movements of the function symbols between $\dot{\langle} a, \dot{\langle} b_{i}, c_{i, j} \dot{\rangle} \dot{\rangle}$ and $\dot{\langle} \dot{\langle} a, b_{i} \dot{\rangle}, c_{\gamma_{\langle \check{a}, \check{b}_{i} \rangle}^{-1}(i, j)} \dot{\rangle}$. Therefore $\alpha^{\odot} \otimes 1$ acts on our term as follows

\begin{displaymath}
\langle \dot{\langle} \dot{\langle} a, b_{k} \dot{\rangle}, c_{\gamma_{\langle \check{a}, \check{b}_{k} \rangle}^{-1}(k, l)} \dot{\rangle}, x_{i}, x_{k,j}, x_{\gamma_{\langle \check{a}, \check{b}_{k} \rangle}^{-1}(k,l), m} \rangle,
\end{displaymath}
where $\gamma$ are the permutation amalgamations of multiplication in $M$.

We must determine what happens when we take the other way around the diagram. By our typing conventions $\alpha^{\odot}$ maps our original term to

\begin{displaymath}
\dot{\langle} \dot{\langle} \langle a, x_{i} \rangle, \langle b_{k}, x_{k, j} \rangle \dot{\rangle}, \langle c_{\gamma_{\langle \check{a}, \check{b}_{i} \rangle}^{-1}(k,l)}, x_{\gamma_{\langle \check{a}, \check{b}_{k} \rangle}^{-1}(k, l), m} \rangle \dot{\rangle},
\end{displaymath}
which $\varphi \odot 1$ makes into

\begin{displaymath}
\dot{\langle} \langle \dot{\langle} a, b_{k} \dot{\rangle}, x_{i}, x_{k,j} \rangle, \langle c_{\gamma_{\langle \check{a}, \check{b}_{i} \rangle}^{-1}(k,l)}, x_{\gamma_{\langle \check{a}, \check{b}_{k} \rangle}^{-1}(k, l), m} \rangle \dot{\rangle}.
\end{displaymath}
Applying the final $\varphi$ we obtain

\begin{displaymath}
\langle \dot{\langle} \dot{\langle} a, b_{k} \dot{\rangle}, c_{\gamma_{\langle \check{a}, \check{b}_{i} \rangle}^{-1}(k, l)} \dot{\rangle}, x_{i}, x_{k,j}, x_{\gamma_{\langle \check{a}, \check{b}_{k} \rangle}^{-1}(k,l), m} \rangle,
\end{displaymath}
as we should. This concludes the proof of theorem \ref{distributivityforsigma}.

\section{Nonstandard Amalgamation is Necessary}\label{amalgamationisnecessaryexample}
The following remarkably simple example shows that the web monoid $\mathcal{W}(M)$ need not be isomorphic to any monoid with standard amalgamation, even if $M$ is standard. Such an isomorphism amounts to being able to retype the elements of the web monoid in such a way as to get standard amalgamation for multiplication. The example consists purely of pictures, and hence applies to most other constructions in the literature (for example the multicategory of function replacement).

Consider a set $O$ of three distinct types $\{$ circle, square, triangle $\}$ and a signature $M$ consisting of the following function symbols: $\{b, c, s, t, 1_{c}, 1_{s}, 1_{t} \}$. The symbol $b$ (like binary) is binary, and its typing is arbitrary, but injective. We have fixed one such typing in the pictures below. The symbols $c, s, t$ are unary of input and output type circle, square and triangle, respectively. The unary symbols $1_{x}$ are to be thought of as identities on their respective types (we will want to consider $M$ as a monoid). We will draw the nonidentity symbols like this

\begin{center}
\begin{tikzpicture}
\node[isosceles triangle, isosceles triangle apex angle = 60, shape border rotate = 90, draw, minimum height = .5cm] (a) at (0,0) {};
\node[circle, draw, inner sep = 1pt, above = .35 of a.apex] (aout) {};
\node[rectangle, draw, below = .35 of a.left corner] (ain1) {};
\node[regular polygon, regular polygon sides = 3, draw, inner sep = 1pt, below = .32 of a.right corner] (ain2) {};
\draw[-] (a.apex) -- (aout)
			(a.left corner) -- (ain1)
			(a.right corner) -- (ain2.corner 1);

\node[circle, draw, inner sep = 1pt] (A0) at (2,-.5) {};
\node[circle, draw, inner sep = 1pt] (A1) at (2,.5) {};

\node[rectangle, draw] (B0) at (3,-.5) {};
\node[rectangle, draw] (B1) at (3,.5) {};

\node[regular polygon, regular polygon sides = 3, inner sep = 1pt, draw] (C0) at (4,-.5) {};
\node[regular polygon, regular polygon sides = 3, inner sep = 1pt, draw] (C1) at (4,.5) {};

\draw (A0) -- (A1) (B0) -- (B1) (C0.corner 1) -- (C1);

\end{tikzpicture}
\end{center}
The shapes indicate input/output types of the symbols. We will never draw the identity symbols.

Note that $M$ has, because of our typing choices, a unique structure of a monoid in $\mathbf{Sig}_{a}$ (up to a choice of identities, one of which we have indicated), and this monoid has standard amalgamation. Below we draw part of the multiplication table for $M$. It shows the result of computing $\mu(\langle c, b \rangle)$, $\mu(\langle b, 1_{t} , s \rangle)$ and $\mu(\langle b, t, 1_{s} \rangle)$, where $\mu$ is the multiplication map.

\begin{center}
\begin{tikzpicture}
\node[isosceles triangle, isosceles triangle apex angle = 60, shape border rotate = 90, draw, minimum height = .5cm] (a) at (0,0) {};
\node[circle, draw, inner sep = 1pt, above = .35 of a.apex] (aout) {};
\node[rectangle, draw, below = .35 of a.left corner] (ain1) {};
\node[regular polygon, regular polygon sides = 3, draw, inner sep = 1pt, below = .32 of a.right corner] (ain2) {};
\draw[-] (a.apex) -- (aout)
			(a.left corner) -- (ain1)
			(a.right corner) -- (ain2.corner 1);

\node[isosceles triangle, isosceles triangle apex angle = 60, shape border rotate = 90, draw, minimum height = .5cm] (b) at (0,-2) {};
\node[circle, draw, inner sep = 1pt, above = .35 of b.apex] (bout) {};
\node[rectangle, draw, below = .35 of b.left corner] (bin1) {};
\node[regular polygon, regular polygon sides = 3, draw, inner sep = 1pt, below = .32 of b.right corner] (bin2) {};
\draw[-] (b.apex) -- (bout)
			(b.left corner) -- (bin1)
			(b.right corner) -- (bin2.corner 1);

\node[isosceles triangle, isosceles triangle apex angle = 60, shape border rotate = 90, draw, minimum height = .5cm] (c) at (0,-5) {};
\node[circle, draw, inner sep = 1pt, above = .35 of c.apex] (cout) {};
\node[rectangle, draw, below = .35 of c.left corner] (cin1) {};
\node[regular polygon, regular polygon sides = 3, draw, inner sep = 1pt, below = .32 of c.right corner] (cin2) {};
\draw[-] (c.apex) -- (cout)
			(c.left corner) -- (cin1)
			(c.right corner) -- (cin2.corner 1);

\node[isosceles triangle, isosceles triangle apex angle = 60, shape border rotate = 90, draw, minimum height = .5cm] (d) at (4,0) {};
\node[circle, draw, inner sep = 1pt, above = .35 of d.apex] (dout) {};
\node[rectangle, draw, below = .35 of d.left corner] (din1) {};
\node[regular polygon, regular polygon sides = 3, draw, inner sep = 1pt, below = .32 of d.right corner] (din2) {};
\draw[-] (d.apex) -- (dout)
			(d.left corner) -- (din1)
			(d.right corner) -- (din2.corner 1);

\node[isosceles triangle, isosceles triangle apex angle = 60, shape border rotate = 90, draw, minimum height = .5cm] (e) at (4,-2) {};
\node[circle, draw, inner sep = 1pt, above = .35 of e.apex] (eout) {};
\node[rectangle, draw, below = .35 of e.left corner] (ein1) {};
\node[regular polygon, regular polygon sides = 3, draw, inner sep = 1pt, below = .32 of e.right corner] (ein2) {};
\draw[-] (e.apex) -- (eout)
			(e.left corner) -- (ein1)
			(e.right corner) -- (ein2.corner 1);

\node[isosceles triangle, isosceles triangle apex angle = 60, shape border rotate = 90, draw, minimum height = .5cm] (f) at (4,-5) {};
\node[circle, draw, inner sep = 1pt, above = .35 of f.apex] (fout) {};
\node[rectangle, draw, below = .35 of f.left corner] (fin1) {};
\node[regular polygon, regular polygon sides = 3, draw, inner sep = 1pt, below = .32 of f.right corner] (fin2) {};
\draw[-] (f.apex) -- (fout)
			(f.left corner) -- (fin1)
			(f.right corner) -- (fin2.corner 1);

\path[|->] (.5,0) edge (3.5,0) (.5,-2) edge (3.5, -2) (.5, -4.5) edge (3.5, -4.5);

\node[circle, draw, inner sep = 1pt, above = .1 of aout] (cmp1in) {};
\node[circle, draw, inner sep = 1pt, above = .5 of cmp1in] (cmp1out) {};
\node[rectangle, draw, below = .1 of bin1] (cmp2out) {};
\node[rectangle, draw, below = .5 of cmp2out] (cmp2in) {};
\node[regular polygon, regular polygon sides = 3, draw, inner sep = 1pt, below = .1 of cin2] (cmp3out) {};
\node[regular polygon, regular polygon sides = 3, draw, inner sep = 1pt, below = .5 of cmp3out] (cmp3in) {};

\draw (cmp1in) -- (cmp1out) (cmp2in) -- (cmp2out) (cmp3in) -- (cmp3out);

\end{tikzpicture}
\end{center}

The web monoid $\mathcal{W}(M)$ consists of formal composites of these symbols (its universe is $\mathcal{F}_{\odot}(I_{\otimes}(M))$). The list of input types of a formal composite is the list of the symbols used to build it (in ``tree order'', but this is irrelevant since we will consider all the orderings), and its output type is its composite in $M$ (image under the counit).

Looking at the multiplication table for $M$, we see that in $\mathcal{W}(M) \otimes \mathcal{W}(M)$ the following elements are well-defined. Each formal composite on the right is input to the central binary symbol in the formal composite on the left (again, other inputs get identities):

\begin{center}
\begin{tikzpicture}
\node[isosceles triangle, isosceles triangle apex angle = 60, shape border rotate = 90, draw, minimum height = .5cm] (a) at (0,0) {};
\node[circle, draw, inner sep = 1pt, above = .35 of a.apex] (aout) {};
\node[rectangle, draw, below = .35 of a.left corner] (ain1) {};
\node[regular polygon, regular polygon sides = 3, draw, inner sep = 1pt, below = .32 of a.right corner] (ain2) {};
\draw[-] (a.apex) -- (aout)
			(a.left corner) -- (ain1)
			(a.right corner) -- (ain2.corner 1);

\node[isosceles triangle, isosceles triangle apex angle = 60, shape border rotate = 90, draw, minimum height = .5cm] (b) at (0,-4) {};
\node[circle, draw, inner sep = 1pt, above = .35 of b.apex] (bout) {};
\node[rectangle, draw, below = .35 of b.left corner] (bin1) {};
\node[regular polygon, regular polygon sides = 3, draw, inner sep = 1pt, below = .32 of b.right corner] (bin2) {};
\draw[-] (b.apex) -- (bout)
			(b.left corner) -- (bin1)
			(b.right corner) -- (bin2.corner 1);

\node[isosceles triangle, isosceles triangle apex angle = 60, shape border rotate = 90, draw, minimum height = .5cm] (c) at (0,-8) {};
\node[circle, draw, inner sep = 1pt, above = .35 of c.apex] (cout) {};
\node[rectangle, draw, below = .35 of c.left corner] (cin1) {};
\node[regular polygon, regular polygon sides = 3, draw, inner sep = 1pt, below = .32 of c.right corner] (cin2) {};
\draw[-] (c.apex) -- (cout)
			(c.left corner) -- (cin1)
			(c.right corner) -- (cin2.corner 1);

\node[isosceles triangle, isosceles triangle apex angle = 60, shape border rotate = 90, draw, minimum height = .5cm] (d) at (4,0) {};
\node[circle, draw, inner sep = 1pt, above = .35 of d.apex] (dout) {};
\node[rectangle, draw, below = .35 of d.left corner] (din1) {};
\node[regular polygon, regular polygon sides = 3, draw, inner sep = 1pt, below = .32 of d.right corner] (din2) {};
\draw[-] (d.apex) -- (dout)
			(d.left corner) -- (din1)
			(d.right corner) -- (din2.corner 1);

\node[circle, draw, inner sep = 1pt, above = .1 of aout] (cmp1in) {};
\node[circle, draw, inner sep = 1pt, above = .5 of cmp1in] (cmp1out) {};
\node[rectangle, draw, below = .1 of ain1] (cmp2out) {};
\node[rectangle, draw, below = .5 of cmp2out] (cmp2in) {};
\node[regular polygon, regular polygon sides = 3, draw, inner sep = 1pt, below = .1 of din2] (cmp3out) {};
\node[regular polygon, regular polygon sides = 3, draw, inner sep = 1pt, below = .5 of cmp3out] (cmp3in) {};

\draw (cmp1in) -- (cmp1out) (cmp2in) -- (cmp2out) (cmp3in) -- (cmp3out);

\node[isosceles triangle, isosceles triangle apex angle = 60, shape border rotate = 90, draw, minimum height = .5cm] (e) at (4,-4) {};
\node[circle, draw, inner sep = 1pt, above = .35 of e.apex] (eout) {};
\node[rectangle, draw, below = .35 of e.left corner] (ein1) {};
\node[regular polygon, regular polygon sides = 3, draw, inner sep = 1pt, below = .32 of e.right corner] (ein2) {};
\draw[-] (e.apex) -- (eout)
			(e.left corner) -- (ein1)
			(e.right corner) -- (ein2.corner 1);

\node[circle, draw, inner sep = 1pt, above = .1 of bout] (cmp1in2) {};
\node[circle, draw, inner sep = 1pt, above = .5 of cmp1in2] (cmp1out2) {};
\node[rectangle, draw, below = .1 of ein1] (cmp2out2) {};
\node[rectangle, draw, below = .5 of cmp2out2] (cmp2in2) {};
\node[regular polygon, regular polygon sides = 3, draw, inner sep = 1pt, below = .1 of bin2] (cmp3out2) {};
\node[regular polygon, regular polygon sides = 3, draw, inner sep = 1pt, below = .5 of cmp3out2] (cmp3in2) {};

\draw (cmp1in2) -- (cmp1out2) (cmp2in2) -- (cmp2out2) (cmp3in2) -- (cmp3out2);

\node[isosceles triangle, isosceles triangle apex angle = 60, shape border rotate = 90, draw, minimum height = .5cm] (f) at (4,-8) {};
\node[circle, draw, inner sep = 1pt, above = .35 of f.apex] (fout) {};
\node[rectangle, draw, below = .35 of f.left corner] (fin1) {};
\node[regular polygon, regular polygon sides = 3, draw, inner sep = 1pt, below = .32 of f.right corner] (fin2) {};
\draw[-] (f.apex) -- (fout)
			(f.left corner) -- (fin1)
			(f.right corner) -- (fin2.corner 1);

\node[circle, draw, inner sep = 1pt, above = .1 of fout] (cmp1in3) {};
\node[circle, draw, inner sep = 1pt, above = .5 of cmp1in3] (cmp1out3) {};
\node[rectangle, draw, below = .1 of cin1] (cmp2out3) {};
\node[rectangle, draw, below = .5 of cmp2out3] (cmp2in3) {};
\node[regular polygon, regular polygon sides = 3, draw, inner sep = 1pt, below = .1 of cin2] (cmp3out3) {};
\node[regular polygon, regular polygon sides = 3, draw, inner sep = 1pt, below = .5 of cmp3out3] (cmp3in3) {};

\draw (cmp1in3) -- (cmp1out3) (cmp2in3) -- (cmp2out3) (cmp3in3) -- (cmp3out3);

\path[->] (3.5,0) edge[bend right = 30] (.5,0) (3.5,-4) edge[bend right = 30] (.5,-4) (3.5,-8) edge[bend right = 30] (.5,-8);

\end{tikzpicture}
\end{center}
and all of them compose to

\begin{center}
\begin{tikzpicture}
\node[isosceles triangle, isosceles triangle apex angle = 60, shape border rotate = 90, draw, minimum height = .5cm] (a) at (0,0) {};
\node[circle, draw, inner sep = 1pt, above = .35 of a.apex] (aout) {};
\node[rectangle, draw, below = .35 of a.left corner] (ain1) {};
\node[regular polygon, regular polygon sides = 3, draw, inner sep = 1pt, below = .32 of a.right corner] (ain2) {};
\draw[-] (a.apex) -- (aout)
			(a.left corner) -- (ain1)
			(a.right corner) -- (ain2.corner 1);

\node[circle, draw, inner sep = 1pt, above = .1 of aout] (cmp1in) {};
\node[circle, draw, inner sep = 1pt, above = .5 of cmp1in] (cmp1out) {};
\node[rectangle, draw, below = .1 of ain1] (cmp2out) {};
\node[rectangle, draw, below = .5 of cmp2out] (cmp2in) {};
\node[regular polygon, regular polygon sides = 3, draw, inner sep = 1pt, below = .1 of ain2] (cmp3out) {};
\node[regular polygon, regular polygon sides = 3, draw, inner sep = 1pt, below = .5 of cmp3out] (cmp3in) {};

\draw (cmp1in) -- (cmp1out) (cmp2in) -- (cmp2out) (cmp3in) -- (cmp3out);
\end{tikzpicture}
\end{center}
(this ultimately follows from the definition of $\nu_{n}$ given in equation \ref{definitionofnun}).

Consider the amalgamation permutations of multiplication in $\mathcal{W}(M)$. The above composite has four distinct types -- the list of function symbols used to build it -- as do the above elements of $\mathcal{W}(M) \otimes \mathcal{W}(M)$ which compose to it. Thus the permutation amalgamations are determined uniquely once we determine the order in which these types are listed for all the four elements we are considering. We must list them in such a way that all the amalgamations arising from the above compositions can be taken to be the identity. But in the above elements of $\mathcal{W}(M) \otimes \mathcal{W}(M)$ the binary symbol was listed next to each nonidentity unary symbol (because of our convention for typing tensor products). The identity permutation preserves the ``was listed next to'' relation. Thus if we want standard amalgamation the binary function symbol in the above formal composite must have as neighbors all three unary symbols. Three neighbors in a list is one neighbor too many -- contradiction.

It is easy to see that the situation above actually arises in our opetopic sets. Therefore some pasting diagram monoids must have nonstandard amalgamation.

\end{document}